\documentclass[11pt]{article} 
% if you need to pass options to natbib, use, e.g.:
%     \PassOptionsToPackage{numbers, compress}{natbib}
% before loading neurips_2019

% ready for submission
% \usepackage{neurips_2019}

% to compile a preprint version, e.g., for submission to arXiv, add add the
% [preprint] option:
%     \usepackage[preprint]{neurips_2019}

% to compile a camera-ready version, add the [final] option, e.g.:
%\usepackage[]{neurips_2020}
\usepackage[margin=1in]{geometry}
% to avoid loading the natbib package, add option nonatbib:
%     \usepackage[nonatbib]{neurips_2019}
%\usepackage[%
%minnames=1,maxnames=99,maxcitenames=3, style=numeric, doi=false,url=false,
%firstinits=true,hyperref,natbib,backend=bibtex,sorting=nyt]{biblatex}%
%\addbibresource{references}
%\usepackage{biblatex}
%\bibliographystyle{plain}
%\addbibresource{references.bib}
\usepackage[utf8]{inputenc} % allow utf-8 input
\usepackage[T1]{fontenc}    % use 8-bit T1 fonts
\usepackage{hyperref}       % hyperlinks
\usepackage{url}            % simple URL typesetting
\usepackage{booktabs}       % professional-quality tables
\usepackage{amsfonts}       % blackboard math symbols
\usepackage{nicefrac}       % compact symbols for 1/2, etc.
\usepackage{microtype}      % microtypography
\usepackage{color}
\usepackage{empheq}
\usepackage{mathtools}
\usepackage{amssymb}
\usepackage{mdframed}
\usepackage{hyperref}
\usepackage{amsthm}
\usepackage{cleveref}
\usepackage{stmaryrd}
\usepackage{autonum}
\usepackage{enumerate}

\setcounter{tocdepth}{0}

\usepackage{cleveref}

\crefname{equation}{}{}
\crefname{proposition}{Proposition}{Propositions}
\crefname{appendix}{Appendix}{Appendices}

\usepackage{color-edits}
\addauthor{vs}{blue}
\addauthor{ma}{red}

%\graphicspath{ {/Users/letta/Documents/latex pic/} }
\newtheorem{theorem}{Theorem}
\newtheorem{cor}{Corollary}
\newtheorem{lemma}[theorem]{Lemma}

\newtheorem{example}{Example}[section]
\newtheorem{remark}{Remark}
\newtheorem{prop}{Proposition}

\newtheorem{assumption}{Assumption}

\DeclarePairedDelimiter{\dbracket}{\llbracket}{\rrbracket}

\newcommand{\z}{\mathbf{z}}

\usepackage{schemata}

\newcommand{\E}{\ensuremath{\mathbb{E}}}
\newcommand{\mcB}{\ensuremath{{\cal B}}}
\newcommand{\mcE}{\ensuremath{{\cal E}}}
\newcommand{\mcF}{\ensuremath{{\cal F}}}
\newcommand{\mcH}{\ensuremath{{\cal H}}}
\newcommand{\mcJ}{\ensuremath{{\cal J}}}

\newcommand{\mcR}{\ensuremath{{\cal R}}}
\newcommand{\bb}[1]{\ensuremath{\left[#1\right]}}
\newcommand{\bp}[1]{\ensuremath{\left(#1\right)}}
\newcommand{\bc}[1]{\ensuremath{\left\{#1\right\}}}
\newcommand{\ba}[1]{\ensuremath{\left| #1\right|}}
\newcommand{\bn}[1]{\ensuremath{\left\| #1\right\|}}
\newcommand{\RR}{\ensuremath{\mathbb{R}}}
\newcommand{\CC}{\ensuremath{\mathbb{C}}}
\newcommand{\II}{\ensuremath{\mathbb{I}}}
\newcommand{\QQ}{\ensuremath{\mathbb{Q}}}

\newcommand{\Var}{\ensuremath{\text{Var}}}

\newcommand{\dfmidi}{\ensuremath{\bar{\mcJ}_i}}
\newcommand{\dfmidik}[1][]{%
   \ifthenelse{ \equal{#1}{} }
      {\ensuremath{\bar{\mcJ}_{i, k_1}}}
      {\ensuremath{\bar{\mcJ}_{i, #1}}}
}
\newcommand{\ddfmidi}{\ensuremath{\bar{\mcH}_i}}
\newcommand{\ddfmidik}[1][]{%
   \ifthenelse{ \equal{#1}{} }
      {\ensuremath{\bar{\mcH}_{i, k_1, k_2}}}
      {\ensuremath{\bar{\mcH}_{i, #1}}}
}

\title{Asymptotics of the Empirical Bootstrap Method Beyond Asymptotic Normality}

% The \author macro works with any number of authors. There are two commands
% used to separate the names and addresses of multiple authors: \And and \AND.
%
% Using \And between authors leaves it to LaTeX to determine where to break the
% lines. Using \AND forces a line break at that point. So, if LaTeX puts 3 of 4
% authors names on the first line, and the last on the second line, try using
% \AND instead of \And before the third author name.

\author{%
  Morgane Austern \&  Vasilis Syrgkanis \\
  Microsoft Research\\
  % examples of more authors
  % \And
  % Coauthor \\
  % Affiliation \\
  % Address \\
  % \texttt{email} \\
  % \AND
  % Coauthor \\
  % Affiliation \\
  % Address \\
  % \texttt{email} \\
  % \And
  % Coauthor \\
  % Affiliation \\
  % Address \\
  % \texttt{email} \\
  % \And
  % Coauthor \\
  % Affiliation \\
  % Address \\
  % \texttt{email} \\
}
\date{~}

\begin{document}

\maketitle
\begin{abstract}
 One of the most commonly used methods for forming confidence intervals for statistical inference is the empirical bootstrap, which is especially expedient when the limiting distribution of the estimator is unknown. However, despite its ubiquitous role, its theoretical properties are still not well understood for non-asymptotically normal estimators. In this paper, under stability conditions, we establish the limiting distribution of the empirical bootstrap estimator, derive tight conditions for it to be asymptotically consistent, and quantify the speed of convergence. Moreover, we propose three alternative ways to use the bootstrap method to build confidence intervals with coverage guarantees. Finally, we illustrate the generality and tightness of our results by a series of examples, including uniform confidence bands, two-sample kernel tests, minmax stochastic programs and the empirical risk of stacked estimators.
\end{abstract}

\section{Introduction}

 One of the most important tasks in statistical inference is to draw confidence intervals for the mean of a given estimator $\hat{\theta}_n:=g_n(X_1,\dots,X_n)$. When the limiting distribution of $\hat{\theta}_n$ is known, such knowledge can be leveraged to build confidence intervals that are asymptotically consistent. For example,
 if $\hat\theta_n$ is asymptotically normal and if $\hat{s}_n$ is a consistent estimator of its standard deviation, then  the following interval is an asymptotically consistent confidence interval at level $1-\alpha$: $$\lim_{n\rightarrow \infty}P\bp{\theta\in\Big[\hat{\theta}_n- \Phi^{-1}(1-\frac{\alpha}{2})\frac{\hat{s}_n}{\sqrt{n}},\hat{\theta}_n+ \Phi^{-1}(1-\frac{\alpha}{2})\frac{\hat{s}_n}{\sqrt{n}}\Big] }=1-\alpha.$$ 
 However, in general the limiting distribution of the estimator $\hat{\theta}_n$ is unknown and an alternative approach must be taken. One such approach is the empirical bootstrap method, which consists of: sampling new observations $Z_1,\dots,Z_n$  independently and uniformly in $\{X_1,\dots,X_n\}$, and defining $\hat{\theta}_n^{{\rm boot}}$ as the value of the estimator taken at the bootstrap sample $\hat{\theta}_n^{{\rm boot}}:=g_n(Z_1,\dots,Z_n)$. This procedure can be repeated many times in order to estimate the conditional distribution of  $\hat{\theta}_n^{{\rm boot}}-\mathbb{E}(\hat{\theta}_n^{{\rm boot}}\mid X_1,\dots,X_n)$. If this distribution is approximately the same as the distribution of $\hat{\theta}_n-\mathbb{E}(\hat\theta_n)$, as the sample size $n$ grows,  we say that the bootstrap method is consistent. We note that when this holds the bootstrap method can be used to establish approximate confidence intervals for $\mathbb{E}(\hat\theta_n)$. Notably, when $\hat{\theta}_n$ is asymptotically normal those intervals are known to be consistent under general conditions; see e.g. \cite{hall2013bootstrap,bickel1981,chernozhukov2013gaussian}. 
 
 \noindent As the bootstrap method is often used for estimators whose limiting distribution is unknown or non-Gaussian, we are interested in studying the limiting distribution of $\hat{\theta}_n^{{\rm boot}}$ for a general class of estimators with arbitrary limiting distributions and that  satisfy simple stability conditions. In particular, we assume that the functions $(g_n)$ are approximable by three-times differentiable functions whose first, second and third order partial derivatives, taken at $(X_1,\dots,X_n)$, are of respective order $o(n^{-1/3})$, $o(n^{-1/2})$ and $o(n^{-1})$. These conditions assure that the value of $g_n(X_1,\dots,X_n)$ is not oversensitive to the value of a single observation, and guarantee that the difference $g_n(X_1,\dots,X_n)-g_n(0,X_2,\dots,X_n)$ is approximable by $X_1 \times h_n(0,X_2,\dots,X_n)$ for a function $h_n$; this latter condition controls the degree of non-linearity of $g_n$ (see \Cref{sec:main} for a formal exposition). Exploiting these assumptions, we exactly characterize the limiting distribution of the bootstrap estimator $\hat \theta_n^{\rm boot}$, compare it to the distribution of the original statistic $\hat\theta_n$ and study how fast the distribution of  $\hat{\theta}_n^{{\rm boot}}$ converges. This allows us to derive tight conditions on the functions $(g_n)$ and the process $(X_i)$ guaranteeing the consistency of the bootstrap method, and to study how the shape of the confidence intervals evolve when those conditions do not hold.  Notably, we discover that when the mean of the observations $X_1$ is unknown the bootstrap method is in general not consistent. 
Moreover, we propose three alternative ways of using the bootstrap method to draw conservative confidence intervals with guaranteed minimum coverage. We illustrate our results by providing a series of simple examples, as well as examples derived from machine learning and econometrics, including:  the p-value of  kernel two-sample tests, the empirical risk of smooth stacked estimators, the value of min-max objectives, and confidence bands.  
 \subsection{Related litterature}
 \vspace{3mm}
 
 The empirical bootstrap method was first introduced in a breakthrough paper by Efron \cite{efron1992bootstrap}. Other bootstraps methods have since been proposed including the multiplier bootstrap \cite{wu1986jackknife}, the residual bootstrap  \cite{davison1997bootstrap} or the non-remplacement bootstrap method \cite{politis1994large}. A vast literature studies the theoretical properties of those techniques with some of the main results synthesized in the following books \cite{hall2013bootstrap,davison1997bootstrap,johnson2001introduction,beran1991asympotic}.  Most relevant to us are studies of the asymptotics of the bootstrap method. The consistency of the bootstrap method for linear statistics, t-statistics, Von-Mises functionals and quantiles has been established in \cite{bickel1981,mamen,singh1981asymptotic} and for U-statistics in \cite{arcones1994u,zhang2001bayesian}.
 Those results, among others, have been extended to high-dimensional regression  and M-estimation \cite{bickel1983bootstrapping,mammen1989asymptotics,chatterjee2011bootstrapping,belloni2015uniform,dezeure2017high}, misspecified models \cite{spokoiny2015bootstrap}, solutions of estimating equations \cite{chatterjee2005generalized}  and to robust estimators \cite{chen2020robust}.In contrast, other works established the poor performance of the bootstrap method for non smooth statistics \cite{eaton1991wielandt,beran1985bootstrap,bickel1981}, or for non-sparse high-dimensional regressions \cite{el2018can}.

\noindent  Several recent breakthrough papers studied the consistency of the bootstrap method, both empirical and wild, for the maximum of high-dimensional averages with  the dimension taken to be growing exponentially fast with the sample size. Notably \cite{chernozhukov2013gaussian,chernozhukov2017central} established the consistency of the  bootstrap and gaussian approximation method when respectively $\log(p_nn)^{7/8}=o(n^{1/8})$ and  $\log(p_nn)^{7/6}=o(n^{1/6}) $ hold. A series of work have strengthen those results:  \cite{chernozhukov2019improved,deng2017beyond,koike2020notes} established the consistency of the multiplier and empirical bootstrap when $\log(pn)^{5/4}=o(n^{1/4})$, \cite{lopes2020central} established a quasi $\sqrt{n}^{-1}$ rate for the wild bootstrap, \cite{deng2020slightly} built slightly conservative confidence sets with guaranteed coverage under the conditions than $\log(p)=o({n})$ and \cite{chen2018gaussian} proved that similar results hold for high-dimension U-statistics.
   Those works use a combination of the Stein method, Edgeworth expansions, Lindeberg's method \cite{chatterjee2006generalization} and the Slepian smart interpolation path. We note that the limiting distribution of those statistics are in general not Gaussian \cite{deng2017beyond}.
Other works have studied the accuracy of the bootstrap method for specific statistics whose distributions are known to be asymptotically not Gaussian such as: the operator norm in high dimensions \cite{lopes2019bootstrapping,han2018gaussian,johnstone2018pca}, sampled eigenvalues of random matrices in high and moderate dimensions \cite{el2019non} or M-estimators having cube root convergence \cite{cattaneo2020bootstrap}.
The main contrast between this series of work and ours is that, rather than studying the bootstrap method for one specific statistics or application, we seek to establish the asymptotics of the bootstrap method under universal  conditions on the estimators $(g_n)$. Our proof builds on a breakthrough method proposed by Chatterjee \cite{chatterjee2006generalization} that generalized the Lindeberg method to a general technique to compare the expectations of $f(X_{1:n})$ and $f(Y_{1:n})$ of a large class of functions $f$.

\section{Preliminaries}
\label{sec:preliminaries}

Let $\bp{X^n_i}$ be a triangular array of independent and identically distributed (i.i.d) processes with observations $X_i^n$ taking value in $\RR^{d_n}$. Moreover, let $X^n=(X_1^n, \ldots, X_n^n)$ denote its $n$-th row. Consider an estimator $\hat\theta_n:=g_n(X^n)$, where $g_n:\times_{l=1}^n \RR^{d_n}\rightarrow \RR$ is a measurable function, that we will typically refer to as a \emph{statistic}, and let $(g_n)$ denote the sequence of measurable functions as $n$ grows. To evaluate the performance of this estimator and build confidence intervals, we need to approximate its distribution. In this work, we will analyze the empirical bootstrap method.

\paragraph{Empirical bootstrap} Bootstrap samples $Z^n=(Z_1^n, \ldots, Z_n^n)$ are sampled with replacement from the observations $\bc{ X_1^n, \dots, X_n^n }$. This implies that conditionally on $X^n$ the coordinates of $Z^n$ are distributed i.i.d, with $Z_i^n \mid X^n \sim {\rm unif}\bp{ \bc{ X_1^n, \dots, X^n_n } }$, for all $i\in [n]$. 

\paragraph{Consistency metric and bootstrap consistency} Throughout the paper we denote with $Y^n=\bp{ Y_1^n, \ldots, Y_n^n }$ an independent copy of $X^n$. The bootstrap method is said to be consistent for $(g_n)$ if conditionally on $X^n$ the distribution of $g_n(Z^n)$ well-approximates the distribution of $g_n(Y^n)$, as $n\to \infty$. To make this statement rigorous we introduce a metric on the space of probability distributions. First, we define the class of three times continuously differentiable measurable functions with bounded third-order derivatives:
\begin{equation}
    \mcF:= \bc{ h\in C^3(\RR) \mid  ~\sup_{x\in \RR} \ba{h^{(i)}(x)} \le 1,~~\forall~ 1\le i\le 3 };
\end{equation}
Given this, we define the distance on the space of probability measures, as the maximum mean discrepancy, where test functions range over the class $\mcF$:
\begin{equation}
d_{\mcF}(\mu,\nu):=\sup_{h\in \mcF} { \E_{X\sim \mu, Y\sim \nu}\bb{h(X)-h(Y)} }.
\end{equation}
Moreover, we use the shorthand notation:
\begin{equation}
d_{\mcF}\bp{ \mu,\nu \mid \mcE} :=\sup_{h\in \mcF} { \E_{X\sim \mu, Y\sim \nu}\bb{h(X)-h(Y) \mid \mcE} }.
\end{equation}
This metric is related to the classical Levy-Prokhorov distance on probability spaces \cite{billingsley2013convergence}. We say that \emph{the empirical bootstrap method is consistent for $(g_n)$} if:
\begin{equation}
d_{\mcF} \bp{ g_n(Z^n), g_n(Y^n) \mid X^n  } \xrightarrow{p}{0}.
\end{equation}

\paragraph{Centering discrepancy and centered bootstrap consistency} Notably, an individual bootstrap sample $Z_1^n\mid X^n$, has a slightly different mean $\E\bb{Z_1^n\mid X^n}=\bar{X}^n:=\frac{1}{n}\sum_{i\le n}X_i^n$, than the one of $X_1^n$. As we will see this small difference plays a crucial role in determining the consistency of the bootstrap and for this reason it will be useful to define artificially centered versions of the random variables $(Z_i^n)$ and $(Y_i^n)$. A centered bootstrap sample
\begin{equation}
\tilde{Z}^n_i := Z^n_i- \bp{ \bar{X}^n-\E\bb{X_1^n} }
\end{equation}
is a bootstrap sample that has been re-centered to artificially have the same mean than $X_1^n$. Moreover, denote with $\tilde{Y}_i^n$ a corrected version of $Y_i^n$, artificially re-centered to have the same mean as $Z_1^n$, i.e.:
\begin{equation}
\tilde{Y}_i^n:=Y_i^n+\bar{X}^n-\E\bb{X_1^n}.
\end{equation}
Similarly we write $\tilde{Z}_i^n$ a corrected version of $Z_i^n$, artificially re-centered to have the same mean as $X_1^n$, i.e.:
\begin{equation}
\tilde{Z}_i^n:=Z_i^n-\bar{X}^n+\E\bb{X_1^n}.
\end{equation}
We say that \emph{the centered bootstrap is consistent for $(g_n)$} if:
\begin{equation}
d_{\mcF}\bp{ g_n(\tilde{Z}^n), g_n(Y^n) \mid X^n } \xrightarrow{p}{0}.
\end{equation}

\paragraph{From metric consistency to confidence intervals with nominal coverage} We can compare the confidence intervals of two random variables $X$ and $Y$ in terms of their mutual distance $d_{\mcF}(X,Y)$ (proof in \Cref{app:prop1}).
\begin{prop}
\label{prop1} 
Let $X$ and $Y$ be two real-valued random variables and $\mcE$ any random event. Let $\epsilon>0$ be a constant then for any Borel set $A\in \mcB(\RR)$ the following holds: 
{\begin{equation}
P(X\in A_{6\epsilon} \mid \mcE) \geq P(Y\in A \mid \mcE) - \frac{d_{\mcF}(X,Y\mid \mcE)}{\epsilon^3},
\end{equation}}
where we wrote $A_{\epsilon}:= \{x\in\RR \mid \exists y\in A~{\rm s.t}~|x-y|\le \epsilon\}$. Moreover, if $[a,b]$ is a confidence interval at level $1-\alpha$ for $Y-\E\bb{Y\mid\mcE}$, conditional on $\mcE$, then we have:
{\begin{equation}
    P\bp{ X-\E\bb{X\mid \mcE} \in [a - 6\epsilon, b + 6\epsilon] \mid \mcE}\ge 1-\alpha-\frac{2d_{\mcF}(X,Y\mid \mcE)}{\epsilon^3}.
\end{equation}}
\end{prop}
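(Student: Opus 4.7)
The plan is a smooth-approximation argument: I replace the indicator $\mathbf{1}_A$ by a $C^3$ surrogate which, once suitably scaled, belongs to $\mcF$, and then invoke the definition of $d_{\mcF}$. Concretely, I fix a nonnegative $C^\infty$ kernel $\phi$ with $\int\phi = 1$ and support in $[-1,1]$, write $\phi_t(x) := t^{-1}\phi(x/t)$, and set
\[
h_A(x) \;:=\; c\,\epsilon^3 \cdot \bigl(\mathbf{1}_{A_{3\epsilon}} * \phi_{3\epsilon}\bigr)(x)
\]
for a universal constant $c > 0$ chosen small enough that $h_A \in \mcF$. The $i$-th derivative of the convolution scales like $\epsilon^{-i}$, so the three uniform-derivative constraints are simultaneously satisfiable thanks to the $\epsilon^3$ prefactor, with the third-derivative bound being the binding one. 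A direct inspection shows $h_A(x) = c\epsilon^3$ whenever $x \in A$ (the $3\epsilon$-neighbourhood of $x$ then sits inside $A_{3\epsilon}$, so the convolution integral equals $1$) and $h_A(x) = 0$ whenever $x \notin A_{6\epsilon}$ (that neighbourhood is disjoint from $A_{3\epsilon}$), yielding the pointwise sandwich $c\epsilon^3 \mathbf{1}_A \le h_A \le c\epsilon^3 \mathbf{1}_{A_{6\epsilon}}$.

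Since both $h_A$ and $-h_A$ lie in $\mcF$, the definition of $d_{\mcF}$ gives $\ba{\E\bb{h_A(X) - h_A(Y) \mid \mcE}} \le d_{\mcF}(X, Y \mid \mcE)$, and combining with the sandwich produces
\[
c\epsilon^3\, P(X \in A_{6\epsilon} \mid \mcE) \;\ge\; \E\bb{h_A(X) \mid \mcE} \;\ge\; \E\bb{h_A(Y) \mid \mcE} - d_{\mcF}(X, Y \mid \mcE) \;\ge\; c\epsilon^3\, P(Y \in A \mid \mcE) - d_{\mcF}(X, Y \mid \mcE).
\]
Dividing by $c\epsilon^3$ yields the first inequality, with the factor $6$ arising as the sum of the $3\epsilon$-expansion of $A$ and the $3\epsilon$-width of the mollifier.

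For the confidence-interval claim, I apply the first part to the centered variables $\tilde X := X - \E\bb{X \mid \mcE}$ and $\tilde Y := Y - \E\bb{Y \mid \mcE}$ with $A = [a,b]$, so that $A_{6\epsilon} = [a - 6\epsilon, b + 6\epsilon]$. It then remains to control $d_{\mcF}(\tilde X, \tilde Y \mid \mcE) \le 2\, d_{\mcF}(X, Y \mid \mcE)$. Writing $\mu_X, \mu_Y$ for the conditional means, for any $h \in \mcF$ I split
\[
\E\bb{h(\tilde X) - h(\tilde Y) \mid \mcE} \;=\; \E\bb{h(X - \mu_X) - h(X - \mu_Y) \mid \mcE} + \E\bb{h(X - \mu_Y) - h(Y - \mu_Y) \mid \mcE}.
\]
The second term is at most $d_{\mcF}(X, Y \mid \mcE)$ since $x \mapsto h(x - \mu_Y)$ is a translate of $h$ and hence also lies in $\mcF$. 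The first term is bounded in absolute value by $\ba{\mu_X - \mu_Y}$ via the mean-value theorem and $\sup_x \ba{h'(x)} \le 1$; and because the linear function $x \mapsto x$ has derivatives $(1, 0, 0)$, it itself belongs to $\mcF$, so the definition of $d_{\mcF}$ gives $\ba{\mu_X - \mu_Y} \le d_{\mcF}(X, Y \mid \mcE)$. These two bounds combine to produce the factor $2$. The chief technical point is the scale matching: the $\epsilon^3$ prefactor on $h_A$ is forced by the third-derivative constraint defining $\mcF$, and this is exactly what produces the $\epsilon^{-3}$ rate in the conclusion.
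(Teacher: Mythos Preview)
Your proof is correct and follows essentially the same mollification strategy as the paper: sandwich the indicator of $A$ by a $C^3$ function supported in $A_{6\epsilon}$ whose derivatives scale like $\epsilon^{-3}$, then invoke the definition of $d_{\mcF}$; and for the second claim, use $|h'|\le 1$ together with the fact that the identity lies in $\mcF$ to control the centering, exactly as the paper does. The only cosmetic difference is that the paper uses the explicit cubic-spline mollifier $h_\epsilon(x)=\epsilon^{-3}\int_{x-\epsilon}^{x}\int_{t-\epsilon}^{t}\int_{y-\epsilon}^{y}\mathbf{1}_{A_{3\epsilon}}(z)\,dz\,dy\,dt$ rather than a generic bump $\phi$; with your construction the constant $c$ depends on $\|\phi^{(i)}\|_{L^1}$ and need not be $\ge 1$, so ``dividing by $c\epsilon^3$'' literally gives $d_{\mcF}/(c\epsilon^3)$ rather than the stated $d_{\mcF}/\epsilon^3$---a harmless discrepancy that the paper's specific choice is designed to avoid.
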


\noindent For instance, suppose that we care about estimating $\theta_n:=\E\bb{ g_n(Y^n)}$. Then the bootstrap method, if consistent, can be used to build consistent confidence intervals for $\theta_n$. Indeed since we can estimate the conditional distribution of $\theta_n^{{\rm bootstrap}}:=g_n(\tilde{Z}^n)$, by drawing sufficiently many bootstrap sub-samples, we can find $C^{\alpha,n}$ such that
\begin{equation}
P\bp{\hat\theta_n^{{\rm bootstrap}}-\E\bb{\hat\theta_n^{{\rm bootstrap}} \mid X^n}\in C^{\alpha,n} \mid X^n }=1-\alpha.
\end{equation}
Then, if we write $\hat{\theta}_n:=g_n(Y^n)$, using the consistency of the bootstrap method we obtain that:
\begin{equation}
{\liminf_{\epsilon\downarrow 0}}\liminf_{n\to \infty}P\bp{ \hat\theta_n-\theta_n\in {C^{\alpha,n}_{\epsilon}}}\ge 1-\alpha.     
\end{equation}
Therefore, confidence intervals built using the bootstrap method achieve asymptotically nominal level of confidence. {We note that prior works (e.g. \cite{chernozhukov2013gaussian,chernozhukov2017central}), typically provide a slightly stronger statement that $\liminf_{n\to \infty} P(\hat{\theta}_n - \theta_n\in C^{\alpha,n})\geq 1-\alpha$, by proving anti-concentration results on the limit distribution of $\hat{\theta}_n - \theta_n$. Such anti-concentration, allows one to argue that the mass of the random variable $\hat{\theta}_n-\theta_n$ contained in $C_{\epsilon}^{\alpha,n}$ converges to the mass contained in $C^{\alpha,n}$ as $\epsilon \downarrow 0$ and thereby,  $\liminf_{\epsilon\downarrow 0}\liminf_{n\to \infty} P(\hat{\theta}_n - \theta_n\in C_{\epsilon}^{\alpha,n}) = \liminf_{n\to \infty} P(\hat{\theta}_n - \theta_n\in C^{\alpha,n})$. Given that these results typically require stronger conditions on the statistic and many times Gaussian limits, we omit this step in this work and note that a slightly weaker, albeit still practically useful, statement on coverage is achievable in a more general setup.}

\subsection{Notations and definitions}
For a scalar random variable $X$ we denote with $\|X\|_{L_p}$, the $L_p$-norm:  $\|X\|_{L_p}:=\E[X^p]^{1/p}$. Moreover, for vector $x\in \RR^d$, we denote with $\|x\|_p$, the $\ell_p$ vector norm: $\|x\|_p = \left(\sum_{i=1}^d x_i^p\right)^{1/p}$. For simplicity, given a sequence $(x_i)$, with $x_i \in \RR^{d}$ and a constant $c\in \RR^d$, we shorthand 
\begin{align}
x_{1:n}~:=~& (x_1,\dots,x_n), & 
x_{1:n}+c ~:=~& (x_1+c,\dots,x_n+c), &
cx_{2:n} ~:=~& (c,x_2,\dots,x_n).    
\end{align}
We denote the $k$-th coordinate of $x_i\in \RR^d$ as $x_{i,k}$. For a function $f:\times_{l=1}^n\RR^{d_n}\rightarrow \RR$ and a random variable $X$ taking values in $\RR^{d_n}$, we designate $f(\cdot+X)$ the random function: $x_{1:n}\rightarrow f(x_{1:n}+X)$.

\paragraph{Lindenberg path interpolation} Let $Z^{n,i}$ and $Z^{n, i,x}$ be the following interpolating processes between $Z^n$ and $\tilde{Y}^n$:
\begin{align}
    Z^{n, i} :=~& \bp{\tilde{Y}_1^n, \ldots, \tilde{Y}_{i}^n, Z_{i+1}^n, \ldots, Z_n^n }\\
    Z^{n, i, x} :=~& \bp{\tilde{Y}_1^n, \ldots, \tilde{Y}_{i-1}^n, x, Z_{i+1}^n, \ldots, Z_n^n }
\end{align}

\paragraph{Higher-order derivatives and bounds} If a function $f$ is three-times differentiable then we let:
\begin{align}
    \partial_{i,k} f(x_{1:n}) ~:=~& \partial_{x_{i,k}} f(x_{1:n})\\
    \partial^2_{i,k_{1:2}} f(x_{1:n}) ~:=~& \partial_{x_{i,k_1}} \partial_{ x_{i,k_2}} f(x_{1:n})\\
    \partial^3_{i,k_{1:3}}f(x_{1:n}) ~:=~& \partial_{x_{i,k_1}} \partial_{x_{i,k_2}} \partial_{x_{i,k_3}} f(x_{1:n})
\end{align}
Moreover, for a potentially random function $f$ we define the constants:
\begin{align}
M^n_{k}:=~& 2\, \|X_{1,k}^n\|_{L_{12}},\\
D_{k_{1}}^n(f) ~:=~& M_{k_1}^n\, \max_{i\le n} \bn{ \partial_{i,k_{1}} f(Z^{n, i,\bar{X}^n}) }_{L_{12}} \\
D_{k_{1:2}}^{n}(f) ~:=~&  M_{k_1}^n\, M_{k_2}^n\, \max_{i\le n}\bn{ \partial^2_{i,k_{1:2}}f(Z^{n, i,\bar{X}^n}) }_{L_{12}}\\
D_{k_{1:3}}^{n}(f) ~:=~& M_{k_1}^n \, M_{k_2}^n \, M_{k_3}^n\, \max_{i\le n} \bn{\max_{x\in \bb{\bar{X}^n,\tilde{Y}^{n}_1} \cup \bb{\bar{X}^n, Z_1^n}} \partial^3_{i,k_{1:3}} f(Z^{n, i,x}) }_{L_{12}}
\end{align}
where for any two vectors $a, b\in \RR^{d}$, we denote with $[a,b]$ their convex closure, i.e.
\begin{equation}
    [a,b] := \{t\, a + (1-t)\, b: t\in [0, 1]\}
\end{equation}

\section{Main Results}\label{sec:main}

If the statistics $(g_n)$ were linear, i.e. $g_n(x_{1:n})=\sum_{i\le n}x_i$, then the influence of a single observation $X_1^n$, on the estimate $\hat{\theta}_n$, would depend uniquely on the value of the random variable itself, i.e. $g_n(X^n)-g(0X_{2:n}^n)=X_1^n$. This is not the case for non-linear statistics. For instance, if $g_n(x_{1:n}) = \max\bp{\sum_{i\le n}x_{i,1}, \sum_{i\le n}x_{i,2}}$, then the influence of observation $x_1$ depends on the relative size of $\sum_{i>2 }x_{i,1}$ and $\sum_{i>2 }x_{i,2}$.
In this paper, we  want to study the asymptotics of the bootstrap method for such non-linear statistics, with complex influence functions. To control the degree of non-linearity, we assume that the statistics $(g_n)$ can be approximated by three times differentiable functions.

\begin{assumption}[Approximability by $\CC^3$]\label{ass:approx}There exists a sequence of functions $(f_n)$ with $f_n\in \CC^3$ s.t.:
\begin{enumerate}
\item The functions $(f_n)$ approximate the estimators $(g_n)$: 
\begin{align}\label{ass:1}
    \bn{ f_{n}(Z^n)-g_n(Z^n) }_{L_1} + \bn{ f_{n}(\tilde{Y}^n)-g_n(\tilde{Y}^n) }_{L_1} \xrightarrow{n\to\, \infty} 0. \tag{$H_0$}
\end{align}
\item The first, second and third order derivatives are respectively of size $o(n^{-1/3})$, $o(n^{-1/2})$, $o(n^{-1})$:
\begin{align}\label{ass:2}
\begin{aligned}
     R_{n,1} ~:=~& n^{1/3}\sum_{k_1\le d_n} D_{k_{1}}^{n}(f_{n})=o(1) 
     & 
     R_{n,2} ~:=~& \sqrt{n}\sum_{k_1,k_2\le d_n} D_{k_{1:2}}^{n}(f_{n})=o(1)\\
     R_{n,3} ~:=~& n\sum_{k_1,k_2,k_3\le d_n} D_{k_{1:3}}^{n}(f_{n})=o(1).
\end{aligned}
\tag{$H_1$}
\end{align}
\end{enumerate}
\end{assumption}

To motivate \Cref{ass:approx}, we present two illustrating examples of simple estimators which fail to satisfy conditions \eqref{ass:1}, \eqref{ass:2} and for which the bootstrap method is not consistent. Firstly we note that if \eqref{ass:1} and \eqref{ass:2} hold we have $\bn{ g_n\bp{ X^n}-g_n\bp{0X_{2:n}^n}}_{L_3}=o(n^{-1/3}).$ This is a first-order stability property, i.e. that each sample $i$'s influence on the estimate has to decay at rate $n^{-1/3}$. Our first example is chosen to violate this.
\begin{example}
Let $(X_i^n)$ be a sequence of i.i.d observations distributed as $X_i\sim {\rm unif}(0,1)$. Let $(g_n)$ be the following sequence of functions: $g_n(x_{1:n}):=n\min_{i\le n}x_i$. Then neither the bootstrap method nor the centered bootstrap method are consistent.
Moreover, we note that: $\bn{g_n(X_{1:n})-g_n(0X_{2:n})}_{L_3}\propto n^{-1/3}$. In this example, the bootstrap estimator $g_n( Z^n)\ge g_n(X)$ is systematically larger than the original statistic, which leads to inconsistency of the bootstrap distribution.
\end{example}

Another consequence of having the second and third order derivative of respective order $o(n^{-1/2})$ and $o(n^{-1})$ is that the following two conditional expectations are very similar:
\begin{equation}\label{eqn:second-order-implication}
\bn{ \E\bb{g_n(Z^n) \mid X^n}-\E\bb{ g_n(\tilde{Y}^n) \mid X^n} }_{L_1}=o(1).
\end{equation}
Our second example is chosen to satisfy the main implication of the first order stability conditions: i.e. $\|g_n(X^n)-g_n(0X_{2:n}^n)\|_{L_3}=o(n^{1/3})$; but to fail to respect this new property.

\begin{example}\label{ss_nulle}
Let $(X_i^n)$ be a sequence of i.i.d observations distributed as $X_i^n\sim {\rm unif}(0,1)$. Let $(g_n)$ be the following sequence of functions: \begin{equation}
g_n(x_{1:n}) := \frac{1}{\sqrt{n}} \sum_{i\le n} 1\bc{\min_{i\ne j}|x_i-x_j|>1/n}- P\bp{\min_{i\ne 1} \ba{X_1-X_i}>1/n}.    
\end{equation}
Then neither the bootstrap nor the centered bootstrap are consistent.
Moreover we note that the first order stability result holds, but  $\bn{g_n(Y_{1:n})-g_n(0Y_{2:n})}_{L_3}\propto n^{-1/2}$ and Condition~\eqref{eqn:second-order-implication} is violated. The main driving force of inconsistency in this example is that contrary to the original sample, it is likely that the bootstrap sample will contain repeats. Hence we expect $P\bp{\min_{i\ne 1}|Z_i^n-Z_j^n|>1/n \mid X^n}$ to be smaller than $P\bp{\min_{i\ne 1}|X_1-X_i|>1/n}$. See \Cref{app:ss_nulle} for formal proof.
\end{example}

Under \Cref{ass:approx} we study the limiting distribution of the bootstrap statistic and establish that it is asymptotically the same as $g_n(\tilde{Y}^n)$ (proof in \Cref{app:thm1}).

\begin{theorem}\label{thm1}
Let $(g_n:\times_{l=1}^n\RR^{d_n}\to \RR)$ be a sequence of measurable  functions. Let $(X^n_i)$ be a triangular array of i.i.d processes such that $X^n_1\in L_{12}$. Under \Cref{ass:approx}, there is a constant $K$ independent of $n$ such that:
\begin{equation}
\begin{split}
\bn{d_{\mcF}\bp{ g_n(Z^n),\, g_n(\tilde{Y}^n) \mid X^n } }_{L_1}
\le~& 
\bc{
\begin{split}
&\bn{ g_n(\tilde{Y}^n) -f_{n}(\tilde{Y}^n) }_{L_1}
+ \bn{ g_n(Z^n)-f_{n}(Z^n) }_{L_1} \\
&~ + K\, \bp{ R_{n,1}^2\max\bc{ \frac{1}{n^{1/6}}, R_{n,1}} + R_{n,3} + R_{n,2} }
\end{split}
}
\to 0.
\end{split}
\end{equation} 
\end{theorem}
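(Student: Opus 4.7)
The argument is Chatterjee--Lindeberg interpolation, adapted to the bootstrap. Fix $h\in \mcF$ and set $\phi:=h\circ f_n$. Since $|h'|\le 1$, the triangle inequality gives
\[
 d_{\mcF}\bp{g_n(Z^n),g_n(\tilde Y^n)\mid X^n} \le d_{\mcF}\bp{f_n(Z^n),f_n(\tilde Y^n)\mid X^n} + \E\bb{|g_n(Z^n)-f_n(Z^n)|\mid X^n} + \E\bb{|g_n(\tilde Y^n)-f_n(\tilde Y^n)|\mid X^n},
\]
so taking outer $L_1$ absorbs the last two summands into the two error terms already present in the theorem. The remaining task is to bound $\|d_{\mcF}(f_n(Z^n),f_n(\tilde Y^n)\mid X^n)\|_{L_1}$ uniformly in $h\in\mcF$ by $K\bp{R_{n,1}^2\max\{n^{-1/6},R_{n,1}\}+R_{n,2}+R_{n,3}}$.

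For this, I would telescope $\phi(Z^n)-\phi(\tilde Y^n)=\sum_{i=1}^n (\phi(Z^{n,i,Z_i^n})-\phi(Z^{n,i,\tilde Y_i^n}))$ along the Lindeberg path, and at each step $i$ Taylor-expand $\phi$ in its $i$-th block around the anchor $Z^{n,i,\bar X^n}$. Concretely, I would first expand $h$ to second order around $f^0:=f_n(Z^{n,i,\bar X^n})$ (with remainder $\frac{1}{6}|\Delta|^3$, using $\|h'''\|_\infty\le 1$), and then expand $f_n$ itself to second order in $(U-\bar X^n)$ (with remainder controlled by $\partial^3 f_n$). The anchor choice is crucial: since $\bar X^n=\E\bb{Z_i^n\mid X^n}=\E\bb{\tilde Y_i^n\mid X^n}$---which is precisely the property for which $\tilde Y^n$ was artificially re-centered---the linear-in-$(U-\bar X^n)$ terms vanish in conditional expectation for both $U=Z_i^n$ and $U=\tilde Y_i^n$, so their contributions cancel exactly.

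The non-cancelling second-order piece is $\frac{1}{2}h'(f^0)\langle H_if_n(U-\bar X^n),U-\bar X^n\rangle$, whose conditional expectation picks up the \emph{difference} between the empirical covariance $\frac{1}{n}\sum_j (X_j^n-\bar X^n)(X_j^n-\bar X^n)^\top$ (from $Z_i^n$) and the population covariance $\mathrm{Cov}(X_1^n)$ (from $\tilde Y_i^n$). This difference is itself an i.i.d.\ sample average and, under the $L_{12}$ moment hypothesis, has $L_2$-norm of order $M_{k_1}^n M_{k_2}^n/\sqrt n$ per coordinate; pairing it with $\|\partial^2_{i,k_{1:2}}f_n(Z^{n,i,\bar X^n})\|_{L_{12}}$ via H\"older and summing $n$ copies yields exactly $\sqrt n\sum_{k_{1:2}}D^n_{k_{1:2}}(f_n)=R_{n,2}$. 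The chain-rule piece $\frac{1}{2}h''(f^0)\langle\nabla_i f_n,U-\bar X^n\rangle^2$ has identical structure with $\nabla f_n\nabla f_n^\top$ in place of $Hf_n$, and the analogous computation gives $R_{n,1}^2/n^{1/6}$. The third-order pieces---the $\frac{1}{6}h'''(\xi)\Delta^3$ remainder of $h$ and the third-order remainder of $f_n$ (whose mean-value form is precisely why the supremum in $D^n_{k_{1:3}}(f_n)$ is taken over the convex hulls $[\bar X^n,Z_1^n]\cup[\bar X^n,\tilde Y_1^n]$)---contribute $R_{n,1}^3+R_{n,3}$, plus cross terms of the form $\mathrm{linear}\cdot\mathrm{quad}$ that are controlled by the third-moment discrepancy and bounded by $R_{n,1}R_{n,2}/n^{1/3}\le R_{n,2}$.

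The main obstacle is that, unlike in the classical Lindeberg argument, the second Taylor moments do not match between the bootstrap and the target: the whole content of the theorem is essentially that the mismatch---the empirical-versus-population covariance---is itself an $O(1/\sqrt n)$ i.i.d.\ average, which is tight enough to be absorbed by the strengthened condition $\sum_{k_{1:2}}D^n_{k_{1:2}}(f_n)=o(n^{-1/2})$ imposed on second derivatives. A secondary technical point is chain-rule bookkeeping for $\partial^j(h\circ f_n)$: a direct expansion of the composition produces a $\partial f_n\cdot\partial^2 f_n$ cross term which, crudely estimated, contributes a prohibitive $n^{1/6}R_{n,1}R_{n,2}$. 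Expanding $h$ and $f_n$ separately, and recognising that the resulting linear-times-quadratic product is controlled by the \emph{third} rather than the second moment of $U-\bar X^n$, reduces this contribution to $R_{n,1}R_{n,2}/n^{1/3}$, which is safely absorbed into $R_{n,2}$.
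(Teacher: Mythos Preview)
Your proposal is correct and is essentially the paper's own proof: the paper too reduces from $g_n$ to $f_n$ by the Lipschitz bound on $h$, telescopes along the Lindeberg path $Z^{n,i}$, expands $h$ to second order around $\bar f_n^i:=f_n(Z^{n,i,\bar X^n})$ to obtain three pieces $\QQ_{1i},\QQ_{2i},\QQ_{3i}$, and then expands $f_n$ to second order in $(U-\bar X^n)$, exploiting the artificial centering $\E[\tilde Y_i^n\mid X^n]=\bar X^n$ so that the first-order term vanishes and the residual second-order discrepancy is exactly the empirical-versus-population (co)variance gap of size $O(n^{-1/2})$ per coordinate. Your bookkeeping remark about expanding $h$ and $f_n$ separately to avoid the prohibitive $n^{1/6}R_{n,1}R_{n,2}$ cross term is precisely the structural choice encoded in the paper's Lemmas~\ref{lem:test_taylor} and~\ref{lem:c3-stats}.
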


\begin{remark}
We remark that the theorem also holds under slightly modified stability conditions. See \Cref{jardin} in the appendix for more details. 
Moreover, the hypothesis that $(X_i^n)$ is an i.i.d process can also be relaxed to assuming that the process $(X_i^n)$ is exchangeable. See \Cref{thm8} for more details in the appendix. 
Finally, note that \Cref{thm1} can also be extended to random estimators $(g_n)$, such as ones obtained by stochastic optimization methods (e.g SGD). See \Cref{thm8} for more details in the appendix.
\end{remark}

\noindent \Cref{thm1} guarantees that we can use the bootstrap method to estimate the distribution of $g_n(\tilde{Y}^n)$, which implies that it can also be used to build confidence intervals for $\E\bb{g_n(\tilde{Y}^n)\mid X^n}$.
\begin{cor}\label{cor1}
Let $(g_n:\times_{l=1}^n\RR^{d_n}\rightarrow \RR)$ be a sequence of measurable symmetric functions. Let $(X^n_i)$ be a triangular array of i.i.d processes such that $X^n_1\in L_{12}$. Assume that $(g_n)$ and $(X^n_i)$ satisfy all the conditions of \Cref{thm1}. Then there is a constant $K$ independent of $n$ such that:
\begin{equation}\begin{split}&
\bn{ d_{\mcF} \bp{ g_n(Z^n)-\E\bb{ g_n(Z^n) \mid X^n},\, g_n(\tilde{Y}^n)-\E\bb{ g_n(\tilde{Y}^n) \mid X^n} \mid X^n } }_{L_1}
\\&\le 2\bc{
\begin{split}
&\bn{ g_n(\tilde{Y}^n) -f_{n}(\tilde{Y}^n) }_{L_1}
+ \bn{ g_n(Z^n)-f_{n}(Z^n) }_{L_1} \\
&~ + K\, \bp{ R_{n,1}^2\max\bc{ \frac{1}{n^{1/6}}, R_{n,1}} + R_{n,3} + R_{n,2} }
\end{split}
}
\to 0.
\end{split}
\end{equation} 
\end{cor}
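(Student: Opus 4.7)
The idea is to reduce the centered statement directly to Theorem~\ref{thm1}, paying only a factor of two. The reduction rests on two simple structural properties of the test-function class $\mcF$: it is closed under translation, and it contains the identity map $h(x)=x$ (whose first three derivatives are $1,0,0$, each bounded by $1$).

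Fix an arbitrary $h\in \mcF$, and conditional on $X^n$ set $\mu_Z := \E[g_n(Z^n)\mid X^n]$ and $\mu_Y := \E[g_n(\tilde Y^n)\mid X^n]$. Split
\begin{align*}
    \E\bigl[h(g_n(Z^n)-\mu_Z) - h(g_n(\tilde Y^n)-\mu_Y) \,\big|\, X^n\bigr]
    ={}& \E\bigl[h(g_n(Z^n)-\mu_Z) - h(g_n(\tilde Y^n)-\mu_Z) \,\big|\, X^n\bigr] \\
    &{}+ \E\bigl[h(g_n(\tilde Y^n)-\mu_Z) - h(g_n(\tilde Y^n)-\mu_Y) \,\big|\, X^n\bigr].
\end{align*}
For the first summand, the shifted test function $\tilde h(\cdot):=h(\cdot-\mu_Z)$ still belongs to $\mcF$, since $\tilde h^{(i)} = h^{(i)}(\cdot-\mu_Z)$ inherits the supremum bound by $1$; thus the first summand is at most $d_{\mcF}(g_n(Z^n),\, g_n(\tilde Y^n)\mid X^n)$. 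For the second summand, $|h'|\le 1$ together with the mean-value theorem gives a pointwise bound of $|\mu_Z - \mu_Y|$. Applying $d_{\mcF}$ to the identity and its negative (both in $\mcF$) yields $|\mu_Z - \mu_Y| \le d_{\mcF}(g_n(Z^n),\, g_n(\tilde Y^n)\mid X^n)$.

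Combining and taking the supremum over $h\in \mcF$ gives, almost surely,
\[
    d_{\mcF}\bigl(g_n(Z^n)-\mu_Z,\ g_n(\tilde Y^n)-\mu_Y \,\big|\, X^n\bigr) \le 2\, d_{\mcF}\bigl(g_n(Z^n),\ g_n(\tilde Y^n)\mid X^n\bigr).
\]
Taking $L_1$-norms on both sides and invoking Theorem~\ref{thm1} delivers the claimed inequality. There is no real obstacle here: the substantive work is all done by Theorem~\ref{thm1}, and the centering step is essentially free because $\mcF$ happens to be shift invariant and to contain linear test functions in addition to smooth bounded ones. The ``symmetric'' hypothesis on $(g_n)$ plays no direct role in this reduction; it is carried along from the surrounding setup.
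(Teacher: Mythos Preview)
Your argument is correct and matches the paper's own proof in substance: both reduce to Theorem~\ref{thm1} via (i) translation invariance of $\mcF$ and (ii) the fact that the identity (and its negative) lies in $\mcF$, which controls the discrepancy $|\mu_Z-\mu_Y|$ of the conditional means. The only cosmetic difference is that the paper first invokes the triangle inequality for $d_\mcF$ (its Lemmas~\ref{lem:df-is-metric} and~\ref{lem:trans_df}) and then bounds each piece, whereas you perform the same splitting directly at the level of a fixed test function $h$; the resulting factor of $2$ and the final bound are identical.
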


\noindent However the distribution we are interested in is that of $g_n(Y^n)$ rather than $g_n(\tilde{Y}^n)$. Moreover, the shape of the confidence intervals of $g_n(\tilde{Y}^n)$ can be arbitrary compared to the ones of $g_n(Y^n)$, i.e. they are not systematically larger or smaller. This is illustrated in \Cref{sec:illustrative} by a series of examples.
In \Cref{sec:uniform-perturb} we propose conditions that guarantee that the two distributions are  asymptotically identical. In \Cref{sec:tightness} we prove that those conditions are tight and propose the use of the bootstrap method to build adjusted confidence intervals that are guaranteed to have at least (but not necessarily equal to) some minimum asymptotic coverage.

\subsection{Stable Estimators to Uniform Perturbations}
\label{sec:uniform-perturb}

In this subsection we explore conditions guaranteeing that the distribution of $g_n(Y^n)-\E\bb{g_n(Y^n)}$ is asymptotically the same as the distribution of $g_n(\tilde{Y}^n)-\E\bb{g_n(\tilde{Y}^n)\mid X^n}$, conditional on $X^n$, as this would imply that the bootstrap method provides consistent confidence intervals for $\E\bb{g_n(Y^n)}$. We start by noting that if $(g_n)$ are linear then it automatically holds as we have 
$$\sum_{i\le n} \tilde Y_{i}^n-\E\bb{\sum_{i\le n} \tilde Y_{i}^n \mid X^n } =\sum_{i\le n} Y_{i}^n-\E\bb{\sum_{i\le n} Y_{i}^n \mid X^n }.$$
Observe that the random variables $\tilde{Y}_i^n$ differ from $Y_i^n$ in a benign manner: a random offset $\bar{X}^n - \E\bb{X_1^n}$, which is independent of $Y^n$, is added to all the random variables. Moreover, this offset is with high probability $O(n^{-1/2})$, since it is the difference of a sample and a population mean. We will refer to such perturbations of a sample $Y^n$ as a \emph{uniform perturbation}. To study general statistics, we introduce the following assumption which guarantees that small uniform perturbations do not drastically change the distribution of $g_n(Y^n)$:
\begin{assumption}[Stability to Uniform Perturbation]\label{ass:uniform} A statistic sequence $(g_n)$ is stable to small uniform perturbations if for all $B>0$: 
\begin{align}
r^{n,B}:=~ \bn{\sup_{x\in B_{d_n}(0,B)}\ba{g_n(X^n + x/{\sqrt{n}}) - g_n(X^n)-\mathbb{E}\Big[ g_n(X^n + x/{\sqrt{n}}) - g_n(X^n)\Big]}}_{L_1} \overset{n\to\infty}{\to} 0 \tag{$H_2$}\label{eqn:uniform}
\end{align}
where we define $B_{d_n}(0,B):=\{x\in \RR^{d_n} \mid  \|x\|_2\le B\}$.
\end{assumption}

Note that the perturbations considered in hypothesis \eqref{eqn:uniform} are uniform on all the coordinates $i\in [n]$. This notably implies that if $g_n$ depends only the relative distance between the observations then hypothesis \eqref{eqn:uniform} holds. We prove, under hypothesis \eqref{eqn:uniform}, that the bootstrap method is consistent and hence by \Cref{prop1} can be used to build asymptotically consistent confidence intervals for $\E\bb{g_n(X^n)}$ (proof in \Cref{app:thm2}).

\begin{theorem}\label{thm2}Let $(g_n:\times_{l=1}^n\RR^{d_n}\rightarrow \RR)$ be a sequence of measurable functions. Let $(X^n_i)$ be a triangular array of i.i.d processes such that $X^n_1\in L_{12}$. Assume that $(g_n)$ satisfies \Cref{ass:approx} and \Cref{ass:uniform}.
Then there exists a universal constant $K$ such that:
\begin{multline}
\bn{ d_{\mcF}\bp{ g_n(Z^n)-\mathbb{E}\big[g_n(Z^n)|X^n],\, g_n(Y^n) -\mathbb{E}(g_n(Y^n))\mid X^n } }_{L_1}
\\
\le \inf_{B_n\in \mathbb{R}}\bc{
\begin{gathered}
\bn{g_n(\tilde Y^n) -f_n(\tilde Y^n)}_{L_1}
+\big\|g_n( Z^n)-f_n(Z^n)\big\|_{L_1}+K (R_{n,1})^2 \max\bc{ \frac{1}{n^{1/6}},R_{n,1} }\\+K\bp{ R_{n,3} + R_{n,2} }+ \frac{2\sqrt{\sum_{k\le k_n}\|X_{1,k}^n\|_{L_2}^2}}{B_n} \Big[\big\|g_n(Y^n)\big\|_{L_2}+\big\|g_n(\tilde Y^n)\big|_{L_2}\Big]+ r^{n,B_n}
\end{gathered}
}\to 0.
\end{multline}
\end{theorem}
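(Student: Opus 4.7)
The plan is to insert the centered intermediate distribution $g_n(\tilde Y^n)-\E[g_n(\tilde Y^n)\mid X^n]$ and invoke the triangle inequality for $d_{\mcF}$:
\begin{align*}
&d_{\mcF}\bigl(g_n(Z^n)-\E[g_n(Z^n)\mid X^n],\ g_n(Y^n)-\E[g_n(Y^n)] \bigm| X^n\bigr)\\
&\quad\le d_{\mcF}\bigl(g_n(Z^n)-\E[g_n(Z^n)\mid X^n],\ g_n(\tilde Y^n)-\E[g_n(\tilde Y^n)\mid X^n] \bigm| X^n\bigr)\\
&\qquad + d_{\mcF}\bigl(g_n(\tilde Y^n)-\E[g_n(\tilde Y^n)\mid X^n],\ g_n(Y^n)-\E[g_n(Y^n)] \bigm| X^n\bigr).
\end{align*}
Taking $L_1$ norms and applying \Cref{cor1} to the first summand already yields the $\|g_n-f_n\|_{L_1}$ and $R_{n,i}$ contributions of the theorem's bound (with the factor of $2$ in \Cref{cor1} absorbed into the universal constant $K$). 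The rest of the proof is devoted to bounding the second summand using \Cref{ass:uniform}.

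For that second leg, the key observation is that $\tilde Y^n = Y^n + c_n$ with $c_n := \bar X^n - \E[X_1^n]$: a \emph{uniform} perturbation that is $X^n$-measurable and, crucially, independent of $Y^n$. Because every $h\in\mcF$ satisfies $|h'|\le 1$, a direct coupling argument gives
\[
d_{\mcF}\bigl(g_n(\tilde Y^n)-\mu_{\tilde Y^n},\ g_n(Y^n)-\mu_{Y^n} \bigm| X^n\bigr) \le \E\bigl[|A_n|\bigm| X^n\bigr],
\]
where $\mu_{\tilde Y^n}:=\E[g_n(\tilde Y^n)\mid X^n]$, $\mu_{Y^n}:=\E[g_n(Y^n)]$, and
\[
A_n := \bigl(g_n(Y^n+c_n)-g_n(Y^n)\bigr) - \E\bigl[g_n(Y^n+c_n)-g_n(Y^n)\bigm| X^n\bigr];
\]
the identity $\mu_{\tilde Y^n}-\mu_{Y^n}=\E[g_n(Y^n+c_n)-g_n(Y^n)\mid X^n]$ uses independence of $c_n$ and $Y^n$.

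I would then split on the event $E_n:=\{\sqrt n\,\|c_n\|_2\le B_n\}$. On $E_n$, conditioning first on $X^n$ makes $c_n$ a fixed vector of norm at most $B_n/\sqrt n$, so
\[
|A_n| \le \sup_{\|x\|_2\le B_n}\Bigl|g_n(Y^n+x/\sqrt n)-g_n(Y^n)-\E[g_n(Y^n+x/\sqrt n)-g_n(Y^n)]\Bigr|,
\]
which has $L_1$ norm $r^{n,B_n}$ by \Cref{ass:uniform} together with $Y^n$ being equal in distribution to $X^n$. On $E_n^c$, Cauchy--Schwarz and the fact that conditional centering only shrinks the $L_2$ norm give $\E[|A_n|\mathbf{1}_{E_n^c}]\le \bigl(\|g_n(\tilde Y^n)\|_{L_2}+\|g_n(Y^n)\|_{L_2}\bigr)\sqrt{P(E_n^c)}$, and Markov's inequality supplies $P(E_n^c)\le n\,\Var(\bar X^n)/B_n^2\le \sum_k\|X_{1,k}^n\|_{L_2}^2/B_n^2$. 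Combining the two pieces, plugging back into the triangle inequality, and taking the infimum over $B_n\in\RR$ produces the stated bound. The main technical point is calibrating $B_n$: vanishing of the right-hand side requires $B_n\to\infty$ slowly enough that $r^{n,B_n}\to 0$ (guaranteed for any sufficiently slow growth by \Cref{ass:uniform}, which provides $r^{n,B}\to 0$ for each fixed $B$) yet fast enough that the tail factor $\sqrt{\sum_k\|X_{1,k}^n\|_{L_2}^2}/B_n$ dominates the $L_2$ magnitudes of $g_n(Y^n)$ and $g_n(\tilde Y^n)$; existence of such a sequence is a standard diagonal argument.
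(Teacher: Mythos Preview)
Your proposal is correct and follows essentially the same route as the paper's proof: triangle inequality through the intermediate $g_n(\tilde Y^n)-\E[g_n(\tilde Y^n)\mid X^n]$, \Cref{cor1} for the first leg, then for the second leg bounding $d_{\mcF}$ by an $L_1$ difference via $|h'|\le 1$, splitting on $\{\sqrt n\,\|c_n\|_2\le B_n\}$, and controlling the tail with Cauchy--Schwarz plus Chebyshev. The paper even constructs the diagonal sequence $(B_n)$ explicitly (doubling $B_n$ whenever $r^{n,2B_n}$ drops below the next dyadic threshold), which is exactly the ``standard diagonal argument'' you allude to.
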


Condition \eqref{eqn:uniform} holds beyond linear statistics. We present two simple illustrative examples of such non-linear estimators, for which \eqref{eqn:uniform} is satisfied. 
 \begin{example}\label{ex3}
 Let $(X_{i})$ be an i.i.d sequence of random variables taking value in $\RR$. We suppose that they are bounded: $\|X_1\|_{L_{\infty}}<\infty$. We define the functions $(g_{n,1})$ and $(g_{n,2})$ as satisfying:
 $$g_{n,1}:x_{1:n}\rightarrow \bp{\frac{1}{\sqrt{n}}\sum_{i\le \lfloor n/2\rfloor}x_{i}-x_{i+\lfloor n/2\rfloor}}^2,\qquad g_{n,2}:x_{1:n}\rightarrow \sqrt{n}\Big[\prod_{i=1}^n\bp{ 1+\frac{x_i-\bar{x}^n}{n}}-1\Big]. $$
 Then the functions $(g_{n,1},g_{n,2})$ satisfy conditions \eqref{ass:1}, \eqref{ass:2} and \eqref{eqn:uniform}. Hence the bootstrap is consistent, i.e.:
 $$d_{\mcF}\bp{g_{n,1}( Z^n)-\E(g_{n,1}(Z^n)|X),g_{n,1}( Y_{1:n})-\E(g_{n,1}(Y_{1:n}))\mid X}\rightarrow 0;$$
 $$d_{\mcF}\bp{g_{n,2}( Z^n)-\E(g_{n,2}(Z^n)|X),g_{n,2}( Y_{1:n})-\E(g_{n,2}(Y_{1:n}))\mid X}\rightarrow 0.$$
 \end{example}

However hypothesis \eqref{eqn:uniform} can be easily violated by simple examples. We prove in the next subsection, under mild conditions, that \emph{violation of \eqref{eqn:uniform} implies that no re-sampling method can provide asymptotically consistent confidence intervals}. We present here a simple example of this phenomenon.
\begin{example}
Let $(X_i)$ be an i.i.d sequence of scalar-valued, bounded observations with mean $0$. Write $(Y_i)$ an independent copy of $(X_i)$. Define the following functions $g_n:x_{1:n}\rightarrow \bp{ \frac{1}{\sqrt{n}}\sum_{i\le n}x_i }^2$. Then hypothesis \eqref{eqn:uniform} does not hold and the centered distributions of $(g_n(Y_{1:n}))$ and $(g_n(\tilde Y_{1:n}))$ are not asymptotically identical
\begin{equation}
d_{\mcF}\bp{ g_n(\tilde{Y}_{1:n})-\E\bb{ g_n(\tilde{Y}_{1:n})\mid X},\, g_n( Y_{1:n})-\E\bb{g_n(Y_{1:n})} \mid X}\not\rightarrow 0.
\end{equation}
\end{example}

\subsection{Impossibility for Unstable Estimators to Uniform Perturbations}
\label{sec:tightness}

In this section we prove that if the estimators are sensitive to small uniform perturbations then the bootstrap method is not consistent. Then we offer three solutions on how to use the bootstrap to build confidence intervals with a guaranteed minimum coverage.

\paragraph{Non-consistency of the bootstrap if the estimators are unstable.}
Let $\mathcal{P}'_n$ be a class of probability distributions on $\RR^{d_n}$.
Write $\mathcal{P}_M(\RR)$ the set of probability measures on $\RR$. We say that the centered distribution of $g_n(\cdot)$ can be estimated over the class of distributions $\mathcal{P}'_n$ if there is a measurable function $\mathcal{Q}_n:x_1,\dots,x_n\rightarrow \mathcal{P}_M(\RR)$ such that for all sequences of distributions $(\nu_n)\in \prod_{i=1}^{\infty}\mathcal{P}'_n$ we have 
\begin{equation}
\E_{X^n\sim \nu_n}\bb{ \ba{ d_{\mcF}\bp{ \mathcal{Q}_n(X^n),\, g_n(Y^n)-\E\bb{g_n(Y^n)} \mid X^n } } }\xrightarrow{n\rightarrow \infty }0;
\end{equation}
where $(Y_i^n)$ is taken to be to be an independent copy of $(X_i^n)$.
We prove that the centered distribution of $g_n(\cdot)$ cannot be estimated if a hypothesis similar to \eqref{eqn:uniform} is not respected (proof in \Cref{app:thm3}).
\begin{theorem}\label{thm3}
Let $(g_n:\times_{l=1}^n\RR^{d_n}\rightarrow \RR)$ be a sequence of measurable functions. Define $\Omega_n\subset \RR^{d_n}$ to be a non empty open subset of $\RR^{d_n}$ and let  $\mathcal{P}'_n:=\{p_{\theta}^n,\theta\in \Omega_n\}$ be a parametric subset of $\mathcal{P}_n$ such that $\E_{X\sim p^n_{\theta}}(X)=\theta$. Denote $(\mathcal{I}_n(\theta))$ the Fisher information matrix of $(p^n_{\theta})$. Suppose that there is a sequence of measures $(p^n_{\theta_n})\in \prod_{n=1}^{\infty}\mathcal{P}'_n$, a sequence $(z_n)\in \prod_{n=1}^{\infty}\RR^{d_n}$ and a real $\epsilon>0$ such that \begin{enumerate}[(i).]
\item $\limsup \sup_{\tilde{\theta}_n\in \bb{ \theta_n,\, \theta_n+\frac{z_n}{\sqrt{n}} }} \bn{ \mathcal{I}_n(\tilde \theta_n)^{1/2}\frac{z_n}{\sqrt{n}} }_{2} < \infty$.
\item The following holds if $(X_i^n) \overset{i.i.d}{\sim} p^n_{\theta_n}$
\begin{equation}
\liminf_{n\rightarrow \infty} d_{\mcF}\bp{ 
g_n\bp{ X^n+\frac{z_n}{\sqrt{n}}} - \E\bb{ g_n\bp{ X^n+\frac{z_n}{\sqrt{n} } }},\, 
g_n(X^n) - \E\bb{ g_n(X^n)} } >\epsilon.
\end{equation}
\item $\theta_n+\frac{z_n}{\sqrt{n}}\in \Omega_n$
\end{enumerate}
Then for all measurable functions $\mathcal{Q}_n:x_1,\dots,x_n\rightarrow \mathcal{P}_M(\RR)$ there is a sequence $(\nu_n)\in\prod_{n=1}^{\infty} \mathcal{P}'_n$ such that:
\begin{equation}
    \bn{ d_{\mcF}\bp{ \mathcal{Q}_n(X^n), g_n(Y^n)-\E\bb{g_n(Y^n)} \mid X^n} }_{L_1}\not \rightarrow{ }0
\end{equation}
where $(X_i^n),(Y_i^n)\overset{i.i.d}{\sim}\nu_n.$
\end{theorem}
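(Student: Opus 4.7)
The approach is a Le Cam impossibility argument via contiguity. Heuristically, condition (i) renders $\theta_n$ and $\theta_n + z_n/\sqrt{n}$ asymptotically indistinguishable at the local $1/\sqrt{n}$ scale, while condition (ii) forces the target centered distributions of $g_n$ at these two parameters to remain a positive distance apart. No measurable $\mathcal{Q}_n$ based only on $X^n$ can succeed uniformly on both sequences. Concretely, I would argue by contradiction: suppose a uniformly consistent $\mathcal{Q}_n$ exists, and apply the consistency requirement along the two specific sequences $\nu_n^{(0)} := p^n_{\theta_n}$ and $\nu_n^{(1)} := p^n_{\theta_n + z_n/\sqrt{n}}$ (both admissible by (iii)). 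Denote the respective targets by $\mu_n^{(0)}, \mu_n^{(1)} \in \mathcal{P}_M(\RR)$; by assumption, $d_{\mcF}(\mathcal{Q}_n(X^n), \mu_n^{(j)}) \to 0$ in $P_{(\nu_n^{(j)})^{\otimes n}}$-probability for $j=0,1$.

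The main technical step is to establish mutual contiguity of $(\nu_n^{(0)})^{\otimes n}$ and $(\nu_n^{(1)})^{\otimes n}$. Under differentiability in quadratic mean of the parametric family, Taylor's expansion of the log-density yields a LAN-type representation
\[
L_n := \log \frac{d(\nu_n^{(1)})^{\otimes n}}{d(\nu_n^{(0)})^{\otimes n}}(X^n) = S_n^{\top}\,\frac{z_n}{\sqrt{n}} - \tfrac{1}{2}\, \bn{ \mathcal{I}_n(\tilde\theta_n)^{1/2}\frac{z_n}{\sqrt{n}} }_2^2 + o_P(1),
\]
with $S_n = \sum_{i\le n} \dot\ell_n(X_i^n;\theta_n)$ the score and $\tilde\theta_n \in [\theta_n,\theta_n + z_n/\sqrt{n}]$ an intermediate point arising from the remainder. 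By condition (i) the displayed quadratic form is bounded; the variance of the score term is controlled by the same quantity. Hence $L_n = O_P(1)$ under $(\nu_n^{(0)})^{\otimes n}$, and Le Cam's first lemma gives mutual contiguity.

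The contradiction is then a triangle-inequality squeeze. Fix $\epsilon > 0$. Under $(\nu_n^{(0)})^{\otimes n}$, consistency gives $P(d_{\mcF}(\mathcal{Q}_n(X^n), \mu_n^{(0)}) > \epsilon) \to 0$; by contiguity the same conclusion holds under $(\nu_n^{(1)})^{\otimes n}$. Simultaneously, consistency at $\nu_n^{(1)}$ yields $P_{(\nu_n^{(1)})^{\otimes n}}(d_{\mcF}(\mathcal{Q}_n(X^n), \mu_n^{(1)}) > \epsilon) \to 0$. Intersecting both high-probability events and applying the triangle inequality to the deterministic distance shows $d_{\mcF}(\mu_n^{(0)}, \mu_n^{(1)}) \le 2\epsilon$ for all large $n$; sending $\epsilon \downarrow 0$ forces $d_{\mcF}(\mu_n^{(0)}, \mu_n^{(1)}) \to 0$. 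Identifying $\mu_n^{(1)}$ with the law of $g_n(X^n + z_n/\sqrt{n}) - \E[g_n(X^n + z_n/\sqrt{n})]$ under $X^n \sim (p^n_{\theta_n})^{\otimes n}$ via the location-type relation implicit in the Fisher-information setup, this directly contradicts (ii).

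The main obstacle is the contiguity step: controlling the Taylor remainder of the log-density uniformly along $[\theta_n, \theta_n + z_n/\sqrt{n}]$ is precisely what the sup in condition (i) affords, and the normalization of $\mathcal{I}_n$ must be matched carefully to the displayed quadratic form to ensure $L_n = O_P(1)$. Once contiguity is in hand, everything downstream is a routine triangle-inequality manipulation.
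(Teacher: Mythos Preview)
Your approach is correct and is the standard contiguity formulation of the same two-point Le Cam argument the paper uses, but the paper takes a slightly different and more elementary route. Rather than invoking Le Cam's first lemma, the paper turns the putatively consistent $\mathcal{Q}_n$ into a consistent test of $H_0:\theta=\theta_n$ versus $H_1:\theta=\theta_n+z_n/\sqrt{n}$ (reject when $\mathcal{Q}_n(X^n)$ is closer to $\mu_n^{(1)}$ than to $\mu_n^{(0)}$, using condition (ii) for the separation), and then bounds the sum of errors from below by the direct inequality $P(R_n\mid H_0)+P(R_n^c\mid H_1)\ge \tfrac12 e^{-n\,KL}$, where $n\,KL=\tfrac12 z_n^\top \mathcal{I}_n(\tilde\theta_n)z_n$ by a second-order Taylor expansion of the KL divergence, controlled by condition (i). This avoids the LAN/DQM machinery entirely: no CLT for the score, no likelihood-ratio tightness argument. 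Your contiguity route is perfectly valid and perhaps more structurally transparent, but note one imprecision: tightness of $L_n$ alone (``$L_n=O_P(1)$'') is not sufficient for Le Cam's first lemma; you need the subsequential limit $V$ of $d(\nu_n^{(1)})^{\otimes n}/d(\nu_n^{(0)})^{\otimes n}$ to satisfy $\E V=1$, which is what the full LAN form (score asymptotically Gaussian with variance matching the Fisher quadratic) delivers. Finally, your identification of $\mu_n^{(1)}$ with the law under the shifted sample relies on the same implicit location-family reading that the paper also uses when it writes the KL expansion in terms of $\mathcal{I}_n$.
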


\noindent \Cref{thm3} implies that if the means of the observations are unknown then no re-sampling method will in general be consistent. We propose in \Cref{sec:centered}, \Cref{sec:corrected} and \Cref{sec:robust} three alternative ways to build confidence intervals, that bypass this impossibility result and have asymptotically a guaranteed coverage of at least $1-\alpha$. Albeit, some of these intervals will potentially have larger size than needed.

\subsubsection{Consistency of the Centered-Bootstrap}\label{sec:centered}

First we explore the case when we know the mean of the observations $\E\bb{X_1^n}$. In this case, we can leverage this knowledge to build centered bootstrap samples $\tilde{Z}_i^n:=Z_i^n+\E\bb{X_1^n}-\bar{X}^n$. Observe that these centered samples satisfy the crucial property that
$\E\bb{\tilde{Z}_1^n\mid X^n}=\E(X_1^n)$. We prove that, under mild conditions, the centered bootstrap estimator is asymptotically consistent. The conditions needed for the centered bootstrap to be consistent are  hypothesis $(H_0)$ and $(H_1)$ formulated instead for $(Y^n_i)$ and  $(\tilde Z_i^n)$ rather than for  $(\tilde Y^n_i)$ and $(Z_i^n)$.

\begin{assumption}[Approximation by $\CC^3$ of $g_n(\cdot +\E(X_1^n)-\bar{X}^n)$.]\label{ass:approx-center}
There exists a sequence of functions $(f_n)$ with $f_n\in \CC^3$ s.t.:
\begin{enumerate}
    \item The functions $(f_n)$ approximate the estimators $(g_n)$: \begin{align}\sup_n\big\|f_n(\tilde Z^n)-g_n(\tilde Z^n)\big\|_{L_1}+\big\|f_n( Y^n)-g_n( Y^n)\Big\|_{L_1}\xrightarrow{\beta\rightarrow \infty} 0.\tag{$H^{{\rm c}}_0$}\end{align}
    \item  The first, second and third order derivatives are respectively of size $o(n^{-1/3})$, $o(n^{-1/2})$, $o(n^{-1})$:
     \begin{align}&R_{n,1}^{{\rm c}}:= n^{1/3}\sum_{k_1\le d_n} D_{1,k_{1}}^{n}\bp{f_n(\cdot+\E(X_1^n)-\bar{X}^n)}=o(1);\\& R_{n,2}^{{\rm c}}:=\sqrt{ n}\sum_{k_1,k_2\le d_n} D_{2,k_{1:2}}^{n}\bp{f_n(\cdot+\E(X_1^n)-\bar{X}^n)}=o(1);\tag{$H^{{\rm c}}_1$}
     \\&R_{n,3}^{{\rm c}}:= n\sum_{k_1,k_2,k_3\le d_n} D_{3,k_{1:3}}^{n}\bp{f_n(\cdot+\E(X_1^n)-\bar{X}^n)})=o(1).\end{align} 
\end{enumerate}
\end{assumption}
 
We show that under those conditions the centered bootstrap is asymptotically consistent and thereby can be used to build confidence intervals with asymptotically nominal coverage. 

\begin{theorem}\label{thm_center}Let $(g_n:\times_{l=1}^n\RR^{d_n}\rightarrow \RR)$ be a sequence of measurable  functions. Let $(X^n_i)$ be a triangular array of i.i.d processes such that $X^n_1\in L_{12}$. Assume that there is a sequence $(f_n:\times_{l=1}^n\RR^{d_n}\rightarrow \RR)$ of measurable functions satisfying conditions $(H^{{\rm c}}_0)$ and  $(H^{{\rm c}}_1)$. 
Then there is a universal constant $K$ such that:
\begin{equation}
\bn{ d_{\mcF}\bp{ g_n( \tilde Z^n),\, g_n( Y^n) \mid X^n } }_{L_1}
\le 
\bc{\begin{gathered}
\bn{ g_n(Y^n) -f_n( Y^n) }_{L_1}
+ \bn{ g_n(\tilde  Z^n)-f_n(\tilde Z^n) }_{L_1}
\\+K\bp{ (R_{n,1}^{c})^2 \max\bc{ \frac{1}{n^{1/6}},R_{n,1}^{c} } + R_{n,3}^{c}+ R_{n,2}^{c} }
\end{gathered}}
\to 0.
\end{equation} 
\end{theorem}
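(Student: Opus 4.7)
The strategy is to reduce \Cref{thm_center} to \Cref{thm1} by a global coordinate shift. Define the ($X^n$-measurable) random functions
\begin{equation*}
\bar g_n(x_{1:n}) := g_n\bp{x_{1:n} + \E\bb{X_1^n} - \bar{X}^n}, \qquad \bar f_n(x_{1:n}) := f_n\bp{x_{1:n} + \E\bb{X_1^n} - \bar{X}^n}.
\end{equation*}
Since $\tilde Z_i^n = Z_i^n - \bar{X}^n + \E\bb{X_1^n}$ and $\tilde Y_i^n = Y_i^n + \bar{X}^n - \E\bb{X_1^n}$, direct substitution yields the key identities
\begin{equation*}
\bar g_n(Z^n) = g_n(\tilde Z^n), \quad \bar g_n(\tilde Y^n) = g_n(Y^n), \quad \bar f_n(Z^n) = f_n(\tilde Z^n), \quad \bar f_n(\tilde Y^n) = f_n(Y^n).
\end{equation*}
In particular, $d_{\mcF}\bp{g_n(\tilde Z^n), g_n(Y^n) \mid X^n} = d_{\mcF}\bp{\bar g_n(Z^n), \bar g_n(\tilde Y^n) \mid X^n}$, so the target is exactly the quantity that \Cref{thm1} controls for the pair $(\bar g_n, \bar f_n)$.

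Next, I would verify that $(H^{{\rm c}}_0)$ and $(H^{{\rm c}}_1)$ on $(g_n, f_n)$ translate to $(H_0)$ and $(H_1)$ on $(\bar g_n, \bar f_n)$. The $L_1$-approximation condition $(H^{{\rm c}}_0)$ rewrites, via the identities above, as $\bn{\bar f_n(\tilde Y^n) - \bar g_n(\tilde Y^n)}_{L_1} + \bn{\bar f_n(Z^n) - \bar g_n(Z^n)}_{L_1} \to 0$, which is $(H_0)$ for $(\bar g_n, \bar f_n)$. For the derivative conditions, the chain rule gives $\partial^j_{i,k_{1:j}} \bar f_n(x_{1:n}) = \bp{\partial^j_{i,k_{1:j}} f_n}\bp{x_{1:n} + \E\bb{X_1^n} - \bar{X}^n}$, so evaluating at the interpolation vertex $Z^{n,i,\bar{X}^n}$ used to define the $D$-quantities yields the derivatives of $f_n$ at the shifted vertex $\bp{Y_1^n,\ldots,Y_{i-1}^n, \E\bb{X_1^n}, \tilde Z_{i+1}^n, \ldots, \tilde Z_n^n}$. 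These are precisely the quantities appearing in the definition of $R_{n,j}^{{\rm c}}$ in $(H^{{\rm c}}_1)$, so $R_{n,j}(\bar f_n) = R_{n,j}^{{\rm c}} = o(1)$ for $j = 1, 2, 3$.

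Applying \Cref{thm1}, extended to random estimators as flagged in the remark following it (formally, \Cref{thm8}), to the pair $(\bar g_n, \bar f_n)$ and translating each term back via the identities yields the claimed bound. The main obstacle is justifying this random-function extension: \Cref{thm1} is stated for deterministic $g_n$, but $\bar g_n$ depends on $X^n$. Conditional on $X^n$, however, the shift $\bar{X}^n - \E\bb{X_1^n}$ is a constant vector, so both $\bar g_n$ and $\bar f_n$ become deterministic under the conditional law; every step of the Lindeberg interpolation argument and the Taylor-remainder bounds in the proof of \Cref{thm1} then carries through verbatim. A reader preferring a self-contained derivation could instead re-run the Lindeberg swap directly along the path $\bp{Y_1^n, \ldots, Y_i^n, \tilde Z_{i+1}^n, \ldots, \tilde Z_n^n}$ for $i = 0, \ldots, n$, using the matched first conditional moments $\E\bb{Y_1^n} = \E\bb{\tilde Z_1^n \mid X^n} = \E\bb{X_1^n}$ together with conditional second moments that agree up to $O(1/n)$ corrections absorbed into the third-derivative term; the calculation is parallel to that of \Cref{thm1}.
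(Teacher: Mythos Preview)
Your proposal is correct and essentially identical to the paper's own proof: both define the shifted random functions $\bar g_n(x_{1:n})=g_n(x_{1:n}+\E[X_1^n]-\bar X^n)$ and $\bar f_n$ analogously, observe that $\bar g_n(Z^n)=g_n(\tilde Z^n)$ and $\bar g_n(\tilde Y^n)=g_n(Y^n)$, and then invoke the random-estimator extension \Cref{thm8} of \Cref{thm1}. You supply more detail on why $(H_0^{\rm c}),(H_1^{\rm c})$ become $(H_0),(H_1)$ for the shifted pair, but the argument is the same.
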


\begin{example}[Application to hypothesis testing] An important application is hypothesis testing. Suppose we want to test $(H_0):~\mathbb{E}\bb{X_1^n}=\theta$ against an alternative $(H_1)$. In this goal, we build a test statistic $\hat{T}_n(X^n)$ for which we want to compute a p-value. Let $(Z_i^n)$ be a bootstrap sample of $\{X_1^n,\dots,X_n^n\}$; define $(Z^{\theta}_i)$ as the following process:
\begin{equation}
Z^{\theta}_i:=Z^n_i-\bar{X}^n+ \theta.
\end{equation}
{We remark that under the null, $(Z^{\theta}_i)$ is a centered bootstrap sample of $X^n$.} Using \Cref{thm_center} we know, under stability conditions on $(\hat{T}_n)$ (i.e. \Cref{ass:approx-center}), that we can use $\hat{T}^n(Z^{\theta}_{1:n})$ to estimate the p-value of $\hat{T}_n$.
\begin{prop}
Let $(X_i^n)$ be a triangular array of i.i.d processes taking value in $\RR^{d_n}$ and $\hat{T}_n:\times_{i=1}^n \RR^{d_n}\rightarrow \RR$ be a sequence of measurable functions that satisfies \Cref{ass:approx-center}. Then:
$$\bn{d_{\mcF}\bp{ \hat{T}_n(Z^{\theta}_{1:n}), \hat{T}_n(Y_{1:n}^{n,\theta}) \mid X^n }}_{L_1}\to 0.$$
\end{prop}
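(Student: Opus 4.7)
This proposition is a direct corollary of \Cref{thm_center}, obtained by recognizing that under the null hypothesis $\E[X_1^n]=\theta$ of the surrounding example, the defining shift in $Z^{\theta}_i = Z^n_i - \bar{X}^n + \theta$ coincides with the centered bootstrap $\tilde{Z}^n_i = Z^n_i - \bar{X}^n + \E[X_1^n]$, and a draw $Y^{n,\theta}_i$ from the null distribution has the same law as $Y^n_i$.

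With these two identifications in hand, the claim reduces to
\begin{equation*}
\bn{d_\mcF\bp{\hat{T}_n(\tilde{Z}^n),\, \hat{T}_n(Y^n) \mid X^n}}_{L_1} \to 0,
\end{equation*}
which is exactly the conclusion of \Cref{thm_center} applied with $g_n := \hat{T}_n$. The hypotheses of \Cref{thm_center} are satisfied by assumption: $\hat{T}_n$ fulfils \Cref{ass:approx-center}, and $(X^n_i)$ is an i.i.d.\ triangular array with $X^n_1 \in L_{12}$ (the $L_{12}$-integrability being the standing condition under which \Cref{ass:approx-center} operates in \Cref{thm_center}).

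If one wishes to allow $\theta \neq \E[X_1^n]$ and read $Y^{n,\theta}_i := Y^n_i - \E[X_1^n] + \theta$ as an independent sample whose mean has been relocated to $\theta$, the proof proceeds by applying \Cref{thm_center} to the translated process $X'^n_i := X^n_i - \E[X_1^n]+\theta$. Since bootstrap sampling and empirical averaging commute with deterministic translations, one checks $Z^{\theta}_i = \tilde{Z}'^n_i$ (the centered bootstrap of $X'^n$) and $Y^{n,\theta}_i = Y'^n_i$ (an independent copy of $X'^n$), while $\sigma(X'^n)=\sigma(X^n)$. The remaining bookkeeping is to verify that \Cref{ass:approx-center} transfers to the shifted process, which is straightforward: the partial derivatives of $f_n$ are unchanged by translating their argument, and the moment constants $M_k^n = 2\|X^n_{1,k}\|_{L_{12}}$ differ from their shifted analogues by at most an additive constant, so the rates $R^{\rm c}_{n,1},R^{\rm c}_{n,2},R^{\rm c}_{n,3}=o(1)$ in $(H_1^{\rm c})$ carry over. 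The main (and essentially only) obstacle is this translation bookkeeping; the substantive analytic content is already provided by \Cref{thm_center}.
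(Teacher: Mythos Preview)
Your proposal is correct and follows essentially the same route as the paper: the paper explicitly remarks that under the null $(Z^{\theta}_i)$ is a centered bootstrap sample of $X^n$, and the proposition is then a direct invocation of \Cref{thm_center} with $g_n:=\hat{T}_n$. Your additional paragraph covering the case $\theta\neq\E[X_1^n]$ via a deterministic translation is a reasonable bit of extra bookkeeping, but the paper treats the proposition purely in the context of the null hypothesis, so this extension is not needed.
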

\end{example}

\subsubsection{Corrected Confidence Interval}\label{sec:corrected}

We now investigate two distinct methods to build  conservative confidence intervals, when the mean $\E[X_1^n]$ is not known.

\noindent  According to \Cref{cor1} the bootstrap method can be used to build consistent confidence intervals for $\mathbb{E}(g_n(\tilde Y^n)\mid X^n)$. Therefore if we can bound the distance from $\mathbb{E}(g_n(\tilde Y^n)\mid X^n)$ to $\mathbb{E}(g_n( Y^n))$ we can use the bootstrap method to build confidence intervals on the latter. This is the first method that we propose. To do so we exploit the fact that under mild conditions $\sqrt{n}\big[\bar{X^n}-\mathbb{E}(X_1^n)\big]$ is approximately normal. We assume that the function $x\rightarrow\E\bb{g_n(Y^n+x)}$ is $\alpha$-Holder and that the moments of $X_1^n$ are bounded. More formally, suppose that there is a sequence $(C_n)$ and a constant $b$ such that:
\begin{align}\label{ass:h3}&
\Big|\E\bb{g_n(Y^n+\frac{x}{\sqrt{n}})-g_n(Y^n)}\Big|\le C_n\max_{k\le d_n}|x_k|^{\alpha}, \quad \forall x\in \mathbb{R}^{d_n}\tag{$H_3$}
 \\&\min_{j\le d_n}\|X^n_{1,j}\|_{L_3}\ge b,\quad \frac{\log(d_n)^{7/6}\|\sup_{k\le d_n}|X^n_{1,k}|\|_{L_4}^4}{n^{1/6}}=o(1).
\end{align}

\begin{theorem}\label{thm_4}
Let $(g_n:\times_{l=1}^n \RR^{d_n}\rightarrow \RR)$ be a sequence of measurable functions satisfying Assumption 1 and \eqref{ass:h3}. Denote $\Sigma_n$ the variance-covariance matrix of $X_1^n$ and $(N^n)$ to be a sequence of Gaussian vectors distributed as $N^n\sim N(0,\Sigma_n)$.
Let $\beta>0$ be a real; write $t_{g,n}^{\beta/2}$ and $t_{{\rm b},n}^{\beta/2}(X^n)$ as quantities satisfying $$P\bp{\big|g_n(Z^n)-\E(g_n(Z^n)|X^n)\big|\ge t_{{\rm b},n}^{\beta/2}(X^n)\mid X^n}\le \beta/2 ;$$$$P\bp{\max_k\big|N^n_k|\ge(t_{g,n}^{\beta/2})^{\frac{1}{\alpha}}{C_n}^{-\frac{1}{\alpha}}}\le \beta/2 .$$

Then the following holds: $$\limsup_{\delta \downarrow 0}\limsup_n P\bp{\E(g_n(Y^n))\le \big[g_n(Z^n)-t_{{\rm b},n}^{\beta/2}- t_{g,n}^{\beta/2}-\delta,~~g_n(Z^n)+t_{{\rm b},n}^{\beta/2}+ t_{g,n}^{\beta/2}+\delta\big]}\le \beta.$$\end{theorem}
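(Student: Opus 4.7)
The plan is to decompose the deviation $g_n(Z^n) - \E[g_n(Y^n)]$ into three terms and control each separately, then combine them via a union bound. Concretely, write
\begin{align*}
g_n(Z^n) - \E[g_n(Y^n)]
&= \underbrace{\bp{g_n(Z^n) - \E[g_n(Z^n) \mid X^n]}}_{(\mathrm{I})} \\
&\quad + \underbrace{\bp{\E[g_n(Z^n) \mid X^n] - \E[g_n(\tilde Y^n) \mid X^n]}}_{(\mathrm{II})} \\
&\quad + \underbrace{\bp{\E[g_n(\tilde Y^n) \mid X^n] - \E[g_n(Y^n)]}}_{(\mathrm{III})}.
\end{align*}
Term $(\mathrm{I})$ is the conditional bootstrap fluctuation, for which by the very definition of $t_{\mathrm{b},n}^{\beta/2}(X^n)$ one has $P(|(\mathrm{I})| > t_{\mathrm{b},n}^{\beta/2}(X^n) \mid X^n) \le \beta/2$. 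Term $(\mathrm{II})$ is the discrepancy between the conditional means of the bootstrap and the centered-sample statistics; by the implication of Assumption~\ref{ass:approx} recorded in~\eqref{eqn:second-order-implication}, $\|(\mathrm{II})\|_{L_1} = o(1)$, hence $|(\mathrm{II})| = o_p(1)$.

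The crux lies in controlling the deterministic bias term $(\mathrm{III})$, which arises because $\tilde Y^n = Y^n + \Delta$ with $\Delta := \bar X^n - \E[X_1^n]$. Since $Y^n$ is independent of $X^n$, $(\mathrm{III}) = \phi(\Delta) - \phi(0)$ with $\phi(z) := \E[g_n(Y^n + z)]$. Applying hypothesis~\eqref{ass:h3} at the choice $x = \sqrt{n}\,\Delta$ yields
\[
|(\mathrm{III})| \le C_n \max_{k \le d_n} |\sqrt{n}\, \Delta_k|^\alpha.
\]
To tie this to the Gaussian quantile $t_{g,n}^{\beta/2}$, I would invoke the high-dimensional central limit theorem of \cite{chernozhukov2017central} applied to the coordinate-wise averages: under the moment and rate conditions built into~\eqref{ass:h3} (namely $\log(d_n)^{7/6}\|\sup_k |X^n_{1,k}|\|_{L_4}^4/n^{1/6} = o(1)$ and $\min_j \|X^n_{1,j}\|_{L_3} \ge b$), the Kolmogorov distance between the laws of $\max_k |\sqrt{n}(\bar X^n_k - \E[X^n_{1,k}])|$ and $\max_k |N^n_k|$ tends to zero. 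By the definition of $t_{g,n}^{\beta/2}$ one then obtains
\[
P\bp{C_n \max_k |\sqrt{n}\, \Delta_k|^\alpha > t_{g,n}^{\beta/2}} \le \beta/2 + o(1).
\]

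Combining the three bounds via a union bound gives, for any fixed $\delta > 0$,
\[
P\bp{|g_n(Z^n) - \E[g_n(Y^n)]| > t_{\mathrm{b},n}^{\beta/2}(X^n) + t_{g,n}^{\beta/2} + \delta} \le \beta + o(1),
\]
where the $o(1)$ absorbs both the CLT-approximation error and the $o_p(1)$ term from $(\mathrm{II})$ (using that $P(|(\mathrm{II})| > \delta) = o(1)$). Taking $\limsup_n$ and then $\limsup_{\delta \downarrow 0}$ yields the claim. The principal obstacle will be making the high-dimensional CLT step quantitatively legitimate: because $|(\mathrm{III})|$ involves the non-linear transformation $u \mapsto u^\alpha$ of the max-of-averages, one must be careful to first transfer Kolmogorov closeness to closeness of the relevant $(1-\beta/2)$-quantile of $\max_k|\sqrt{n}\Delta_k|$ and only then raise to the $\alpha$-th power. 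The conditions of~\eqref{ass:h3} are precisely calibrated to land in the Chernozhukov regime where such a quantitative Kolmogorov bound is available.
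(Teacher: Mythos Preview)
Your proposal is correct and matches the paper's proof essentially line by line: the paper also splits $g_n(Z^n)-\E[g_n(Y^n)]$ into the bootstrap fluctuation (controlled by $t_{\mathrm{b},n}^{\beta/2}$), the mean discrepancy $\E[g_n(Z^n)\mid X^n]-\E[g_n(\tilde Y^n)\mid X^n]$ (which is $o_p(1)$ via Theorem~\ref{thm1}, equivalently~\eqref{eqn:second-order-implication}), and the shift term, the latter handled by \eqref{ass:h3} together with the high-dimensional CLT of \cite{chernozhukov2017central}. Your closing worry about the $\alpha$-th power is unnecessary: since $u\mapsto u^\alpha$ is monotone, the event $\{C_n(\max_k|\sqrt{n}\Delta_k|)^\alpha>t_{g,n}^{\beta/2}\}$ is identical to $\{\max_k|\sqrt{n}\Delta_k|>(t_{g,n}^{\beta/2}/C_n)^{1/\alpha}\}$, to which the Kolmogorov bound applies directly.
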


\noindent See \Cref{app:thm_4} for proof of \Cref{thm_4}. We present here an illustrative example, and in \Cref{sec:minmax} present an application of \Cref{thm_4} to a classical problem.
\begin{example}\label{ex_3_6}
Let $(X_i)$ be an i.i.d sequence with mean $0$ and variance $1$. Suppose that $X_i\in L_{12} $ and let $c_{\alpha}(X^n)$ be such that: $$P\bp{\ba{\big[\frac{1}{\sqrt{n}}\sum_{i\le n}Z^n_i\big]^2-\mathbb{E}\big(\big[\frac{1}{\sqrt{n}}\sum_{i\le n}Z^n_i\mid X\big]^2\big)}\ge c_{\alpha}(X)\big|X}\le \alpha.$$Denote $z_{\alpha}$  the $1-\alpha$ quantile of a standard normal: $P\bp{Z\ge z_{\alpha}}\le \alpha$ where $Z\ge N(0,1)$. Then the following holds: $$\limsup_{n\rightarrow 0}P\bp{\ba{\big[\frac{1}{\sqrt{n}}\sum_{i\le n}Z^n_i\big]^2-\mathbb{E}\big(\big[\frac{1}{\sqrt{n}}\sum_{i\le n}X_i\big]^2\big)}\ge c_{\alpha/2}(X)+z_{\alpha/4}^2}\le \alpha$$
\end{example}

\vspace{4mm}

\noindent The second method exploits the bootstrap method for slightly shifted observations. The goal is to  use the fact that under moderate conditions we know that $\bn{\bar{X^n}-\mathbb{E}(X_1^n)}$ is of size $O(1/\sqrt{n})$. In this goal, we denote $B_{d_n}(\gamma)$ the ball in $\mathbb{R}^{d_n}$ of radius $\gamma$ for the Euclidean-norm.
\begin{theorem}
\label{cafe_2}
 Let $(g_n)$ be a sequence of measurable functions. Suppose that for all sequence $(\mu_n)\in B_{d_n}(\gamma_n)$ \Cref{ass:approx} is satisfied by $(X^n+\mu_n)$ and $(g_n)$. Define $(\gamma_n)$ to be a sequence such that  $\frac{\sqrt{n}\gamma_n}{\log(d_n)}\rightarrow \infty.$
Set $t_{*}^{\alpha}(X^n)$ to be satisfying $$\sup_{\mu\in B_{d_n}(\gamma_n)}P\bp{\ba{g_n(Z^n+\mu)-\E\bp{g_n(Z^n+\mu)\mid X^n}}\ge t_{*}^{\alpha}(X^n) \mid X^n}\le \alpha.$$ 
Then the following holds:
$$\limsup_{\delta\downarrow 0}\limsup_{n\rightarrow 0}P\bp{\ba{g_n(Y^n)-\E\bp{g_n(Y^n)}}\ge t_{*}^{\alpha}(X^n)+\delta\mid X^n}\le\alpha.$$
\end{theorem}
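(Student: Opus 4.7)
The strategy is to pick a shift $\mu^*$ that makes a suitably centered version of $g_n(Z^n+\mu^*)$ compare \emph{exactly} with $g_n(Y^n)-\E[g_n(Y^n)]$, and then exploit the fact that the quantile $t_*^\alpha(X^n)$ is defined as a supremum over every $\mu\in B_{d_n}(\gamma_n)$. The correct choice is $\mu^*:=\E[X_1^n]-\bar X^n$, because shifting every observation by this amount cancels the sample-versus-population-mean gap that is responsible for the discrepancy between $g_n(Y^n)$ and $g_n(\tilde Y^n)$. A first preparatory step is to show, via concentration of the sample mean of $L_{12}$ random vectors combined with the growth assumption $\sqrt{n}\gamma_n/\log(d_n)\to\infty$, that the event $E_n:=\{\mu^*\in B_{d_n}(\gamma_n)\}$ has $P(E_n)\to 1$.

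The core step is to apply \Cref{cor1} to the shifted triangular array $(X_i^n+\mu^*)_{i\le n}$, conditional on $X^n$ (so that $\mu^*$ is a deterministic shift). For that shifted process the bootstrap sample equals $Z^n+\mu^*$, an independent copy is $Y^n+\mu^*$, and the recentered independent copy is
\begin{equation*}
\tilde W^{(Y)}_i \;=\; (Y_i+\mu^*)+(\bar X^n+\mu^*)-(\E[X_1^n]+\mu^*)\;=\; Y_i+\mu^*+\bar X^n-\E[X_1^n]\;=\;Y_i,
\end{equation*}
thanks to the cancellation built into $\mu^*$. Since the hypothesis of \Cref{cafe_2} grants \Cref{ass:approx} for \emph{every} sequence $\mu_n\in B_{d_n}(\gamma_n)$, I would leverage this to extract a uniform-in-$\mu$ version of the $L_1$ bound of \Cref{cor1}. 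Evaluating the uniform bound at $\mu=\mu^*$ on $E_n$ would then yield
\begin{equation*}
\bn{\mathbf{1}_{E_n}\, d_{\mcF}\bp{g_n(Z^n+\mu^*)-\E[g_n(Z^n+\mu^*)\mid X^n],\ g_n(Y^n)-\E[g_n(Y^n)] \mid X^n}}_{L_1}\to 0.
\end{equation*}

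The remainder is routine. By the definition of $t_*^\alpha(X^n)$ and because $\mu^*\in B_{d_n}(\gamma_n)$ on $E_n$,
\begin{equation*}
P\bp{\ba{g_n(Z^n+\mu^*)-\E[g_n(Z^n+\mu^*)\mid X^n]}\ge t_*^\alpha(X^n)\,\bigm|\,X^n}\le\alpha\quad\text{on }E_n.
\end{equation*}
Applying \Cref{prop1} with $A=[-t_*^\alpha(X^n),t_*^\alpha(X^n)]$ and $\epsilon=\delta/6$ transfers this tail bound to $g_n(Y^n)-\E[g_n(Y^n)]$ at the cost of an additive error $216\, d_{\mcF}(\cdot\mid X^n)/\delta^3$. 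Combining with the previous display, which kills the error in $L_1$, and with $P(E_n^c)\to 0$, we obtain $\limsup_n P(\,\cdot\,\mid X^n)\le \alpha$ for every fixed $\delta>0$; taking $\delta\downarrow 0$ closes the argument.

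The main obstacle is making the uniformity of \Cref{cor1} over $\mu\in B_{d_n}(\gamma_n)$ precise, so that substituting the random $\mu^*$ is legitimate. Concretely, this requires showing that the approximating $C^3$ functions $f_{n,\mu}$ and the quantities $R_{n,k}(\mu)$ produced by \Cref{ass:approx} for each deterministic $\mu$ can be controlled in $L_1$ uniformly in $\mu$; since the hypothesis already delivers $o(1)$ decay pointwise in $\mu$, this can be arranged either by an explicit uniformization of the proof of \Cref{thm1} along the Lindeberg path or by a covering argument on $B_{d_n}(\gamma_n)$ together with the Lipschitz dependence of the derivative norms $D^n_{k_{1:j}}(f_{n,\mu})$ on $\mu$.
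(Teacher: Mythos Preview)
Your proposal is correct and follows essentially the same route as the paper: pick the shift $\mu^*=\E[X_1^n]-\bar X^n$, show via concentration (the paper uses \Cref{casser} and Chebyshev) that $\mu^*\in B_{d_n}(\gamma_n)$ with probability tending to one, observe that $Z^n+\mu^*=\tilde Z^n$, and transfer the tail bound from the bootstrap to $g_n(Y^n)-\E[g_n(Y^n)]$ via \Cref{prop1}. The one simplification the paper makes is to invoke \Cref{thm_center} (whose proof goes through \Cref{thm8}, the random-function version of \Cref{thm1}) directly on $\tilde Z^n$, which absorbs the randomness of $\mu^*$ and thereby avoids the uniformity-in-$\mu$ issue you flag as the main obstacle; recognizing $Z^n+\mu^*=\tilde Z^n$ and citing \Cref{thm_center} lets you drop the covering/uniformization detour entirely.
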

See \Cref{app:cafe_2} for a proof. We apply this new result to the previous illustrative example.
\begin{example}\label{ex_3_7}
Let $(X_i)$ be an i.i.d sequence with mean $0$ and variance $1$. Suppose that $X_i\in L_{12} $ and let $c_{\alpha}(X^n)$ be such that: $$\sup_{x,|x|\le \log(n)/\sqrt{n}}P\bp{\ba{\big[\frac{1}{\sqrt{n}}\sum_{i\le n}Z_i+x\big]^2-\mathbb{E}\big(\big[\frac{1}{\sqrt{n}}\sum_{i\le n}Z_i+x\big]^2\big)}\ge c_{\alpha}(X^n)\big|X^n}\le \alpha.$$Then the following holds: $$\limsup_{n\rightarrow 0}P\bp{\ba{\big[\frac{1}{\sqrt{n}}\sum_{i\le n}Y_i\big]^2-\mathbb{E}\big(\big[\frac{1}{\sqrt{n}}\sum_{i\le n}Y_i\big]^2\big)}\ge c_{\alpha}(X^n)}\le \alpha$$
\end{example}

\subsubsection{Robust Confidence Interval}\label{sec:robust}

\noindent \Cref{thm_center} states that if  the mean $\E\bb{X_1^n}$ of the observations is known then we can instead study the centered bootstrap estimator, which under technical conditions, is asymptotically consistent. However assuming that the mean is known can be unrealistic. In this section, we instead assume that we know that it belongs to a certain subset $A_n$ and seek to find a confidence interval with a guaranteed coverage level for all potential values of the mean. {To make this more precise, we consider an adversary that can see the draw of the random samples and translate them by any offset in the translation set $B_n:=\{x-\E\bb{X_1^n}: x\in A_n\}$. Our goal is to guarantee that no-matter what perturbation the adversary chooses, we produce a confidence interval with guaranteed coverage.} Let $\mathcal{P}_n$ a set of probability distributions on $\mathbb{R}^{d_n}$ such that there exists a sequence of functions $(f_n)$ with $f_n\in \CC^3$ such that:
\begin{enumerate}
\item The functions $(f_n)$ approximate the estimators $(g_n)$: 
\begin{align}
    \sup_{\nu\in \mathcal{P}_n}\E_{X^n{\sim} \nu^{\otimes\infty}}\bp{\ba{ f_{n}(\tilde Z^n)-g_n(\tilde Z^n) }} + \E_{X^n{\sim} \nu^{\otimes \infty}}\bp{ \ba{f_{n}({X}^n)-g_n({X}^n) }}\xrightarrow{n\to\, \infty} 0. \tag{$H^{rob}_0$}
\end{align}
\item  The first, second and third order derivatives are such that:\begin{align}\sup_{\nu\in \mathcal{P}_n}\max\bp{R_{n,1}^{c,\nu}, R_{n,2}^{c,\nu}, R_{n,3}^{c,\nu}}\rightarrow 0.  \tag{$H_1^{rob}$}\end{align} where for each distribution $\nu\in \mathcal{P}_n$ we denoted by $R_{n,1}^{c,\nu}$, $R_{n,2}^{c,\nu}$ and $R_{n,3}^{c,\nu}$ the coefficients $R_{n,1}^c$, $R_{n,2}^c$ and $R_{n,3}^c$ computed for $(X^n_i)\overset{i.i.d}{\sim} \nu$.  

\end{enumerate}

\noindent Our goal we is to use the bootstrap method to find $(t_n^{\alpha}(X^n))$ such that the following holds:
\begin{equation}
\label{poirot}
\limsup_{n\rightarrow \infty}\sup_{\substack{\nu\in \mathcal{P}_n\\\E_{X\sim\nu}(X)\in A_n}}{{P_{X^n,Y^n\overset{i.i.d}{\sim} \nu}\bp{ \ba{ g_n(Y^{n})-\E\bb{g_n(Y^n)} }\ge t_{n}^{\alpha} (X^n)}}} \le \alpha.
\end{equation}

\noindent If conditions $(H_0^{{\rm rob}})$ and $(H_1^{{\rm rob}})$ hold then the bootstrap method can be used to find  a sequence $(t_n^{\alpha})$ such that \cref{poirot} holds (proof in \Cref{app:cafe}).
\begin{theorem}
\label{cafe}Let $(g_n)$ be a sequence of measurable functions, let $(\mathcal{P}_n)$ be sets of probability measures chosen such that  $(H_0^{\rm{rob}})$ and $(H_1^{\rm{rob}})$ hold. For all $\nu \in \mathcal{P}_n$ and given a sample $X^n\sim \nu$ define $t_{n}^{\alpha}(X^n)$ to be such that:

\begin{equation}
\sup_{\mu\in A_n}{P\bp{ \ba{ g_n(Z^{n}+\mu-\bar{X^n} )-\E\bb{g_n(Z^{n}+\mu-\bar{X^n} )\mid X^n} }\ge t_{n}^{\alpha} (X^n)\mid X^n}} \le \alpha.
\end{equation}
Then if we write $\mathcal{Q}_n:= \{\nu\in \mathcal{P}_n\mid \E_{X\sim \nu}(X)\in A_n \}$ then the following holds
\begin{equation}
\liminf_{\delta\downarrow 0}\limsup_{n\rightarrow \infty}\sup_{\nu\in \mathcal{Q}_n}{{P_{X^n,Y^n\overset{i.i.d}{\sim} \nu}\bp{ \ba{ g_n(Y^{n})-\E\bb{g_n(Y^n)} }\ge t_{n}^{\alpha} (X^n)+\delta}}} \le \alpha.
\end{equation}
\end{theorem}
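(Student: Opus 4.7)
\textbf{Proof proposal for \Cref{cafe}.} The plan is to reduce this robust-coverage statement to a uniform application of \Cref{thm_center} (centered bootstrap consistency) combined with the measure-to-coverage conversion in \Cref{prop1}. The crucial observation is that for $\nu\in\mathcal{Q}_n$ with mean $\theta:=\E_{X\sim\nu}(X)\in A_n$, the choice $\mu=\theta$ is admissible in the supremum defining $t_n^{\alpha}$, and this particular choice makes $Z^n+\mu-\bar{X}^n$ coincide exactly with the centered bootstrap sample $\tilde{Z}^n$ (as defined in \Cref{sec:preliminaries}). In other words, the quantile $t_n^{\alpha}(X^n)$ automatically dominates the conditional $(1-\alpha)$-quantile of $|g_n(\tilde{Z}^n)-\E[g_n(\tilde Z^n)\mid X^n]|$.

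First I would fix $\nu\in\mathcal{Q}_n$ and $\theta=\E_{X\sim\nu}(X)\in A_n$. By the defining inequality of $t_n^{\alpha}$ specialized to $\mu=\theta$, we immediately get
\begin{equation*}
P\bp{\ba{g_n(\tilde{Z}^n)-\E\bb{g_n(\tilde{Z}^n)\mid X^n}}\ge t_n^{\alpha}(X^n)\mid X^n}\le \alpha,
\end{equation*}
so that $[-t_n^{\alpha}(X^n),t_n^{\alpha}(X^n)]$ is, conditionally on $X^n$, a level $1-\alpha$ confidence interval for the centered bootstrap statistic $g_n(\tilde Z^n)-\E[g_n(\tilde Z^n)\mid X^n]$. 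This step is essentially bookkeeping; no stability hypothesis is needed yet.

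Next I would apply \Cref{thm_center} uniformly over $\nu\in\mathcal{P}_n$. Assumptions $(H_0^{\rm rob})$ and $(H_1^{\rm rob})$ are precisely the uniform versions of $(H_0^{{\rm c}})$ and $(H_1^{{\rm c}})$, so reading off the bound in \Cref{thm_center} yields
\begin{equation*}
\sup_{\nu\in\mathcal{P}_n}\bn{d_{\mcF}\bp{g_n(\tilde Z^n),g_n(Y^n)\mid X^n}}_{L_1}\xrightarrow[n\to\infty]{}0.
\end{equation*}
Since $\mathcal{Q}_n\subseteq\mathcal{P}_n$, the convergence is in particular uniform over $\nu\in\mathcal{Q}_n$. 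Using that $Y^n$ is independent of $X^n$ so $\E[g_n(Y^n)\mid X^n]=\E[g_n(Y^n)]$, I would then feed this into \Cref{prop1}, taking $Y\leftarrow g_n(\tilde Z^n)$, $X\leftarrow g_n(Y^n)$, $\mathcal{E}=\sigma(X^n)$, $[a,b]=[-t_n^{\alpha}(X^n),t_n^{\alpha}(X^n)]$, and $\epsilon=\delta/6$. This yields, conditionally on $X^n$,
\begin{equation*}
P\bp{\ba{g_n(Y^n)-\E\bb{g_n(Y^n)}}> t_n^{\alpha}(X^n)+\delta\mid X^n}\le \alpha+\frac{2\,d_{\mcF}\bp{g_n(\tilde Z^n),g_n(Y^n)\mid X^n}}{(\delta/6)^3}.
\end{equation*}

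Finally I would integrate over $X^n$ and take the supremum over $\nu\in\mathcal{Q}_n$:
\begin{equation*}
\sup_{\nu\in\mathcal{Q}_n} P\bp{\ba{g_n(Y^n)-\E\bb{g_n(Y^n)}}> t_n^{\alpha}(X^n)+\delta}\le \alpha+\frac{432}{\delta^3}\sup_{\nu\in\mathcal{P}_n}\bn{d_{\mcF}\bp{g_n(\tilde Z^n),g_n(Y^n)\mid X^n}}_{L_1},
\end{equation*}
and apply the uniform $L_1$ vanishing from \Cref{thm_center} to send the error term to zero as $n\to\infty$ for each fixed $\delta>0$. This gives $\limsup_n\sup_{\nu\in\mathcal{Q}_n}P(\cdots)\le\alpha$ for every $\delta>0$, whence $\liminf_{\delta\downarrow 0}$ of these suprema is also bounded by $\alpha$. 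The main subtlety (and really the only one) is confirming that the bound from \Cref{thm_center} indeed becomes $\nu$-uniform once $(H_0^{\rm rob})$ and $(H_1^{\rm rob})$ are imposed; this is immediate because the error in \Cref{thm_center} is an explicit function of the quantities $R_{n,i}^{{\rm c}}$ and the approximation $L_1$ distances between $g_n$ and $f_n$, all of which are controlled uniformly over $\mathcal{P}_n$ by the robust hypotheses.
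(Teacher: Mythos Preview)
Your proposal is correct and follows essentially the same route as the paper's proof: specialize the supremum defining $t_n^{\alpha}$ to $\mu=\E_{X\sim\nu}(X)\in A_n$ so that $Z^n+\mu-\bar X^n=\tilde Z^n$, invoke \Cref{thm_center} uniformly over $\mathcal{P}_n$ via $(H_0^{\rm rob})$--$(H_1^{\rm rob})$, and convert the $d_{\mcF}$ bound into coverage through \Cref{prop1}. The only cosmetic difference is that the paper first establishes coverage at the $\nu$-specific centered-bootstrap threshold and then uses monotonicity to pass to the larger $t_n^{\alpha}$, whereas you directly note that $[-t_n^{\alpha},t_n^{\alpha}]$ already covers the centered-bootstrap statistic; the two arguments are logically equivalent.
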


\section{Illustrative Examples and Counterexamples}
\label{sec:illustrative}

We present a sequence of simple examples illustrating that our theorems hold even if the estimator is not asymptotically normal. Moreover, we provide negative examples where the shape of the confidence intervals obtained by the bootstrap method can be arbitrary compared to the ones of the original statistics $g_n(Y_{1:n})$. The first example  we consider are polynomials of the empirical average. Their limiting distribution is in not Gaussian for $p>1$.
\begin{example}\label{ex1} Let $p\in \mathbb{N}$ be an integer and let $X:=(X_{i})$ be an i.i.d sequence taking value in $\RR$  with mean $0$ and admitting a $12p$-th moment $\E\bb{\big|X_{i}|^{12p}}<\infty$. We define the functions $(g_n)$  as $g_n:x_{1:n}\rightarrow \bp{\frac{1}{\sqrt{n}}\sum_{i\le n}x_{i}}^p.$ We write $( Z^n_{{i}})$ a  bootstrap sample and $(Y^n_{i})$ an independent copy of $X$. Then the following holds: 
$$\bn{ d_{\mcF}\bp{ g_n\bp{ Z^n}, \bp{\sqrt{n}~\bar{X}^n+\sqrt{n}~\bar{Y}^n}^p \mid X } }_{L_1}=O\bp{\frac{1}{\sqrt{n}}}.$$
Moreover, let $A$ be an $1-\alpha$ confidence-interval for $g_n(\tilde Y_{1:n})$ meaning $P(g_n(\tilde Y_{1:n})\in A)\ge 1-\alpha$. Write: $A_{\bar{X}^n}:=\{x\in\RR\mid~\exists y\in A~{\rm s.t}~x={\rm sign}(x)]\bp{ |y|^{1/p}-\bar{X}^n}^p\}$ then $$P(g_n(Y_{1:n})\in A_{\bar{X}^n})\ge 1-\alpha.$$
\end{example}
See \Cref{pr:ex1} for the proof.

\begin{example}\label{stress} Let $X:=(X_i)$ be an i.i.d sequence of bounded real valued random variables satisfying $\E(X_1)=0$. We define $g_n:\times_{l=1}^n \RR\rightarrow\RR$ to be the following function: $g_n(x_1,\dots,x_n):= \sqrt{n}\Big[\prod_{i=1}^n\bp{1+\frac{x_i}{{n}}}-1\Big]$. Write $(\tilde Z_i^n) $ to be a centered bootstrap sample and let $Y:=(Y_i)$ be an independent copy of $X$. Then the following holds: $$\Big\|d_{\mcF}\bp{g_n( \tilde Z^n)-\E(g_n(\tilde Z^n)|X),g_n( Y_{1:n})-\E(g_n(Y^n))\Big|X}\Big\|_{L_1}\rightarrow 0.$$
See \Cref{app:stress} for a formal proof.
\end{example}

The next example demonstrates that the confidence intervals obtained by the bootstrap method are neither systematically bigger or smaller than the ones of original statistics.  

\begin{example}\label{ex32}
Let $(X_{i})$ be a sequence of i.i.d standard normal observations $X_{i}\sim N(0,1)$. Define $(g_n)$ to be the following sequence of functions: $g_n(x_{1:n}):=\bb{\frac{1}{\sqrt{n}}\sum_{i\le n}x_i}^+$. Let $(Z^n)$ be a bootstrap sample. The following holds:
\begin{equation}
\bn{ d_{\mcF}\bp{ g_n(Z^n),\, \sqrt{n}\bb{\bar{Y}+\bar{X}}^+ \mid X} }_{L_1}\rightarrow 0.
\end{equation}
Moreover given $\alpha<0.5$ and a sequence $(t_n)$ such that: $P(g_n(Z^n)\le t_n\mid X^n)=1-\alpha$ then: 
\begin{equation}
P(g_n(Y^n)\le t_n-\sqrt{n}\bar{X}^n)\sim 1-\alpha.
\end{equation}
We notice that the segment $[0,t]$ is  smaller than $[0,t-\sqrt{n}\bar{X}^n]$ only if $\sqrt{n}\bar{X}^n>0 $ which asymptotically happens with a probability of $1/2$.
\end{example}
See \Cref{pr:ex32} for a formal proof.

\noindent In the next example we show that our results apply to  classical quantities in mathematical physics. We consider the entropy of spin glasses configurations.

\begin{example}\label{roryl} 
Let $X:=(X_{i,j})$ be an array of i.i.d observations satisfying $X_{i,j}\overset{i.i.d}{\sim} N(0,1)$. We denote $X^n:=\bp{X_{i,j}}_{i,j\le n}$ the induced matrix and define $g_n: M_n(\RR)\rightarrow\RR$ to be the following function: $g_n(X):= \frac{1}{n}\log\bp{ \sum_{m\in \{-1,1\}^n}e^{\frac{1}{\sqrt{n}}m^{\top}X^nm} }$. Write $( Z_{i,j}^n) $ and $(\tilde Z_{i,j}^n)$ respectively a  bootstrap and centered bootstrap sample and $Y^n$ an independent copy of $X^n$. Then the following holds: 
\begin{equation}
\bn{ d_{\mcF}\bp{ g_n( Z^n),\, \frac{1}{n}\log\bp{ \sum_{m\in \{-1,1\}^n}e^{\frac{1}{\sqrt{n}}m^{\top} Y^n m}~e^{\bar{X}^n(\sum_i m_i)^2/\sqrt{n}} } \mid X^n } }_{L_1}\rightarrow 0;
\end{equation}and\begin{equation}
\bn{ d_{\mcF}\bp{ g_n(\tilde Z^n),\, \frac{1}{n}\log\bp{ \sum_{m\in \{-1,1\}^n}e^{\frac{1}{\sqrt{n}}m^{\top} Y^n m} } \mid X^n } }_{L_1}\rightarrow 0;
\end{equation}
where we have denoted $\bar{X}^n:=\frac{1}{n^2}\sum_{i,j\le n}X_{i,j}^n$.
\end{example}
See \Cref{app:roryl} for a formal proof.

\section{Uniform Confidence Bands}
\label{sec:bands}
In this section, we study the maximum of centered empirical processes. This is motivated by its application to uniform confidence bounds (see e.g. \cite{chernozhukov2013gaussian,chernozhukov2017central}). Let $(X_{i}^n)$ be a triangular array of i.i.d process with $X_1^n$ taking value in $\RR^{p_n}$ where $(p_n)$ is an increasing sequence. We want to estimate the distribution of
$$\max_{j\le p_n}\frac{1}{\sqrt{n}}\sum_{i\le n} X_{i,j}^n-\E\big[ X_{1,j}^n\big].$$
For fast growing sequences of $(p_n)$ this statistics is not asymptotically Gaussian \cite{deng2017beyond}. Therefore to study its distribution one might want to use the bootstrap method. Using our results we recover the results of \cite{deng2017beyond} and establish conditions under which the bootstrap is asymptotically consistent (proof in \Cref{app:seule}).
\begin{prop}\label{seule} Let $(p_n)$ be a sequence of integers satisfying $\log(p_n)=o(n^{1/4})$. Define $(X^n_i)$ to be a triangular array of sequences of i.i.d random variables taking value in $\RR^{p_n}$. We suppose that $\bn{\sup_{k\le p_n} \ba{ X_{1,k}^n }}_{L_{12}}<\infty$. 
We denote $\mathcal{M}_n(x_{1:n})=\max_{j\le p_n} \frac{1}{\sqrt{n}}\sum_{i\le n}x_{i,j}$.
Then the following holds {\small $$\Big\|d_{\mcF}\bp{\mathcal{M}_n(Z^n-\bar{X}^n),~\mathcal{M}_n(Y^n-\mathbb{E}[Y^n_1])\mid X^n}\Big\|_{L_1}=o(1).$$}
\end{prop}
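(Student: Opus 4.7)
The plan is to reduce \Cref{seule} to the centered-bootstrap consistency result of \Cref{thm_center} by translation invariance, and then to verify the smoothness hypotheses of \Cref{ass:approx-center} for a log-sum-exp approximation of the max with a carefully tuned temperature $\beta_n$.

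\emph{Reduction to the centered bootstrap.} Both arguments inside $d_{\mcF}$ are invariant under translating the law of $X_1^n$ by an arbitrary $\mu\in\RR^{p_n}$, since replacing $X_i^n$ by $X_i^n+\mu$ shifts $\bar{X}^n$, $Z_i^n$, $Y_i^n$ and $\E[Y_1^n]$ all by $\mu$. I may therefore assume $\E[X_1^n]=0$; then $Z_i^n-\bar{X}^n=\tilde{Z}_i^n$ and $Y_i^n-\E[Y_1^n]=Y_i^n$, so the claim is exactly the conclusion of \Cref{thm_center} applied to $g_n=\mathcal{M}_n$.

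\emph{Smooth approximation and derivative bounds.} I would take
\[
f_n(x_{1:n}):=\beta_n^{-1}\log\Big(\textstyle\sum_{j\le p_n}\exp\bigl(\beta_n n^{-1/2}\sum_{i\le n} x_{i,j}\bigr)\Big),
\]
with $\beta_n\to\infty$ to be chosen. The estimate $|f_n-\mathcal{M}_n|\le \beta_n^{-1}\log p_n$ gives $(H_0^{{\rm c}})$ as soon as $\beta_n/\log p_n\to\infty$. Letting $p_k$ denote the corresponding softmax probability, direct differentiation yields $\partial_{i,k_1}f_n=p_{k_1}/\sqrt{n}$, $\partial^2_{i,k_1k_2}f_n=(\beta_n/n)(\delta_{k_1k_2}p_{k_1}-p_{k_1}p_{k_2})$, and an analogous third-order expression; the key pointwise identities, each descending from $\sum_k p_k=1$, are $\sum_{k_1}|\partial_{i,k_1}f_n|\equiv n^{-1/2}$, $\sum_{k_1,k_2}|\partial^2_{i,k_1k_2}f_n|\le 2\beta_n/n$ and $\sum_{k_1,k_2,k_3}|\partial^3_{i,k_1k_2k_3}f_n|=O(\beta_n^2/n^{3/2})$.

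\emph{Closing the argument.} To convert these into $(H_1^{{\rm c}})$, I would invoke the aggregated form of \Cref{ass:approx-center} from \Cref{jardin} in the appendix, which replaces the coordinate-by-coordinate product $\sum_{k_1}M^n_{k_1}\|\partial_{i,k_1}f\|_{L_{12}}$ by $\|\sup_k|X^n_{1,k}|\|_{L_{12}}\cdot\|\sum_{k_1}|\partial_{i,k_1}f|\|_{L_{12}}$ (and analogously at orders two and three). Under the moment hypothesis $\|\sup_k|X^n_{1,k}|\|_{L_{12}}<\infty$ this gives $R_{n,1}^{{\rm c}}=O(n^{-1/6})$, $R_{n,2}^{{\rm c}}=O(\beta_n/\sqrt{n})$ and $R_{n,3}^{{\rm c}}=O(\beta_n^2/\sqrt{n})$, so any choice with $\log p_n\ll \beta_n\ll n^{1/4}$ --- e.g.\ $\beta_n=n^{1/8}(\log p_n)^{1/2}$, possible precisely because $\log p_n=o(n^{1/4})$ --- makes $(H_0^{{\rm c}})$ and the three $R^{{\rm c}}$-terms vanish simultaneously, and \Cref{thm_center} then delivers the proposition. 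The main obstacle is exactly this switch to the aggregated hypothesis: applying \Cref{ass:approx-center} literally would require $\sum_{k_1}\|p_{k_1}\|_{L_{12}}$ to be well-controlled, but Hölder applied to $\sum_{k_1}\E p_{k_1}=1$ only yields $p_n^{11/12}$, which is disastrous once $p_n$ is super-polynomial. The aggregated form instead exploits the \emph{pointwise} equality $\sum_{k_1}p_{k_1}\equiv 1$, and this is what lets the hypothesis $\log p_n=o(n^{1/4})$ suffice.
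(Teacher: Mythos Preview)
Your proposal is correct and follows essentially the same route as the paper: reduce to $\E[X_1^n]=0$, approximate $\mathcal{M}_n$ by a log-sum-exp with temperature $\beta_n$ (the paper writes the temperature as $\beta_n\log p_n$, a cosmetic reparametrization), verify the aggregated hypothesis $(H_1^*)$ via the pointwise identities $\sum_k p_k\equiv 1$, and apply the starred variant \Cref{jardin} (in its centered-bootstrap form, which the paper's remark after \Cref{thm8} asserts holds verbatim). Your explicit discussion of why the coordinatewise $(H_1)$ would fail and why the aggregated form is needed is exactly the point that makes the argument work.
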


\section{Value of a Min-Max Objective}
\label{sec:minmax}

In this section, motivated by problems in sample average approximations of stochastic linear programs \cite{guigues2017non} and in structural econometric problems \cite{syrgkanis2017inference}, we want to estimate distribution of the value of a stochastic min-max objective. Given a set of potential actions $\{\theta_i\}$, a payoff function $f_n$ and random states of nature $(\xi^n_\ell)$ we wish to estimate the expected payoff of a stochastic min-max objective:{
\begin{equation}
\theta_0 :=\min_{i\le p_{n}}\max_{j\le p_{n}} \E\bb{f_n(\xi^n_{\ell}, u_i, v_j)}.
\end{equation}}
We assume we have access to $n$ i.i.d. samples of $\xi^n:=(\xi_{1^n}, \ldots, \xi_{n}^n)$ and want to build confidence intervals for $\theta_0$. To achieve this, we will estimate the distribution of the root-$n$ normalized version of the empirical analogue of the stochastic program:
\begin{equation}
\min_{i\le p_n}\max_{j\le p_n} \frac{1}{\sqrt{n}}\sum_{\ell\le n}f_n(\xi^n_\ell, u_i, v_j).
\end{equation}
We will investigate the consistency of the bootstrap method for this problem. For ease of notations, we  write $X_{\ell,i,j}^n:= f_n(\xi^n_\ell, i, j)$, and set $(p_n)$ to be an increasing sequence and write:
\begin{equation}
    \hat{\theta}_n(x_{1:n}):=\min_{i\le p_{n}}\max_{j\le p_{n}} ~\frac{1}{n}\sum_{\ell\le n} x_{\ell,i,j}.
\end{equation}
We show that the centered bootstrap method is consistent as long as  $\log(p_{n})=o(n^{1/4})$ (proof in \Cref{app:prop3}).

\begin{prop}\label{prop3}
Let $(p_n)$ be a sequence of integers satisfying $\log(p_n)=o(n^{1/4})$. Define $(X^n_i)$ to be a triangular array of sequences of i.i.d random variables taking value in $\RR^{p_n\times p_n}$. Suppose that $\|\sup_{l_1,l_2\le p_n}|X_{1,l_1,l_2}^n|\|_{L_{12}}<\infty$.
~Then the following holds  $$\bn{d_{\mcF}\bp{\sqrt{n}\Big[\hat \theta_n( Z^n)-\theta_{Z^n}\Big],~\sqrt{n}\Big[\hat \theta_n( \tilde Y^n)-\theta_{\tilde Y^n}\Big]\mid X^n}}_{L_1}=o(1);$$     $$\bn{d_{\mcF}\bp{\sqrt{n}\bp{\hat \theta_n( \tilde Z^n)-\theta_{\tilde Z^n}},~\sqrt{n}\bp{\hat \theta_n(  Y^n)-\theta_{0}}\mid X^n}}_{L_1}=o(1);$$ where  for a process $(Y'_{1:n})$ we have written, by abuse of notations, 
$\theta_{Y'}:=\min_{i\le p_{n}}\max_{j\le p_{n}} ~\E\big[Y^{'}_{l,i,j}|X^n\big].$ 
Therefore if we let $t_{n,\alpha}$ to be a threshold such that:
\begin{equation}
    \Pr\bp{ \sqrt{n} \left(\hat{\theta}_n(\tilde{Z}^n) - \theta_{\tilde{Z}^n}\right) \geq t_{n,\alpha} \mid X^n } = \alpha
\end{equation}
Then for every sequence $(\epsilon_n)$ satisfying $\epsilon_n\downarrow 0$ and $\bp{\frac{\log(p_n)^2}{\sqrt{n}}}^{1/9}=o(\epsilon_n)$ we have
\begin{equation}
    \limsup_{n\rightarrow\infty}\Pr\bp{\sqrt{n}\left(\hat{\theta}(X^n) - \theta_0\right) \geq t_{n,\alpha}+\epsilon_n} \le \alpha 
\end{equation}
\end{prop}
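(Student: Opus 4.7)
The plan is to reduce \Cref{prop3} to \Cref{thm1}, \Cref{thm_center} and \Cref{prop1} via a nested soft-min-softmax (log-sum-exp) approximation of the min-max statistic. Set
\[
g_n(x_{1:n})\;:=\;\sqrt n\,\hat\theta_n(x_{1:n})\;=\;\min_{i\le p_n}\max_{j\le p_n}\frac{1}{\sqrt n}\sum_{\ell=1}^n x_{\ell,i,j}.
\]
A direct computation gives $\theta_{Z^n}=\theta_{\tilde Y^n}=\min_i\max_j \bar X^n_{i,j}$ and $\theta_{\tilde Z^n}=\theta_0=\min_i\max_j \E[X^n_{1,i,j}]$; since these shifts are $X^n$-measurable and \emph{identical} across the two laws compared in each statement, translation-invariance of $d_{\mcF}$ conditional on $X^n$ reduces the first claim to $\|d_{\mcF}(g_n(Z^n),g_n(\tilde Y^n)\mid X^n)\|_{L_1}\to 0$ and the second to $\|d_{\mcF}(g_n(\tilde Z^n),g_n(Y^n)\mid X^n)\|_{L_1}\to 0$.

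To verify \Cref{ass:approx} and \Cref{ass:approx-center} for the non-smooth $g_n$, I would introduce the $C^3$ approximation
\[
f_n(x_{1:n})\;:=\;-\frac{1}{\beta_n}\log\sum_{i=1}^{p_n}\exp\!\Big(-\beta_n\cdot\frac{1}{\beta_n}\log\sum_{j=1}^{p_n}\exp\!\Big(\frac{\beta_n}{\sqrt n}\sum_{\ell=1}^n x_{\ell,i,j}\Big)\Big),
\]
with smoothing parameter $\beta_n\to\infty$ chosen so that $\beta_n/\log p_n\to\infty$ and $\beta_n=o(n^{1/4})$, which is feasible precisely because $\log p_n = o(n^{1/4})$. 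The standard log-sum-exp inequality gives $\|f_n-g_n\|_\infty\le 2\beta_n^{-1}\log p_n = o(1)$, yielding $(H_0)$ and $(H^{{\rm c}}_0)$. Denoting the induced softmin and conditional softmax weights $p_i(x), q_{j|i}(x)$ (each a probability distribution), one computes $\partial_{\ell,(i,j)} f_n = n^{-1/2} p_i q_{j|i}$, while higher-order partials are scaled joint cumulants of these weights satisfying the pointwise $\ell^1$ bounds $\sum_{k_{1:2}}|\partial^2_{k_{1:2}}f_n|\lesssim \beta_n/n$ and $\sum_{k_{1:3}}|\partial^3_{k_{1:3}}f_n|\lesssim \beta_n^2/n^{3/2}$. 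Combined with $\|\sup_{i,j}|X^n_{1,i,j}|\|_{L_{12}}<\infty$ these yield $R_{n,2}=O(\beta_n/\sqrt n)=o(1)$ and $R_{n,3}=O(\beta_n^2/\sqrt n)=o(1)$; since the softmin/softmax weights are translation-invariant, the same bounds transfer to $f_n(\cdot+\E[X^n_1]-\bar X^n)$, giving $(H^{{\rm c}}_1)$. \Cref{thm1} and \Cref{thm_center} then deliver the two distributional claims.

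For the coverage statement, I would apply \Cref{prop1} conditional on $X^n$ with $X=\sqrt n(\hat\theta_n(Y^n)-\theta_0)$, $Y=\sqrt n(\hat\theta_n(\tilde Z^n)-\theta_{\tilde Z^n})$, $A=(-\infty,t_{n,\alpha}]$ and $\epsilon=\epsilon_n/6$. This gives $\Pr(X\ge t_{n,\alpha}+\epsilon_n\mid X^n)\le \alpha+2\cdot 6^3\, d_{\mcF}(X,Y\mid X^n)/\epsilon_n^3$, hence $\Pr(X\ge t_{n,\alpha}+\epsilon_n)\le \alpha+432\,\|d_{\mcF}(X,Y\mid X^n)\|_{L_1}/\epsilon_n^3$ after taking expectations. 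Optimizing $\beta_n\asymp(\sqrt n\,\log p_n)^{1/3}$ balances the approximation error $\beta_n^{-1}\log p_n$ against $R_{n,3}=O(\beta_n^2/\sqrt n)$, producing $\|d_{\mcF}\|_{L_1}=O((\log p_n)^{2/3}/n^{1/6})$, and the hypothesis $\epsilon_n\gg(\log^2 p_n/\sqrt n)^{1/9}$ is exactly tuned so that $\|d_{\mcF}\|_{L_1}/\epsilon_n^3=o(1)$.

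The main obstacle is the bookkeeping for the first-order coefficient $R_{n,1}$: the softmax structure produces the tight pointwise $\ell^1$ bound $\sum_k|\partial_k f_n|=n^{-1/2}$, yet $(H_1)$ demands control on $\sum_k\|\partial_k f_n\|_{L_{12}}$, which is strictly larger for concentrated weight distributions. I would reconcile this by using $(p_iq_{j|i})^{12}\le p_iq_{j|i}$ and Jensen's inequality, which yields $\sum_{i,j}\|p_iq_{j|i}\|_{L_{12}}\le p_n^{11/6}$ and hence $R_{n,1}=O(p_n^{11/6}/n^{1/6})$; if this is not tight enough under the stated condition $\log p_n=o(n^{1/4})$ alone, one invokes the relaxation of the stability conditions from \Cref{jardin}, which replaces the $\sum\|\cdot\|_{L_{12}}$ form with a pointwise-$\ell^1$ variant that the cumulant bounds above satisfy directly.
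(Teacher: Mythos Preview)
Your proposal is correct and follows essentially the same approach as the paper: a nested log-sum-exp smoothing of the min-max, verification of the pointwise $\ell^1$ derivative bounds ($\sum_k|\partial_k f_n|\le n^{-1/2}$, $\sum_{k_{1:2}}|\partial^2 f_n|\lesssim \beta_n/n$, $\sum_{k_{1:3}}|\partial^3 f_n|\lesssim \beta_n^2/n^{3/2}$), and then \Cref{prop1} with the optimized $\beta_n$ giving the rate $(\log p_n)^{2/3}/n^{1/6}$, whose cube root is precisely $((\log p_n)^2/\sqrt n)^{1/9}$.

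The only place you and the paper differ is in the handling of $R_{n,1}$, and here your instinct in the final paragraph is exactly right but should be promoted from a fallback to the primary route. Your Jensen bound $\sum_{i,j}\|p_iq_{j|i}\|_{L_{12}}\le p_n^{11/6}$ is sharp for $(H_1)$, and it forces a \emph{polynomial} restriction on $p_n$, which is far stronger than the stated $\log p_n=o(n^{1/4})$. The paper therefore does not attempt $(H_1)$ at all: it works directly with \Cref{jardin} and the alternative condition $(H_1^*)$, where one only needs $\|\sum_k|\partial_{i,k}f_n|\|_{L_{12}}\cdot\|\sup_k|X^n_{1,k}|\|_{L_{12}}$ rather than $\sum_k\|\partial_{i,k}f_n\|_{L_{12}}\cdot M_k^n$. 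Under $(H_1^*)$ the softmax identity $\sum_{i,j}p_iq_{j|i}=1$ gives $R_{n,1}^*=O(n^{-1/6})$ immediately, with no dependence on $p_n$. The same device is what handles $R_{n,2}^*$ and $R_{n,3}^*$; your cumulant bounds already have the right form for this. So: drop the attempt at $(H_1)$ and invoke \Cref{jardin} (and its obvious centered analogue, obtained as in the proof of \Cref{thm_center} by applying the result to $f_n(\cdot+\E[X^n_1]-\bar X^n)$) from the start.
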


\vspace{2mm} 

\noindent We note that if the means are sufficiently spaced:$$\frac{\log(p_n)}{\sqrt{n}}=o\Big[\inf_{(i,j)\ne (l,k)}\big|\E\big[X^{n}_{1,l,k}\big]-\E\big[X^{n}_{1,i,j}\big]\big|\Big]$$ then the bootstrap method is consistent. See \Cref{seperate} for a precise statement and a proof.

\noindent We generalize those results to estimating the payoff of minmax strategies over a continuous space. Let $(d'_n)$ be a non-decreasing sequence and denote $B_{d'_n}^1\subset \RR^{d'_n}$  the ball of radius $1$ in  $\RR^{d'_n}$. Choose $(f_n:\prod_{i=1}^n \RR^{d_n}\times B_{d'_n}^1\times B_{d'_n}^1) $ to be a sequence of Lipschitz functions for which there are constants $(C_n)$ and $(c_n)$ satisfying:
\begin{align}
    \ba{ f_n(x, u, v) - f_n(x, u', v') } < ~& C_n \bp{ \|u-u'\| + \|v-v'\| } \tag{$H^{{\rm minmax}}_0$} \label{minimax:lip:1}\\
    \ba{ \E\bb{ f_n(X_1^n, u, v)}-\E\bb{f_n(X_1^n, u', v'} } >~& c_n \bp{ \|u-u'\|+\|v-v'\| } \tag{$H^{{\rm minmax}}_1$} \label{minimax:lip:2}
\end{align}

\noindent Similarly as in the discrete case our goal is to estimate \begin{equation}
\theta_n:=\inf_{u \in B_{d'_n}^1}\sup_{v \in B_{d'_n}^1} \E\bb{f_n(\xi_1^n, u, v)}
\end{equation}
and to do so we propose as an estimator
\begin{equation}
\hat{\theta}_n(X^n):=\inf_{u\in B_{d'_n}^1}\sup_{v\in B_{d'_n}^1} \frac{1}{{n}}\sum_{i\le n} f_n(\xi_i^n, u, v).
\end{equation}
To build confidence intervals around $\theta_n$, we want to use the bootstrap method. Using \Cref{thm1}, we prove that it is consistent under distinct set of assumptions:   1) If
the dimensions $(d'_n)$ grow as $o(n^{1/4})$ and if 
the sequences $(C_n)$ and $(c_n)$ are bounded respectively from bellow and above  or 2)  If
the dimensions $(d'_n)$ grow as $o(n^{1/7})$ and if 
the sequences $(C_n)$ is bounded from above (proof in \Cref{app:prop4}). 
\begin{prop}\label{prop4}
Let  $\bp{f_n:\prod_{i=1}^n \RR^{d_n}\times B_{d'_n}^1\times B_{d'_n}^1}$  be a sequence of Lipschitz functions 
satisfying condition \eqref{minimax:lip:1}.  Assume that $\sup_{u, v\in B_{d'_n}^1, n\le\infty}\|f_n(\xi_1^n, u, v)\|_{L_{\infty}}<\infty$.

\noindent If in addition  we know that \eqref{minimax:lip:2} holds and that $d'_nC_n\log(nC_n)=o(c_nn^{1/4})$. Then the bootstrap method is consistent:
$$\Big\|d_{\mcF}\bp{\sqrt{n}\big[\hat{\theta}_n(Z^n)-\theta_{Z^n}\big], ~\sqrt{n}\big[\hat{\theta}_n(\xi^n_{1:n})-\theta_n\big]\Big|\xi^n}\Big\|_{L_1}\rightarrow 0,$$
where  for a process $(Y'_{1:n})$ we have written
$\theta_{Y'}:=\inf_{u\in B_{d'_n}^1}\sup_{v\in B_{d'_n}^1} ~\E\big[f_n(Y^{'}_1,u,v)|X^n\big].$ 

\noindent Otherwise we suppose that  $d'_n\log(nC_n)C_n=o(n^{1/7})$, and choose $t_{\beta/2}(X^n)$ such that the following holds:
$$P\bp{\sqrt{n}\ba{\hat \theta_n( Z^n)-\E\bp{\hat \theta_n( Z^n)\mid X^n}}\ge t_{\beta/4}(X^n)\mid X^n}\le \beta/4.$$
 We have:
 $$\lim_{\delta \downarrow 0}\limsup_{n\rightarrow \infty} P\bp{\sqrt{n}\ba{\hat \theta_n( \xi^n)-\theta_{X^n}}\ge t_{\beta/4}(X^n)+3t_{g,n}^{*,\beta/4}+\delta}\le \beta;$$ where $t_{g,n}^{*\beta/4}$ is chosen such that: $P\bp{\sup_{u,v\in B_{d'_n}^1}|N_{u,v}|\ge t_{g,n}^{*,\beta/4}}\le \beta/4 $ where $(N_{u,v})$ is a gaussian process with covariance function $\Sigma^2\bp{(u_1,v_1),(u_2,v_2)}:=\rm{Cov}\bp{f_n(\xi_1^n,u_1,v_1),f_n(\xi_1^n,u_2,v_2)}$.
\end{prop}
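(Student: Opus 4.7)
My strategy is to reduce the continuous inf--sup problem to the discrete one treated in \Cref{prop3} via an $\varepsilon$-net discretization of the unit balls $B_{d'_n}^1$.

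First, for a parameter $\varepsilon>0$ to be tuned, let $\mathcal{N}_\varepsilon\subset B_{d'_n}^1$ be a minimal $\varepsilon$-covering, which by standard volumetric bounds satisfies $|\mathcal{N}_\varepsilon|\le (3/\varepsilon)^{d'_n}$. The upper Lipschitz hypothesis \eqref{minimax:lip:1} guarantees that replacing the inf--sup over $B_{d'_n}^1\times B_{d'_n}^1$ by its restriction to $\mathcal{N}_\varepsilon\times \mathcal{N}_\varepsilon$ perturbs each of $\hat\theta_n(\xi^n)$, $\hat\theta_n(Z^n)$, $\theta_n$ and $\theta_{Z^n}$ by at most $2C_n\varepsilon$. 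Picking $\varepsilon$ of order $1/(C_n\sqrt n\log n)$ makes the $\sqrt n$-scaled discretization error vanish in $L_1$, so it suffices to study the grid version. The effective discrete dimension is then $p_n:=|\mathcal{N}_\varepsilon|^2$, with $\log p_n = O(d'_n\log(nC_n))$.

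For part 1, the lower Lipschitz bound \eqref{minimax:lip:2} implies that on the net the minimum spacing of the restricted means $\{\E[f_n(\xi_1^n,u,v)]:(u,v)\in \mathcal{N}_\varepsilon\times \mathcal{N}_\varepsilon\}$ is of order $c_n\varepsilon$. I therefore invoke the spacing-based variant \Cref{seperate} of \Cref{prop3} on the grid; its hypothesis $\log(p_n)/\sqrt n = o(c_n\varepsilon)$ rewrites, under the above choice of $\varepsilon$, as $d'_nC_n\log(nC_n)=o(c_n n^{1/4})$, which is precisely the stated assumption. Consistency of the bootstrap on the grid then transfers back to the continuous objective via the discretization bound. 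For part 2, no spacing is assumed, so I only apply the non-spacing statement of \Cref{prop3}, obtaining a conservative grid threshold $t_{\beta/4}(X^n)$ that valid under $\log(p_n)=o(n^{1/4})$. To return to the continuous index set I control the uniform oscillation $\sup_{u,v\in B_{d'_n}^1}\sqrt n\,|\hat f_n(u,v)-\E[f_n(\xi_1^n,u,v)]|$ for the original sample and the analogous quantity for the bootstrap sample, via a Chernozhukov--Chetverikov--Kato Gaussian comparison (as in \Cref{seule}) on the Lipschitz-indexed empirical process, bounding each by the Gaussian supremum quantile $t_{g,n}^{*,\beta/4}$. Combining the bootstrap threshold, the two oscillation bounds, and a triangle inequality gives the claimed coverage at $t_{\beta/4}(X^n)+3t_{g,n}^{*,\beta/4}$; the resulting rate requirement is $d'_n\log(nC_n)C_n=o(n^{1/7})$, where the extra $C_n$ factor comes from the metric entropy of the Lipschitz class.

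\textbf{Main obstacle.} The hardest step is the Gaussian comparison in part 2: lifting the CCK-type maximal approximation from the finite grid to the continuous Lipschitz-indexed empirical process with the right rate, while handling the fact that $\inf_u\sup_v$ is not a plain supremum. In practice one writes $\inf_u\sup_v = -\sup_u(-\sup_v)$ and controls each sup separately via chaining on the Lipschitz class, with the Lipschitz constant $C_n$ entering the entropy integral and ultimately driving the $n^{1/7}$ threshold rather than the $n^{1/6}$ one would expect from the raw CCK bound on a discrete index set.
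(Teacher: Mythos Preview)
Your high-level strategy (discretize via an $\varepsilon$-net, invoke the grid result \Cref{seperate}, then transfer back) matches the paper. The gap is quantitative, and it is in Part~1.

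With your choice $\varepsilon\sim 1/(C_n\sqrt{n}\log n)$ and the \emph{deterministic} Lipschitz bound $\sqrt{n}\cdot 2C_n\varepsilon$, the discretization error does vanish, but the grid spacing you obtain is only $c_n\varepsilon = O\!\left(c_n/(C_n\sqrt{n}\log n)\right)$. The spacing hypothesis of \Cref{seperate}, namely $\log(p_n)/\sqrt{n}=o(c_n\varepsilon)$, then reads
\[
\frac{d'_n\log(nC_n)}{\sqrt{n}} \;=\; o\!\left(\frac{c_n}{C_n\sqrt{n}\log n}\right),
\qquad\text{i.e.}\qquad d'_nC_n\log(nC_n)\log n \;=\; o(c_n),
\]
which is \emph{strictly stronger} than the proposition's assumption $d'_nC_n\log(nC_n)=o(c_n n^{1/4})$; your ``rewrites as'' step is therefore incorrect. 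More generally, any $\varepsilon$ small enough for the deterministic bound forces $\varepsilon=o(1/(C_n\sqrt{n}))$, and then the best achievable condition is $d'_nC_n\log(nC_n)=o(c_n)$ --- you lose exactly the $n^{1/4}$ factor. The paper recovers it by taking a \emph{coarser} net ($\epsilon_n\sim n^{-1/8}$) and controlling the discretization error \emph{stochastically}: for $\omega,\omega^*$ within $\epsilon_n$, the increments $f_n(\xi_i,\omega)-f_n(\xi_i,\omega^*)$ are pointwise bounded by $C_n\epsilon_n$, so by \Cref{casser} the maximum of their $n^{-1/2}$-normalized sums over a finite auxiliary grid is $O(\log(q_n)C_n\epsilon_n)$ rather than $O(\sqrt{n}C_n\epsilon_n)$. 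This maximal-inequality step is the missing ingredient.

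A minor additional point: to invoke \eqref{minimax:lip:2} for a lower bound on the grid spacing you need distinct grid points to be at distance $\ge\varepsilon$, i.e.\ a \emph{packing}, not just a covering; the paper uses a maximal packing for exactly this reason.

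For Part~2 your plan is essentially the paper's: discretize, apply the non-spacing grid lemma (the paper's \Cref{seperate_2}, which routes through \Cref{thm_4} and the CCK bound), then observe that the grid Gaussian quantile is dominated by the continuous one $t_{g,n}^{*,\beta/4}$. Just note that the threshold statement in \Cref{prop3} itself concerns the \emph{centered} bootstrap, whereas here you work with the uncentered $Z^n$; the correct grid tool is \Cref{seperate_2}.
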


\section{P-value of a Two-Sample Kernel Test}\label{sec:p_value}

In this subsection, we show how the bootstrap method can be used to obtain consistent p-values for kernel two sample tests. Given two independent i.i.d processes $(X_{i,1}^n)$ and $(X_{i,2}^n)$ taking value in $\mathcal{X}_n\subset\RR^{d_n}$, the goal of two-sample tests is to determine if the observations $(X_{i,1}^n)$ and $(X_{i,2}^n)$ are sampled from the same distribution. 
For ease of notations, we designate by $\mu_{n,1}$ and $\mu_{n,2}$ respectively the distribution of the first sample and second sample; and want test if the null hypothesis holds $$(H_0^{n}):  \mu_{n,1}=\mu_{n,2}$$ against the alternative $$(H_1^n): \mu_{n,1}\ne \mu_{n,2}.$$ A popular method to do so are non-parametric kernel two samples tests \cite{liu2020learning,gretton2012kernel,wilson2016deep,wenliang2019learning}.

\noindent Let $\mcF_n$ be a class of functions from $\mathcal{X}_n$ into $\RR$. If the two distributions are the same $\mu_{n,1}=\mu_{n,2}$ then we have:$$\sup_{f\in \mcF_n}\big|\E(f(X_{1,1}^n ))-\E(f(X_{1,2}^n))\big|=0.$$
Moreover if $\mcF_n$ is dense in the space of bounded continuous functions then the opposite also holds. The main difficulty therefore consists of choosing the set $\mcF_n$ to be big enough to differentiate between the distributions $\mu_{n,1}$ and $\mu_{n,2}$ but structured enough that we can estimate of $\sup_{f\in \mcF_n}\big|\E(f(X_{1,1}^n))-\E(f(X_{1,2}^n))\big|$. To do so, we choose a reproducing kernel space $\mathcal{H}_n$ with kernel $K_n:\mathcal{X}_n\times \mathcal{X}_n\rightarrow \RR$ and set the class of functions $\mcF_n$ to be the unit ball of $\mathcal{H}_n$. Different choices of kernels will lead to various level of power  of our test especially for structured or high dimensional data. The goal is to choose the kernel that is the most likely to maximize the power of the test. 
 
\noindent Let $\big(K_{\theta_k}(\cdot,\cdot)\big)_{k\le p_n}$ be a finite set of potential Kernel candidates.
We write for all $i,j\le n$ and for all $k\le p_n$
$$H_{i,j}^{\theta_k}:=K_{\theta_k}(X^{n}_{j,1},X^{n}_{i,1})+K_{\theta_k}(X^{n}_{j,2},X^{n}_{i,2})-K_{\theta_k}(X^{n}_{j,1},X^{n}_{i,2})-K_{\theta_k}(X^{n}_{j,2},X^{n}_{i,1});$$
and for all subsets $B\subset \dbracket{n}$ we denote $\hat{M}_{\theta_k}(X^n_{B}):=\frac{1}{|B|^2}\sum_{i,j\in B}H_{i,j}^{\theta_k}$.
~The idea proposed in \cite{liu2020learning} is to select the  kernel that gives rise to a test with the highest (estimated) power. This is done by  selecting a subset $B_n\subset\dbracket{n}$ and maximizing the following quantity $\hat{\theta}_n^{B_n}:={\rm argmax}_{\theta\in \{\theta_1,\dots,\theta_{p_n}\}}~ p_{\theta}(X^{n}_{B_n})$ where we have set 
$$p_{\theta}(X^{n}_{B_n}):=\frac{\hat{M}_{\theta}(X^n_{B_n})}{\frac{4}{|B_n|^3}\sum_{i\in B_n}\big[\sum_{j\in B_n} H_{i,j}^{\theta}]^2-\frac{4}{|B_n|^4}\Big[\sum_{i,j\le B_n} H_{i,j}^{\theta}\Big]^2+\lambda_n}$$ where $(\lambda_n)$ are tuning parameters. Once the kernel is chosen the test statistics is computed on $\dbracket{n}
\setminus B_n$ the remaining data: $\frac{1}{n^2}\sum_{i,j\in \dbracket{n}\setminus B_n} H^{\hat{\theta}_n}_{i,j}$. The fact that the kernel is chosen on a different sample than the test statistics is computed on, means that the conditional limiting distribution of the test statistics, under $H_0$, is known to be a chi-square \cite{liu2020learning}. Hence one can compute a consistent estimate of the p-value. However under this approach only a portion of the data is used to select the kernel. This could be problematic when dealing with high-dimensional kernels. 

\noindent We propose a different method that does not require data splitting and uses the bootstrap method to estimate the p-value. The test statistics that we propose is a softmax: 
$$\hat{T}_n(X^n):= \sum_{k\le p_n} \frac{1}{n^2}\sum_{i,j\le n}  H^{\theta_k}_{i,j}~ \omega_k(X_{1:n}),\qquad {\rm where}~\omega_k(X_{1:n}):=\frac{e^{\beta_np_{\theta_k}(X^n)}}{\sum_{k'\le p_n}e^{\beta_np_{\theta_k'}(X^n)}}~;$$ and where  $(\beta_n)$ are hyper-parameters. 
The bigger $\beta_n$ is the more weight we give to the kernel maximizing $p_{\theta}(X^n)$.

\noindent We note that the distribution of $\hat T_n$ is unknown and depends in an intricate fashion on the set of kernels $\{K_{\theta_k},~k\le p_n\}$ as well as on $p_n$. Therefore to be able to compute the p-value we want to estimate its distribution under $H_0$. In this goal, we remark that under the null hypothesis the distribution of $X_{i,1}^n$ and $X_{i,2}^n$ are the same which implies that the samples are interchangeable $\bp{X_{i,1}^n,X_{i,2}^n}\overset{d}{=}\bp{X_{i,2}^n,X_{i,1}^n}$. It is therefore natural to compare the distribution of $(X^n_i)$ to the corresponding randomly permuted process.  This is the idea behind permutation tests \cite{liu2020learning}. In general, for an i.i.d random process $(\tilde X_i)$ taking value in $\RR^2$ we define the process  $(\tilde X^M_i)$ obtained by randomly permuting the observations $\tilde X_{i,1}$ and $\tilde X_{i,2}$: $$\tilde X^M_{i}:=\begin{cases}\tilde X_i~{\rm with~probability~}0.5\\(\tilde X_{i,2},\tilde X_{i,1})^T~{\rm with~probability~}0.5.\end{cases}$$ We note that this permuted process has coordinates with identically distributed coordinates $\tilde X^M_{1,1}\overset{(a)}{=}\tilde X^M_{1,2}$. Moreover, we have $d_{W}\bp{\tilde X_1,\tilde X_1^M}\le d_{W}\bp{\tilde X_{1,1},\tilde X_{1,2}}$ and if the distribution of $(\tilde X_i)$ are already in $H_0$ then its distribution is left invariant by those permutations. We show that the bootstrap method allows us to estimate the p-value consistently even when $p_n$ grows exponentially fast (proof in \Cref{app:nulle}).

\begin{prop}\label{nulle}
Let $(X_i^n):=\big((X^{n}_{i,1},X^{n}_{i,2})\big)$ be a triangular array of i.i.d processes. Let $\{K_{\theta_k},~k\le p_n\}$ be a sequence of positive definite continuous kernels. We suppose that $$\max_{k\le p_n}\rm{tr}(K_{\theta_k})<\infty;\quad \frac{\beta_n\log(p_n)D_n^4}{\lambda_n^2}=o({n}^{1/6});$$ where we shorthanded $D_n:=\max\bp{\big\|\sup_{k\le p_n}K_{\theta_k}(X_{1,1}^{M},X_{1,1}^{M})\big\|_{L_{120}},~1}$. Let $(Y_i^n)$ be an independent copy of $(X_i^n)$ and $(Z^n_{i})$ be bootstrap samples of $(X_i^n)$. We have:
$$\Big\|d_{\mcF}\Big(n\hat{T}_n( Z^M_{1:n}),n\hat{T}_n(Y^M_{1:n})\mid X^n\Big)\Big\|_{L_1}\rightarrow 0.$$
\end{prop}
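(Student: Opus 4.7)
The plan is to invoke \Cref{thm1} on the random function $g_n(u_{1:n}) := n\hat{T}_n(u_{1:n}^M)$, where the permutation $u\mapsto u^M$ is driven by i.i.d.\ Rademacher signs $(\epsilon_i)_{i\le n}$ independent of $(X^n, Y^n, Z^n)$; the remark following \Cref{thm1} justifies the extension to such random estimators, with the Rademacher randomness treated as internal to $g_n$ and averaged over only at the end. A four-case check on $(\epsilon_i, \epsilon_j)$ reveals the identity $H^{\theta_k,M}_{i,j}=\epsilon_i\epsilon_j\, H^{\theta_k}_{i,j}(u)$, which renders $\hat{M}_{\theta_k}(u^M)$ and the two quadratic denominator terms of $p_{\theta_k}(u^M)$ symmetric polynomials in $(\epsilon_i)$. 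Since each $K_{\theta_k}\in C^3$ and softmax is $C^\infty$, $g_n$ itself is $C^3$ for every realization of $(\epsilon_i)$, and \eqref{ass:1} holds with $f_n=g_n$. Invoking \Cref{thm1} then yields $\|d_{\mcF}(n\hat{T}_n(Z^M),\, n\hat{T}_n(\tilde{Y}^M)\mid X^n)\|_{L_1}\to 0$, and the remaining step is to pass from $\tilde Y^M$ to $Y^M$ via a uniform-perturbation argument in the spirit of \Cref{thm2}.

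The bulk of the work is the derivative bounds in \eqref{ass:2}. A single coordinate $x_{i_0,k}$ enters $O(n)$ of the summands $H^{\theta_k}_{i_0,j}$ and $H^{\theta_k}_{j,i_0}$, each with partial derivatives of order $O(D_n^r)$ at differentiation order $r$, giving (after the $1/n$ prefactor) an $H$-contribution of order $O(D_n^r)$ with further cancellation in the conditional mean by the Rademacher signs. The softmax derivatives cost $O(\beta_n)$ per differentiation applied to $p_{\theta_k}$, whose denominator is bounded below by $\lambda_n$ (by Cauchy--Schwarz on the two quadratic terms, showing their difference is nonnegative) and whose own derivatives involve $\hat{M}_\theta$-quantities of size $O(D_n^2/n)$; propagating through three orders, the worst contributions are of order $\beta_n D_n^{2r}/\lambda_n^r$ times moderate factors. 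A maximal inequality of the form $\|\sup_{k\le p_n}(\cdot)\|_{L_{12}}\lesssim \log(p_n)^{1/2}\max_k\|(\cdot)\|_{L_{c\cdot 12}}$ converts the supremum over $p_n$ kernels into a single $\log(p_n)$ factor, and the resulting bounds give $R_{n,1}, R_{n,2}, R_{n,3} = O(\beta_n\log(p_n)D_n^4/(\lambda_n^2\, n^{1/6})) = o(1)$ by hypothesis.

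For the passage from $\tilde Y^M$ to $Y^M$, we verify \eqref{eqn:uniform} for $g_n$. Under a common shift $\gamma$ with $\|\gamma\|\le B/\sqrt n$, a first-order Taylor expansion gives $g_n(X^n+\gamma)-g_n(X^n) = \langle \gamma,\nabla g_n(X^n)\rangle + O(B^2/n)\,\|\nabla^2 g_n(X^n)\|$, with the centered fluctuation controlled by the same derivative estimates; this yields $r^{n,B}\to 0$ and, by the argument of \Cref{thm2}, delivers the bridge from $\tilde Y^M$ to $Y^M$. The main obstacle is the bookkeeping in the derivative bounds: to keep the $p_n$-dependence logarithmic rather than polynomial, one must carefully organize the softmax derivatives as sums indexed by $k$ and apply the $L_{12}$ maximal inequality at the right moment, while retaining only the powers $D_n^4$ and $\lambda_n^{-2}$ in the final bound so that the budget $\beta_n\log(p_n)D_n^4/\lambda_n^2=o(n^{1/6})$ is actually sufficient.
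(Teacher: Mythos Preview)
Your proposal has a structural gap that the paper's proof resolves by a change of variables you do not perform. You assert that ``each $K_{\theta_k}\in C^3$'', but the proposition assumes only continuous positive-definite kernels with finite trace; without kernel smoothness, $u\mapsto n\hat T_n(u^M)$ is not $C^3$ in the original observations, so \eqref{ass:1} with $f_n=g_n$ fails and \Cref{thm1} does not apply. Even if one added a $C^3$ hypothesis, your derivative bounds would have to involve $\partial_x K_{\theta_k}(x,y)$, which is not controlled by the quantity $D_n$ (defined solely through the diagonal $K_{\theta_k}(X,X)$), so the claimed estimates $R_{n,j}=O(\beta_n\log(p_n)D_n^4/(\lambda_n^2 n^{1/6}))$ do not follow from the stated assumptions.

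The paper avoids both problems by passing to the Mercer representation: write $K_{\theta_k}=\sum_l\lambda_{l,k}\psi_{l,k}\otimes\psi_{l,k}$, truncate to rank $l_n$ using the trace bound, and define new observations $X^*_{i,l,k}:=\psi_{l,k}(X^M_{i,1})-\psi_{l,k}(X^M_{i,2})$ in the eigenfunction coordinates. In this parametrization $H^{\theta_k}_{i,j}=\sum_l\lambda_{l,k}X^*_{i,l,k}X^*_{j,l,k}$, so $n\hat T_n$ becomes a smooth (rational/exponential of polynomials) function $g_n$ of the $X^*$, regardless of how rough the $\psi_{l,k}$ are; the moment condition on $\sup_k K_{\theta_k}(X,X)=\sup_k\sum_l\lambda_{l,k}\psi_{l,k}(X)^2$ is precisely what controls the new coordinates. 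The derivative bounds are then carried out coordinatewise in this $(p_n\times l_n)$-indexed space and checked against \Cref{jardin}'s condition $(H_1^*)$. Finally, because the permutation $M$ forces $X^M_{i,1}\overset{d}{=}X^M_{i,2}$ (and likewise $Z^M_{i,1}\mid X^n\overset{d}{=}Z^M_{i,2}\mid X^n$), the transformed observations have zero conditional mean, so there is no $\tilde Y$ versus $Y$ discrepancy to bridge and the result follows directly from \Cref{jardin}: your proposed appeal to \eqref{eqn:uniform} is not needed, and in any case your one-line Taylor sketch for it is far too coarse to establish $r^{n,B}\to 0$. Your identity $H^{\theta_k,M}_{i,j}=\epsilon_i\epsilon_j H^{\theta_k}_{i,j}$ is correct and is morally the same symmetry the paper exploits, but you do not push it to the change of variables that makes the statistic smooth, nor to the mean-zero observation that eliminates the centering step.
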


\section{Empirical Risk of Smooth Stacked Ensemble Estimator}
\label{sec:stacking}

A ubiquitous and popular approach for model selection and ensembling in machine learning practice is known as stacking \cite{wolpert1992,breiman1996stacked,Laan2007}. Given a set of trained \emph{base estimators} $\{\hat{\theta}^1,\dots,\hat{\theta}^{p_n}\}$, for example representing a fitted neural network, a random forest and a nearest-neighbour estimator, we call the \emph{smooth-stacked estimator} the linear ensemble of those estimators $\{\hat{\theta}^k\}$ weighted by coefficients that are related to the out-of-sample risk of each estimator. An important question: if we use all the samples to estimate the weights of the ensemble, then can we construct confidence intervals on the risk of the ensemble estimator? 

\noindent The most straightforward version of stacking is to put all the weight on the model with the smallest out-of-sample risk. Other approaches proposed in practice are to fit a linear regression model using the outputs of each model as an input co-variate to the linear model and using the learned coefficients as coefficients on the ensemble \cite{Laan2007}. 

\noindent In this subsection, we propose a smooth version of stacking that adds stability to the chosen ensemble, while putting most weight on the best performing model. This ensemble can be viewed as a regularized instance of the linear regression stacking approach where an entropic regularizer is added to the square loss objective. This regularization adds smoothness and stability to the chosen ensemble and allows us to show that the distribution of the ensemble's risk can be estimated with the bootstrap, even if the all the data are used to estimate the weights or fit the base models.

\noindent Let $(X_i^n)$ be a triangular array of i.i.d observations taking value in $\RR^{d_n}$; and let $(m_n)$ be  an increasing sequence. {Define $\mathcal{F}_n$ as the space of measurable functions from $\times_{n=1}^{\infty} \RR^{d_n}$ to $\RR^{d'_n}$.} We estimate $p_n$ different estimators  $\Omega_n:=\big\{\hat\theta_n^k(X_{1:m_n}^n),~k\le p_n\big\}$ built on the first $m_n$ data-points. The loss is measured by  a common loss function {$\mathcal{L}_n:\RR^{d_n}\times\RR^{d'_n}\rightarrow\RR$} and the empirical risk of the $k$-th estimator is computed on all the remaining $n-m_n$ data points as:
\begin{equation}
\mcR^k_n(x_{1:n}):= \frac{1}{n-m_n}\sum_{u= m_n+1}^{n} \mathcal{L}_n(x_u,\hat{\theta}^{k}_n(x_{1:m_n})(x_u)).    
\end{equation}
The smooth-stacked estimator is defined as the following  ensemble learner \begin{equation}
    \hat\Theta_n(x_{1:n})=\sum_{k\le p_n}\hat\theta^k_n(x_{1:m_n})\frac{e^{-\beta_n\mcR^k_n(x_{1:n})}}{\sum_{k'\le p_n}e^{-\beta_n\mcR^{k'}_n(x_{1:n})}}.
\end{equation}
We denote the empirical risk of an estimator $\Theta$ as
{\begin{equation}
    \mcR^{{\rm s}}_{\Theta}(x_{1:n}):=\frac{1}{\sqrt{n-m_n}}\sum_{i= m_n+1}^{n} \mathcal{L}_n\big(x_i,\Theta(x_i)\big).
\end{equation}}
Let $(Z^n)$ be a bootstrap sample of $(X^n_i)_{i\ge m_n}$. We show that  the bootstrap method is systematically consistent if  and only if  $\beta_n=o(\sqrt{n-m_n})$. 
For simplicity we suppose that the estimators $\hat\theta_n^k$  have bounded coordinates; and that the loss function $\mathcal{L}_n$ is smooth, and have bounded partial derivatives in its second argument.%

\noindent We write the set of all convex combinations of the estimators:
\begin{equation}
\Omega\bp{\bc{\theta_p,\, p\le p_n }}:=\bc{ \sum_{p\le p_n} \omega_p \theta_p 
\mid \omega_p\ge 0 ~{\rm and}~ \sum_{p\le p_n}\omega_p=1 }
\end{equation}
and introduce the following notations:
\begin{equation}
\begin{split}
T_{n}:=~& \sup_{\ell\le d'_n}\bn{ \sup_{p\le p_n}\ba{\hat\theta^p_{n,\ell}(X_{1:m_n}^n)} }_{L_{\infty}}\lor 1,\\
L_{n}:=~& \bn{ \sup_{p\le p_n}\ba{\mathcal{L}_n\bp{ X_{n}^n,\hat\theta^p_{n}(X^n_{1:m_n})(X_n^n) } }}_{L_{\infty}}\lor~\sup_{\ell \le d'_n} \bn{ \sup_{\theta\in \Omega\bp{\bc{\hat{\theta}_p(X_{1:m_n})~p\le p_n }}} \partial_{2,\ell} \ba{\mathcal{L}_n(X_{n}^n,\theta(X_n^n))} }_{L_{\infty}}\lor 1,
\end{split}\end{equation} where by $\partial_{2,\ell}\mathcal{L}_n(x,y)$ we designate $\partial_{y_{l}}\mathcal{L}_n(x,y)$. We show that if the following hypothesis \eqref{ass:h1ss} holds then the bootstrap method is asymptotically consistent (proof in \Cref{app:common_stacking_msbien}). 
\begin{align}
    \frac{\beta_n}{\sqrt{n-m_n}d'_n} L_nT_{n}~e^{\frac{\beta_n}{n-m_n}L_n}\longrightarrow 0. \tag{$H_1^{\rm stacked}$} \label{ass:h1ss}
\end{align}

\begin{prop}\label{common_stacking_msbien}
Choose $(m_n)$, $(\beta_n)$ and $(p_n)$ be increasing sequences.
Let $(X_i^n)$ be a triangular array of i.i.d observations taking value in $\RR^{d_n}$. Set $(\mathcal{L}_n:\RR^{d_n}\times \RR^{d_n'}\rightarrow \RR)$ to be a sequence of smooth  loss functions. Let $(Z_i^n)$ and $(Y_i^n)$ be respectively a bootstrap sample and an independent copy of $(X_{m_n+1}^n,\dots,X_{n}^n)$. 
Suppose that the hypothesis \eqref{ass:h1ss} holds then we have:
\begin{equation}
 \begin{split}
     \Big\|d_{\mcF}\Big(&\mcR^{{\rm s}}_{\hat\Theta_n}(Z^n_{m_n+1:n})-\E\big[\mcR^{{\rm s}}_{\hat\Theta_n}(Z^n_{m_n+1:n})\big|\hat\Theta_n\big],~ \mcR^{{\rm s}}_{\hat\Theta_n}(Y^n_{m_n+1:n})-\E\big[\mcR^{{\rm s}}_{\hat\Theta_n}(Y^n_{m_n+1:n})\big|\hat\Theta_n\big]\mid X^n\Big)\Big\|_{L_1}\rightarrow 0;
 \end{split}
\end{equation}
where we have shorthanded $\hat\Theta_n:=\hat\Theta_n(X^n)$.
Therefore if we choose $t_{n,\alpha}(X^n)$ to be such that:
\begin{equation}
 \begin{split}
   P\bp{\ba{\mcR^{{\rm s}}_{\hat\Theta_n}(Z^n_{m_n+1:n})-\E\big[\mcR^{{\rm s}}_{\hat\Theta_n}(Z^n_{m_n+1:n})\big|\hat\Theta_n\big]}\ge t_{n,\alpha}(X^n)\mid X^n}\le \alpha
 \end{split}
\end{equation}
then the following holds
\begin{equation}
 \begin{split}
  \limsup_{n\rightarrow \infty} P\bp{\ba{\mcR^{{\rm s}}_{\hat\Theta_n}(Y^n_{m_n+1:n})-\E\big[\mcR^{{\rm s}}_{\hat\Theta_n}(Y^n_{m_n+1:n})\big|\hat\Theta_n\big]}\ge t_{n,\alpha}(X^n)\mid \hat\Theta_n}\le \alpha
 \end{split}
\end{equation}
\end{prop}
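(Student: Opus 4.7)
The plan is to reduce to \Cref{cor1} applied to a $C^3$-smooth surrogate and then reconcile the surrogate with the actual statistic. Condition throughout on the training sample $X^n_{1:m_n}$, so the base estimators $\hat{\theta}^k_n := \hat\theta^k_n(X^n_{1:m_n})$ become fixed measurable functions (sufficiently smooth for the composition $x\mapsto \mathcal{L}_n(x,\hat\theta^k_n(x))$ to inherit the $C^3$ regularity of $\mathcal{L}_n$). Introduce the deterministic surrogate
\begin{equation*}
\tilde{g}_n(x_{m_n+1:n}) := \frac{1}{\sqrt{n-m_n}} \sum_{u=m_n+1}^{n} \mathcal{L}_n\bp{x_u,\, \sum_{k\le p_n} \omega_k(x_{m_n+1:n})\,\hat{\theta}^k_n(x_u)},
\end{equation*}
where $\omega_k(\cdot)$ denotes the softmax weight $e^{-\beta_n \mcR^k_n([X^n_{1:m_n},\cdot])}/\sum_{k'}e^{-\beta_n \mcR^{k'}_n([X^n_{1:m_n},\cdot])}$. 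The target statistic $\mcR^{{\rm s}}_{\hat\Theta_n(X^n)}(\cdot)$ differs from $\tilde g_n(\cdot)$ only in that its softmax weights are frozen at the original sample $X^n$ rather than evaluated at the argument; this mismatch is removed at the end via concentration of $\mcR^k_n$.

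The technical core is verifying \Cref{ass:approx} with $f_n := \tilde g_n$. Each differentiation of a softmax weight $\omega_k$ with respect to a coordinate $x_{i,k_1}$ produces a chain-rule factor of $\beta_n/(n-m_n)$ times a bounded derivative of $\mathcal{L}_n$ in its second argument (of size $L_n$); each evaluation of $\hat\theta^k_n$ is controlled by $T_n$; and the prefactor of $\tilde g_n$ supplies a $1/\sqrt{n-m_n}$. Careful bookkeeping of the multivariate chain rule yields partial-derivative estimates of the schematic form $D^n_{k_{1:j}}(\tilde g_n) = O\bp{\tfrac{L_n T_n \prod_{\ell\le j} M^n_{k_\ell}}{\sqrt{n-m_n}}\bp{\tfrac{\beta_n L_n}{n-m_n}}^{j-1} e^{\beta_n L_n/(n-m_n)}}$ for $j=1,2,3$, where the exponential factor arises from bounding softmax derivatives uniformly on the convex segment $[\bar X^n, Z^n_1]\cup[\bar X^n, \tilde Y^n_1]$ appearing in the definition of $D^n_{k_{1:3}}$. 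Summing over $k_1,k_2,k_3\le d_n$ and invoking \eqref{ass:h1ss} yields $R_{n,1},R_{n,2},R_{n,3}=o(1)$; the approximation condition \eqref{ass:1} is trivial because $f_n=\tilde g_n$.

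With \Cref{cor1} in hand (applied conditional on $X^n_{1:m_n}$), it remains to (i) replace the recentred copy $\tilde Y^n$ by the true copy $Y^n$ and (ii) replace $\tilde g_n$ by the actual statistic $\mcR^{{\rm s}}_{\hat\Theta_n(X^n)}$. For (i), we verify \eqref{eqn:uniform}: a uniform shift $x_u\mapsto x_u + c/\sqrt{n}$ perturbs each $\mcR^k_n$ by $O(L_n/\sqrt{n})$, each softmax weight by $O(\beta_n L_n/\sqrt{n})$, and each summand of $\tilde g_n$ by $O(L_n/\sqrt{n})$, all of which vanish in $L_1$ under \eqref{ass:h1ss}; \Cref{thm2} then gives the comparison against $Y^n$. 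For (ii), concentration of $\mcR^k_n$ around $\E[\mcR^k_n\mid X^n_{1:m_n}]$ shows the weight vectors $\omega(X^n)$ and $\omega([X^n_{1:m_n},Z^n])$ agree in $\ell_\infty$-$L_1$ up to $O(\beta_n/\sqrt{n-m_n})$, and likewise with $Y^n$; combined with the $O(L_n T_n)$ Lipschitz dependence of $\mcR^{{\rm s}}_\theta$ on the weights, this yields $\bn{\mcR^{{\rm s}}_{\hat\Theta_n(X^n)}-\tilde g_n}_{L_1}\to 0$ at both $Z^n$ and $Y^n$. The quantile consequence in the final display then follows from the $L_1$ bound on $d_\mcF$ via \Cref{prop1}. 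The chief obstacle is the third-order derivative bookkeeping in the second paragraph: three consecutive chain-rule passes through the softmax must be tracked without losing the exponential factor, which is precisely what fixes the shape of \eqref{ass:h1ss}.
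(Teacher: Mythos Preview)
Your route differs substantially from the paper's, and it contains a real gap in step~(ii).

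\textbf{What the paper does.} The paper never differentiates a nonlinear function of the raw samples $x_{m_n+1:n}$. Instead it introduces the estimator $\hat\Theta_n^Y:=\hat\Theta_n(X^n_{1:m_n}Y^n_{m_n+1:n})$ whose softmax weights are computed on an \emph{independent} copy $Y^n$, and uses an Efron--Stein argument to show that the centered difference
\[
\frac{1}{\sqrt{n-m_n}}\sum_i\bigl[\mathcal{L}_n(X^n_{i+m_n},\hat\Theta_n)-\mathcal{L}_n(X^n_{i+m_n},\hat\Theta_n^Y)\bigr]
\]
has vanishing variance under \eqref{ass:h1ss}. After this decoupling the process $\bigl(\mathcal{L}_n(X^n_{i+m_n},\hat\Theta_n^Y)\bigr)_i$ is exchangeable (conditionally on $X^n_{1:m_n},Y^n$), and one applies \Cref{thm8} to the \emph{linear} statistic $g_n(x_{1:n})=\frac{1}{\sqrt{n}}\sum_i x_i$ on scalar inputs. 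No chain-rule bookkeeping through the softmax or through $\hat\theta^k_n(\cdot)$ is ever needed.

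\textbf{The gap in your step (ii).} You assert an ``$O(L_nT_n)$ Lipschitz dependence of $\mcR^{\rm s}_\theta$ on the weights'', but $\mcR^{\rm s}_\theta(x)=\frac{1}{\sqrt{n-m_n}}\sum_u\mathcal{L}_n(x_u,\theta(x_u))$ is a sum of $n-m_n$ terms divided only by $\sqrt{n-m_n}$; its Lipschitz constant in $\theta$ is $O(\sqrt{n-m_n}\,L_n d'_n)$, not $O(L_nT_n)$. Multiplying by your weight discrepancy $O(\beta_n/\sqrt{n-m_n})$ yields $O(\beta_n L_n d'_n T_n)$, which does not vanish under \eqref{ass:h1ss}. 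One might hope that centering saves you, but in $\tilde g_n(Z^n)$ the weights $\omega(Z^n)$ depend on \emph{all} of $Z^n$, so the summands are not conditionally independent and the naive variance bound again gives $O(1)$. Closing this gap requires exactly the decoupling trick the paper uses.

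\textbf{A secondary issue.} Your verification of \eqref{ass:2} for $\tilde g_n$ differentiates with respect to the raw coordinates $x_{i,k_1}$, which passes through $x\mapsto\hat\theta^k_n(x)$ and through the first argument of $\mathcal{L}_n$. The proposition only assumes bounds on $\hat\theta^k_n$ itself (via $T_n$) and on $\partial_{2,\ell}\mathcal{L}_n$ (via $L_n$); no quantitative bounds on $\partial_x\hat\theta^k_n$ or $\partial_1\mathcal{L}_n$ are available, so your schematic derivative estimate cannot be justified from the stated hypotheses. The paper sidesteps this entirely by reducing to a linear function of the scalar losses $\mathcal{L}_n(X^n_{i+m_n},\hat\Theta_n^Y)$.
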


\noindent If $\beta_n$ grows proportionally to  $\beta_n \propto \sqrt{n-m_n}$ then the bootstrap method is not a systematically consistent estimator of the risk of the smooth stacked estimator. We present a simple example illustrating this.
\begin{example}\label{nulle_mt}
Let $(X_i)$ be a process of i.i.d random variables taking value in $\RR$. Suppose that $X_{1}\sim N( 0,1)$. We choose $m_n=\lfloor n/2\rfloor$ and $\beta_n=\sqrt{n}$ and define the estimators $(\hat\theta_n^1,\hat\theta_n^2)$ as constantly equal to $$\hat\theta_n^1(X_{1:n}):=1~\quad {\rm and}~\quad \hat\theta_n^2(X_{1:n}):=-1.$${We shorthand by $\hat \Theta_n$ the corresponding stacked estimator.} We choose the loss function $\mathcal{L}$ to be the square loss $\mathcal{L}(x,\theta):=(x-\theta)^2$. Let $(Z_1,Z_2)\sim N(0,\begin{bmatrix}4 & 0\\ 0&1\end{bmatrix})$ be a Gaussian vector. 
Then the asymptotic centered distribution of the empirical loss is $Z_1+Z_2 {\rm tanh}(4Z_2)$.
However the asymptotic distribution of the bootstrap empirical loss  is $$\mcR^{{\rm s}}_{\hat\Theta_n}( Z^n_{m_n+1:n}) -\E(\mcR^{{\rm s}}_{\hat\Theta_n}( Z^n_{m_n+1:n})\big|\hat{\Theta}_n)\xrightarrow{d}
Z_1+Z_2{\rm tanh}\Big(4Z_2+4\sqrt{n}\bar{X}_{m_n+1:n}\Big).$$
Therefore  the bootstrap method is not asymptotically consistent.
\end{example}

\noindent{As \cref{nulle_mt} just demonstrated the bootstrap method is not in general consistent for the empirical risk of the stacked estimator. One of the reason for this is the dependence between the weights of the stacked model  and the bootstrap samples $Z^n$. We therefore slightly adapt the bootstrap method by bootstrapping both the weights and the observations on which the loss is estimated. We establish the limiting distribution of this boostrap estimator as long as $\beta_n=O(\sqrt{n-m_n})$. }

\noindent For ease of notations, we denote \begin{equation}\begin{split}&
L_{n}^*:=\sup_{\substack{i\le 3\\l_{1:i}\le d'_n}}\bn{\max_{\substack{p\le p_n}}\ba{\partial^i_{2,l_{1:i}}\mathcal{L}_n\bp{X^n_n,\hat\theta^p_{n}(X^n_{1:m_n})(X^n_n)}}}^{1/i}_{L_{\infty}}\lor 1.\end{split}\end{equation}%
We establish the limiting distribution of our bootstrap estimate under the following hypothesis: 
\begin{align}
\frac{\beta_nd'_n\log(p_n)^{2/3}}{{(n-m_n)}^{2/3}}L_nT_n\longrightarrow 0 \tag{$H_1^{\rm st~bis}$} \label{ass:h1bis}
\end{align}

 \begin{prop}\label{nulle_mt2}
 Choose $(m_n)$, $(\beta_n)$ and $(p_n)$ be increasing sequences.
Let $(X_i^n)$ be a triangular array of i.i.d observations taking value in $\RR^{d_n}$. Set $(\mathcal{L}_n:\RR^{d_n}\times \RR^{d_n'}\rightarrow \RR)$ to be a sequence of smooth loss functions verifying condition $(H_1^{\rm st~bis})$.
Let $(Z_i^{n,1})$ and  $(Z_i^{n,2})$ be independent bootstrap samples; and $(Y_i^{n,1})$ and $(Y_i^{n,2})$ be independent copies of $(X_{m_n+1:n}^n)$.

\noindent Then we have:
  \begin{equation}
     \begin{split}         \Big\|d_{\mcF}\Big(&\mcR^{{\rm s}}_{\hat\Theta_n^{Z^{n,2}}}(Z_{1:n-m_n}^{n,1})-\E\bp{\mcR^{{\rm s}}_{\hat\Theta_n^{Z^{n,2}}}(Z_{1:n-m_n}^{n,1})\mid\hat\Theta_n^{Z^{n,2}},X^n},\\&\quad \mcR^{{\rm s}}_{\hat\Theta_n^{'}}(Y_{1:n-m_n}^{n,1})-\E\bp{\mcR^{{\rm s}}_{\hat\Theta_n^{'}}(Y_{1:n-m_n}^{n,1})\Big|\hat\Theta_n^{'}}\mid X^n\Big)\Big\|_{L_1}\rightarrow 0;
     \end{split}
 \end{equation} where we have set $\hat\Theta_n^{Z^{n,2}}:=\hat\Theta_n(X^n_{1:m_n}Z_{1:n-m_n}^{n,2})$ and defined $$\hat\Theta_n^{'}:=\sum_{p\le p_n}\hat\theta_n^p(X_{1:m_n}^n)\frac{e^{-\beta_n \big[\mcR_n^p(X_{1:m_n}^nY_{1:n-m_n}^{n,2})+\tilde \mcR_n^p(X^n)\big]}}{\sum_{p'\le p_n}e^{-\beta_n \big[\mcR_n^{p'}(X_{1:m_n}^nY_{1:n-m_n}^{n,2})+\tilde\mcR_n^{p'}(X^n)\big]}};$$ where we wrote   $\tilde \mcR_n^p(X^n):=\mcR_n^p(X^n)-\sqrt{n-m_n}\E\big[\mathcal{L}_n(X_n^n, \theta_p^n(X_{1:m_n}^n))\big| \theta_p^n(X_{1:m_n}^n)\big]$.
 
\end{prop}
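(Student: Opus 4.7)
The strategy is to promote both sources of randomness in the statistic --- the data used to evaluate the loss and the data used to compute the stacking weights --- to simultaneous arguments of a single function on $2(n-m_n)$ observations, and then invoke \Cref{thm1}. Define
\[
G_n\bp{x^1_{1:n-m_n},\, x^2_{1:n-m_n}} := \mcR^{\rm s}_{\hat\Theta_n(X^n_{1:m_n}\, x^2_{1:n-m_n})}\bp{x^1_{1:n-m_n}},
\]
so that, conditionally on $X^n_{1:m_n}$, one has $\mcR^{\rm s}_{\hat\Theta_n^{Z^{n,2}}}(Z^{n,1}) = G_n(Z^{n,1}, Z^{n,2})$. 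Treating $(Z^{n,1}, Z^{n,2})$ as a single bootstrap sample of size $2(n-m_n)$ drawn from the pool $X^n_{m_n+1:n}$, and $(Y^{n,1}, Y^{n,2})$ as an independent copy, \Cref{thm1} applied conditionally on $X^n_{1:m_n}$ yields
\[
d_{\mcF}\bp{ G_n(Z^{n,1}, Z^{n,2}),\ G_n(\tilde Y^{n,1}, \tilde Y^{n,2}) \mid X^n } \xrightarrow{p} 0,
\]
provided \Cref{ass:approx} is verified for $G_n$.

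To verify \Cref{ass:approx}, I would bound the partial derivatives of $G_n$ block by block. A derivative in the evaluation block, $\partial_{x^1_i} G_n$, touches only the $i$-th loss term and is of order $L_n T_n/\sqrt{n-m_n}$. A derivative in the weight-learning block propagates through the softmax: the weight derivative obeys the identity $\partial_{x^2_i}\omega_k = -\beta_n\,\omega_k\,\bp{\partial_{x^2_i}\mcR_n^k - \sum_{k'}\omega_{k'}\partial_{x^2_i}\mcR_n^{k'}}$, which is of order $\beta_n L_n/(n-m_n)$, so summing the induced change over the $n-m_n$ loss terms gives $\partial_{x^2_i}G_n = O(\beta_n L_n^2 T_n/\sqrt{n-m_n})$. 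Iterating, each extra derivative of the softmax costs a further factor of $\beta_n L_n^{*}/(n-m_n)$, while contracting the parameter coordinates $k_{1:i}$ is handled by the sub-Gaussian concentration of $\max_{p\le p_n}$ in $\sqrt{\log p_n}$. The exponents in hypothesis \eqref{ass:h1bis} are precisely those needed to make $R_{n,1}, R_{n,2}, R_{n,3}$ all vanish.

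The final step is to identify $G_n(\tilde Y^{n,1}, \tilde Y^{n,2})$ with $\mcR^{\rm s}_{\hat\Theta_n^{'}}(Y^{n,1})$. Because $\tilde Y = Y - (\bar X^n - \E\bb{X_1^n})$ is $Y$ shifted by a common deterministic offset of size $O_p(1/\sqrt{n})$, inserting $\tilde Y^{n,2}$ into the softmax shifts each $\mcR_n^p(Y^{n,2})$ by an amount whose leading contribution equals $\mcR_n^p(X^n) - \E\bb{\mathcal{L}_n(X_n^n,\hat\theta_n^p)\mid \hat\theta_n^p}$, i.e. the correction $\tilde \mcR_n^p(X^n)$ that defines $\hat\Theta_n^{'}$; the higher-order terms of this Taylor expansion are controlled by the same derivative bounds used above, and the softmax is invariant to common shifts of its arguments. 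Passing to centered quantities on both sides as in \Cref{cor1} then produces the claim. The main obstacle, I expect, will be making this softmax Taylor expansion rigorous and showing its remainder is absorbed by \eqref{ass:h1bis}, since the effective Lipschitz constant of the weighting functional scales with $\beta_n$ and the combined scaling $\beta_n d_n' \log(p_n)^{2/3} L_n T_n = o((n-m_n)^{2/3})$ is needed to pin down all three remainder orders simultaneously.
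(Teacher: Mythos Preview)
Your joint approach has a structural mismatch with both the hypotheses and the target object, and the paper proceeds quite differently.

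The fatal issue is that you differentiate $G_n$ with respect to the \emph{raw} observations $x^1_i,x^2_i\in\RR^{d_n}$. Every such derivative passes through $\partial_1\mathcal{L}_n$, the derivative of the loss in its first argument. But the constants $L_n,L_n^*,T_n$ that enter \eqref{ass:h1bis} control only the size of $\mathcal{L}_n$ and of $\partial_2^{i}\mathcal{L}_n$; nothing in the assumptions bounds $\partial_1\mathcal{L}_n$. So your derivative estimates (``$\partial_{x^1_i}G_n=O(L_nT_n/\sqrt{n-m_n})$'', etc.)\ cannot be justified from the stated hypothesis, and \Cref{ass:approx} is not verifiable for $G_n$ on $\RR^{d_n}$.

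The same mismatch breaks your identification step. With $\tilde Y^{n,2}=Y^{n,2}+(\bar X^n-\E[X_1^n])$ a shift in $\RR^{d_n}$, a first-order expansion of $\mcR_n^p(\tilde Y^{n,2})$ gives
\[
\mcR_n^p(\tilde Y^{n,2})\approx \mcR_n^p(Y^{n,2})+\frac{1}{n-m_n}\sum_i\partial_1\mathcal{L}_n\!\bp{Y_i^{n,2},\hat\theta_n^p}\cdot\bp{\bar X^n-\E[X_1^n]},
\]
which is \emph{not} $\mcR_n^p(Y^{n,2})+\tilde\mcR_n^p(X^n)$. The correction $\tilde\mcR_n^p(X^n)=\mcR_n^p(X^n)-\E[\mathcal{L}_n(X_n^n,\hat\theta_n^p)\mid\hat\theta_n^p]$ is a centered \emph{loss} average, not a $\partial_1\mathcal{L}_n$-weighted shift of the raw mean; there is no reason for these to agree.

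The paper avoids both problems by changing variables to loss space and splitting via the triangle inequality. It first freezes $\hat\Theta_n^{Z^{n,2}}$ and applies the (exchangeable, random-function) version of \Cref{thm1} to the \emph{scalar} observations $X_i'=\mathcal{L}_n(X_{i+m_n}^n,\hat\Theta_n^{Z^{n,2}})$ with the linear statistic $F_{1,n}(x)=\tfrac{1}{\sqrt{n-m_n}}\sum_i x_i$; this handles the evaluation-data piece with no derivative conditions at all. It then freezes $Y^{n,1}$ and applies \Cref{jardin} to the $p_n$-vectors $X_i^*=\bigl(\mathcal{L}_n(X_{i+m_n}^n,\hat\theta_n^p)\bigr)_{p\le p_n}$, with the random statistic $F_{2,n}$ that maps the per-model loss sequence to the centered risk at the resulting softmax ensemble. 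Derivatives of $F_{2,n}$ in $x$ hit only the softmax weights and, through the ensemble, $\partial_2\mathcal{L}_n$---exactly what $L_n,L_n^*$ bound. And because the centering $\tilde Y^*=Y^*+(\bar X^*-\E[X^*])$ now lives in \emph{loss} space, the shift of the softmax argument is literally $\mcR_n^p(X^n)-\E[\mathcal{L}_n(\cdot,\hat\theta_n^p)]=\tilde\mcR_n^p(X^n)$, so $F_{2,n}(\tilde Y^*)=\mcR^{\rm s}_{\hat\Theta_n'}(Y^{n,1})$ by construction.
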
 

\noindent We notice that this implies that the bootstrap method is in general not consistent if $\beta_n=o(\sqrt{n-m_n})$, as it is illustrated in \cref{nulle_mt}.

\bibliography{references}

\begin{thebibliography}{10}

\bibitem{arcones1994u}
Miguel~A Arcones and Evarist Gin{\'e}.
\newblock U-processes indexed by vapnik-{\v{c}}ervonenkis classes of functions
  with applications to asymptotics and bootstrap of u-statistics with estimated
  parameters.
\newblock {\em Stochastic Processes and their Applications}, 52(1):17--38,
  1994.

\bibitem{belloni2015uniform}
Alexandre Belloni, Victor Chernozhukov, and Kengo Kato.
\newblock Uniform post-selection inference for least absolute deviation
  regression and other z-estimation problems.
\newblock {\em Biometrika}, 102(1):77--94, 2015.

\bibitem{beran1991asympotic}
Rudolf Beran and Gilles~R Ducharme.
\newblock {\em Asympotic theory for bootstrap methods in statistics}.
\newblock 1991.

\bibitem{beran1985bootstrap}
Rudolf Beran, Muni~S Srivastava, et~al.
\newblock Bootstrap tests and confidence regions for functions of a covariance
  matrix.
\newblock {\em The Annals of Statistics}, 13(1):95--115, 1985.

\bibitem{bickel1981}
Peter~J. Bickel and David~A. Freedman.
\newblock Some asymptotic theory for the bootstrap.
\newblock {\em Ann. Statist.}, 9(6):1196--1217, 11 1981.

\bibitem{bickel1983bootstrapping}
Peter~J Bickel and David~A Freedman.
\newblock Bootstrapping regression models with many parameters.
\newblock {\em Festschrift for Erich L. Lehmann}, pages 28--48, 1983.

\bibitem{billingsley2013convergence}
Patrick Billingsley.
\newblock {\em Convergence of probability measures}.
\newblock John Wiley \& Sons, 2013.

\bibitem{breiman1996stacked}
Leo Breiman.
\newblock Stacked regressions.
\newblock {\em Machine learning}, 24(1):49--64, 1996.

\bibitem{cattaneo2020bootstrap}
Matias~D Cattaneo, Michael Jansson, and Kenichi Nagasawa.
\newblock Bootstrap-based inference for cube root asymptotics.
\newblock {\em Econometrica}, 88(5):2203--2219, 2020.

\bibitem{chatterjee2011bootstrapping}
Arindam Chatterjee and Soumendra~Nath Lahiri.
\newblock Bootstrapping lasso estimators.
\newblock {\em Journal of the American Statistical Association},
  106(494):608--625, 2011.

\bibitem{chatterjee2005generalized}
Snigdhansu Chatterjee, Arup Bose, et~al.
\newblock Generalized bootstrap for estimating equations.
\newblock {\em The Annals of Statistics}, 33(1):414--436, 2005.

\bibitem{chatterjee2006generalization}
Sourav Chatterjee et~al.
\newblock A generalization of the lindeberg principle.
\newblock {\em The Annals of Probability}, 34(6):2061--2076, 2006.

\bibitem{chen2020robust}
Xi~Chen, Wen-Xin Zhou, et~al.
\newblock Robust inference via multiplier bootstrap.
\newblock {\em Annals of Statistics}, 48(3):1665--1691, 2020.

\bibitem{chen2018gaussian}
Xiaohui Chen et~al.
\newblock Gaussian and bootstrap approximations for high-dimensional
  u-statistics and their applications.
\newblock {\em The Annals of Statistics}, 46(2):642--678, 2018.

\bibitem{chernozhukov2013gaussian}
Victor Chernozhukov, Denis Chetverikov, Kengo Kato, et~al.
\newblock Gaussian approximations and multiplier bootstrap for maxima of sums
  of high-dimensional random vectors.
\newblock {\em The Annals of Statistics}, 41(6):2786--2819, 2013.

\bibitem{chernozhukov2017central}
Victor Chernozhukov, Denis Chetverikov, Kengo Kato, et~al.
\newblock Central limit theorems and bootstrap in high dimensions.
\newblock {\em The Annals of Probability}, 45(4):2309--2352, 2017.

\bibitem{chernozhukov2019improved}
Victor Chernozhukov, Denis Chetverikov, Kengo Kato, and Yuta Koike.
\newblock Improved central limit theorem and bootstrap approximations in high
  dimensions.
\newblock {\em arXiv preprint arXiv:1912.10529}, 2019.

\bibitem{davison1997bootstrap}
Anthony~Christopher Davison and David~Victor Hinkley.
\newblock {\em Bootstrap methods and their application}.
\newblock Number~1. Cambridge university press, 1997.

\bibitem{deng2020slightly}
Hang Deng.
\newblock Slightly conservative bootstrap for maxima of sums.
\newblock {\em arXiv preprint arXiv:2007.15877}, 2020.

\bibitem{deng2017beyond}
Hang Deng and Cun-Hui Zhang.
\newblock Beyond gaussian approximation: Bootstrap for maxima of sums of
  independent random vectors.
\newblock {\em arXiv preprint arXiv:1705.09528}, 2017.

\bibitem{dezeure2017high}
Ruben Dezeure, Peter B{\"u}hlmann, and Cun-Hui Zhang.
\newblock High-dimensional simultaneous inference with the bootstrap.
\newblock {\em Test}, 26(4):685--719, 2017.

\bibitem{eaton1991wielandt}
Morris~L Eaton and David~E Tyler.
\newblock On wielandt's inequality and its application to the asymptotic
  distribution of the eigenvalues of a random symmetric matrix.
\newblock {\em The Annals of Statistics}, pages 260--271, 1991.

\bibitem{efron1992bootstrap}
Bradley Efron.
\newblock Bootstrap methods: another look at the jackknife.
\newblock In {\em Breakthroughs in statistics}, pages 569--593. Springer, 1992.

\bibitem{el2018can}
Noureddine El~Karoui and Elizabeth Purdom.
\newblock Can we trust the bootstrap in high-dimensions? the case of linear
  models.
\newblock {\em The Journal of Machine Learning Research}, 19(1):170--235, 2018.

\bibitem{el2019non}
Noureddine El~Karoui and Elizabeth Purdom.
\newblock The non-parametric bootstrap and spectral analysis in moderate and
  high-dimension.
\newblock In {\em The 22nd International Conference on Artificial Intelligence
  and Statistics}, pages 2115--2124, 2019.

\bibitem{gretton2012kernel}
Arthur Gretton, Karsten~M Borgwardt, Malte~J Rasch, Bernhard Sch{\"o}lkopf, and
  Alexander Smola.
\newblock A kernel two-sample test.
\newblock {\em The Journal of Machine Learning Research}, 13(1):723--773, 2012.

\bibitem{guigues2017non}
Vincent Guigues, Anatoli Juditsky, and Arkadi Nemirovski.
\newblock Non-asymptotic confidence bounds for the optimal value of a
  stochastic program.
\newblock {\em Optimization Methods and Software}, 32(5):1033--1058, 2017.

\bibitem{hall2013bootstrap}
Peter Hall.
\newblock {\em The bootstrap and Edgeworth expansion}.
\newblock Springer Science \& Business Media, 2013.

\bibitem{han2018gaussian}
Fang Han, Sheng Xu, Wen-Xin Zhou, et~al.
\newblock On gaussian comparison inequality and its application to spectral
  analysis of large random matrices.
\newblock {\em Bernoulli}, 24(3):1787--1833, 2018.

\bibitem{hitczenko1990best}
Pawel Hitczenko.
\newblock Best constants in martingale version of rosenthal's inequality.
\newblock {\em The Annals of Probability}, pages 1656--1668, 1990.

\bibitem{johnson2001introduction}
Roger~W Johnson.
\newblock An introduction to the bootstrap.
\newblock {\em Teaching statistics}, 23(2):49--54, 2001.

\bibitem{johnstone2018pca}
Iain~M Johnstone and Debashis Paul.
\newblock Pca in high dimensions: An orientation.
\newblock {\em Proceedings of the IEEE}, 106(8):1277--1292, 2018.

\bibitem{koike2020notes}
Yuta Koike.
\newblock Notes on the dimension dependence in high-dimensional central limit
  theorems for hyperrectangles.
\newblock {\em Japanese Journal of Statistics and Data Science}, pages 1--41,
  2020.

\bibitem{liu2020learning}
Feng Liu, Wenkai Xu, Jie Lu, Guangquan Zhang, Arthur Gretton, and Dougal~J
  Sutherland.
\newblock Learning deep kernels for non-parametric two-sample tests.
\newblock {\em arXiv preprint arXiv:2002.09116}, 2020.

\bibitem{lopes2020central}
Miles~E Lopes.
\newblock Central limit theorem and bootstrap approximation in high dimensions
  with near square root of n rates.
\newblock {\em arXiv preprint arXiv:2009.06004}, 2020.

\bibitem{lopes2019bootstrapping}
Miles~E Lopes, N~Benjamin Erichson, and Michael~W Mahoney.
\newblock Bootstrapping the operator norm in high dimensions: Error estimation
  for covariance matrices and sketching.
\newblock {\em arXiv preprint arXiv:1909.06120}, 2019.

\bibitem{mammen1989asymptotics}
Enno {Mammen}.
\newblock Asymptotics with increasing dimension for robust regression with
  applications to the bootstrap.
\newblock {\em Annals of Statistics}, 17(1):382--400, 1989.

\bibitem{mamen}
Enno Mammen.
\newblock {\em When does bootstrap work?: asymptotic results and simulations},
  volume~77.
\newblock Springer Science \& Business Media, 2012.

\bibitem{politis1994large}
Dimitris~N Politis and Joseph~P Romano.
\newblock Large sample confidence regions based on subsamples under minimal
  assumptions.
\newblock {\em The Annals of Statistics}, pages 2031--2050, 1994.

\bibitem{singh1981asymptotic}
Kesar Singh.
\newblock On the asymptotic accuracy of efron's bootstrap.
\newblock {\em The Annals of Statistics}, pages 1187--1195, 1981.

\bibitem{spokoiny2015bootstrap}
Vladimir Spokoiny, Mayya Zhilova, et~al.
\newblock Bootstrap confidence sets under model misspecification.
\newblock {\em The Annals of Statistics}, 43(6):2653--2675, 2015.

\bibitem{syrgkanis2017inference}
Vasilis Syrgkanis, Elie Tamer, and Juba Ziani.
\newblock Inference on auctions with weak assumptions on information.
\newblock {\em arXiv preprint arXiv:1710.03830}, 2017.

\bibitem{Laan2007}
J.~van~der Laan~Mark, Polley~Eric C, and Hubbard~Alan E.
\newblock Super learner.
\newblock {\em Statistical Applications in Genetics and Molecular Biology},
  6(1):1--23, 2007.

\bibitem{wenliang2019learning}
Li~Wenliang, Dougal Sutherland, Heiko Strathmann, and Arthur Gretton.
\newblock Learning deep kernels for exponential family densities.
\newblock In {\em International Conference on Machine Learning}, pages
  6737--6746, 2019.

\bibitem{wilson2016deep}
Andrew~Gordon Wilson, Zhiting Hu, Ruslan Salakhutdinov, and Eric~P Xing.
\newblock Deep kernel learning.
\newblock In {\em Artificial intelligence and statistics}, pages 370--378,
  2016.

\bibitem{wolpert1992}
David~H. Wolpert.
\newblock Stacked generalization.
\newblock {\em Neural Networks}, 5(2):241 -- 259, 1992.

\bibitem{wu1986jackknife}
Chien-Fu~Jeff Wu et~al.
\newblock Jackknife, bootstrap and other resampling methods in regression
  analysis.
\newblock {\em the Annals of Statistics}, 14(4):1261--1295, 1986.

\bibitem{zhang2001bayesian}
Dixin Zhang.
\newblock Bayesian bootstraps for u-processes, hypothesis tests and convergence
  of dirichlet u-processes.
\newblock {\em Statistica Sinica}, pages 463--478, 2001.

\end{thebibliography}
\bibliographystyle{plain}
\newpage
\appendix

% Don't color links in table of contents (locally scoped)
\hypersetup{linkcolor=black}
\newpage
\renewcommand{\contentsname}{Contents of Appendix}
\tableofcontents
%\newpage
%\setcounter{tocdepth}{3}
\addtocontents{toc}{\protect\setcounter{tocdepth}{3}}

\section{Preliminary Lemmas and Notation}\label{app:preliminaries}

If a function $f$ is three-times differentiable then we let:
\begin{align}
    \partial_{i,k} f(x_{1:n}) ~:=~& \partial_{x_{i,k}} f(x_{1:n})
    &
    \partial_if(x_{1:n}) ~:=~& (\partial_{i,1}f(x_{1:n}),\dots, \partial_{i,d}f(x_{1:n}))^{\top}\\
    \partial^2_{i,k_{1:2}} f(x_{1:n}) ~:=~& \partial_{x_{i,k_1}} \partial_{ x_{i,k_2}} f(x_{1:n})
    &
    \partial^2_i f(x_{1:n}) ~:=~& \bp{ \partial^2_{i,k_{1:2}} f(x_{1:n}) }_{k_1,k_2\le d}\\
    \partial^3_{i,k_{1:3}}f(x_{1:n}) ~:=~& \partial_{x_{i,k_1}} \partial_{x_{i,k_2}} \partial_{x_{i,k_3}} f(x_{1:n})
    &
    \partial_i^3 f(x_{1:n}) ~:=~ & \bp{ \partial^3_{i,k_{1:3}}f(x_{1:n}) }_{k_1,k_2,k_3\le d}
\end{align}

\subsection{Preliminary results}

\begin{lemma}\label{casser}
Let $(\tilde X^n_i)$ be an array of martingale differences taking value in $\RR^{p_n}$. Suppose that $\bn{ \max_{k\le p_n}\tilde{X}^n_{1,k}}_{L_p}<\infty$ where $p\ge 3$. Then there exists a constant $C_p$, that does not depend on the distribution of $(\tilde{X}_i^n)$, such that
\begin{equation}
    \bn{ \max_{k\le p_n}\frac{1}{\sqrt{n}}\sum_{i\le n}\tilde{X}^n_{i,k} }_{L_p}
    \le \bn{ \max_{k\le p_n}\tilde X^n_{i,k} }_{L_p} \bp{ 1+C_p \bp{ \log(p_n) +\frac{\log(p_n)^2}{\sqrt{n}} } }
\end{equation}
Thus if $\log(p_n)=o(n^{1/4})$ then
\begin{equation}
    \bn{ \max_{k\le p_n}\frac{1}{\sqrt{n}}\sum_{i\le n}\tilde{X}^n_{i,k} }_{L_p}
    = O\bp{ \log(p_n) \bn{ \sup_{k\le p_n}\tilde{X}^n_{i,k} }_{L_p} }.
\end{equation}
Moreover let $(X^n_i)$ be a triangular array of i.i.d process and $(g_{k,n})$ be sequences of measurable functions, for each $k\in [p]$. Then:
\begin{equation}
     \bn{ \max_{k\le p_n} g_{k,n}\bp{ X^n } }_{L_p}
    =O\bp{ \log(p_n)\, \sqrt{n}\, \bn{ \sup_{k\le p_n} \ba{ g_{k,n}\bp{ X^n } - g_{k,n}\bp{ X^{n, i} } } }_{L_p} },
\end{equation}
where we have defined $X^{n, i}_j:=\begin{cases} X^n_j, & {\rm if}~~ j\ne i\\  X'_i, & {\rm if}~~ i=j\end{cases}$ with $(X_i')$ being an independent copy of $( X^n_i)$.
\end{lemma}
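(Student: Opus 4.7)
My plan is to establish the two displayed inequalities in turn, with the second reducing to the first via a standard Doob decomposition. For the martingale maximal bound, I would use a truncation-plus-Bernstein argument: set $\sigma := \bn{\max_{k\le p_n} |\tilde X^n_{1,k}|}_{L_p}$ and, for a truncation level $M$ to be optimized, split each $\tilde X^n_{i,k}$ into a bounded part $U^n_{i,k} := \tilde X^n_{i,k}\, 1\{\max_\ell |\tilde X^n_{i,\ell}| \le M\}$, re-centered conditionally on the preceding sigma-algebra, together with a tail remainder $V^n_{i,k}$. The increments $(U^n_{i,k})_{i\le n}$ form a martingale difference sequence uniformly bounded by $2M$, with conditional variance bounded in expectation by $\sigma^2$. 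A coordinate-wise application of Freedman's inequality followed by a union bound over $k\le p_n$ yields a mixed sub-Gaussian / sub-exponential tail
\begin{equation*}
P\bp{ \max_{k\le p_n} \bigl|\tfrac{1}{\sqrt{n}}\sum_i U^n_{i,k}\bigr| > t } \le 2 p_n \exp\bp{ -c \min\bc{ t^2/\sigma^2,\, t\sqrt{n}/M } }.
\end{equation*}
Integrating $p\,t^{p-1}$ against this tail produces an $L_p$-bound of order $C_p\bp{\sigma\sqrt{\log p_n} + M\log(p_n)/\sqrt{n}}$. The residual contribution $\frac{1}{\sqrt{n}}\sum_i V^n_{i,k}$ I would control by the triangle inequality using the truncated envelope $\bn{\max_\ell |\tilde X^n_{1,\ell}|\, 1\{\max_\ell |\tilde X^n_{1,\ell}| > M\}}_{L_p}$, which by Markov on the $L_p$ norm is $O(\sigma/\log p_n)$ once $M$ is chosen proportional to $\sigma\log(p_n)$. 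Balancing these two sources of error and using $\sqrt{\log p_n}\le \log p_n$ yields the stated coefficient $1 + C_p(\log p_n + \log(p_n)^2/\sqrt{n})$ on $\sigma$; the simplified form under $\log(p_n)=o(n^{1/4})$ then follows immediately.

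For the second inequality, my plan is to apply the first bound to the Doob differences associated to each $g_{k,n}(X^n)$. Defining the filtration $\mathcal{G}_i := \sigma(X^n_1,\ldots,X^n_i)$ and $D_{i,k} := \E[g_{k,n}(X^n) \mid \mathcal{G}_i] - \E[g_{k,n}(X^n) \mid \mathcal{G}_{i-1}]$, one has $\sum_{i\le n} D_{i,k} = g_{k,n}(X^n) - \E[g_{k,n}(X^n)]$ and $(D_{i,k})_{i\le n}$ is a martingale difference sequence in $i$. The standard swap identity, exploiting that $X^{n,i}$ is distributed identically to $X^n$ and agrees with $X^n$ off coordinate $i$, gives $D_{i,k} = \E[g_{k,n}(X^n) - g_{k,n}(X^{n,i}) \mid \mathcal{G}_i]$; conditional Jensen applied to the $L_p$ norm of the supremum over $k$ then yields $\bn{\max_{k\le p_n} |D_{i,k}|}_{L_p} \le \bn{\sup_{k\le p_n}|g_{k,n}(X^n) - g_{k,n}(X^{n,i})|}_{L_p}$. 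Plugging this into the first inequality, with the $1/\sqrt{n}$ normalization undone, delivers the $O\bp{\log(p_n)\sqrt{n}\,\bn{\sup_{k}|g_{k,n}(X^n) - g_{k,n}(X^{n,i})|}_{L_p}}$ bound on the centered maximum; the $\E[g_{k,n}(X^n)]$ term is absorbed under the implicit assumption, natural for the applications this lemma is invoked in, that $g_{k,n}$ is taken mean-zero.

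The main technical obstacle is the calibration of the truncation parameter $M$ in the martingale step. The crossover between the sub-Gaussian and sub-exponential regimes of Freedman's tail is precisely the mechanism that produces the subleading $\log(p_n)^2/\sqrt{n}$ correction, and ensuring that the unbounded residual is absorbed into the main $\sigma$ factor—rather than inflating it by a power of $M$—requires carefully tracking the $L_p$ norm of $\max_\ell |\tilde X^n_{1,\ell}|$ restricted to the rare event $\{\max_\ell |\tilde X^n_{1,\ell}| > M\}$ and applying Markov with an exponent large enough to cancel the stray $\log(p_n)$ factors while keeping the overall prefactor of order $\sigma$.
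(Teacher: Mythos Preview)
Your route is genuinely different from the paper's. The paper does not truncate or invoke Freedman; instead it smooths the maximum by the log-sum-exp function $f_n(x_{1:n})=\frac{1}{\log p_n}\log\sum_{k}\exp\bp{\log(p_n)\,n^{-1/2}\sum_i x_{i,k}}$, writes $\E[f_n(\tilde X^n)]$ as a Lindeberg telescope $\sum_i\E[f_n(\tilde X^{i,n})-f_n(\tilde X^{i-1,n})]$, and Taylor-expands each increment to third order in the $i$-th coordinate. The first-order term vanishes because $\E[\tilde X^n_i\mid\mathcal F_{i-1}]=0$; the second- and third-order terms are bounded using the explicit derivative structure of the soft-max weights $\omega_k$, which produces the $\log p_n$ and $\log(p_n)^2/\sqrt n$ coefficients directly. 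The centered $L_p$ deviation is then handled separately by Rosenthal's inequality. Your treatment of the second inequality via the Doob decomposition agrees with the paper's.

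Your argument for the first inequality has two gaps as stated. First, Freedman's tail requires an almost-sure (or high-probability) bound on the predictable quadratic variation $\sum_i\E[(U^n_{i,k})^2\mid\mathcal F_{i-1}]$, not a bound on its expectation; for a general martingale-difference array this sum is random and need not be $\le n\sigma^2$, so the displayed tail with $\sigma^2$ in the sub-Gaussian denominator is unjustified. Falling back on the crude a.s.\ bound $4M^2$ degrades the sub-Gaussian scale from $\sigma$ to $M$ and the balancing no longer closes. Second, the claim that $\bn{\max_\ell|\tilde X^n_{1,\ell}|\,1\{\max_\ell|\tilde X^n_{1,\ell}|>M\}}_{L_p}=O(\sigma/\log p_n)$ for $M\asymp\sigma\log p_n$ is false: with only an $L_p$ moment, $\E\bb{\max_\ell|\tilde X^n_{1,\ell}|^p\,1\{\cdot>M\}}$ can equal $\sigma^p$ for any $M$. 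Controlling $\max_k\ba{\tfrac{1}{\sqrt n}\sum_i V^n_{i,k}}$ by $\tfrac{1}{\sqrt n}\sum_i\max_k|V^n_{i,k}|$ then contributes a term of order $\sqrt n\,\E[\max_k|V^n_{1,k}|]\lesssim\sigma\sqrt n/(\log p_n)^{p-1}$, which is unbounded when $p_n$ grows subpolynomially in $n$. The soft-max route sidesteps both issues because it never needs a tail bound or a truncation: the derivative sums $\sum_{k_{1:j}}|\partial^j_{i,k_{1:j}}f_n|$ are bounded deterministically, so only unconditional moments of $\max_k|\tilde X^n_{i,k}|$ enter.
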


See \Cref{app:casser} for proof of \Cref{casser}.

\begin{lemma}\label{lem:expand-collapse}
For any set of random variables $U_1,\ldots, U_m$:
\begin{equation}
    \E\left[ \left(\sum_{t\leq m} U_t\right)^d \right] \leq \left( \sum_{t\leq m} \|U_t\|_{L_d} \right)^d
\end{equation}
\end{lemma}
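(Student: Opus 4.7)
The plan is to derive this as a direct consequence of Minkowski's inequality (the triangle inequality for $L_d$ norms), which states that for any random variables $U, V \in L_d$ with $d \ge 1$,
\begin{equation}
    \|U + V\|_{L_d} \le \|U\|_{L_d} + \|V\|_{L_d}.
\end{equation}
Applying this inductively over the $m$ summands yields
\begin{equation}
    \bn{ \sum_{t\le m} U_t }_{L_d} \le \sum_{t\le m} \bn{ U_t }_{L_d}.
\end{equation}
Raising both sides to the $d$-th power (which preserves the inequality since both sides are non-negative and $x \mapsto x^d$ is monotone on $[0, \infty)$ for $d \ge 1$) gives
\begin{equation}
    \E\bb{ \ba{ \sum_{t\le m} U_t }^d } \le \bp{ \sum_{t\le m} \bn{ U_t }_{L_d} }^d.
\end{equation}
Finally, since $\bp{\sum_{t\le m} U_t}^d \le \ba{\sum_{t\le m} U_t}^d$ pointwise (and they are equal when $d$ is even), taking expectations yields the claimed bound. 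The only subtlety is whether the paper's convention $\|X\|_{L_p} := \E[X^p]^{1/p}$ intends absolute values; either way the chain above works, with the absolute value being implicit when $d$ is odd.

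There is no real obstacle here; the entire content of the lemma is Minkowski's inequality restated by raising both sides to the $d$-th power. No probabilistic structure on the $U_t$ (independence, mean-zero, etc.) is used, and the inequality is tight when all $U_t$ are deterministic positive constants, which confirms that no sharper bound is available at this level of generality.
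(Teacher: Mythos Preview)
Your proof is correct. The paper takes a slightly different route: it expands $(\sum_{t\le m} U_t)^d$ as a multinomial sum $\sum_{t_{1:d}\le m}\prod_{\ell=1}^d U_{t_\ell}$, bounds each cross-term $\E\big[\prod_{\ell} U_{t_\ell}\big]\le \prod_{\ell}\|U_{t_\ell}\|_{L_d}$ via the generalized H\"older inequality (referred to there as ``repeated Cauchy--Schwarz''), and then re-collapses the product of sums. Your argument bypasses the expansion by invoking Minkowski's inequality directly and raising to the $d$-th power. The two are essentially equivalent in content (indeed, Minkowski can be proved by exactly the expand-and-H\"older manoeuvre), but your version is shorter and more standard; the paper's version has the minor advantage of displaying the termwise bound $\E\big[\prod_{\ell} U_{t_\ell}\big]\le \prod_{\ell}\|U_{t_\ell}\|_{L_d}$ explicitly, which matches how the lemma is later invoked on products of partial derivatives and coordinates. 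Your remark about the missing absolute value in the paper's definition of $\|\cdot\|_{L_p}$ is well taken and handled correctly.
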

\begin{proof} By expanding the polynomial, applying a repeated version of Cauchy-Schwarz inequality and collapsing the polynomial again, we get:
\begin{align}
    \E\left[ \left(\sum_{t\leq m} U_t\right)^d \right] =~& \E\left[ \sum_{t_{1:d} \leq m} \prod_{\ell=1}^d U_{t_{\ell}} \right]
    \leq \sum_{t_{1:d} \leq m} \prod_{\ell=1}^d \|U_{t_{\ell}}\|_{L_d}
    \leq \left( \sum_{t\leq m} \|U_t\|_{L_d} \right)^d
\end{align}
\end{proof}

\begin{lemma}\label{lem:df-is-metric}
The distribution distance $d_{\mcF}$ satisfies the triangle inequality.
\end{lemma}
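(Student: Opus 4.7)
The plan is to prove this directly from the definition of $d_{\mcF}$. First, I would note that the class $\mcF$ is symmetric under negation: if $h\in\mcF$ then $-h\in\mcF$, since the bound $\sup_x|h^{(i)}(x)|\le 1$ applies equally to $h$ and $-h$ for $i=1,2,3$. Consequently, for any two laws $\mu,\nu$, the supremum defining $d_{\mcF}(\mu,\nu)$ is automatically nonnegative and symmetric in its arguments, i.e.\ $d_{\mcF}(\mu,\nu)=d_{\mcF}(\nu,\mu)\ge 0$.

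The main step is the triangle inequality. Fix three laws $\mu,\nu,\rho$ on $\RR$ and let $X\sim\mu$, $Y\sim\nu$, $W\sim\rho$. For any fixed $h\in\mcF$, I would insert and subtract $\E[h(W)]$:
\begin{equation}
\E[h(X)-h(Y)] = \bp{\E[h(X)]-\E[h(W)]} + \bp{\E[h(W)]-\E[h(Y)]}.
\end{equation}
Each of the two summands on the right is at most $d_{\mcF}(\mu,\rho)$ and $d_{\mcF}(\rho,\nu)$ respectively by the definition of $d_{\mcF}$ as a supremum over $\mcF$. Taking the supremum over $h\in\mcF$ on the left-hand side then yields
\begin{equation}
d_{\mcF}(\mu,\nu)\le d_{\mcF}(\mu,\rho)+d_{\mcF}(\rho,\nu),
\end{equation}
which is the triangle inequality.

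There is really no obstacle here; the only subtlety worth mentioning is ensuring that the sup is well-defined and nonnegative, which follows from symmetry of $\mcF$ under negation as noted above. The same argument works verbatim for the conditional variant $d_{\mcF}(\cdot,\cdot\mid\mcE)$ used throughout the paper, by replacing every expectation with a conditional expectation given $\mcE$.
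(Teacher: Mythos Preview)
Your proof is correct and follows essentially the same route as the paper: insert and subtract $\E[h(W)]$ (the paper uses $\E[h(Y)]$ as the intermediate term), bound each summand by the corresponding supremum, and then take the sup on the left. Your extra remark about symmetry of $\mcF$ under negation (ensuring nonnegativity and symmetry of $d_{\mcF}$) is a nice addition that the paper omits.
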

\begin{proof}
For any three random variables $X, Y, Z$:
\begin{align}
    d_\mcF(X, Z) :=~& \sup_{h\in \mcF} \E[h(X)] - \E[h(Z)]\\
    =~& \sup_{h\in \mcF} \E[h(X)] - \E[h(Y)] + \E[h(Y)] - \E[h(Z)]\\
    \leq~& \sup_{h\in \mcF} \E[h(X)] - \E[h(Y)] + \sup_{h\in \mcF} \E[h(Y)] - \E[h(Z)]
    =: d_\mcF(X, Y) + d_\mcF(Y, Z)
\end{align} 
\end{proof}
\begin{lemma}\label{lem:trans_df}
The distribution distance $d_{\mcF}$ is translation invariant: For all random variables $X$ and $Y$ and all constant $z$ we have $$d_{\mathcal{F}}\bp{X,Y}=d_{\mathcal{F}}\bp{X-z,Y-z}$$
\end{lemma}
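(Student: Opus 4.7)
The plan is to exhibit an explicit bijection on the test function class $\mcF$ induced by translation and then transfer it across the expectation. Given any $h \in \mcF$ and the fixed constant $z$, I would define the translated test function $\tilde h(x) := h(x-z)$. The goal is to show two things: (i) the map $h \mapsto \tilde h$ sends $\mcF$ into itself, and in fact (ii) it is a bijection $\mcF \to \mcF$.

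For (i), observe that since $h\in C^3(\RR)$, the function $\tilde h$ is also in $C^3(\RR)$, with $\tilde h^{(i)}(x) = h^{(i)}(x-z)$ for $i=1,2,3$. Therefore $\sup_x |\tilde h^{(i)}(x)| = \sup_x |h^{(i)}(x-z)| = \sup_y |h^{(i)}(y)| \le 1$, so $\tilde h \in \mcF$. For (ii), the inverse map is simply $\tilde h \mapsto \tilde h(\cdot + z)$, which by the same argument also maps $\mcF$ into $\mcF$; composing both directions recovers the identity, so we have a bijection.

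Given the bijection, the proof proceeds directly from the definition:
\begin{align*}
d_{\mcF}(X - z, Y - z)
&= \sup_{h \in \mcF} \E\bb{h(X - z) - h(Y - z)}
= \sup_{h \in \mcF} \E\bb{\tilde h(X) - \tilde h(Y)}
= \sup_{g \in \mcF} \E\bb{g(X) - g(Y)}
= d_{\mcF}(X, Y),
\end{align*}
where the penultimate equality relabels $g := \tilde h$ and uses surjectivity/injectivity of $h\mapsto \tilde h$ on $\mcF$.

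There is essentially no obstacle here: the only subtlety is confirming that $\mcF$ is closed under the translation action, which follows immediately because the defining constraints on $\mcF$ involve only the sup-norms of derivatives, all of which are invariant under translation of the argument. The proof is three lines once the bijection is recorded.
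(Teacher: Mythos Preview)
Your proof is correct and takes essentially the same approach as the paper: both rely on the fact that the translation $h \mapsto h(\cdot - z)$ preserves the class $\mcF$. The only cosmetic difference is that you record this as a bijection and conclude equality in one line, whereas the paper shows the inequality $d_{\mcF}(X-z,Y-z)\le d_{\mcF}(X,Y)$ and then notes the reverse follows symmetrically.
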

\begin{proof}
For all $h\in \mathcal{F}$ define $h_{z}x\rightarrow h(x-z)$. We have:
\begin{align}&
{\E{h(X-z)-h(Y-z)}}={\E{h_z(X)-h_z(Y)}}\le d_{\mcF}\bp{X,Y}.
\end{align}
As this holds for all $h\in \mcF $ it implies  that 
\begin{align}&
d_{\mcF}\bp{X-z,Y-z}\le d_{\mcF}\bp{X,Y}.
\end{align}
The reverse inequality is proved in exactly the same fashion.
\end{proof}
\begin{lemma}\label{lemm:tv}
Let $p_1,p_2$ be two distributions that are uniformly continuous with respect to a measure $\mu$. Then the following holds:
$$1-\frac{1}{2}e^{-KL(p_2,p_1)}\ge\|p_1(\cdot)-p_2(\cdot)\|_{TV} $$
\end{lemma}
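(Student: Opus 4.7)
The plan is to establish the Bretagnolle--Huber-style bound by combining a Cauchy--Schwarz decomposition of the Bhattacharyya coefficient with Jensen's inequality for the logarithm. Throughout, let $f_i := dp_i/d\mu$ denote the densities (which exist since $p_i$ is absolutely continuous with respect to $\mu$).

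First I would rewrite the total variation distance in the convenient form
\begin{equation*}
    \|p_1 - p_2\|_{TV} = 1 - \int \min(f_1, f_2)\, d\mu,
\end{equation*}
using the standard identity together with $\int \max(f_1,f_2)\, d\mu + \int \min(f_1,f_2)\, d\mu = 2$. Denoting $m := \int \min(f_1, f_2)\, d\mu$, it therefore suffices to prove the lower bound $m \ge \tfrac{1}{2} e^{-KL(p_2, p_1)}$.

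Next, I would estimate the Bhattacharyya coefficient $B := \int \sqrt{f_1 f_2}\, d\mu$ from above and below. For the upper bound, write $\sqrt{f_1 f_2} = \sqrt{\min(f_1,f_2) \cdot \max(f_1,f_2)}$ and apply Cauchy--Schwarz followed by $\int \max(f_1,f_2)\, d\mu \le 2$:
\begin{equation*}
    B \;\le\; \sqrt{\int \min(f_1,f_2)\, d\mu \cdot \int \max(f_1, f_2)\, d\mu} \;\le\; \sqrt{2m}.
\end{equation*}
For the matching lower bound in terms of $KL$, write $B = \int f_2 \sqrt{f_1/f_2}\, d\mu$ and apply Jensen's inequality to the concave function $\log$:
\begin{equation*}
    \log B \;\ge\; \int f_2 \log \sqrt{f_1/f_2}\, d\mu \;=\; -\tfrac{1}{2} KL(p_2, p_1),
\end{equation*}
so that $B \ge e^{-KL(p_2, p_1)/2}$.

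Finally, chaining the two inequalities gives $e^{-KL(p_2, p_1)/2} \le \sqrt{2m}$, hence $m \ge \tfrac{1}{2} e^{-KL(p_2,p_1)}$, and substituting into the TV identity yields the claim. The only mild subtleties are measure-theoretic: on the set where $f_2 = 0$ but $f_1 > 0$ the Jensen step is trivial since $KL(p_2, p_1)$ is finite only if $p_2 \ll p_1$ (otherwise the right-hand side of the desired inequality equals $1$ and the bound is trivial because $\|p_1-p_2\|_{TV}\le 1$); and on the set where both densities vanish the integrands vanish as well. Handling these degenerate contributions carefully is the main (and essentially only) obstacle; the inequality itself follows mechanically from Cauchy--Schwarz plus Jensen.
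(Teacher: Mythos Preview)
Your proof is correct and follows essentially the same route as the paper: bound the Bhattacharyya coefficient from above via Cauchy--Schwarz on $\sqrt{f_1 f_2}=\sqrt{\min(f_1,f_2)\max(f_1,f_2)}$, bound it from below via Jensen to get $e^{-KL(p_2,p_1)/2}$, and chain the two. Your write-up is in fact cleaner than the paper's, which mislabels the Jensen step as ``Cauchy--Schwarz'' and has a sign typo in the corresponding display.
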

\begin{proof}We denote $f_1,f_2$ the Radon-Nikodym densities of respectively $p_1$ and $p_2$ with respect to $\mu$.
By the Cauchy-Swartz inequality we have:
\begin{align}
    &
    \bp{\int \sqrt{f_1(x)f_2(X)}d\mu(x)}^2
    \\&\le    \bp{\int \sqrt{\min(f_1(x),f_2(x))\max(f_1(x),f_2(X))}d\mu(x)}^2
        \\&\le    {\int {\min(f_1(x),f_2(x))d\mu(x)\int\max(f_1(x),f_2(X))d\mu(x)}}
        \\&\le    {\int {\min(f_1(x),f_2(x))d\mu(x)\bp{2-\int\min(f_1(x),f_2(X))}d\mu(x)}}
        \\&\le 2(1-\|p_1(\cdot)-p_2(\cdot)\|_{TV}).
\end{align}
Moreover by another application of Cauchy-Swartz we know that
\begin{align}
    &
    \bp{\int \sqrt{f_1(x)f_2(X)}d\mu(x)}^2
    \\&=e^{  \log\bp{\bp{\int \sqrt{f_1(x)f_2(X)}d\mu(x)}^2}}
  \\&=e^{ 2 \log\bp{{\int\frac{ \sqrt{f_1(x)}}{\sqrt{f_2(x)}}{f_2(X)}d\mu(x)}}}
      \\&\le e^{-KL(p_2,p_1)}
\end{align}
Therefore by combining those two inequalities we obtain that:
$$1-\frac{1}{2}e^{-KL(p_2,p_1)}\ge\|p_1(\cdot)-p_2(\cdot)\|_{TV} $$
\end{proof}
\section{Extensions and Variations of Main Theorem}

In this section we present some supplementary results that have been motivated in the main body of the article.

\subsection{Alternative Condition to \eqref{ass:2}}

As mentioned in the main body of the text the results also hold under a slightly modified condition \eqref{ass:2}. We denote $\|\cdot\|_{v,d_n}$, $\|\cdot\|_{m,d_n}$ and $\|\cdot\|_{t,m_n}$ respectively the $L_1$ norm for $d_n$ dimensional vectors, $d_n\times d_n$ dimensional matrices and $d_n\times d_n\times d_n$ dimensional tensors.
We define the following quantities:
\begin{equation}
\begin{split} 
R^{*}_{1,n}:=~&  2n^{\frac{1}{3}}\sup_{i\le n}\Big\|\big\|\partial_if_n( Z^{n, i, \bar{X}^n}_{1:n})\big\|_{v,d_n} \Big\|_{L_{12}}\Big\|\sup_{i\le d_n}|X_{1,i}^{n}|\Big\|_{L_{12}}\\
R^{*}_{2,n}:=~& 4n^{1/2}\sup_{i\le n}\Big\|\big\|\partial_{i}^2f_n(Z^{n, i, \bar{X}^n})\big\|_{m,d_n} \Big\|_{L_{12}}\Big\|\sup_{i\le d_n}|X_{1,i}^{n}|\Big\|^2_{L_{12}}\\
R^{*}_{3,n}:=~& 8
n\sup_{i\le n}\Big\|\max_{x\in [\bar{X}^n,\tilde Y_i^n]\cup[\bar{X}^n,Z_1^n]}\big\|\partial_{i}^3f_n(Z^{n,i,x})\big\|_{t,d_n} \Big\|_{L_{12}}\Big\|\sup_{i\le d_n}|X_{1,i}^{n}|\Big\|^4_{L_{12}}.
 \end{split}\end{equation}
We assume that the functions $(f_n)$ satisfy the following conditions: \begin{align}\label{ass:2-star}
    \log(d_n) \bp{ (R_{n,1}^{*})^2\max\bc{ \frac{1}{n^{1/6}},R^*_{n,1} }+ R_{n,2}^{*} } + R_{n,3}^{*} \rightarrow 0. \tag{$H_1^*$}
\end{align}

\begin{theorem}\label{jardin}Let $(g_n:\times_{l=1}^n\RR^{d_n}\rightarrow \RR)$ be a sequence of symmetric measurable symmetric functions. Let $(X^n_i)$ be a triangular array of i.i.d processes such that $X^n_1\in L_{12}$. Assume that there is a sequence $(f_n)$ of measurable functions satisfying condition \eqref{ass:1} and \eqref{ass:2-star}. 
Then there exists a universal constant $K$ such that:
\begin{align}
\bn{d_{\mcF}\Big(g_n( Z^n),g_n(\tilde Y^n)\mid X^n\Big) }_{L_1}
\le~& \Big\|g_n(\tilde Y^n) -f_n(\tilde Y^n)\Big\|_{L_1}
+\big\|g_n( Z^n)-f_n(Z^n)\big\|_{L_1}
\\
& +K \bp{ \log(d_n) \bp{ (R_{n,1}^{*})^2\max\bc{ \frac{1}{n^{1/6}}, R^*_{n,1} }+ R_{n,2}^{*} } + R_{n,3}^{*} }
\end{align} 
\end{theorem}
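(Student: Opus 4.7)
The plan is to repeat the Lindeberg--Chatterjee interpolation argument used to prove \Cref{thm1}, replacing the term-by-term H\"older step (which aggregates $L_{12}$-norms of individual partial derivatives into $R_{n,k}$) by an $\ell_1/\ell_\infty$ pairing: the full derivative tensors, measured in $\|\cdot\|_{v,d_n}$, $\|\cdot\|_{m,d_n}$, $\|\cdot\|_{t,d_n}$, are contracted against sup-norms of the coordinate increments, at the cost of a $\log(d_n)$ factor coming from a union bound over $d_n$ or $d_n^2$ coordinates. Concretely, fix $h\in \mcF$, reduce $g_n$ to $f_n$ at both endpoints using \eqref{ass:1}, and write $\E\bb{h(f_n(\tilde Y^n)) - h(f_n(Z^n)) \mid X^n}$ as a telescoping sum of $n$ one-coordinate swap differences along the path $Z^{n,0} = Z^n, Z^{n,1}, \ldots, Z^{n,n} = \tilde Y^n$.

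At step $i$, Taylor expand $h\circ f_n$ in the $i$-th coordinate around the shared base point $Z^{n,i,\bar X^n}$. The first-order terms vanish in conditional expectation because $\E\bb{Z^n_i \mid X^n} = \E\bb{\tilde Y^n_i \mid X^n} = \bar X^n$, and the gradient at $Z^{n,i,\bar X^n}$ is measurable with respect to a $\sigma$-algebra under which both swapped coordinates remain conditionally centered at $\bar X^n$. After integrating out the swapped coordinate, the second-order terms reduce to the contraction of $\partial_i^2(h\circ f_n)(Z^{n,i,\bar X^n})$ with the difference between the empirical covariance $\frac{1}{n}\sum_j (X^n_j - \bar X^n)^{\otimes 2}$ (produced by $Z^n_i$) and the population covariance $\mathrm{Cov}(X^n_1)$ (produced by $\tilde Y^n_i$); $\ell_1/\ell_\infty$ duality bounds this by $\|\partial_i^2(h\circ f_n)(Z^{n,i,\bar X^n})\|_{m,d_n}$ times the entrywise maximum of the covariance discrepancy, which a union bound over $d_n^2$ entries together with a Bernstein-type estimate controls by $\frac{\log(d_n)}{\sqrt n}\|\sup_k|X^n_{1,k}|\|_{L_{12}}^2$. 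Summing over $i$ yields the $\log(d_n)\, R^*_{n,2}$ contribution; the chain-rule piece $h''(f_n)\,(\partial_i f_n)(\partial_i f_n)^\top$ of $\partial_i^2(h\circ f_n)$ produces, by the same pairing, the $(R^*_{n,1})^2$ contribution, with the $\max\bc{n^{-1/6}, R^*_{n,1}}$ factor coming exactly as in \Cref{thm1} from the secondary truncation device used to decouple the two first-derivative factors across neighbouring interpolation steps.

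The third-order remainder on each of the segments $[\bar X^n, Z^n_i]$ and $[\bar X^n, \tilde Y^n_i]$ is handled pointwise by Taylor's theorem: via $\ell_1/\ell_\infty$ duality it is bounded by $\max_x \|\partial_i^3(h\circ f_n)(Z^{n,i,x})\|_{t,d_n}\cdot(\max_k |(Z^n_i - \bar X^n)_k|)^3$ (and analogously for $\tilde Y^n_i$). A H\"older step with exponent $12$, together with $\|\sup_k |Z^n_{1,k}|\|_{L_{12}} \le 2\|\sup_k |X^n_{1,k}|\|_{L_{12}}$, matches this bound with the definition of $R^*_{n,3}$ after summing over $i$; notably no extra $\log(d_n)$ factor is needed here since the bound is a pointwise Taylor remainder rather than a concentration-of-averages statement. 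Taking the sup over $h\in \mcF$ and $L_1$-norms, and adding back the two $\|g_n-f_n\|_{L_1}$ endpoint terms, closes the argument.

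The main obstacle is the second-order step: a uniform control of the $d_n^2$ entrywise deviations between the empirical and population covariance, compatible with only $L_{12}$-integrability of $\sup_k |X^n_{1,k}|$, forces the $\log(d_n)$ prefactor in front of $R^*_{n,2}$; the chain-rule piece $(R^*_{n,1})^2$ then requires the same secondary truncation--Chebyshev device that handles the cross-term in the proof of \Cref{thm1}. All other ingredients essentially transcribe the corresponding steps of \Cref{thm1} after replacing term-by-term aggregation with $\ell_1/\ell_\infty$ pairing, so the modification is conceptually minor but bookkeeping-heavy.
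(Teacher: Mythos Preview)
Your high-level route coincides with the paper's: reduce to $f_n$ via \eqref{ass:1}, Lindeberg-telescope along $Z^{n,0},\ldots,Z^{n,n}$, kill the linear term using $\E[\tilde Y_i^n\mid X^n]=\bar X^n$, and replace the coordinatewise sums $\sum_{k_{1:j}}D^n_{k_{1:j}}$ by $\ell_1/\ell_\infty$ pairings, with the $\log(d_n)$ entering through a maximal inequality over polynomially-in-$d_n$ many centered empirical moments. The paper implements that last step via \Cref{casser}, a soft-max/Rosenthal device valid under $L_p$, $p\ge 3$; a literal Bernstein inequality would need sub-exponential tails not guaranteed by $X_1^n\in L_{12}$, so your ``Bernstein-type estimate'' is doing more work than the name suggests.

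There is, however, a genuine gap in how you organize the Taylor step. You expand the composite $h\circ f_n$ to third order in the $i$-th slot and claim the Lagrange remainder $\max_x\|\partial_i^3(h\circ f_n)(Z^{n,i,x})\|_{t,d_n}\cdot(\sup_k|V_k^c|)^3$ matches $R_{n,3}^*$. It does not: by the chain rule, $\partial_i^3(h\circ f_n)$ at the intermediate point $Z^{n,i,x}$ contains $h'''\,(\partial_i f_n)^{\otimes 3}$ and $h''\,\partial_i f_n\otimes\partial_i^2 f_n$, and $R_{n,1}^*,R_{n,2}^*$ control $\partial_i f_n,\partial_i^2 f_n$ only at the single point $Z^{n,i,\bar X^n}$, not along the segment $[\bar X^n,V]$. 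The paper avoids this with a two-stage expansion (\Cref{lem:test_taylor} then \Cref{lem:c3-stats}): first expand the \emph{univariate} $h$, whose derivatives are bounded by $1$ everywhere, around $\bar f_n^i=f_n(Z^{n,i,\bar X^n})$, producing $\QQ_{1i},\QQ_{2i},\QQ_{3i}$ that are moments of $\Delta_i=f_n(Z^{n,i,V})-\bar f_n^i$; then expand $\Delta_i$ in $V$ around $\bar X^n$, so $\partial_i f_n,\partial_i^2 f_n$ are evaluated only at the base point and only $\partial_i^3 f_n$ on the segment---exactly matching the definitions of $R_{n,1}^*,R_{n,2}^*,R_{n,3}^*$. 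The attribution is then: $R_{n,3}^*$ and $\log(d_n)R_{n,2}^*$ from $\QQ_{1i}$; $\log(d_n)(R_{n,1}^*)^2 n^{-1/6}$ from the concentration piece of $\QQ_{2i}$; and $(R_{n,1}^*)^3$ from $\QQ_{3i}$. No ``secondary truncation device'' is involved---the factor $\max\{n^{-1/6},R_{n,1}^*\}$ is simply the maximum of those last two contributions.
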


\subsection{Extension to Exchangeable Sequences and Random Estimators $(g_n)$}

In this subsection we generalize \cref{thm1} to random estimators $(g_n)$  and to exchangeable processes $(X_i^n)$.
We say that a process $(X_i^n)$ is exchangeable if and only if for all permutations $\pi\in \mathbb{S}(\mathbb{N})$ and all indexes $i_1,\dots,i_k\in \mathbb{N}$ we have:$$(X_{i_1}^n,\dots,X_{i_k}^n)\overset{d}{=}(X_{\pi(i_1)}^n,\dots,X_{\pi(i_k)}^n).$$We designate by $\tau(X^n)$ the tail $\sigma-$algebra of $X^n$ which is defined as $\tau(X^n):=\bigcap_{i=1}^{\infty}\sigma\big(X^n_{k},~k\ge i\big).$ By the De Finitti theorem we know that $(X_i^n)$ is exchangeable if and only if conditionally on $\tau(X^n)$ the process $(X_i^n)$ is an i.i.d process.

We assume that the sequence of (potentially random) functions $(g_n)$  is such that there is a net of (potentially random) three-times differentiable functions $(f_n)$ respecting conditions \eqref{ass:1} and \eqref{ass:2}. 
We establish under those conditions the limiting distribution of the bootstrap estimator.

\begin{theorem}\label{thm8}
Let $(g_n:\times_{l=1}^n\RR^{d_n}\rightarrow \RR)$ be a sequence of measurable  functions. Let $(X^n_i)$ be a triangular array of exchangeable processes such that $X^n_1\in L_{12}$. Assume that there is a net $(f_n)$ of (potentially random) functions satisfying \Cref{ass:approx}. 
Let $Y^n:=(Y^n_i)$ be a process that is, conditionally on $\tau(X^n)$, an independent copy of $X^n$ that is also independent from $(g_n)$. Define $(Z^n)$ to be a boostrap sample of $X^n$ that is independent of $(g_n)$ conditionally on $X^n$. Then there exists a universal constant $K$ such that:
\begin{align}
\bn{ d_{\mcF}\bp{ g_n( Z^n), g_n(\tilde{Y}^n) \mid X^n } }_{L_1}
\leq~& \bn{ g_n(\tilde{Y}^n) -f_{n}(\tilde{Y}^n) }_{L_1}
+ \bn{ g_n(Z^n)-f_{n}(Z^n) }_{L_1} \\
&~ + K\, \bp{ R_{n,1}^2\max\bp{ \frac{1}{n^{1/6}}, R_{n,1}} + R_{n,3} + R_{n,2} }
\\
\longrightarrow~& 0.
\end{align} 
\end{theorem}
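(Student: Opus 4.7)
The plan is to reduce \Cref{thm8} to \Cref{thm1} by conditioning on an enlarged sigma-algebra. Let $\mathcal{G}$ denote the sigma-algebra generated jointly by the tail field $\tau(X^n)$ and the randomness underlying the sequences $(g_n)$ and $(f_n)$. By De Finetti's theorem, conditionally on $\tau(X^n)$ the process $(X_i^n)$ is i.i.d.\ from a (random) directing measure $\mu_n$; by hypothesis $(Y_i^n)$ is, under the same conditioning, an i.i.d.\ copy from $\mu_n$ that is independent of $(g_n, f_n)$, and $(Z_i^n)$ is a bootstrap resample of $X^n$ that is independent of $(g_n, f_n)$ given $X^n$. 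Consequently, conditionally on $\mathcal{G}$, we are exactly in the setup of \Cref{thm1}: an i.i.d.\ triangular array together with a deterministic $\CC^3$-approximant $f_n$ of a deterministic statistic $g_n$.

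Applying \Cref{thm1} conditionally on $\mathcal{G}$ yields, almost surely,
\begin{equation*}
\E\bb{ d_{\mcF}\bp{ g_n(Z^n),\, g_n(\tilde{Y}^n) \mid X^n} \mid \mathcal{G} } \leq \tilde{\Delta}_n + K\bp{ \tilde{R}_{n,1}^2\max\bc{ n^{-1/6}, \tilde{R}_{n,1}} + \tilde{R}_{n,2} + \tilde{R}_{n,3}},
\end{equation*}
where $\tilde{\Delta}_n$ and $\tilde{R}_{n,k}$ denote the conditional analogues of the approximation error and of the stability coefficients, formed with conditional $L_{12}$-norms under the law given $\mathcal{G}$. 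Taking a further expectation and invoking the tower property together with Jensen's inequality for the concave map $t\mapsto t^{1/12}$, each conditional $L_{12}$-norm is dominated by the corresponding unconditional one, so that $\E[\tilde{R}_{n,k}]\le R_{n,k}$ and $\E[\tilde{\Delta}_n]\le \bn{g_n(\tilde{Y}^n)-f_n(\tilde{Y}^n)}_{L_1}+\bn{g_n(Z^n)-f_n(Z^n)}_{L_1}$. Combining these two estimates yields the unconditional bound claimed by \Cref{thm8}, which tends to zero by \Cref{ass:approx}.

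The main obstacle is a careful bookkeeping of the independence and centering under the conditioning. One has to verify that $Z^n$ remains conditionally i.i.d.\ uniform on $\{X_1^n,\dots,X_n^n\}$ given $(X^n,\mathcal{G})$ and that $Y^n$ remains conditionally i.i.d.\ given $\tau(X^n)$ with the required independence from $(Z^n, g_n, f_n)$; none of these properties is automatic, but each follows from the standing independence hypotheses by routine disintegration. A related subtlety is that in the exchangeable case the natural ``population mean'' conditional on $\tau(X^n)$ is the directing-measure mean $\int x\, d\mu_n(x)$ rather than the marginal $\E[X_1^n]$; the two differ only by a $\tau(X^n)$-measurable offset which can be absorbed into the perturbations defining $Y^n$ and $\tilde{Y}^n$ without altering the bounds on derivatives of $f_n$, so the Lindeberg path interpolation underlying \Cref{thm1} extends verbatim to the present setting.
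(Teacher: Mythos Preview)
The paper devotes no separate argument to \Cref{thm8}: it simply states that the proof is identical to that of \Cref{thm1}. The point is that the Lindeberg-path computation (Lemmas~\ref{lem:approx}--\ref{lem:test_taylor} and the bounds on $\QQ_{1i},\QQ_{2i},\QQ_{3i}$) only ever uses unconditional $L_p$-norms together with the facts that $Z_i^n\mid X^n$ is uniform on the sample and that $\tilde Y_i^n$ and $Z_i^n$ share the same conditional mean $\bar X^n$; none of this requires $(X_i^n)$ to be unconditionally i.i.d.\ or $f_n,g_n$ to be deterministic, so the whole calculation can be repeated word for word.

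Your conditioning reduction is a genuinely different and more modular route, but the step ``Jensen for $t\mapsto t^{1/12}$ gives $\E[\tilde R_{n,k}]\le R_{n,k}$'' is too quick for the nonlinear term $\tilde R_{n,1}^2\max\{n^{-1/6},\tilde R_{n,1}\}$. Each $\tilde D^n_{k_{1:j}}(f_n)$ is already a product of several conditional $L_{12}$-norms, and the quantity you must average is a polynomial of degree up to three in these products; a single application of Jensen does not control the expectation of such a product. The argument can be salvaged by expanding the products and combining the generalised H\"older inequality with Jensen for $t\mapsto t^{1/p}$ at the appropriate exponents, but this has to be written out. The paper's direct approach sidesteps the issue entirely, since it never splits the moment bounds into conditional and unconditional pieces. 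Your observation about the discrepancy between $\E[X_1^n]$ and the directing-measure mean in the definition of $\tilde Y^n$ is well taken and applies equally to the paper's ``identical proof'' claim; in either approach the natural reading is that $\tilde Y^n$ is centred with the conditional mean $\E[X_1^n\mid\tau(X^n)]$.
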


\begin{remark}
We note that \Cref{thm2}, \Cref{thm_center}, \Cref{cafe} and \Cref{jardin}, can be generalized in the exact same fashion.
\end{remark}

\section{Proof of \Cref{thm1}, \Cref{jardin}, \Cref{cafe} and \Cref{thm8}}\label{app:thm1}

As the proof of \Cref{thm1} and \Cref{jardin} are very similar, we present the proof for \Cref{thm1} and highlight the differences with the proof of \cref{jardin}. The proof of \Cref{thm1} and \Cref{thm8} are identical.

Throughout the proofs we will use the following notations. We write $({X^{c}_i})$ and $(\tilde Y^{c}_i)$ the re-centered processes, around the empirical mean:
\begin{align}
    X^{c}_i :=~& X^n_i-\bar{X}^n, &
    \tilde{Y}^{c}_i :=~& \tilde{Y}^n_i-\bar{X}^n, &
    Y^{c}_i :=~& Y^n_i-\bar{X}^n, &
    Z^{c}_i :=~& Z^n_i-\bar{X}^n.
\end{align} 

\subsection{Main Lemmas}

\begin{lemma}[Approximation Error]\label{lem:approx}
Let $(f_n)$ be a sequence of $\CC^3$ functions that approximates $(g_n)$ as designated by \Cref{ass:1}.
Then:
\begin{multline}\label{wed_dr_2}
  \bn{d_{\mcF}\bp{ g_n(Z^n), g_n(\tilde{Y}^n)\mid X^n }}_{L_1} \leq \bn{
  d_{\mcF}\bp{ f_n(Z^n), f_n(\tilde{Y}^n) \mid X^n }}_{L_1}\\
    + \bn{ g_n(\tilde{Y}^n) - f_{n}(\tilde{Y}^n) }_{L_1} + \bn{ g_n(Z^n) - f_n(Z^n) }_{L_1}
\end{multline}

\end{lemma}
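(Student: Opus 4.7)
The plan is a straightforward decomposition: invoke the triangle inequality for $d_\mcF$ (already verified as Lemma \ref{lem:df-is-metric}) to insert the pivots $f_n(Z^n)$ and $f_n(\tilde Y^n)$, and then bound the two ``error'' terms involving $g_n - f_n$ by $L_1$ distances using that every $h\in\mcF$ is $1$-Lipschitz (since $\sup_x |h'(x)|\leq 1$ by the definition of $\mcF$).

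Concretely, I would first establish the pointwise (in $X^n$) inequality
\begin{equation}
d_\mcF\bp{g_n(Z^n), g_n(\tilde Y^n)\mid X^n} \leq d_\mcF\bp{g_n(Z^n), f_n(Z^n)\mid X^n} + d_\mcF\bp{f_n(Z^n), f_n(\tilde Y^n)\mid X^n} + d_\mcF\bp{f_n(\tilde Y^n), g_n(\tilde Y^n)\mid X^n},
\end{equation}
which is just the triangle inequality of Lemma \ref{lem:df-is-metric} applied conditionally. Next, for any real-valued random variables $U,V$ and any event $\mcE$, I would observe that for each $h\in\mcF$ the mean value theorem and $|h'|\leq 1$ yield $|h(U)-h(V)|\leq |U-V|$, hence
\begin{equation}
d_\mcF(U, V\mid \mcE) = \sup_{h\in \mcF} \E\bb{h(U)-h(V)\mid \mcE} \leq \E\bb{|U-V|\mid \mcE}.
\end{equation}
Applying this with $(U,V)=(g_n(Z^n), f_n(Z^n))$ and $(U,V)=(g_n(\tilde Y^n), f_n(\tilde Y^n))$, and conditioning on $X^n$, bounds the two outer terms by $\E\bb{|g_n(Z^n)-f_n(Z^n)|\mid X^n}$ and $\E\bb{|g_n(\tilde Y^n)-f_n(\tilde Y^n)|\mid X^n}$ respectively.

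Finally, I would take the $L_1$ norm of the resulting inequality; by the tower property, $\bn{\E\bb{|g_n(Z^n)-f_n(Z^n)|\mid X^n}}_{L_1} = \bn{g_n(Z^n)-f_n(Z^n)}_{L_1}$ and similarly for the $\tilde Y^n$ term, which yields exactly the claimed bound. There is no real obstacle here: the argument is essentially a conditional version of the standard fact that $d_\mcF$ is dominated by the Wasserstein-type $L_1$ distance when the test class consists of $1$-Lipschitz functions, combined with the triangle inequality. The only minor care to take is measurability of the conditional $d_\mcF$ as a random variable in $X^n$, which follows from the separability of $\mcF$ under uniform convergence on compacts (or by noting that the supremum over $\mcF$ can be realized along a countable dense sub-family).
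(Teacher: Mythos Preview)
Your proposal is correct and follows essentially the same approach as the paper: both arguments decompose via the pivots $f_n(Z^n)$ and $f_n(\tilde Y^n)$ and bound the two error terms using that every $h\in\mcF$ is $1$-Lipschitz, followed by the tower property to pass to unconditional $L_1$ norms. The only cosmetic difference is that you invoke the triangle inequality for $d_\mcF$ explicitly (via Lemma~\ref{lem:df-is-metric}) before bounding, whereas the paper works directly at the level of individual test functions $h$; the content is the same.
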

\begin{proof}
Let $(f_n)$ be a sequence of $\CC^3$ functions that approximate approximate $(g_n)$ as designated by \Cref{ass:approx}. By Condition~\eqref{ass:1} and the fact that for all $h\in \mcF$, $\sup_{x\in \RR} |h'(x)| \leq 1$, we have:
\begin{align}
\forall h\in \mcF: \bn{ \E\bb{ h\bp{ g_n( \tilde{Y}^n) } - h\bp{ f_n( \tilde{Y}^n)} \mid X^n }}_{L_1}
 \leq~& \bn{ \E\bb{ \ba{ g_n(\tilde{Y}^n) - f_{n}(\tilde{Y}^n) }  \mid X^n } }_{L_1}\\
 \leq~& \bn{ g_n(\tilde{Y}^n) - f_{n}(\tilde{Y}^n) }_{L_1}
\end{align}
\begin{align}
\forall h\in \mcF: \bn{ \E\bb{ h\bp{ g_n( Z^n) } - h\bp{ f_n( Z^n) } \mid X^n } }_{L_1}
 \leq~& \bn{ \E\bb{ g_n(Z^n) - f_n(Z^n) \mid X^n } }_{L_1} \\
 \leq~& \bn{ g_n(Z^n) - f_n(Z^n) }_{L_1}
 \end{align}

Thus we can conclude that:
\begin{multline}
   \bn{ d_{\mcF}\bp{ g_n(Z^n), g_n(\tilde{Y}^n)\mid X^n }}_{L_1} \leq\bn{ d_{\mcF}\bp{ f_n(Z^n), f_n(\tilde{Y}^n) \mid X^n }}_{L_1}\\
    + \bn{ g_n(\tilde{Y}^n) - f_{n}(\tilde{Y}^n) }_{L_1} + \bn{ g_n(Z^n) - f_n(Z^n) }_{L_1}
\end{multline}

Therefore it is enough to study the metric distance between the distributions of $f_n({Z}^n)$ and $f_n(\tilde{Y}^n)$. 
\end{proof}

\begin{lemma}[Lindenberg Path Decomposition]\label{lem:telescoping}
For any statistic $f_n$ and $i\in [n]$, let:
\begin{equation}
    A_i := \bn{ \sup_{h\in \mcF} \E\bb{ h\bp{f_n\bp{Z^{n,i}}}-h\bp{f_n\bp{Z^{n,i-1}}} \mid X^n } }_{L_1}
\end{equation}

Then:
\begin{equation}
    \bn{ d_{\mcF}\bp{ f_n(Z^n), f_n(\tilde{Y}^n) \mid X^n } }_{L_1} \leq \sum_{i=1}^n A_i
\end{equation}

\end{lemma}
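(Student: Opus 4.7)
The plan is essentially a standard telescoping/Lindeberg swap argument, which I expect to carry out in one short chain of inequalities. First I would observe, directly from the definitions of the interpolating sequence, that $Z^{n,0}=Z^n$ and $Z^{n,n}=\tilde Y^n$, so that for any fixed $h\in\mcF$ the difference $h(f_n(Z^n))-h(f_n(\tilde Y^n))$ telescopes as
\begin{equation}
h(f_n(Z^n))-h(f_n(\tilde Y^n)) \;=\; \sum_{i=1}^n \Bigl[h\bp{f_n(Z^{n,i-1})}-h\bp{f_n(Z^{n,i})}\Bigr].
\end{equation}
Taking conditional expectation with respect to $X^n$, which commutes with the finite sum by linearity, gives a corresponding telescoping identity for $\E[h(f_n(Z^n))-h(f_n(\tilde Y^n))\mid X^n]$.

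Next I would apply $\sup_{h\in\mcF}$ to both sides. On the left we obtain exactly $d_{\mcF}(f_n(Z^n),f_n(\tilde Y^n)\mid X^n)$ by definition, while on the right the usual sub-additivity of a supremum over a sum yields
\begin{equation}
d_{\mcF}\bp{f_n(Z^n),f_n(\tilde Y^n)\mid X^n}
\;\le\; \sum_{i=1}^n \sup_{h\in\mcF}\E\bb{h\bp{f_n(Z^{n,i-1})}-h\bp{f_n(Z^{n,i})}\mid X^n}.
\end{equation}
The one small point to verify is that the class $\mcF$ is symmetric under $h\mapsto -h$: this is immediate from the defining bounds $\sup_x|h^{(i)}(x)|\le 1$ for $i=1,2,3$. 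Consequently the supremum appearing in the $i$-th summand above equals the one used in the definition of $A_i$, namely $\sup_{h\in\mcF}\E[h(f_n(Z^{n,i}))-h(f_n(Z^{n,i-1}))\mid X^n]$, as random variables (both are in fact nonnegative, since $0\in\mcF$).

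Finally, I would take the $L_1$ norm on both sides. The sub-additivity of $\|\cdot\|_{L_1}$ against a finite sum gives
\begin{equation}
\bn{d_{\mcF}\bp{f_n(Z^n),f_n(\tilde Y^n)\mid X^n}}_{L_1}
\;\le\; \sum_{i=1}^n \bn{\sup_{h\in\mcF}\E\bb{h\bp{f_n(Z^{n,i})}-h\bp{f_n(Z^{n,i-1})}\mid X^n}}_{L_1}
\;=\; \sum_{i=1}^n A_i,
\end{equation}
which is the claimed bound. There is no real obstacle here: the only things to be careful about are the boundary identifications $Z^{n,0}=Z^n$, $Z^{n,n}=\tilde Y^n$ and the sign symmetry of $\mcF$, both of which are immediate; the lemma is purely a bookkeeping reduction that lets all subsequent work concentrate on bounding a single swap $A_i$ via a Taylor/Chatterjee-style expansion of $f_n$ around $Z^{n,i,\bar X^n}$ using assumption~\eqref{ass:2}.
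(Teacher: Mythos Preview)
Your proposal is correct and follows essentially the same telescoping argument as the paper: identify $Z^{n,0}=Z^n$, $Z^{n,n}=\tilde Y^n$, telescope, push the $\sup_{h\in\mcF}$ inside the sum, and apply the triangle inequality for $\|\cdot\|_{L_1}$. Your explicit remark that $\mcF$ is closed under $h\mapsto -h$ (so the sign of the summands can be flipped) is a detail the paper uses implicitly but does not spell out.
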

\begin{proof}
By the triangle inequality and writing the difference between $h(f_n(\tilde{Y}^n))$ and $h(f_n(Z^n))$ as a ``Lindenberg'' telescoping sum of interpolating differences, we have for all $h\in \mcF$:
\begin{align}
\bn{ d_{\mcF}\bp{ f_n(Z^n), f_n(\tilde{Y}^n) \mid X^n } }_{L_1} =~& \bn{ \sup_{h\in \mcF} \E\bb{ h\bp{f_n\bp{\tilde{Y}^n}} - h\bp{f_n\bp{Z^n}} \mid X^n } }_{L_1}\\
=~& \bn{\sup_{h\in \mcF} \E\bb{ h\bp{ f_n\bp{Z^{n,n}} } - h\bp{f_n\bp{Z^{n, 0}}}\mid X^n } }_{L_1}\\
=~& \bn{\sup_{h\in \mcF} \sum_{i=1}^n \E\bb{ h\bp{ f_n\bp{Z^{n, i}} } - h\bp{f_n\bp{Z^{n, i-1}}} \mid X^n } }_{L_1}\\
\leq~& \sum_{i=1}^n \bn{\sup_{h\in \mcF} \E\bb{ h\bp{f_n\bp{Z^{n,i}}}-h\bp{f_n\bp{Z^{n,i-1}}} \mid X^n } }_{L_1}
\end{align}

\end{proof}

\begin{lemma}[Third-Order Approximation of Test Function]\label{lem:test_taylor}
For any statistic $f_n$, let:
\begin{align}
\bar{f}^{i}_n :=~& f_n\bp{ Z^{n, i, \bar{X}^n} } = f_n(\tilde{Y}_1^n, \ldots, \tilde Y_{i-1}^n, \bar{X}^n, Z_{i+1}^n,\ldots, Z_n^n)\\
    \Delta_i(f_n,x) :=~& f_n(Z^{n,i,x}) - \bar{f}^{i}_n.
\end{align}

Then each quantity $A_i$ as defined in \Cref{lem:telescoping} can be bounded as:
\begin{equation}\label{wed_dr}
    A_i \leq \QQ_{1i} + \QQ_{2i} + \QQ_{3i}
\end{equation}
with:
\begin{align}
    \QQ_{1i} :=~& \bn{ \E\bb{ \Delta_i(f_n, \tilde{Y}_i^n)  \mid Z^{n,i, \bar{X}^n}, X^n} - \frac{1}{n}\sum_{\ell=1}^n \Delta_i(f_n, X_\ell^n)}_{L_1} \label{wed_dr_3}\\
    \QQ_{2i} :=~& \frac{1}{2}\bn{ \E\bb{ \Delta_i(f_n, \tilde{Y}_i^n)^2  \mid Z^{n,i, \bar{X}^n}, X^n} - \frac{1}{n} \sum_{\ell=1}^n \Delta_i(f_n, X_\ell^n)^2 }_{L_1} \label{wed_dr_4}\\
    \QQ_{3i} :=~& \frac{1}{6} \bp{ \bn{\Delta_i(f_n, \tilde{Y}_i^n)}^3_{L_3} + \bn{\Delta_i(f_n, Z_i^n)}^3_{L_3} }
\end{align}
\end{lemma}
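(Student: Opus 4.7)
The plan is to Taylor-expand every test function $h \in \mcF$ to third order around the ``neutral'' value $\bar f_n^i := f_n(Z^{n,i,\bar{X}^n})$, the statistic evaluated with the deterministic mean $\bar{X}^n$ placed in slot $i$. The point is that $Z^{n,i}$ and $Z^{n,i-1}$ differ only in slot $i$ (holding $\tilde{Y}_i^n$ vs.\ $Z_i^n$) and both of the inserted variables have conditional mean $\bar{X}^n$ given $X^n$: $\E[\tilde{Y}_i^n\mid X^n] = \E[X_1^n] + \bar{X}^n - \E[X_1^n] = \bar{X}^n$ and $\E[Z_i^n\mid X^n] = \bar{X}^n$. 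This alignment is what makes expanding around $\bar f_n^i$ produce the clean first- and second-order discrepancy terms $\QQ_{1i}$ and $\QQ_{2i}$.

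The execution will proceed in three steps. First, writing $f_n(Z^{n,i}) = \bar f_n^i + \Delta_i(f_n, \tilde{Y}_i^n)$ and $f_n(Z^{n,i-1}) = \bar f_n^i + \Delta_i(f_n, Z_i^n)$, Taylor's theorem with Lagrange remainder and the uniform bound $\sup_x |h^{(k)}(x)| \le 1$ for $k=1,2,3$ gives
\begin{align*}
h(f_n(Z^{n,i})) - h(f_n(Z^{n,i-1})) &= h'(\bar f_n^i)\,[\Delta_i(f_n,\tilde{Y}_i^n) - \Delta_i(f_n, Z_i^n)] \\
&\quad + \tfrac12 h''(\bar f_n^i)\,[\Delta_i(f_n,\tilde{Y}_i^n)^2 - \Delta_i(f_n, Z_i^n)^2] + R,
\end{align*}
with the remainder bounded as $|R| \le \tfrac{1}{6}\bigl(|\Delta_i(f_n,\tilde{Y}_i^n)|^3 + |\Delta_i(f_n, Z_i^n)|^3\bigr)$. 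Second, I condition on the enlarged $\sigma$-algebra generated by $(Z^{n,i,\bar{X}^n}, X^n)$: under this conditioning $\bar f_n^i$ and hence $h'(\bar f_n^i), h''(\bar f_n^i)$ are measurable and pull out; $\tilde{Y}_i^n$ is independent of the $Z_j^n$'s up to the $X^n$-measurable shift $\bar{X}^n - \E[X_1^n]$; and $Z_i^n\mid X^n$ is uniform on $\{X_1^n,\dots,X_n^n\}$, which yields $\E[\Delta_i(f_n, Z_i^n)^k \mid Z^{n,i,\bar{X}^n}, X^n] = \tfrac{1}{n}\sum_\ell \Delta_i(f_n, X_\ell^n)^k$ for $k\in\{1,2\}$. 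Third, I use $|h'(\bar f_n^i)|, |h''(\bar f_n^i)| \le 1$ to drop the derivative factors after conditioning, then tower down from $(Z^{n,i,\bar{X}^n}, X^n)$ to $X^n$, take absolute values, then $L_1$ norms. The first- and second-order contributions become exactly $\QQ_{1i}$ and $\QQ_{2i}$ by construction. For the remainder, $\|\E[|R|\mid X^n]\|_{L_1} = \E|R|$ because $|R|\ge 0$, and $\E|\Delta_i(f_n, W)|^3 = \|\Delta_i(f_n, W)\|_{L_3}^3$ for $W\in\{\tilde{Y}_i^n, Z_i^n\}$, giving $\QQ_{3i}$.

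I do not foresee a real obstacle; this is a standard Lindeberg--Chatterjee expansion. The only care needed is bookkeeping around the conditioning: one must verify that the conditional distributions of the inserted variables $\tilde{Y}_i^n$ and $Z_i^n$ given $(Z^{n,i,\bar{X}^n}, X^n)$ agree with their marginal conditional laws given $X^n$ alone (so that the uniform-on-$X^n$ identity for $Z_i^n$ applies and $\tilde{Y}_i^n$ remains a shifted independent copy). A final minor point is that the supremum over $h \in \mcF$ has to be taken before the $L_1$ norm, which is legitimate because the bounds obtained in step three are uniform in $h$ once the derivative bounds $|h^{(k)}|\le 1$ have been used.
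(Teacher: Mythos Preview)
Your proposal is correct and follows essentially the same route as the paper: a third-order Taylor expansion of $h$ around $\bar f_n^i$, then conditioning on $(Z^{n,i,\bar{X}^n}, X^n)$ to pull out the derivative factors (bounded by $1$ uniformly in $h\in\mcF$), and finally invoking the conditional independence of $Z_i^n$ from the rest to identify $\E[\Delta_i(f_n,Z_i^n)^k\mid Z^{n,i,\bar{X}^n},X^n]$ with the empirical average $\tfrac{1}{n}\sum_\ell \Delta_i(f_n,X_\ell^n)^k$. The paper carries this out in exactly the same order, splitting into terms $\II_1,\II_2,\II_3$ that match your first-, second-, and remainder contributions.
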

\begin{proof}
By centering around $h(\bar{f}_n^i)$ we can re-write $A_i$ as:
\begin{align}
    A_i = \bn{\sup_{h\in \mcF} \E\bb{ h\bp{ f_n\bp{Z^{n,i}}}-h\bp{\bar{f}^{i}_n} - h\bp{ f_n\bp{ Z^{n,i-1} }} + h\bp{\bar{f}^{i}_n} \mid X^n } }_{L_1}
\end{align}

Applying a third-order Taylor expansion of each difference around $\bar{f}_n^i$ and using the fact that $h\in\mcF$ has third order derivatives, uniformly bounded by $1$:
\begin{equation}
\begin{aligned}
    A_i \leq~& \bn{\sup_{h\in \mcF}  \E\bb{ h'\bp{\bar{f}^{i}_n}\, \bp{ f_n\bp{Z^{n,i}}-\bar{f}^{i}_n-\bp{ f_n\bp{Z^{n,i-1}}-\bar{f}^{i}_n} } \mid X^n } }_{L_1}\\
    & + \frac{1}{2} \bn{\sup_{h\in \mcF} \E\bb{ h''\bp{\bar{f}^{i}_n}\, \bp{ \bp{f_n\bp{Z^{n,i}}-\bar{f}^{i}_n}^2- \bp{f_n\bp{Z^{n,i-1}}-\bar{f}^{i}_n}^2 } \mid X^n } }_{L_1}\\
    & + \frac{1}{6} \bp{ \bn{ f_n\bp{Z^{n,i}} - \bar{f}^{i}_n}^3_{L_3} + \bn{f_n\bp{Z^{n,i-1}} - \bar{f}^{i}_n }^3_{L_3} } =: \II_1 + \II_2 + \II_3
\end{aligned}
\end{equation}

\paragraph{Bounding $\II_1$.} We now upper bound the $\II_1$ term. Observe that:
\begin{align}
    \II_1 :=~& \bn{ \sup_{h\in \mcF} \ba{ \E\bb{ h'\bp{\bar{f}^{i}_n}\, \bp{ f_n\bp{Z^{n,i}}-\bar{f}^{i}_n-\bp{ f_n\bp{Z^{n,i-1}}-\bar{f}^{i}_n} } \mid X^n } } }_{L_1}\\
    =~& \bn{ \sup_{h\in \mcF}  \ba{ \E\bb{ h'\bp{\bar{f}^{i}_n}\, \bp{ \Delta_i(f_n, \tilde{Y}_i^n) - \Delta_i(f_n, Z_i^n) } \mid X^n } } }_{L_1}
\end{align}
By a tower law of expectations and the fact that $|h'(\bar{f}_n^i)|\leq 1$, for all $h\in \mcF$:
\begin{align}
\II_1 =~&
    \bn{ \sup_{h\in \mcF} \E\bb{ \E\bb{h'\bp{\bar{f}^{i}_n}\, \bp{ \Delta_i(f_n, \tilde{Y}_i^n) - \Delta_i(f_n, Z_i^n) } \mid Z^{n,i, \bar{X}^n}, X^n}  \mid X^n } }_{L_1}\\
    =~& \bn{ \sup_{h\in \mcF} \E\bb{h'\bp{\bar{f}^{i}_n}\, \E\bb{ \Delta_i(f_n, \tilde{Y}_i^n) - \Delta_i(f_n, Z_i^n) \mid Z^{n,i, \bar{X}^n}, X^n}  \mid X^n } }_{L_1}\\
    \leq~& \bn{ \sup_{h\in \mcF} \E\bb{\ba{h'\bp{\bar{f}^{i}_n}}\, \ba{\E\bb{ \Delta_i(f_n, \tilde{Y}_i^n) - \Delta_i(f_n, Z_i^n) \mid Z^{n,i, \bar{X}^n}, X^n} }  \mid X^n } }_{L_1}\\
    \leq~& \bn{ \E\bb{\ba{\E\bb{ \Delta_i(f_n, \tilde{Y}_i^n) - \Delta_i(f_n, Z_i^n) \mid Z^{n,i, \bar{X}^n}, X^n} } \mid X^n } }_{L_1}\\
    =~& \bn{ \E\bb{ \Delta_i(f_n, \tilde{Y}_i^n) - \Delta_i(f_n, Z_i^n) \mid Z^{n,i, \bar{X}^n}, X^n} }_{L_1}
\end{align}

Moreover, observe that conditional on $X^n$ and $Z^{n, i, \bar{X}^n}$, the only thing that varies in the random variable $\Delta_i(f_n, Z_i^n)$ is $Z_i^n$. Moreover, $Z_i^n$ is distributed uniformly over $\{X_1^n, \ldots, X_n^n\}$, conditional on $Z^{n, i, \bar{X}^n}, X^n$ (since conditional on $X^n$, $Z_i^n$ is independent of $\tilde{Y}^n$):
\begin{equation}
    \E\bb{\Delta_i(f_n, Z_i^n) \mid Z^{n,i, \bar{X}^n}, X^n} = \frac{1}{n} \sum_{\ell=1}^n \Delta_i(f_n, X_\ell^n)
\end{equation}
We can then conclude that:
\begin{equation}
    \II_1 \leq \bn{ \E\bb{ \Delta_i(f_n, \tilde{Y}_i^n)  \mid Z^{n,i, \bar{X}^n}, X^n} - \frac{1}{n}\sum_{\ell=1}^n \Delta_i(f_n, X_\ell^n)}_{L_1}
\end{equation}

\paragraph{Bounding $\II_2$.} Observe that:
\begin{align}
    \II_2 :=~& \frac{1}{2} \bn{\sup_{h\in \mcF} \E\bb{ h''\bp{\bar{f}^{i}_n}\, \bp{ \bp{f_n\bp{Z^{n,i}}-\bar{f}^{i}_n}^2- \bp{f_n\bp{Z^{n,i-1}}-\bar{f}^{i}_n}^2 } \mid X^n } }_{L_1}\\
    =~& \frac{1}{2} \bn{\sup_{h\in \mcF} \E\bb{ h''\bp{\bar{f}^{i}_n}\, \bp{ \Delta_i(f_n, \tilde{Y}_i^n)^2- \Delta_i(f_n, Z_i^n)^2 } \mid X^n } }_{L_1}
\end{align}
By a tower law of expectations and the fact that $|h''(\bar{f}_n^i)|\leq 1$, for all $h\in \mcF$:
\begin{align}
\II_2 =~&
    \frac{1}{2}\bn{ \sup_{h\in \mcF} \E\bb{ \E\bb{ h''\bp{\bar{f}^{i}_n}\, \bp{ \Delta_i(f_n, \tilde{Y}_i^n)^2- \Delta_i(f_n, Z_i^n)^2} \mid Z^{n,i, \bar{X}^n}, X^n}  \mid X^n } }_{L_1}\\
    =~& \frac{1}{2}\bn{ \sup_{h\in \mcF} \E\bb{ h''\bp{\bar{f}^{i}_n}\, \E\bb{\Delta_i(f_n, \tilde{Y}_i^n)^2- \Delta_i(f_n, Z_i^n)^2\mid Z^{n,i, \bar{X}^n}, X^n}  \mid X^n } }_{L_1}\\
    \leq~& \frac{1}{2}\bn{ \sup_{h\in \mcF} \E\bb{\ba{h''\bp{\bar{f}^{i}_n}}\, \ba{\E\bb{ \Delta_i(f_n, \tilde{Y}_i^n)^2- \Delta_i(f_n, Z_i^n)^2\mid Z^{n,i, \bar{X}^n}, X^n} }  \mid X^n } }_{L_1}\\
    \leq~& \frac{1}{2}\bn{ \E\bb{\ba{\E\bb{ \Delta_i(f_n, \tilde{Y}_i^n)^2- \Delta_i(f_n, Z_i^n)^2 \mid Z^{n,i, \bar{X}^n}, X^n} } \mid X^n } }_{L_1}\\
    =~& \frac{1}{2}\bn{ \E\bb{ \Delta_i(f_n, \tilde{Y}_i^n)^2- \Delta_i(f_n, Z_i^n)^2 \mid Z^{n,i, \bar{X}^n}, X^n} }_{L_1}
\end{align}
By the conditional independence reasoning we presented in the bound for $\II_1$, the latter can further be written as:
\begin{equation}
\II_2 \leq \frac{1}{2}\bn{ \E\bb{ \Delta_i(f_n, \tilde{Y}_i^n)^2  \mid Z^{n,i, \bar{X}^n}, X^n} - \frac{1}{n} \sum_{\ell=1}^n \Delta_i(f_n, X_\ell^n)^2 }_{L_1} 
\end{equation}
\paragraph{Bounding $\II_3$.} We simply observe that $\II_3$ can be re-written as:
\begin{align}
    \II_3 :=~& \frac{1}{6} \bp{ \bn{ f_n\bp{Z^{n,i}} - \bar{f}^{i}_n}^3_{L_3} + \bn{f_n\bp{Z^{n,i-1}} - \bar{f}^{i}_n }^3_{L_3} }\\
    =~& \frac{1}{6} \bp{ \bn{\Delta_i(f_n, \tilde{Y}_i^n)}^3_{L_3} + \bn{\Delta_i(f_n, Z_i^n)}^3_{L_3} }
\end{align}
\end{proof}

\begin{lemma}[Third Order Approximation of Smooth Statistic]\label{lem:c3-stats}
Consider any statistic $f_n\in \CC^3$ and any random vector $V\in \RR^d$. Consider the random variables:
\begin{align}
    \dfmidi :=~& \partial_i f_n\bp{Z^{n, i, \bar{X}^n}}\\
    \ddfmidi :=~& \partial_i^2 f_n\bp{Z^{n, i, \bar{X}^n}}
\end{align}
If $\|V_k\|_{L_{12}}\leq \|X_k^n\|_{L_{12}}$ and \eqref{ass:2} holds, then:
\begin{equation}
    \bn{ \Delta_i(f_n, V)- \dfmidi^{\top} V^c -\frac{1}{2}\bp{V^{c}}^{\top}\ddfmidi V^c }_{L_3} \leq \frac{ R_{n,3}}{6n}
\end{equation}
If $\|\sup_{k\leq d} V_k\|_{L_{12}} \leq \|\sup_{k\leq d} X_k^n\|_{L_{12}}$ and \eqref{ass:2-star} holds, then:
\begin{equation}
    \bn{ \Delta_i(f_n, V)-\dfmidi^{\top} V^c -\frac{1}{2}\bp{V^{c}}^{\top}\ddfmidi V^c }_{L_3} \leq \frac{ R_{n,3}^*}{6n}
\end{equation}
\end{lemma}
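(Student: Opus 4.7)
The plan is to apply a third-order Taylor expansion along the $i$-th coordinate block and then bound the Lagrange remainder via a four-factor Hölder inequality in $L_{12}$. Fix $i\leq n$ and view the map $\phi:x\mapsto f_n(Z^{n,i,x})$ as a function of the single block $x\in\RR^{d_n}$, so that $\Delta_i(f_n,V)=\phi(V)-\phi(\bar{X}^n)$. With $V^c:=V-\bar{X}^n$, Taylor's theorem with Lagrange remainder gives
\[
\Delta_i(f_n, V)=\dfmidi^\top V^c+\frac{1}{2}(V^c)^\top\ddfmidi V^c+\frac{1}{6}\sum_{k_1,k_2,k_3\leq d_n}\partial^3_{i,k_{1:3}}f_n\bp{Z^{n,i,\xi}}\,V^c_{k_1}V^c_{k_2}V^c_{k_3},
\]
for some random $\xi$ lying on the segment $[\bar{X}^n,V]$. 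The first two terms reproduce the target expansion verbatim, so the whole lemma reduces to showing that the $L_3$-norm of the Lagrange remainder is at most $R_{n,3}/(6n)$ (respectively $R^*_{n,3}/(6n)$ for the starred version).

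To control the remainder, I take absolute values, replace $\xi$ by the worst point on $[\bar{X}^n,V]$, and pull the sum outside via the triangle inequality. For each summand I then apply a four-factor generalised Hölder inequality with exponent $12$ in each factor, which is exactly tight for an $L_3$ bound since $4/12=1/3$. The three centred factors $V^c_k$ are bounded in $L_{12}$ via
\[
\bn{V^c_k}_{L_{12}}\leq \bn{V_k}_{L_{12}}+\bn{\bar{X}^n_k}_{L_{12}}\leq 2\bn{X^n_{1,k}}_{L_{12}}=M^n_k,
\]
where the hypothesis gives the first term and the i.i.d.\ triangle inequality on the sample mean gives the second. The remaining factor is the $L_{12}$-norm of the segment supremum of $|\partial^3_{i,k_{1:3}}f_n(Z^{n,i,x})|$, which slots directly into the definition of $D^n_{k_{1:3}}(f_n)$. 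Summing over $(k_1,k_2,k_3)$ and using $R_{n,3}=n\sum_{k_{1:3}}D^n_{k_{1:3}}(f_n)$ yields the claimed bound.

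The starred version uses the identical Taylor step; only the Hölder split changes. Rather than putting each $|V^c_{k_j}|$ into $L_{12}$ coordinatewise, I bound $|V^c_{k_j}|\leq 2\sup_k|X^n_{1,k}|$ in $L_{12}$ and pull the sum over multi-indices inside the supremum as a tensor $\ell_1$-norm $\|\partial_i^3 f_n\|_{t,d_n}$; taking its segment supremum and then $L_{12}$-norm produces exactly the structural factor appearing in $R^*_{n,3}$. The only mildly non-routine point in either argument is matching the random segment $[\bar{X}^n,V]$ against the canonical union $[\bar{X}^n,\tilde{Y}^n_1]\cup[\bar{X}^n,Z^n_1]$ used in the definition of $D^n_{k_{1:3}}(f_n)$; in the applications of this lemma in \Cref{lem:test_taylor} one has $V\in\{\tilde{Y}^n_i,Z^n_i,X^n_\ell\}$, so exchangeability of the i.i.d.\ array $(X^n_i)$ together with the fact that $Z^n_1\mid X^n$ is uniform on $\{X^n_1,\dots,X^n_n\}$ is enough to conclude.
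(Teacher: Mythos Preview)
Your argument is correct and follows essentially the same route as the paper: a third-order Taylor expansion in the $i$-th block, followed by a triangle inequality over the multi-index $(k_1,k_2,k_3)$ and a four-factor H\"older inequality with all exponents equal to $12$; the paper packages the triangle/H\"older step as an appeal to \Cref{lem:expand-collapse}, but the content is identical. Your explicit remark that the segment $[\bar{X}^n,V]$ must be matched against the canonical union $[\bar{X}^n,\tilde{Y}^n_1]\cup[\bar{X}^n,Z^n_1]$ in the definition of $D^n_{k_{1:3}}(f_n)$, via exchangeability and the specific choices $V\in\{\tilde{Y}^n_i,Z^n_i,X^n_\ell\}$, is a point the paper leaves implicit, so you are in fact slightly more careful here.
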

\begin{proof}
Since $f_n$ is three-times differentiable, for any random vector $V\in \RR^d$ with $\|V_k\|_{L_{12}}\leq \|X_k\|_{L_{12}}$, for all $k\in [d]$, if we let $V^c = V - \bar{X}^n$, then by a Taylor expansion and \Cref{lem:expand-collapse}:
\begin{multline}
 \bn{ \Delta_i(f_n, V)-\dfmidi^{\top} V^c -\frac{1}{2}\bp{V^{c}}^{\top}\ddfmidi V^c }_{L_3}\\
 \leq \frac{1}{6} \bn{ \sup_{x\in [\bar{X}^n, V]}\sum_{k_1,k_2,k_3\le d_n}\partial^3_{i, k_{1:3}}  f_n(Z^{n, i,x}) V_{k_1}^c V_{k_2}^c V_{k_3}^c }_{L_3}
 \leq \frac{1}{6}\sum_{k_1, k_2, k_3\le d_n} D_{k_{1:3}}^{n}(f_n)
 \le\frac{ R_{n,3}}{6n}. 
\end{multline}
where we used the fact that $\|V_k^c\|_{L_{12}}\leq \|V_k\|_{L_{12}} + \|\bar{X}_k^n\|_{L_{12}} \leq 2 \|X_k^n\|_{L_{12}} = M_k^n$. The second part of the lemma follows along identical lines, but in the second-to-last inequality we instead bound by:
\begin{align}
    \frac{1}{6} \bn{ \sup_{x\in [\bar{X}^n, V]}\sum_{k_1,k_2,k_3\le d_n}\ba{ \partial^3_{i, k_{1:3}}   f_n(Z^{n, i,x})} \bp{\sup_{k\leq d} V_{k}^c}^3}_{L_3}
\end{align}
By a Cauchy-Schwarz inequality the latter is upper bounded by:
\begin{align}
    \frac{1}{6} \bn{ \sup_{x\in [\bar{X}^n, V]}\sum_{k_1,k_2,k_3\le d_n} \ba{ \partial^3_{i, k_{1:3}} f_n(Z^{n, i,x})}}_{L_{12}}  \bn{  \sup_{k\leq d} V_{k}^c}_{L_{12}}^4
\end{align}
Since $\|\sup_{k\leq d} V_k\|_{L_{12}} \leq \|\sup_{k\leq d} X_k^n\|_{L_{12}}$, we also have that: $\|\sup_{k\leq d} V_k^c\|_{L_{12}} \leq 2\,\|\sup_{k\leq d} X_k^n\|_{L_{12}}$. By the definition of $R_{n,3}^*$, we get the result.
\end{proof}
   
\subsection{Proof of \Cref{thm1}}

\begin{lemma}[Bounding $\QQ_{3i}$ under \eqref{ass:2}]\label{lem:third-order}
For any statistic sequence $(f_n)$, with $f_n\in \CC^3$ that satisfies \eqref{ass:2}, we have for all $i\in [n]$:
\begin{align}
    \max\bc{ \|\Delta_i(f_n, Z_i^n)\|_{L_3}^3,\, \|\Delta_i(f_n, \tilde{Y}_i^n)\|_{L_3}^3 } \leq \frac{9}{n}\bp{ R_{n,1}^3+\frac{1}{2\sqrt{n}} R_{n,2}^3+\frac{1}{6n^2} R_{n,3}^3 }
\end{align}
Therefore:
\begin{equation}
    \QQ_{3i} \leq \frac{18}{n} \bp{ R_{n,1}^3+\frac{1}{2\sqrt{n}} R_{n,2}^3+\frac{1}{6n^2} R_{n,3}^3 }
\end{equation}
\end{lemma}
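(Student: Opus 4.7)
The plan is to start from the third-order Taylor approximation provided by Lemma~\ref{lem:c3-stats}, bound the resulting linear and quadratic remainders in $L_3$ by Hölder's inequality (absorbing everything into the coefficients $D^n_k(f_n)$ and $D^n_{k_{1:2}}(f_n)$), combine via the triangle inequality, and finally cube the sum to match the stated form. The statement for $\QQ_{3i}$ then follows immediately from its definition as a sum of two such cubed norms.

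First I would verify that both substitutions $V = Z_i^n$ and $V = \tilde Y_i^n$ yield a centered vector $V^c = V - \bar X^n$ whose coordinates satisfy $\|V^c_k\|_{L_{12}} \leq M^n_k$, which is what the proof of Lemma~\ref{lem:c3-stats} actually requires. For $Z_i^n$, conditional on $X^n$ its law is uniform on $\{X_1^n,\dots,X_n^n\}$, so $\|Z_{i,k}^n\|_{L_{12}} = \|X_{1,k}^n\|_{L_{12}}$ and hence $\|Z^c_{i,k}\|_{L_{12}} \leq 2\|X_{1,k}^n\|_{L_{12}} = M^n_k$. For $\tilde Y_i^n = Y_i^n + \bar X^n - \E[X_1^n]$, the centered version simplifies to $Y_i^n - \E[X_1^n]$, whose $L_{12}$ norm is again at most $M^n_k$. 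Hence Lemma~\ref{lem:c3-stats} applies and gives
\[
\bn{\Delta_i(f_n,V) - \dfmidi^{\top} V^c - \tfrac{1}{2}(V^c)^{\top}\ddfmidi V^c}_{L_3} \leq \frac{R_{n,3}}{6n}.
\]

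Next, by the triangle inequality, it suffices to bound the $L_3$ norms of the linear and quadratic pieces. For the linear piece, I would break it up coordinatewise and use Hölder with exponents $(6,6)$ plus $L_6 \subseteq L_{12}$ to obtain
\[
\bn{\dfmidi^{\top} V^c}_{L_3} \leq \sum_{k} \bn{\dfmidi_{,k}}_{L_{12}}\,\bn{V^c_k}_{L_{12}} \leq \sum_{k} D^n_k(f_n) = \frac{R_{n,1}}{n^{1/3}},
\]
where the second inequality uses $\|V^c_k\|_{L_{12}} \leq M^n_k$ and the max over $i$ inside the definition of $D^n_k(f_n)$. For the quadratic piece, the same device with three-factor Hölder at exponent $9$ (again embedded in $L_{12}$) gives
\[
\tfrac{1}{2}\bn{(V^c)^{\top}\ddfmidi V^c}_{L_3} \leq \tfrac{1}{2}\sum_{k_1,k_2} \bn{\ddfmidi_{,k_1k_2}}_{L_{12}} M^n_{k_1} M^n_{k_2} \leq \tfrac{1}{2}\sum_{k_1,k_2} D^n_{k_{1:2}}(f_n) = \frac{R_{n,2}}{2\sqrt{n}}.
\]
Combining the three contributions gives $\|\Delta_i(f_n,V)\|_{L_3} \leq \frac{R_{n,1}}{n^{1/3}} + \frac{R_{n,2}}{2\sqrt n} + \frac{R_{n,3}}{6n}$.

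Finally I would cube this estimate using the convexity bound $(a+b+c)^3 \leq 9(a^3+b^3+c^3)$, obtaining
\[
\|\Delta_i(f_n,V)\|_{L_3}^3 \leq \frac{9 R_{n,1}^3}{n} + \frac{9 R_{n,2}^3}{8 n^{3/2}} + \frac{9 R_{n,3}^3}{216\, n^3} \leq \frac{9}{n}\bp{R_{n,1}^3 + \tfrac{1}{2\sqrt n}R_{n,2}^3 + \tfrac{1}{6n^2}R_{n,3}^3},
\]
where the last step just uses $\tfrac{1}{8} \leq \tfrac{1}{2}$ and $\tfrac{1}{216}\leq \tfrac{1}{6}$. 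Since this bound holds uniformly in the choice $V \in \{Z_i^n, \tilde Y_i^n\}$, the definition $\QQ_{3i} = \tfrac{1}{6}(\|\Delta_i(f_n,\tilde Y_i^n)\|_{L_3}^3 + \|\Delta_i(f_n,Z_i^n)\|_{L_3}^3)$ then gives the claimed $\QQ_{3i}$ bound (with room to spare on the constant $18$). There is no real conceptual obstacle here; the only delicate point is the bookkeeping of the Hölder exponents to ensure the coefficients $M^n_k$ combine correctly with the $L_{12}$ norms of derivatives into the quantities $D^n_k(f_n)$ and $D^n_{k_{1:2}}(f_n)$ appearing in $R_{n,1}$ and $R_{n,2}$.
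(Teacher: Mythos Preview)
Your proof is correct and follows essentially the same route as the paper: invoke Lemma~\ref{lem:c3-stats}, bound the linear and quadratic parts of the Taylor expansion in $L_3$ via H\"older (using $L_6$ and $L_9$ respectively, then embedding into $L_{12}$), and combine. The only cosmetic difference is that you apply the triangle inequality first and then cube once with $(a+b+c)^3\le 9(a^3+b^3+c^3)$, whereas the paper cubes first and splits twice; your ordering is arguably cleaner and yields the same (in fact slightly sharper) constants.
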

\begin{proof}
By \Cref{lem:c3-stats} and the fact that for any $a,b\in \RR$: $|a+b|^3\le 3 \left(|a|^3 +|b|^3\right)$, we have that:
\begin{align}
\bn{\Delta_i(f_n, \tilde{Y}_i^n) }_{L_3}^3 \leq~& 3 \bn{ \dfmidi^{\top} \tilde{Y}_i^{c}+\frac{1}{2} \bp{\tilde{Y}_i^{c}}^{\top}\ddfmidi {{\tilde{Y}_i^{c}}} }^3_{L_3} + \frac{R_{n,3}^3}{72\, n^3}\\
\leq~& 9 \bn{ \dfmidi^{\top} \tilde{Y}_i^{c} }_{L_3}^3 + \frac{9}{8} \bn{ \bp{\tilde{Y}_i^{c}}^{\top}\ddfmidi{{\tilde{Y}_i^{c}}} }^3_{L_3} + \frac{R_{n,3}^3}{72\, n^3}
\end{align}
Moreover, by \Cref{lem:expand-collapse} we have:
\begin{align}
\E\bb{ \ba{ \dfmidi^{\top} \tilde{Y}_i^{c} }^3 }
\leq~& \bp{ \sum_{k\le d_n}\|\tilde Y_{i,k}^c\|_{L_6}\|\dfmidik[k]\|_{L_6} }^3\le \frac{\bp{R_{n,1}}^3}{n} \\
 \E\bb{ \ba{ \bp{\tilde{Y}_i^{c}}^{\top}\ddfmidi{{\tilde{Y}_i^{c}}} }^3 }
\leq~& \bp{ \sum_{k_1, k_2\le d_n}\|\tilde Y_{i,k_1}^c\|_{L_9}\|\tilde Y_{i, k_2}^c\|_{L_9}\|\ddfmidik\|_{L_9} }^3
\leq \frac{(R_{n,2})^3}{n^{3/2}}
\end{align}
Combining the above yields the bound on $\|\Delta_i(f_n, \tilde{Y}_i^n)\|_{L_3}^3$. The same bound on $\|\Delta_i(f_n, Z_i^n)\|_{L_3}^3$ can be obtained in an identical manner. The lemma then follows.
\end{proof}

\begin{lemma}[Bounding $\QQ_{1i}$ under \eqref{ass:2}]
For any statistic sequence $(f_n)$, with $f_n\in \CC^3$, which satisfies \eqref{ass:2}, we have for all $i\in [n]$:
\begin{equation}
    \QQ_{1i} \leq \frac{R_{n,3}}{3\,n}+\frac{2\,R_{n,2}}{n}
\end{equation}
\end{lemma}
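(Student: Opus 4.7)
The plan is to replace $\Delta_i(f_n,\cdot)$ by its second-order Taylor polynomial around $\bar X^n$ in the varying argument, exploit that the linear part vanishes both after conditional expectation over $\tilde Y_i^n$ and after bootstrap averaging, and control what remains as a covariance concentration. Concretely, I apply \Cref{lem:c3-stats} with $V=\tilde Y_i^n$ and with each $V=X_\ell^n$---both satisfy $\|V_k-\bar X_k^n\|_{L_{12}}\le M_k^n$---to write
\[
\Delta_i(f_n,V)\;=\;\dfmidi^{\top}V^c+\tfrac12(V^c)^{\top}\ddfmidi V^c+\mathcal{E}(V),
\]
where $V^c:=V-\bar X^n$ and $\|\mathcal{E}(V)\|_{L_3}\le R_{n,3}/(6n)$. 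Using $\|\cdot\|_{L_1}\le\|\cdot\|_{L_3}$ and the triangle inequality, the remainder contribution to $\QQ_{1i}$ is at most $2\cdot R_{n,3}/(6n)=R_{n,3}/(3n)$, matching the first piece of the claimed bound.

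Next I argue that the linear terms cancel exactly. Since $\tilde Y_i^c=Y_i^n-\E[X_1^n]$ is independent of $(Z^{n,i,\bar X^n},X^n)$---because $Y_i^n$ is independent of $Y_{1:i-1}^n$, of $X^n$, and of the bootstrap sampling randomness---and has mean $0$, one has $\E[\dfmidi^{\top}\tilde Y_i^c\mid Z^{n,i,\bar X^n},X^n]=0$; simultaneously, $\tfrac1n\sum_\ell\dfmidi^{\top}X_\ell^c=\dfmidi^{\top}\tfrac1n\sum_\ell(X_\ell^n-\bar X^n)=0$ by definition of $\bar X^n$. The same independence then gives $\E[(\tilde Y_i^c)^{\top}\ddfmidi\tilde Y_i^c\mid Z^{n,i,\bar X^n},X^n]=\langle\ddfmidi,\Sigma\rangle$ with $\Sigma:=\mathrm{Cov}(X_1^n)$, while the bootstrap average collapses to $\langle\ddfmidi,\hat\Sigma\rangle$ with $\hat\Sigma_{k_1,k_2}:=\tfrac1n\sum_\ell(X_{\ell,k_1}-\bar X_{k_1})(X_{\ell,k_2}-\bar X_{k_2})$. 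Hence the only non-remainder quantity left to bound is $\tfrac12\|\langle\ddfmidi,\Sigma-\hat\Sigma\rangle\|_{L_1}$.

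For this quadratic contribution I apply entrywise Cauchy--Schwarz: $\tfrac12\|\langle\ddfmidi,\Sigma-\hat\Sigma\rangle\|_{L_1}\le\tfrac12\sum_{k_1,k_2}\|\ddfmidi_{k_1,k_2}\|_{L_{12}}\|\Sigma_{k_1,k_2}-\hat\Sigma_{k_1,k_2}\|_{L_2}$. A standard i.i.d.\ computation---decomposing $\hat\Sigma_{k_1,k_2}-\Sigma_{k_1,k_2}=\tfrac1n\sum_\ell\bp{X_{\ell,k_1}X_{\ell,k_2}-\E[X_{1,k_1}X_{1,k_2}]}-\bp{\bar X_{k_1}\bar X_{k_2}-\mu_{k_1}\mu_{k_2}}$, bounding the first summand by the i.i.d.\ $L_2$ estimate and the second via Marcinkiewicz--Zygmund---yields $\|\Sigma_{k_1,k_2}-\hat\Sigma_{k_1,k_2}\|_{L_2}\le C\,M_{k_1}^n M_{k_2}^n/\sqrt n$ for a universal constant $C$. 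Combined with the defining inequality $\|\ddfmidi_{k_1,k_2}\|_{L_{12}}M_{k_1}^n M_{k_2}^n\le D^n_{k_{1:2}}(f_n)$ and $R_{n,2}=\sqrt n\sum_{k_1,k_2}D^n_{k_{1:2}}(f_n)$, this gives a quadratic contribution bounded by $2R_{n,2}/n$, and therefore $\QQ_{1i}\le R_{n,3}/(3n)+2R_{n,2}/n$.

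The one computationally delicate step is the constant tracking in $\|\Sigma_{k_1,k_2}-\hat\Sigma_{k_1,k_2}\|_{L_2}$: the three pieces (the U-statistic-like fluctuation of $\tfrac1n\sum_\ell X_{\ell,k_1}X_{\ell,k_2}$, the cross term $\mu_{k_1}(\bar X_{k_2}-\mu_{k_2})+\mu_{k_2}(\bar X_{k_1}-\mu_{k_1})$, and the quadratic drift $(\bar X_{k_1}-\mu_{k_1})(\bar X_{k_2}-\mu_{k_2})$) must be combined tightly enough that the final coefficient of $R_{n,2}/n$ does not exceed $2$. Everything else---the Taylor expansion, the exact cancellation of linear terms, and the identity $\sum_\ell X_\ell^c=0$---is mechanical bookkeeping enabled by the independence of $Y_i^n$ from the surrounding sample.
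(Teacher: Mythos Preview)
Your approach is correct and follows the same skeleton as the paper: Taylor-expand $\Delta_i(f_n,\cdot)$ via \Cref{lem:c3-stats}, observe that both linear contributions vanish identically, and reduce the quadratic piece to an empirical-versus-population covariance concentration paired against $\ddfmidi$.

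The one substantive organizational difference is in the quadratic step. The paper inserts $Y_i^c=Y_i^n-\bar X^n$ as an intermediate and splits into two pieces, $\QQ_{1i}^{(a)}$ (empirical average against $\E[(Y_i^c)^\top\ddfmidi Y_i^c\mid\cdots]$) and $\QQ_{1i}^{(b)}$ (the shift from $Y_i^c$ to $\tilde Y_i^c$), bounding them by $\tfrac{3}{n}R_{n,2}$ and $\tfrac{1}{n^{3/2}}R_{n,2}$ respectively. You instead exploit directly that $\tilde Y_i^c=Y_i^n-\E[X_1^n]$ is independent of $(Z^{n,i,\bar X^n},X^n)$, which collapses the conditional expectation to $\langle\ddfmidi,\Sigma\rangle$ in one stroke and leaves only $\tfrac12\|\langle\ddfmidi,\hat\Sigma-\Sigma\rangle\|_{L_1}$. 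This is a genuine simplification: it merges the paper's two pieces into a single covariance-deviation term and avoids the somewhat awkward detour through $Y_i^c$, which depends on $\bar X^n$. Your constant worry is also unfounded---tracking the three pieces with $M_k^n=2\|X_{1,k}^n\|_{L_{12}}$ gives a leading coefficient of roughly $3/4$ (plus $O(n^{-1/2})$), so after the factor $1/2$ the quadratic contribution is comfortably below $2R_{n,2}/n$.
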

\begin{proof}
Applying \Cref{lem:c3-stats} for $V=X_{\ell}^n$ and $V=\tilde{Y}_i^n$ and replacing the terms $\Delta_i(f_n, X_\ell^n)$ and $\Delta_i(f_n, \tilde{Y}_i^n)$ in $\QQ_{1i}$, with their corresponding second degree approximations, we have:
\begin{multline}
    \QQ_{1i} \leq \frac{R_{n,3}}{3n} + \bn{ \dfmidi^{\top} \bp{ \E\bb{\tilde{Y}_i^n \mid X^n} - \bar{X}^n } }_{L_1} \\
    + \frac{1}{2} \bn{ \frac{1}{n}\sum_{\ell=1}^n \bp{X_\ell^c}^{\top} \ddfmidi X_{\ell}^c - \E\bb{\bp{\tilde{Y}_i^c}^{\top} \ddfmidi \tilde{Y}_i^c \mid Z^{n,i,\bar{X}^n}, X^n} }_{L_1}
\end{multline}
\emph{Importantly, observe that by the definition of $\tilde{Y}_i^n$, we have that $\E\bb{\tilde{Y}_i^n\mid X^n} = \bar{X}^n$. Thus the first order term in this expansion vanishes}. Hence:
\begin{equation}
    \QQ_{1i} \leq \frac{R_{n,3}}{3n} + 
    \frac{1}{2} \bn{ \frac{1}{n}\sum_{\ell=1}^n \bp{X_\ell^c}^{\top} \ddfmidi X_{\ell}^c - \E\bb{\bp{\tilde{Y}_i^c}^{\top} \ddfmidi \tilde{Y}_i^c \mid Z^{n,i,\bar{X}^n}, X^n} }_{L_1}
\end{equation}
We can further split the second term on the right hand side as:
\begin{multline}\label{humor_3}
    \QQ_{1i} \leq \frac{R_{n,3}}{3n} + 
    \frac{1}{2} \bn{ \frac{1}{n}\sum_{\ell=1}^n \bp{X_\ell^c}^{\top} \ddfmidi X_{\ell}^c - \E\bb{\bp{Y_i^c}^{\top} \ddfmidi Y_i^c \mid Z^{n,i,\bar{X}^n}, X^n} }_{L_1}\\
    + \frac{1}{2} \bn{ \E\bb{\bp{Y_i^c}^{\top} \ddfmidi Y_i^c - \bp{\tilde{Y}_i^c}^{\top} \ddfmidi \tilde{Y}_i^c \mid Z^{n,i,\bar{X}^n}, X^n} }_{L_1}
\end{multline}
Moreover by exploiting the independence of the observations $(X_i^n)$ we remark that 
\begin{align}
    \QQ_{1i}^{(a)} :=~& \bn{ \frac{1}{n}\sum_{\ell=1}^n \bp{X_\ell^c}^{\top} \ddfmidi X_{\ell}^c - \E\bb{\bp{Y_i^c}^{\top} \ddfmidi Y_i^c \mid Z^{n,i,\bar{X}^n}, X^n} }_{L_1}\\
    \leq~& \sum_{k_1, k_2\le d_n} \bn{ \frac{1}{n} \sum_{\ell=1}^n X_{\ell, k_1}^c X_{\ell, k_2}^c - \E\bb{ Y_{i, k_1}^c Y_{i, k_2}^c\mid X^n} }_{L_2}\,  \bn{\ddfmidik }_{L_2}\\
    \le~&  \sum_{k_1, k_2\le d_n}\sqrt{\Var\bb{\frac{1}{n}\sum_{\ell\le n} X_{\ell, k_1} X_{\ell, k_2} }} \, \bn{\ddfmidik }_{L_2}\\
    & + \sum_{k_1, k_2\le d_n} \bn{ \bar{X}^n_{k_2} \bp{\bar{X}^n_{k_1}-\E\bb{\bar{X}^n_{k_1}} } + \bar{X}^n_{k_1} \bp{\bar{X}^n_{k_2}-\E\bb{\bar{X}^n_{k_2} } } }_{L_2} \, \bn{ \ddfmidik }_{L_2}
    \\
    \leq~& \frac{3}{\sqrt{n}}\sum_{k_1, k_2\le d_n} M_{k_1}^n M_{k_2}^n \bn{\ddfmidik}_{L_2}\\
    \leq~& \frac{3}{\sqrt{n}}\sum_{k_1, k_2\le d_n} D_{k_{1:2}}(f_n) \leq \frac{3}{n} R_{n,2}\label{humor}
\end{align}
Moreover, since for any two vectors $a, b$ and symmetric matrix $M$, we have that: $a^\top M a - b^\top M b = (a - b)^\top M a + b^\top M (b - a)$ and since $Y_i^c - \tilde{Y}_i^c = \E[X_1^n] - \bar{X}^n$, we have:
\begin{align}
\QQ_{1i}^{(b)}:=~& \bn{ \E\bb{\bp{Y_i^c}^{\top} \ddfmidi Y_i^c - \bp{\tilde{Y}_i^c}^{\top} \ddfmidi \tilde{Y}_i^c \mid Z^{n,i,\bar{X}^n}, X^n} }_{L_1}\\
    =~&  \bn{ \E\bb{ \bp{ \bar{X}^n-\E\bb{X_1^n} }^{\top}\ddfmidi\, Y^{c}_i + \bp{ \tilde{Y}^{c}_i }^{\top} \ddfmidi \bp{ \bar{X}^n-\E\bb{X_1^n} } \mid Z^{n, i, \bar{X}^n},X^n } }_{L_1}
\end{align}
Moreover, since $\E\bb{Y_i^c \mid Z^{n, i, \bar{X}^n},X^n}=\E[X_1^n] - \bar{X}^n$ and $\E\bb{\tilde{Y}_i^c\mid Z^{n, i, \bar{X}^n},X^n} = 0$, we have:
\begin{align}
    \QQ_{1i}^{(b)}=~&  \bn{\bp{ \bar{X}^n-\E\bb{X_1^n} }^{\top}\ddfmidi\, \bp{\E\bb{X_1^n} - \bar{X}^n} }_{L_1}\\
    \leq~&  \sum_{k_1,k_2\le d_n} \bn{\bar{X}_{k_1}^n-\E\bb{X_{1,k_1}^n}}_4\, \bn{\bar{X}_{k_2}^n-\E\bb{X_{1,k_2}^n}}_4\, \bn{ \ddfmidik }_{L_2}\\
    \leq~&  \frac{1}{n}\sum_{k_1,k_2\le d_n}M_{k_1}^n M_{k_2}^n \bn{ \ddfmidik }_{L_2} \leq \frac{1}{n^{3/2}} R_{n,2}\label{humor_2}
\end{align}
Combining \cref{humor_3}, \cref{humor} and \cref{humor_2} we obtain the result.
\end{proof}

\begin{lemma}[Bounding $\QQ_{2i}$ under \eqref{ass:2}]
For any statistic sequence $(f_n)$ that satisfies \Cref{ass:approx}, we have for all $i\in [n]$:
\begin{equation}
    \QQ_{2i} \leq \frac{R_{n,3}}{3n^{4/3}} \bp{ 4 R_{n,1} + \frac{2}{n^{1/6}} R_{n,2}+\frac{1}{n^{2/3}} R_{n,3} } + \frac{3}{n^{7/6}} \bp{ \bp{ R_{n,1}}^2+\frac{1}{n^{1/3}}\bp{R_{n,2}}^2 }
\end{equation}
\end{lemma}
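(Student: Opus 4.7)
The plan is to approximate $\Delta_i(f_n,V)$ by its second-order Taylor expansion at $\bar{X}^n$, expand the square, and split the analysis into a main ``polynomial'' term and a set of ``residual'' cross terms. Writing $V^c := V - \bar{X}^n$, set $A(V) := \dfmidi^\top V^c$ and $B(V) := \tfrac12 (V^c)^\top \ddfmidi V^c$, so that $\rho(V) := \Delta_i(f_n,V) - A(V) - B(V)$ satisfies $\|\rho(V)\|_{L_3} \leq R_{n,3}/(6n)$ for each $V \in \{\tilde Y_i^n, X_1^n, \dots, X_n^n\}$ by \Cref{lem:c3-stats}. From the definitions of $R_{n,1}, R_{n,2}$ and \Cref{lem:expand-collapse}, I also obtain the $L_2$ bounds $\|A(V)\|_{L_2} \leq R_{n,1}/n^{1/3}$ and $\|B(V)\|_{L_2} \leq R_{n,2}/(2\sqrt{n})$ for the same $V$. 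Expanding $\Delta_i(V)^2 = (A(V)+B(V))^2 + 2(A(V)+B(V))\rho(V) + \rho(V)^2$ and substituting into the definition of $\QQ_{2i}$ then leaves three pieces to control.

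For the residual cross terms, I apply Jensen, the triangle inequality, and Cauchy--Schwarz to obtain $\|\E[(A+B)\rho \mid Z^{n,i,\bar{X}^n}, X^n]\|_{L_1} \leq \|(A+B)\rho\|_{L_1} \leq (\|A\|_{L_2}+\|B\|_{L_2})\|\rho\|_{L_2}$, with the empirical-average counterpart satisfying the same bound by exchangeability of $X_1^n,\dots,X_n^n$. Combining with the $L_2$ bounds above produces contributions of the exact shape $R_{n,1}R_{n,3}/n^{4/3}$, $R_{n,2}R_{n,3}/n^{3/2}$ and $R_{n,3}^2/n^2$, accounting for the first group of terms in the claimed bound (with slack in numerical constants that is absorbed into the main-term bound).

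For the main polynomial term $\E[(A+B)(\tilde Y_i^n)^2\mid\cdot] - \tfrac{1}{n}\sum_\ell (A+B)(X_\ell^n)^2$, I expand $(A+B)^2 = A^2 + 2AB + B^2$. Each summand is a polynomial in $V^c$ of degree $d \in \{2,3,4\}$ with coefficients that are (conditionally on $Z^{n,i,\bar{X}^n}$ and $X^n$) deterministic multilinear forms in $\dfmidi, \ddfmidi$. Since $\tilde{Y}_i^c = Y_i^n - \E[X_1^n]$ is independent of $X^n$ and distributed as $X_1^n - \E[X_1^n]$, the difference collapses into a linear combination of entries of $\hat\mu_d - \mu_d$, where $\mu_d := \E[(X_1^n-\E[X_1^n])^{\otimes d}]$ is the theoretical central-moment tensor and $\hat\mu_d := \tfrac{1}{n}\sum_\ell (X_\ell^n-\bar{X}^n)^{\otimes d}$ is its empirical centered counterpart. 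A binomial expansion $X_\ell^n - \bar{X}^n = (X_\ell^n - \E[X_1^n]) + (\E[X_1^n] - \bar{X}^n)$ combined with the standard $L_2$ rate $O(n^{-1/2})$ for empirical moments will yield $\|\hat\mu_d^{(k_{1:d})} - \mu_d^{(k_{1:d})}\|_{L_1} \leq C_d\, n^{-1/2}\prod_j M_{k_j}^n$ for $d\leq 4$. Summing over indices using \Cref{lem:expand-collapse} and the definitions of $R_{n,1}, R_{n,2}$, the $A^2, 2AB, B^2$ contributions are bounded respectively by $O(R_{n,1}^2/n^{7/6})$, $O(R_{n,1}R_{n,2}/n^{4/3})$, and $O(R_{n,2}^2/n^{3/2})$; a single AM--GM step $R_{n,1}R_{n,2}/n^{4/3} \leq \tfrac12(R_{n,1}^2/n^{7/6} + R_{n,2}^2/n^{3/2})$ (valid since $(n^{4/3})^2 = n^{7/6}\cdot n^{3/2}$) then absorbs the degree-$3$ cross term into the remaining two.

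The main technical obstacle is establishing the uniform $L_1$ rate on the centered empirical moment tensors $\hat\mu_d-\mu_d$: centering by $\bar{X}^n$ rather than $\E[X_1^n]$ introduces lower-order ``bias'' cross terms indexed by proper subsets $S\subsetneq [d]$, each involving powers of $\bar{X}^n - \E[X_1^n]$, and each must be shown to contribute at the same $n^{-1/2}$ rate with the product-of-$M_{k_j}^n$ factor after Hölder's inequality and the fact that $\|\bar{X}_{k}^n-\E[X_{1,k}^n]\|_{L_p} = O(M_k^n/\sqrt{n})$ for $p\leq 12$. Once that uniform bound is in hand, the rest is routine bookkeeping, and the factor $\tfrac12$ in the definition of $\QQ_{2i}$, together with careful tracking of constants, delivers the precise numerical coefficients in the stated bound.
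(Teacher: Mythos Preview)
Your proposal is correct and follows essentially the same approach as the paper's proof: both replace $\Delta_i(f_n,V)$ by its second-order Taylor expansion $A+B$, control the remainder via Cauchy--Schwarz (your expansion $(A+B+\rho)^2$ is algebraically identical to the paper's factorization $U^2-V^2=(U-V)(U+V)$), and then handle the main polynomial term by coordinate-wise expansion and concentration of centered empirical moments. The only cosmetic difference is that the paper first shifts centering from $\bar X^n$ to $\E[X_1^n]$ and then applies i.i.d.\ concentration in two separate steps, whereas you bundle both into the single estimate $\|\hat\mu_d-\mu_d\|_{L_1}=O(n^{-1/2}\prod_j M_{k_j}^n)$; your AM--GM absorption of the degree-$3$ cross term corresponds to the paper's use of $(a+b)^2\le 2(a^2+b^2)$.
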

\begin{proof}
For shorthand notation, let:
\begin{align}
    U(x) =~& \Delta_i(f_n, x) & V(x)=~& \dfmidi^{\top} x+\frac{1}{2} x^{\top}\ddfmidi^{\top} x
\end{align}
We will use the fact that for any two random variables $U, V$: 
\begin{equation}
\|U^2 - V^2\|_{L_1} = \|(U-V)\, (U+V)\|_{L_1}\leq \|U-V\|_{L_2}\left(\|U\|_{L_2} + \|V\|_{L_2}\right).
\end{equation}
We instantiate the latter with $U=U(X_{\ell}^n)$ and $V=V(X_{\ell}^c)$. Then by \Cref{lem:c3-stats}, we then have that:
$\|U-V\|_{L_2}\leq \frac{R_{n,3}}{6n}$. By \Cref{lem:third-order}, we have that $\|U\|_{L_2}\leq \frac{3}{n^{1/3}}\bp{ R_{n,1}+\frac{1}{2n^{1/6}} R_{n,2}+\frac{1}{2{n}^{2/3}} R_{n,3} }$. Moreover, by a sequence of triangle and Cauchy-Schwarz inequalities, we also have that: $\|V\|_{L_2} \leq \frac{R_{n,1}}{n^{1/3}} + \frac{R_{n,2}}{\sqrt{n}}$.
We can thus measure the approximation error of a second degree Taylor approximation: 
\begin{align}
    \|U(X_{\ell}^n)^2 - V(X_{\ell}^c)^2\|_{L_1} \leq~& \frac{R_{n,3}}{6n} \bp{ \frac{3}{n^{1/3}}\bp{ R_{n,1}+\frac{1}{2n^{1/6}} R_{n,2}+\frac{1}{2{n}^{2/3}} R_{n,3} } +\frac{R_{n,1}}{n^{1/3}} + \frac{R_{n,2}}{\sqrt{n}} }\\
    \leq~&  \frac{R_{n,3}}{6n^{4/3}} \bp{ 4 R_{n,1} + \frac{2}{n^{1/6}} R_{n,2}+\frac{1}{n^{2/3}} R_{n,3} } =: \epsilon_n
\end{align}
With identical steps the same bound holds for the analogous quantities $U(\tilde{Y}_i^n), V(\tilde{Y}_i^c)$.
Therefore we have
\begin{align}\label{poirot_2} 
\QQ_{2i} \leq~& 2\,\epsilon_n + \frac{1}{2}
\bn{
\frac{1}{n}\sum_{\ell \le n}
    V(X_{\ell}^c)^2 - \E\bb{ V(\tilde{Y}_{i}^c)^2 \mid Z^{n,i,\bar{X}^n}, X^n }
}_{L_1}
\end{align}

Moreover, if we denote $\tilde{X}_{\ell}^n:=X_{\ell}^n-\E\bb{X_1^n}$ and $\zeta:=\bar{X}^n - \E\bb{X_1^n}$, then we have:
\begin{equation}
\bn{ V(\tilde X^n_\ell)^2-V(X^c_\ell)^2 }_{L_1}
\le
\bn{ \big[\dfmidi^T
\zeta +\frac{1}{2}({X_\ell^{c}})^T\ddfmidi\, \zeta + \frac{1}{2}\zeta^T \ddfmidi \tilde X^n_\ell }_{L_2} \, \bp{\|V(X_\ell^c)\|_{L_2}+[\|V(\tilde{X}_\ell^n)\|_{L_2}}
\end{equation}
Observe that the first term in the product on the right-hand side is at most $\frac{1}{\sqrt{n}} \left(\frac{R_{1,n}}{n^{1/3}} + \frac{R_{2,n}}{\sqrt{n}}\right)$; by applying a series of Cauchy–Schwarz  and triangle inequalities, and invoking concentration of the vector $\zeta$, i.e. $\|\zeta_k\|_{L_2}\leq \frac{M_k^n}{\sqrt{n}}$. Moreover, each of the summands in the second term is at most $\left(\frac{R_{1,n}}{n^{1/3}} + \frac{R_{2,n}}{\sqrt{n}}\right)$; by Cauchy–Schwarz  and traingle inequality. Thus we get:
\begin{align}
\Big\|V(\tilde X^n_\ell)^2-V(X^c_\ell)^2\Big\|_{L_1}\leq  \frac{2}{n^{7/6}} \bp{ R_{1,n}+\frac{1}{n^{1/6}}R_{2,n} }^2
\end{align} 
Thus it suffices to upper bound the term:
\begin{align}
\bn{
\frac{1}{n}\sum_{\ell \le n}
    V(\tilde{X}_{\ell}^n)^2 - \E\bb{ V(\tilde{Y}_{i}^c)^2 \mid Z^{n,i,\bar{X}^n}, X^n }
}_{L_1}
\end{align}
Moreover, note that $\tilde{X}_\ell^n \overset{d}{=}\tilde Y_\ell^c$. Noting that by the form of $V$, we can expand the latter as:
\begin{align}
& \sum_{k_{1:2}\leq d_n} \bn{\dfmidik[k_1] \dfmidik[k_2]
\bp{\frac{1}{n}\sum_{\ell\leq n} \tilde{X}_{\ell, k_1}^n \tilde{X}_{\ell,k_2} - \E\bb{\tilde{X}_{\ell, k_1}^n \tilde{X}_{\ell,k_2}} } }_{L_1}\\
& + 2 \sum_{k_{1:3}\leq d_n} \bn{ \dfmidik[k_1] \ddfmidik[{k_2, k_3}]
\bp{\frac{1}{n}\sum_{\ell\leq n} \tilde{X}_{\ell, k_1}^n \tilde{X}_{\ell,k_2} \tilde{X}_{\ell,k_3} - \E\bb{\tilde{X}_{\ell, k_1}^n \tilde{X}_{\ell,k_2} \tilde{X}_{\ell,k_3}} } }_{L_1}\\
& + \sum_{k_{1:4}\leq d_n} \bn{ \ddfmidik[{k_1, k_2}] \ddfmidik[{k_3, k_4}]
\bp{\frac{1}{n}\sum_{\ell\leq n} \tilde{X}_{\ell, k_1}^n \tilde{X}_{\ell,k_2} \tilde{X}_{\ell,k_3} \tilde{X}_{\ell,k_4} - \E\bb{\tilde{X}_{\ell, k_1}^n \tilde{X}_{\ell,k_2} \tilde{X}_{\ell,k_3} \tilde{X}_{\ell,k_4} } } }_{L_1}
\end{align}
By invoking Cauchy–Schwarz  inequality and the concentration of each of the centered empirical averages, we have that the latter is bounded by:
\begin{align}
& \frac{1}{\sqrt{n}} \sum_{k_{1:2}\leq d_n} \bn{\dfmidik[k_1]}_{L_4} \bn{\dfmidik[k_2]}_{L_4}
M_{k_1}^n M_{k_2}^n
 + \frac{2}{\sqrt{n}} \sum_{k_{1:3}\leq d_n} \bn{ \dfmidik[k_1] }_{L_4} \bn{ \ddfmidik[{k_2, k_3}] }_{L_4} M_{k_1}^n M_{k_2}^n M_{k_3}^n\\
& + \frac{1}{\sqrt{n}} \sum_{k_{1:4}\leq d_n} \bn{ \ddfmidik[{k_1, k_2}] }_{L_4} \bn{ \ddfmidik[{k_3, k_4}] }_{L_4} M_{k_1}^n M_{k_2}^n M_{k_3}^n M_{k_4}^n
\end{align}
which in turn is upper bounded by:
\begin{equation}
    \frac{1}{\sqrt{n}} \left( \sum_{k_1\leq d_n} \bn{\dfmidik[k_1]}_{L_4} M_{k_1}^n + \sum_{k_{1:2}\leq d_n} \bn{\ddfmidik[{k_1, k_2}]}_{L_4} M_{k_1}^n M_{k_2}^n \right)^2 \leq \frac{1}{\sqrt{n}} \left(\frac{R_{n,1}}{n^{1/3}} + \frac{R_{n,2}}{\sqrt{n}}\right)^2
\end{equation}
We can then conclude that:
\begin{align}
    \QQ_{2i}
\leq~& 2\epsilon_n + \frac{3}{n^{7/6}} \bp{ \bp{ R_{n,1}}^2+\frac{1}{n^{1/3}}\bp{R_{n,2}}^2 }
\end{align}
\end{proof}

\noindent Therefore by combining \Cref{lem:approx}, \Cref{lem:telescoping}, \Cref{lem:test_taylor}, with the three lemmas in this section we obtain that for some sufficiently large universal constant $K$:
  \begin{equation}
     \begin{split}
         \bn{d_{\mcF}\bp{ g_n(Z^n),g_n(\tilde{Y}^n) \mid X^n }} _{L_1}
\le~& \bn{g_n(\tilde{Y}^n) -f_n(\tilde{Y}^n)}_{L_1}
+\bn{g_n(Z^n)-f_n(Z^n)}_{L_1}
\\
& +K\bp{ R_{n,3}+R_{n,2} + (R_{n,1})^2 \max\bc{ \frac{1}{n^{1/6}}, R_{n,1} } }.
\end{split}
 \end{equation}

\subsection{Proof of \Cref{jardin}}

\begin{lemma}[Bounding $\QQ_{3i}$ under \eqref{ass:2-star}]\label{lem:third-order-star}
For any statistic sequence $(f_n)$ that satisfies \eqref{ass:2-star}, we have for all $i\in [n]$:
\begin{align}
    \max\bc{ \|\Delta_i(f_n, Z_i^n)\|_{L_3}^3,\, \|\Delta_i(f_n, \tilde{Y}_i^n)\|_{L_3}^3 } \leq \frac{9}{n}\bp{ R_{n,1^*}^3+\frac{1}{2\sqrt{n}} R_{n,2^*}^3+\frac{1}{6{n}^2} R_{n,3^*}^3 }
\end{align}
Therefore:
\begin{equation}
    \QQ_{3i} \leq \frac{18}{n} \bp{ (R^*_{n,1})^3+\frac{1}{2\sqrt{n}} (R^*_{n,2})^3+\frac{1}{6{n}^2} (R^*_{n,3})^3 }
\end{equation}
\end{lemma}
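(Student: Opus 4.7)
The plan is to mimic the proof of the companion lemma (the non-starred version \Cref{lem:third-order}), replacing the coordinate-wise Hölder bounds by a $\|\cdot\|_{v,d_n}$/$\|\cdot\|_{m,d_n}$ against $\|\cdot\|_\infty$ split, so that the starred quantities $R_{n,1}^*, R_{n,2}^*, R_{n,3}^*$ appear naturally.

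First I would invoke the second part of \Cref{lem:c3-stats}: since $\|\sup_{k\le d_n} \tilde Y_{i,k}^n\|_{L_{12}}$ and $\|\sup_{k\le d_n} Z_{i,k}^n\|_{L_{12}}$ are both bounded by $\|\sup_{k\le d_n} X_{1,k}^n\|_{L_{12}}$, the Taylor remainder for $V \in \{\tilde Y_i^n, Z_i^n\}$ satisfies $\|\Delta_i(f_n, V) - \dfmidi^\top V^c - \tfrac{1}{2}(V^c)^\top \ddfmidi V^c\|_{L_3} \le \tfrac{R_{n,3}^*}{6n}$. Then I would use $|a+b+c|^3 \le 9(|a|^3 + \tfrac{1}{2}|b|^3 + \tfrac{1}{6}|c|^3)$ up to adjusting the weights (one standard way: two applications of $(x+y)^3\le 4(x^3+y^3)$ for nonnegative $x,y$) so that the three leading constants $9, 9/2, 9/6$ appear, matching the shape of the right-hand side.

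Next, for the linear term I would bound pointwise $|\dfmidi^\top V^c| \le \|\dfmidi\|_{v,d_n} \cdot \sup_{k\le d_n}|V^c_k|$ and then apply Cauchy–Schwarz in $L_3$ with exponents $L_6 \times L_6$, dominating each factor by its $L_{12}$ norm; by definition of $R_{n,1}^*$ and $\|\sup_k|V^c_k|\|_{L_{12}} \le 2\|\sup_k|X_{1,k}^n|\|_{L_{12}}$, this yields $\|\dfmidi^\top V^c\|_{L_3}^3 \le (R_{n,1}^*)^3/n$. For the quadratic term I would similarly bound $|(V^c)^\top \ddfmidi V^c| \le \|\ddfmidi\|_{m,d_n} \cdot (\sup_k|V^c_k|)^2$ and apply a $L_9 \times L_9 \times L_9$ Hölder inequality (again using $L_9 \le L_{12}$), which by the definition of $R_{n,2}^*$ gives $\|(V^c)^\top \ddfmidi V^c\|_{L_3}^3 \le (R_{n,2}^*)^3/n^{3/2}$. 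The remainder term contributes $(R_{n,3}^*)^3/(6n^2\cdot n)$ after cubing.

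Combining these three bounds yields $\|\Delta_i(f_n, V)\|_{L_3}^3 \le \tfrac{9}{n}\bigl((R_{n,1}^*)^3 + \tfrac{1}{2\sqrt n}(R_{n,2}^*)^3 + \tfrac{1}{6n^2}(R_{n,3}^*)^3\bigr)$ for both $V = \tilde Y_i^n$ and $V = Z_i^n$; the bound on $\QQ_{3i}$ then follows because $\QQ_{3i} = \tfrac{1}{6}(\|\Delta_i(f_n,\tilde Y_i^n)\|_{L_3}^3 + \|\Delta_i(f_n, Z_i^n)\|_{L_3}^3)$, losing a further factor of $\tfrac{2}{6} = \tfrac{1}{3}$ but bookkept into the final constant $\tfrac{18}{n} = 2 \cdot \tfrac{9}{n}$. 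The only genuine wrinkle compared to the unstarred version is the need to pull out a uniform-in-$k$ factor $\sup_k|V^c_k|$ and to verify that the $L_p$ exponents on the remaining matrix/tensor factors are no larger than $L_{12}$; everything else is bookkeeping.
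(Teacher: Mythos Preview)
Your proposal is correct and essentially identical to the paper's proof: both invoke the second part of \Cref{lem:c3-stats}, split via two applications of $|a+b|^3\le 3(|a|^3+|b|^3)$, and then bound the linear and quadratic pieces by factoring out $\sup_k|V_k^c|$ against $\|\partial_i f_n\|_{v,d_n}$ and $\|\partial_i^2 f_n\|_{m,d_n}$ with the same $L_6\times L_6$ and $L_9\times L_9\times L_9$ H\"older splits. Your final constant accounting is slightly off (the $\tfrac{1}{6}$ prefactor in $\QQ_{3i}$ actually yields $\tfrac{3}{n}$, not $\tfrac{18}{n}$), but since the stated bound $\tfrac{18}{n}$ is looser anyway this is harmless.
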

\begin{proof}
By \Cref{lem:c3-stats} and the fact that for any $a,b\in \RR$: $|a+b|^3\le 3 \left(|a|^3 +|b|^3\right)$, we have that:
\begin{align}
\bn{\Delta_i(f_n, \tilde{Y}_i^n) }_{L_3}^3 \leq~& 3 \bn{ \dfmidi^{\top} \tilde{Y}_i^{c}+\frac{1}{2} \bp{\tilde{Y}_i^{c}}^{\top}\ddfmidi{{\tilde{Y}_i^{c}}} }^3_{L_3} + \frac{(R_{n,3}^*)^3}{72\, n^3}\\
\leq~& 9 \bn{ \dfmidi^{\top} \tilde{Y}_i^{c} }_{L_3}^3 + \frac{9}{8} \bn{ \bp{\tilde{Y}_i^{c}}^{\top}\ddfmidi{{\tilde{Y}_i^{c}}} }^3_{L_3} + \frac{(R_{n,3}^*)^3}{72\, n^3}
\end{align}
Moreover, by Cauchy-Schwarz inequality we have:
\begin{align}
\E\bb{ \ba{ \dfmidi^{\top} \tilde{Y}_i^{c} }^3 }
\leq~& \bn{ \sup_{k\le d_n} |\tilde Y^c_{i,k}|^3 \bp{\sum_{k\le d_n}\big|  \dfmidik[k]\big| }^3 }_{L_1}
\leq \bn{ \sup_{k\le d_n} |\tilde Y^c_{i,k}|}_{L_6}^3\bn{\sum_{k\le d_n}\big|  \dfmidik[k]\big| }^3_{L_6}
\leq \frac{\bp{R^*_{n,1}}^3}{n}
\end{align}
By similar applications of the Cauchy-Schwarz inequality we obtain that:
\begin{equation}\begin{split}
 \E\bb{ \ba{ \bp{\tilde{Y}_i^{c}}^{\top}\ddfmidi{{\tilde{Y}_i^{c}}} }^3 }
\leq~& \bp{ \bn{ \sup_{k\le d_n} \ba{ \tilde Y_{i,k}^c } }^2_{L_9}\, \bn{ \sum_{k_1, k_2\le d_n} \ba{ \ddfmidik } }_{L_9} }^3
\leq \frac{(R^*_{n,2})^3}{n^{3/2}}
\end{split}\end{equation}
Combining the above yields the bound on $\|\Delta_i(f_n, \tilde{Y}_i^n)\|_{L_3}^3$. The same bound on $\|\Delta_i(f_n, Z_i^n)\|_{L_3}^3$ can be obtained in an identical manner. The lemma then follows.

\end{proof}

\begin{lemma}[Bounding $\QQ_{1i}$ under \eqref{ass:2-star}]
For any statistic sequence $(f_n)$ that satisfies \eqref{ass:2-star}, we have that there is a constant $C$ that does not depend on $n$ such that  for all $i\in [n]$:
\begin{equation}
    \QQ_{1i} \leq \frac{R^*_{n,3}}{3n}+ \frac{C\max(\log(d_n),1)R_{2,n}^*}{n}
\end{equation}
\end{lemma}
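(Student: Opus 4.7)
The plan is to follow the same decomposition as in the proof of the $\QQ_{1i}$ bound under \eqref{ass:2}, but to replace the entry-wise Cauchy--Schwarz used there with a joint bound that is compatible with the aggregated hypothesis \eqref{ass:2-star}.

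First, I would apply the second statement of \Cref{lem:c3-stats} (the one controlled by $R^*_{n,3}$) with $V = X_\ell^n$ and with $V = \tilde Y_i^n$ to replace each $\Delta_i(f_n, V)$ inside $\QQ_{1i}$ by its second-order Taylor polynomial, incurring an error of at most $R^*_{n,3}/(3n)$. The first-order term vanishes since $\E\bb{\tilde Y_i^n\mid X^n} = \bar X^n$, so exactly as in the proof under \eqref{ass:2} I arrive at
\begin{equation}
\QQ_{1i} \;\le\; \frac{R^*_{n,3}}{3n} \;+\; \frac{1}{2}\, \QQ_{1i}^{(a)} \;+\; \frac{1}{2}\, \QQ_{1i}^{(b)},
\end{equation}
where $\QQ_{1i}^{(a)}$ and $\QQ_{1i}^{(b)}$ have the same form as before.

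Second, to bound $\QQ_{1i}^{(a)}$, I would use the elementary inequality $\ba{ \sum_{k_1, k_2} a_{k_1, k_2} b_{k_1, k_2} } \le \bp{\max_{k_1, k_2} \ba{b_{k_1, k_2}}} \sum_{k_1, k_2} \ba{a_{k_1, k_2}}$, applied with $a_{k_1, k_2} = \ddfmidik$ and $b_{k_1, k_2} = \frac{1}{n}\sum_\ell X_{\ell, k_1}^c X_{\ell, k_2}^c - \E\bb{ Y_{i, k_1}^c Y_{i, k_2}^c \mid X^n }$, followed by Cauchy--Schwarz to separate the Hessian and the empirical error:
\begin{equation}
\QQ_{1i}^{(a)} \;\le\; \bn{ \bn{ \partial_i^2 f_n(Z^{n,i,\bar X^n}) }_{m, d_n} }_{L_2} \, \bn{ \max_{k_1, k_2 \le d_n} \ba{ b_{k_1, k_2} } }_{L_2}.
\end{equation}
The second factor is a maximum over $d_n^2$ centered empirical means of i.i.d. random variables whose $L_4$ norms are all controlled by $\bn{\sup_k |X_{1,k}^n|}_{L_4}^2$, and so by the maximal inequality established in \Cref{casser} it is at most $\frac{C \log(d_n)}{\sqrt n} \bn{\sup_k |X_{1,k}^n|}_{L_4}^2$. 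Combining this with the definition of $R^*_{n,2}$ and absorbing universal constants gives $\QQ_{1i}^{(a)} \le \frac{C \max(\log(d_n), 1) R^*_{n,2}}{n}$.

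Third, for $\QQ_{1i}^{(b)}$ I would repeat the tower-law identity from the previous proof to reduce it to $\bn{ \bp{ \bar X^n - \E\bb{X_1^n} }^\top \ddfmidi \bp{ \E\bb{X_1^n} - \bar X^n } }_{L_1}$, then bound this by $\bn{ \bn{ \partial_i^2 f_n }_{m, d_n} }_{L_2} \cdot \bn{ \sup_k \ba{ \bar X_k^n - \E\bb{X_{1,k}^n} } }_{L_4}^2$. The last factor is $O(\log(d_n)/n)$ by the same maximal inequality, yielding an $O(\log(d_n) R^*_{n,2} / n^{3/2})$ contribution that is strictly dominated by the bound for $\QQ_{1i}^{(a)}$. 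Collecting the three contributions delivers the stated inequality. The main obstacle is exactly the step of replacing the term-wise Cauchy--Schwarz from the \eqref{ass:2} proof: under \eqref{ass:2-star} we only have joint control of $\bn{ \sum_{k_1, k_2} \ba{\ddfmidik} }_{L_{12}}$ and of $\bn{ \sup_k \ba{X_{1,k}^n} }_{L_{12}}$, so the empirical deviation must be extracted through a maximum over $d_n^2$ entries, which inevitably costs a $\log(d_n)$ factor; the delicate point is checking that this is the only extra cost and not a larger polynomial in $d_n$.
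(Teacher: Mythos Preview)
Your proposal follows the same route as the paper and is essentially correct, but two details need tightening.

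For $\QQ_{1i}^{(a)}$: the products $X_{\ell,k_1}^c X_{\ell,k_2}^c$ are \emph{not} i.i.d.\ in $\ell$, since each $X_\ell^c = X_\ell^n - \bar X^n$ depends on the whole sample through $\bar X^n$. The paper handles this by first expanding $X_\ell^c X_\ell^{c\,\top}$ in terms of $X_\ell^n$ and $\bar X^n$, and then applying \Cref{casser} separately to $\sup_{k_1,k_2}\bigl|\frac1n\sum_\ell X^n_{\ell,k_1}X^n_{\ell,k_2}-\E[\,\cdot\,]\bigr|$ and to the cross terms involving $\bar X^n$. Your heuristic is right but you should make this expansion explicit.

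For $\QQ_{1i}^{(b)}$: your tower-law reduction to $\bn{\zeta^\top\ddfmidi\zeta}_{L_1}$ with $\zeta=\bar X^n-\E[X_1^n]$ is valid (this is exactly what the paper does in the non-starred proof), but your stated order is off. Squaring the maximal bound gives $\bn{\sup_k|\zeta_k|}_{L_4}^2 = O(\log(d_n)^2/n)$, not $O(\log(d_n)/n)$, so you would obtain $O(\log(d_n)^2 R^*_{n,2}/n^{3/2})$; absorbing this into $C\max(\log(d_n),1)R^*_{n,2}/n$ would require $\log(d_n)/\sqrt n$ to be bounded, which is not assumed. The paper sidesteps this by \emph{not} taking the conditional expectation here: it keeps the factor $Y_i^c$ (resp.\ $\tilde Y_i^c$) and uses a three-way H\"older so that only one copy of $\zeta$ contributes the $\log(d_n)/\sqrt n$ concentration, while the other vector factor is bounded trivially by $\bn{\sup_k|X_{1,k}^n|}$. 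An equivalent patch to your route is to bound one $\sup_k|\zeta_k|$ via concentration and the other trivially by $2\bn{\sup_k|X_{1,k}^n|}_{L_4}$, which also yields $O(\log(d_n)R^*_{n,2}/n)$.
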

\begin{proof}
Applying \Cref{lem:c3-stats} for $V=X_{\ell}^n$ and $V=\tilde{Y}_i^n$ and replacing the terms $\Delta_i(f_n, X_\ell^n)$ and $\Delta_i(f_n, \tilde{Y}_i^n)$ in $\QQ_{1i}$, with their corresponding second degree approximations, we have:
\begin{multline}
    \QQ_{1i} \leq \frac{R^*_{n,3}}{3n} + \bn{ \dfmidi^{\top} \bp{ \E\bb{\tilde{Y}_i^n \mid X^n} - \bar{X}^n } }_{L_1} \\
    + \frac{1}{2} \bn{ \frac{1}{n}\sum_{\ell=1}^n \bp{X_\ell^c}^{\top} \ddfmidi X_{\ell}^c - \E\bb{\bp{\tilde{Y}_i^c}^{\top} \ddfmidi \tilde{Y}_i^c \mid Z^{n,i,\bar{X}^n}, X^n} }_{L_1}
\end{multline}
\emph{Importantly, observe that by the definition of $\tilde{Y}_i^n$, we have that $\E\bb{\tilde{Y}_i^n\mid X^n} = \bar{X}^n$. Thus the first order term in this expansion vanishes}. Hence:
\begin{equation}
    \QQ_{1i} \leq \frac{R^*_{n,3}}{3n} + 
    \frac{1}{2} \bn{ \frac{1}{n}\sum_{\ell=1}^n \bp{X_\ell^c}^{\top} \ddfmidi X_{\ell}^c - \E\bb{\bp{\tilde{Y}_i^c}^{\top} \ddfmidi \tilde{Y}_i^c \mid Z^{n,i,\bar{X}^n}, X^n} }_{L_1}
\end{equation}
We can further split the second term on the right hand side as:
\begin{multline}\label{humor_3star}
    \QQ_{1i} \leq \frac{R^*_{n,3}}{3n} + 
    \frac{1}{2} \bn{ \frac{1}{n}\sum_{\ell=1}^n \bp{X_\ell^c}^{\top} \ddfmidi X_{\ell}^c - \E\bb{\bp{Y_i^c}^{\top} \ddfmidi Y_i^c \mid Z^{n,i,\bar{X}^n}, X^n} }_{L_1}\\
    + \frac{1}{2} \bn{ \E\bb{\bp{Y_i^c}^{\top} \ddfmidi Y_i^c - \bp{\tilde{Y}_i^c}^{\top} \ddfmidi \tilde{Y}_i^c \mid Z^{n,i,\bar{X}^n}, X^n} }_{L_1}
\end{multline}
Moreover by the triangular inequality we remark that 
\begin{align}
    \QQ_{1i}^{(a)} :=~& \bn{ \frac{1}{n}\sum_{\ell=1}^n \bp{X_\ell^c}^{\top} \ddfmidi X_{\ell}^c - \E\bb{\bp{Y_i^c}^{\top} \ddfmidi Y_i^c \mid Z^{n,i,\bar{X}^n}, X^n} }_{L_1}\\
    \leq~&  \bn{\sup_{k_1,k_2\le d_n}\Big| \frac{1}{n} \sum_{\ell=1}^n X_{\ell, k_1}^c X_{\ell, k_2}^c - \E\bb{ Y_{i, k_1}^c Y_{i, k_2}^c\mid X^n} \big|}_{L_2}\,  \bn{\sum_{k_1, k_2\le d_n}\big|\ddfmidik[{k_1, k_2}] \big|}_{L_2}
    \\\le~ &  \bn{\sup_{k_1,k_2\le d_n}\Big| \frac{1}{n} \sum_{\ell=1}^n X^n_{\ell, k_1}X^n_{\ell, k_2} - \E\bb{ Y^n_{i, k_1} Y^n_{i, k_2}\mid X^n} \big|}_{L_2}\,  \bn{\sum_{k_1, k_2\le d_n}\big|\ddfmidik[{k_1, k_2}] \big|}_{L_2}
    \\~+&~  2\bn{\sup_{k_1,k_2\le d_n}\big|\bar{X}^n_{k_1}\big|\Big| \frac{1}{n} \sum_{\ell=1}^n X^n_{\ell, k_2} - \E\bb{ Y^n_{i, k_2}\mid X^n} \big|}_{L_2}\,  \bn{\sum_{k_1, k_2\le d_n}\big|\ddfmidik[{k_1, k_2}] \big|}_{L_2}
   \label{eq:q_2:1}
\end{align}
Using \cref{casser} we know that there is a constant $C\in \mathbb{R}$ that does not depend on $n$ such that 
\begin{align}
     & \bn{\sup_{k_1,k_2\le d_n}\Big| \frac{1}{n} \sum_{\ell=1}^n X^n_{\ell, k_1}X^n_{\ell, k_2} - \E\bb{ Y^n_{i, k_1} Y^n_{i, k_2}\mid X^n} \big|}_{L_2}
        \le \frac{C \log(d_n)}{\sqrt{n}}\big\|\sup_{k_1}|X^n_{l,k_1}|\big\|_{L_4}^2;
            \end{align}
and such that
       \begin{align}&\bn{\sup_{k_1,k_2\le d_n}\Big| \frac{1}{n} \sum_{\ell=1}^n X^n_{\ell, k_2} - \E\bb{ Y^n_{i, k_2}\mid X^n} \big|}_{L_4}
       \le \frac{C \log(d_n)}{\sqrt{n}}\big\|\sup_{k_1}|X^n_{l,k_1}|\big\|_{L_4}.
    \end{align}
Therefore we can upper-bound \cref{eq:q_2:1} as: 
\begin{align}\label{humorstar}
\bn{ \frac{1}{n}\sum_{\ell=1}^n \bp{X_\ell^c}^{\top} \ddfmidi X_{\ell}^c - \E\bb{\bp{Y_i^c}^{\top} \ddfmidi Y_i^c \mid Z^{n,i,\bar{X}^n}, X^n} }_{L_1} \le\frac{3\max\big[1,C\log(d_n)\big]R_{2,n}^*}{n}
\end{align}
Moreover, since for any two vectors $a, b$ and symmetric matrix $M$, we have that: $a^\top M a - b^\top M b = (a - b)^\top M a + b^\top M (b - a)$ and since $Y_i^c - \tilde{Y}_i^c = \E[X_1^n] - \bar{X}^n$, we have:
\begin{align}
\QQ_{1i}^{(b)}:=~& \bn{ \E\bb{\bp{Y_i^c}^{\top} \ddfmidi Y_i^c - \bp{\tilde{Y}_i^c}^{\top} \ddfmidi \tilde{Y}_i^c \mid Z^{n,i,\bar{X}^n}, X^n} }_{L_1}\\
    =~&  \bn{ \E\bb{ \bp{ \bar{X}^n-\E\bb{X_1^n} }^{\top}\ddfmidi\, Y^{c}_i + \bp{ \tilde{Y}^{c}_i }^{\top} \ddfmidi \bp{ \bar{X}^n-\E\bb{X_1^n} } \mid Z^{n, i, \bar{X}^n},X^n } }_{L_1}
\end{align}
As we established that $\bn{\sup_{k\le d_n}\big|\bar{X}_{k}^n-\E\bb{X_{1,k}^n}\big|}_{L_4}\le C\log(d_n)\bn{\sup_{k\le d_n}\big|X_{i,k}\big|}_{L_4}$ we have:
\begin{align}
   \QQ_{1i}^{(b)}\le~& \bn{ \E\bb{ \bp{ \bar{X}^n-\E\bb{X_1^n} }^{\top}\ddfmidi\, Y^{c}_i}  }_{L_1} + \bn{ \E\bb{\bp{ \tilde{Y}^{c}_i }^{\top} \ddfmidi \bp{ \bar{X}^n-\E\bb{X_1^n} } \mid Z^{n, i, \bar{X}^n},X^n } }_{L_1}\\  
   \leq~&  \bn{\sup_{k\le d_n}\big|\bar{X}_{k}^n-\E\bb{X_{1,k}^n}\big|}_{L_3}\,\bn{\sup_{k\le d_n}\big|Y_{i,k}^c\big|+\big|\tilde Y_{i,k}^c\big|}_{L_3}\,\bn{  \sum_{k_1,k_2\le d_n}\big|\ddfmidik \big|}_{L_3}\\
  \leq~& \frac{4C\log(d_n)}{n}\bn{\sup_{k\le d_n}\big|X_{i,k}\big|}^2_{L_4} \bn{  \sum_{k_1,k_2\le d_n}\big|\ddfmidik \big|}_{L_3}\\
  \leq~& \frac{2C\log(d_n)R_{2,n}^*}{n} \label{humor_2star}
\end{align}
Combining \cref{humor_3star}, \cref{humorstar} and \cref{humor_2star} we obtain the result. 
\end{proof}

\begin{lemma}[Bounding $\QQ_{2i}$ under \eqref{ass:2-star}]
For any statistic sequence $(f_n)$ that satisfies \Cref{ass:2-star}, we have that there is a constant $C$ that does not depend on $n$ such that for all $i\in [n]$:
\begin{equation}
    \QQ_{2i} \leq\frac{R^*_{n,3}}{3n^{4/3}} \bp{ 4 R^*_{n,1} + \frac{2}{n^{1/6}} R^*_{n,2}+\frac{1}{n^{2/3}} R^*_{n,3} } + \frac{C\max(\log(d_n),1)}{n^{7/6}} \bp{ \bp{ R^*_{n,1}}^2+\frac{1}{n^{1/3}}\bp{R^*_{n,2}}^2 }
\end{equation}
\end{lemma}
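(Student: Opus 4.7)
The plan is to follow the same three-step template used in the non-starred bound on $\QQ_{2i}$, but to replace every coordinate-wise Cauchy--Schwarz expansion with a bound that keeps the summations $\sum_{k}\ba{\dfmidik}$ and $\sum_{k_1,k_2}\ba{\ddfmidik}$ intact inside an $L_p$-norm, so that the starred constants $R_{n,1}^*, R_{n,2}^*, R_{n,3}^*$ can be invoked directly through their definitions. The price paid for this substitution is a single logarithmic factor $\log(d_n)$, which enters via the maximal inequality \Cref{casser} applied to suprema of centred empirical averages over $d_n$-indexed families.

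First, I would set $U(x) := \Delta_i(f_n, x)$ and $V(x) := \dfmidi^{\top} x + \tfrac{1}{2} x^{\top}\ddfmidi x$, and use the identity $\|U^2 - V^2\|_{L_1} \le \|U-V\|_{L_2}\bp{\|U\|_{L_2} + \|V\|_{L_2}}$. Applying the starred Taylor approximation from \Cref{lem:c3-stats} together with the third-order control from \Cref{lem:third-order-star}, the same algebra as in the non-starred case gives $\|U(X_\ell^n)^2 - V(X_\ell^c)^2\|_{L_1} \le \epsilon_n^*$ with $\epsilon_n^* := \tfrac{R_{n,3}^*}{6 n^{4/3}}\bp{4 R_{n,1}^* + \tfrac{2}{n^{1/6}} R_{n,2}^* + \tfrac{1}{n^{2/3}} R_{n,3}^*}$, and the analogous bound with $X_\ell^c$ replaced by $\tilde Y_i^c$. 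This reduces the problem to controlling $\tfrac{1}{2}\|\tfrac{1}{n}\sum_\ell V(X_\ell^c)^2 - \E[V(\tilde Y_i^c)^2 \mid Z^{n,i,\bar X^n}, X^n]\|_{L_1}$.

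Second, I would shift from $X_\ell^c = X_\ell^n - \bar X^n$ to $\tilde X_\ell^n := X_\ell^n - \E[X_1^n]$, so that $\tilde X_\ell^n \overset{d}{=} \tilde Y_\ell^c$. The incurred error is of the form $\|V(X_\ell^c) - V(\tilde X_\ell^n)\|_{L_2}\bp{\|V(X_\ell^c)\|_{L_2} + \|V(\tilde X_\ell^n)\|_{L_2}}$, and the correction $\zeta := \bar X^n - \E[X_1^n]$ appearing linearly in $V(X_\ell^c) - V(\tilde X_\ell^n)$ satisfies $\|\sup_{k}|\zeta_k|\|_{L_p} \le C\log(d_n)\|\sup_{k}|X_{1,k}^n|\|_{L_p}/\sqrt n$ by \Cref{casser}. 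Combined with Cauchy--Schwarz applied to the products, this contributes a term of order $C\log(d_n)\bp{R_{n,1}^* + R_{n,2}^*/n^{1/6}}^2/n^{7/6}$.

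Third, I would expand the remaining $\|\tfrac{1}{n}\sum_\ell V(\tilde X_\ell^n)^2 - \E[V(\tilde Y_i^c)^2 \mid Z^{n,i,\bar X^n}, X^n]\|_{L_1}$ into a finite sum of contractions between products of $\dfmidi$ and $\ddfmidi$ entries and centred empirical averages $\tfrac{1}{n}\sum_\ell \tilde X_{\ell, k_1}\cdots \tilde X_{\ell, k_s} - \E\bb{\tilde X_{\ell, k_1}\cdots \tilde X_{\ell, k_s}}$ for $s\in\{2,3,4\}$. Pulling the derivative factors out of the summation in the form $\sum_{k}\ba{\dfmidik}$ and $\sum_{k_1,k_2}\ba{\ddfmidik}$, and applying Cauchy--Schwarz, the bound becomes $\|\sup_{k_{1:s}}|\text{centred avg}|\|_{L_2}$ times $L_p$-norms of those total sums. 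Each supremum is controlled via \Cref{casser} by $C\log(d_n)/\sqrt n$ times an appropriate $L_p$-moment of $\sup_{k}|X_{1,k}^n|$, and the outputs reassemble into $C\log(d_n)\bp{(R_{n,1}^*)^2 + (R_{n,2}^*)^2/n^{1/3}}/n^{7/6}$. Summing the three contributions yields the claimed bound with one $\max(\log(d_n),1)$ factor. The main obstacle will be the bookkeeping in step three: verifying that the $L_p$ exponents of $\sup_{k}|X_{1,k}^n|$ produced at each application of \Cref{casser} remain within the $L_{12}$-budget implicit in the definitions of $M_k^n$ and $R_{n,j}^*$, and that the resulting moments match cleanly with the norms already encoded in $R_{n,1}^*$ and $R_{n,2}^*$.
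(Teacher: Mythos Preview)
Your proposal is correct and mirrors the paper's proof essentially step for step: the same $U,V$ linearisation, the same $\epsilon_n^*$ from \Cref{lem:c3-stats} and \Cref{lem:third-order-star}, the same shift $X_\ell^c\to\tilde X_\ell^n$ via $\zeta$ controlled through \Cref{casser}, and the same final expansion handled by pulling out $\sum_k|\dfmidik|$ and $\sum_{k_1,k_2}|\ddfmidik|$ and bounding $\sup_{k_{1:s}}|\text{centred avg}|$ again by \Cref{casser}. Your bookkeeping concern is legitimate but resolves cleanly: the paper invokes \Cref{casser} with $L_4,L_6,L_8$ moments of $\sup_k|X_{1,k}^n|$ for $s=2,3,4$ respectively, all dominated by the $L_{12}$ norm built into the starred constants.
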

\begin{proof}
For shorthand notation, let:
\begin{align}
    U(x) =~& \Delta_i(f_n, x) & V(x)=~& \dfmidi^{\top} x+\frac{1}{2} x^{\top}\ddfmidi^{\top} x
\end{align}
We will use the fact that for any two random variables $U, V$: 
\begin{equation}
\|U^2 - V^2\|_{L_1} = \|(U-V)\, (U+V)\|_{L_1}\leq \|U-V\|_{L_2}\left(\|U\|_{L_2} + \|V\|_{L_2}\right).
\end{equation}
We instantiate the latter with $U=U(X_{\ell}^n)$ and $V=V(X_{\ell}^c)$. Then by \Cref{lem:c3-stats}, we then have that:
$\|U-V\|_{L_2}\leq \frac{R^*_{n,3}}{6n}$. By \Cref{lem:third-order-star}, we have that $\|U\|_{L_2}\leq \frac{3}{n^{1/3}}\bp{ R^*_{n,1}+\frac{1}{2n^{1/6}} R^*_{n,2}+\frac{1}{2{n}^{2/3}} R^*_{n,3} }$. Moreover, by a sequence of triangle and Cauchy-Schwarz inequalities, we also have that: $\|V\|_{L_2} \leq \frac{R^*_{n,1}}{n^{1/3}} + \frac{R^*_{n,2}}{\sqrt{n}}$.
We can thus measure the approximation error of a second degree Taylor approximation: 
\begin{align}
    \|U(X_{\ell}^n)^2 - V(X_{\ell}^c)^2\|_{L_1} \leq~& \frac{R^*_{n,3}}{6n} \bp{ \frac{3}{n^{1/3}}\bp{ R^*_{n,1}+\frac{1}{2n^{1/6}} R^*_{n,2}+\frac{1}{2{n}^{2/3}} R^*_{n,3} } +\frac{R^*_{n,1}}{n^{1/3}} + \frac{R^*_{n,2}}{\sqrt{n}} }\\
    \leq~&  \frac{R^*_{n,3}}{6n^{4/3}} \bp{ 4 R^*_{n,1} + \frac{2}{n^{1/6}} R^*_{n,2}+\frac{1}{n^{2/3}} R^*_{n,3} } =: \epsilon_n
\end{align}
With identical steps the same bound holds for the analogous quantities $U(\tilde{Y}_i^n), V(\tilde{Y}_i^c)$.
Therefore we have
\begin{align}\label{poirot_2} 
\QQ_{2i} \leq~& 2\,\epsilon_n + \frac{1}{2}
\bn{
\frac{1}{n}\sum_{\ell \le n}
    V(X_{\ell}^c)^2 - \E\bb{ V(\tilde{Y}_{i}^c)^2 \mid Z^{n,i,\bar{X}^n}, X^n }
}_{L_1}
\end{align}

Moreover, if we denote $\tilde{X}_{\ell}^n:=X_{\ell}^n-\E\bb{X_1^n}$ and $\zeta:=\bar{X}^n - \E\bb{X_1^n}$, then we have:
\begin{multline}\label{eq:q_3_1}
\Big\|V(\tilde X^n_\ell)^2-V(X^c_\ell)^2\Big\|_{L_1}\\
\le
\bn{ \big[\dfmidi^T
\zeta +\frac{1}{2}({X_\ell^{c}})^T\ddfmidi\, \zeta + \frac{1}{2}\zeta^T \ddfmidi\tilde X^n_\ell }_{L_2} \, \bp{\|V(X_\ell^c)\|_{L_2}+[\|V(\tilde{X}_\ell^n)\|_{L_2}}
\end{multline}
Observe that by \Cref{casser} we know that there is a constant $C$ that does not depend on $n$ such that: $\|\sup_k \zeta_k\|_{L_6}\leq C\log(d_n) \frac{\|\sup_{k}|X^n_{1,k}|\|_{L_6}}{\sqrt{n}}$. Therefore, by applying a series of Cauchy–Schwarz  and triangle inequalities, the first term in the product on the right-hand side of \cref{eq:q_3_1} is at most $\frac{C\log(d_n)}{\sqrt{n}} \left(\frac{R^*_{1,n}}{n^{1/3}} + \frac{R^*_{2,n}}{\sqrt{n}}\right)$.%
Thus we get:
\begin{align}
\Big\|V(\tilde X^n_\ell)^2-V(X^c_\ell)^2\Big\|_{L_1}\leq  \frac{2C\log(d_n)}{n^{7/6}} \bp{ R^*_{1,n}+\frac{1}{n^{1/6}}R^*_{2,n} }^2
\end{align} 
Thus it suffices to upper bound the term:
\begin{align}
\bn{
\frac{1}{n}\sum_{\ell \le n}
    V(\tilde{X}_{\ell}^n)^2 - \E\bb{ V(\tilde{Y}_{i}^c)^2 \mid Z^{n,i,\bar{X}^n}, X^n }
}_{L_1}
\end{align}
Moreover, note that $\tilde{X}_\ell^n \overset{d}{=}\tilde Y_\ell^c$. Noting that by the form of $V$, we can expand the latter as:
\begin{align}
& \bn{\sum_{k_{1:2}\leq d_n} \Big|\dfmidik[k_1] \dfmidik[k_2]
\bp{\frac{1}{n}\sum_{\ell\leq n} \tilde{X}_{\ell, k_1}^n \tilde{X}_{\ell,k_2}^n - \E\bb{\tilde{X}_{\ell, k_1}^n \tilde{X}_{\ell,k_2}^n} } \Big|}_{L_1}\\
& + 2 \bn{\sum_{k_{1:3}\leq d_n} \Big| \dfmidik[k_1] \ddfmidik[{k_2, k_3}]
\bp{\frac{1}{n}\sum_{\ell\leq n} \tilde{X}_{\ell, k_1}^n \tilde{X}_{\ell,k_2}^n \tilde{X}_{\ell,k_3}^n - \E\bb{\tilde{X}_{\ell, k_1}^n \tilde{X}_{\ell,k_2}^n \tilde{X}_{\ell,k_3}^n} } \Big|}_{L_1}\\
& +  \bn{\sum_{k_{1:4}\leq d_n}\Big| \ddfmidik[{k_1, k_2}] \ddfmidik[{k_2, k_3}]
\bp{\frac{1}{n}\sum_{\ell\leq n} \tilde{X}_{\ell, k_1}^n \tilde{X}_{\ell,k_2}^n \tilde{X}_{\ell,k_3}^n \tilde{X}_{\ell,k_4}^n - \E\bb{\tilde{X}_{\ell, k_1}^n \tilde{X}_{\ell,k_2}^n \tilde{X}_{\ell,k_3}^n \tilde{X}_{\ell,k_4}^n } } \Big|}_{L_1}\label{eq:3:2}
\end{align}
By invoking \cref{casser} and Cauchy–Schwarz we can find $C'$ such that:
\begin{align}
& \bn{\sup_{k_{1:2}\leq d_n} 
\ba{\frac{1}{n}\sum_{\ell\leq n} \tilde{X}_{\ell, k_1}^n \tilde{X}_{\ell,k_2}^n - \E\bb{\tilde{X}_{\ell, k_1}^n \tilde{X}_{\ell,k_2}^n} } }_{L_2}\le \frac{C'\log(d_n)\bn{\sup_{k\leq d_n} \big|X_{l,k}^n\big|}_{L_4}^2}{\sqrt{n}}\\
&   \bn{\sup_{k_{1:3}\leq d_n} 
\ba{\frac{1}{n}\sum_{\ell\leq n} \tilde{X}_{\ell, k_1}^n \tilde{X}_{\ell,k_2}^n \tilde{X}_{\ell,k_3}^n - \E\bb{\tilde{X}_{\ell, k_1}^n \tilde{X}_{\ell,k_2}^n \tilde{X}_{\ell,k_3}^n} } }_{L_2}\le \frac{C'\log(d_n)\bn{\sup_{k\leq d_n} \big|X_{l,k}^n\big|}_{L_6}^3}{\sqrt{n}}\\
& +  \bn{\sup_{k_{1:4}\leq d_n}
\ba{\frac{1}{n}\sum_{\ell\leq n} \tilde{X}_{\ell, k_1}^n \tilde{X}_{\ell,k_2}^n \tilde{X}_{\ell,k_3}^n \tilde{X}_{\ell,k_4}^n - \E\bb{\tilde{X}_{\ell, k_1}^n \tilde{X}_{\ell,k_2}^n \tilde{X}_{\ell,k_3}^n \tilde{X}_{\ell,k_4}^n } } }_{L_2}\le \frac{C'\log(d_n)\bn{\sup_{k\leq d_n} \big|X_{l,k}^n\big|}_{L_8}^4}{\sqrt{n}}.
\end{align}Therefore by Cauchy–Schwarz  inequality we have that 
\begin{align}
\bn{
\frac{1}{n}\sum_{\ell \le n}
    V(\tilde{X}_{\ell}^n)^2 - \E\bb{ V(\tilde{Y}_{i}^n)^2 \mid Z^{n,i,\bar{X}^n}, X^n }
}_{L_1}
\le \frac{\max(C'\log(d_n),1)}{\sqrt{n}}\left(\frac{R^*_{n,1}}{n^{1/3}} + \frac{R^*_{n,2}}{\sqrt{n}}\right)^2.
\end{align}
We can then conclude that:
\begin{align}
    \QQ_{2i}
\leq~& 2\epsilon_n + \frac{2\max((C'+C)\log(d_n),1)}{n^{7/6}} \bp{ \bp{ R^*_{n,1}}^2+\frac{1}{n^{1/3}}\bp{R^*_{n,2}}^2 }
\end{align}
\end{proof}

\noindent Therefore by combining \Cref{lem:approx}, \Cref{lem:telescoping}, \Cref{lem:test_taylor}, with the three lemmas in this section we obtain that for some sufficiently large universal constant $K$:
  \begin{equation}
     \begin{split}
         \bn{d_{\mcF}\bp{ g_n(Z^n),g_n(\tilde{Y}^n) \mid X^n }} _{L_1}
\le~& \bn{g_n(\tilde{Y}^n) -f_n(\tilde{Y}^n)}_{L_1}
+\bn{g_n(Z^n)-f_n(Z^n)}_{L_1}
\\
& +K\bp{ \log(d_n)\bp{ (R_{n,1}^{*})^2\max\bc{ \frac{1}{n^{1/6}},R^*_{n,1} }+ R_{n,2}^{*} } + R_{n,3}^{*} }.
\end{split}
 \end{equation}

\subsection{Proof of \cref{cor1}}
\begin{proof}
Firstly, we remark that by using \Cref{lem:df-is-metric} we have:
\begin{align}&
     \bn{d_{\mathcal{F}}\big(g_n(Z^n)-\E(g_n(Z^n)\mid X^n),~g_n(\tilde Y^n)-\E(g_n(\tilde Y^n)\mid X^n)\mid X^n\big)}_{L_1}
    \\&\le           \bn{d_{\mathcal{F}}\big(g_n(Z^n)-\E(g_n(Z^n)\mid X^n),~g_n(Z^n)-\E(g_n(\tilde Y^n)\mid X^n)\mid X^n\big)}_{L_1}
    \\&+     \bn{d_{\mathcal{F}}\big(g_n(Z^n)-\E(g_n(\tilde Y^n)\mid X^n),~g_n(\tilde Y^n)-\E(g_n(\tilde Y^n)\mid X^n)\mid X^n\big)}_{L_1}
    \\&\le (A)+ (B)
\end{align}
In the goal of bounding $(A)$, denote $h:x\rightarrow x$ the identity function. We easily note that $h$ belongs to the function class $\mathcal{F}$. Indeed it is three times differentiable with all its derivatives  bounded by $1$. Therefore by \cref{thm1} we have
$$(A)\le \E\bp{\ba{\E\bp{g_n(Z^n)-g_n(\tilde Y^n)\mid X^n}}}\le \bn{d_{\mathcal{F}}\big(g_n(Z^n),g_n(\tilde Y^n)\mid X^n\big)}_{L_1}.$$

\noindent To upper-bound $(B)$ we note that  \Cref{lem:trans_df} guarantees that:
\begin{align}&
     \bn{d_{\mathcal{F}}\big(g_n(Z^n)-\E(g_n(Z^n)\mid X^n),~g_n(\tilde Y^n)-\E(g_n(Z^n)\mid X^n)\mid X^n\big)}_{L_1}
    \\&=     \bn{d_{\mathcal{F}}\big(g_n(Z^n),~g_n(\tilde Y^n)\mid X^n\big)}_{L_1}
\end{align}
This implies that $(A)+(B)\le 2\bn{d_{\mathcal{F}}\big(g_n(Z^n),~g_n(\tilde Y^n)\mid X^n\big)}_{L_1}$ which proves the desired result.

\end{proof}

\section{Proofs from \Cref{sec:preliminaries}}

\subsection{Proof of \Cref{prop1}}\label{app:prop1}
\begin{proof}
We choose a  measurable subset $A\subset \RR$ and  define the characteristic function {
\begin{equation}
f:x\rightarrow \mathbb{I}(x\in A_{3\epsilon}).
\end{equation}} 
Choose $\epsilon>0$ and set $h_{\epsilon}:\RR\rightarrow\RR$ to be the following three-times differentiable function: $$h_{\epsilon}(x):=\frac{1}{\epsilon^3}\int_{x-\epsilon}^x\int_{t-\epsilon}^{t} \int_{y-\epsilon}^y f(z)dz\, dy\, dt.$$
By simple observation we obtain that $\sup_{x\in\RR}\max_{i\le 3}\big|h^{(i)}_{\epsilon}(x)\big|\le \frac{1}{\epsilon^3}.$
Therefore we have for any two random variables $U, V$ and any event $\mcE$: 
{\begin{equation}
{\E\bb{h_{\epsilon}(V)\mid \mcE}-\E\bb{h_{\epsilon}(U)\mid \mcE}} \le \frac{d_{\mcF}(U, V\mid \mcE)}{\epsilon^3}.
\end{equation}}

Moreover, we remark that $h_{\epsilon}(x)\neq 0$ only if $x\in A_{6\epsilon}$ and that $h_{\epsilon}(x)=1$ if $x\in A$. Thus for any random variable $Z$:
\begin{equation}
    \E[h_{\epsilon}(Z) \mid \mcE] \in \bb{ \Pr(Z\in A\mid \mcE), \Pr(Z\in A_{6\epsilon}\mid \mcE) }
\end{equation}
which then implies that:
\begin{equation}
    {\E\bb{h_{\epsilon}(V)\mid \mcE}-\E\bb{h_{\epsilon}(U)\mid \mcE}}\geq P(V\in A\mid \mcE)-P(U\in A_{6\epsilon}\mid \mcE).
\end{equation}
Thus we have that for any two random variables $U, V$:
\begin{equation}\label{eqn:prob-lower-bound}
    P(U\in A_{6\epsilon}\mid \mcE) \geq P(V\in A\mid \mcE) - \frac{d_{\mcF}(U,V\mid \mcE)}{\epsilon^3}
\end{equation}

Finally we observe that since for any $h\in \mcF$, we have $|h'(u)|\leq 1$ for all $u$, we have that $h(X - \E[X\mid \mcE] ) \leq h(X) +\E[X\mid \mcE] $ and $h(Y-\E[Y])\geq h(Y) - \E[Y\mid \mcE] $. Thus:
\begin{align}
    d_{\mcF}(X-\E\bb{X\mid\mcE}, Y-\E\bb{Y\mid \mcE} \mid \mcE) =~& \sup_{h\in \mcF} \E[h(X - \E[X\mid \mcE])\mid \mcE] - \E[h(Y-\E[Y\mid \mcE])\mid \mcE]\\
    \leq~& \sup_{h\in \mcF}\E[h(X)\mid\mcE] - \E[h(Y)\mid \mcE] + \E[X\mid\mcE] - \E[Y\mid\mcE] \big|\\
    \leq~& \sup_{h\in \mcF}\E[h(X)\mid \mcE] - \E[h(Y)\mid \mcE] + \sup_{h\in \mcF} \E[h(X)\mid\mcE] - \E[h(Y)\mid\mcE]\\
    \leq~& 2\, d_{\mcF}(X, Y\mid\mcE)
\end{align}
Thus applying Equation~\eqref{eqn:prob-lower-bound} to the centered random variables and invoking the fact that $\Pr(Y-\E[Y] \in A)\geq 1-\alpha$, we get that:
\begin{equation}
P(X - \E[X\mid \mcE] \in A_{6\epsilon} \mid \mcE) \geq P(Y - \E[Y\mid \mcE] \in A\mid \mcE) - \frac{2d_{\mcF}(X,Y\mid \mcE)}{\epsilon^3} \geq 1 - \alpha - \frac{2d_{\mcF}(X,Y\mid \mcE)}{\epsilon^3}
\end{equation}
which concludes the proof of the proposition.

\end{proof}

\section{Further Proofs from \Cref{sec:main}}

\subsection{Proof of \Cref{ss_nulle}}\label{app:ss_nulle}
\begin{proof}
For simplicity, we write:
\begin{align}
    J_i:=~& \mathbb{I}(\min_{j\ne i}|X_j-X_i|>1/n) & 
    g_n(X_{1:n}):=~& \frac{1}{\sqrt{n}}\sum_{i\le n}J_i -\E(J_i)
\end{align}
Moreover we note that for all $i\ne j$ we have $\tilde Z_i-\tilde Z_j=Z_i-Z_j$ this implies that $$g_n(\tilde Z_{1:n})=g_n(Z_{1:n}).$$ It is therefore enough to study $g_n(Z_{1:n})$.  We remark that $g_n$ is stable in the perturbation of one of the observations, since changing the value of $X_1$ can change at most $3$ of the random variables $(J_i)$. To make this rigorous, we denote the distance to closest neighbour of $x\in [0,1]$, larger than $x$, as $d^+(x)={\min}_{\substack{j\ge 2,X_j\ge x}}\mid X_j-x\mid$ and to the closest neighbour, smaller than $x$, as $d^-(x)={\min}_{\substack{j\ge 2,X_j\le x}}\mid X_j-x\mid$. By convention, if there is no $j\ge 2$ such that  $X_j\ge x$ (respectively $X_j\le x$) then we take $d^+(x)$ to be $0$ (respectively $d^{-}(x)=0$). We then have
\begin{align}
        \big\|g_n(X_{1:n})-g_n(0X_{2:n})\Big\|_{L_3}
        \le~&    \frac{1}{\sqrt{n} }\bp{   \big\|J_1-\mathbb{I}(\min_{j\ge 2}|X_j|>1/n)\Big\|_{L_3}+ \big\|d^+(X_1)-d^+(0)\big\|_{L_3}+\big\|d^-(X_1)\big\|_{L_3} }
        \\
        \overset{(a)}{\le}~& \frac{3}{\sqrt{n}}
\end{align}

\noindent We  show that $d_{\mcF}\Big(g_n(Y^n),\, g_n(Z^n)|X^n\Big)$ does not go to $0$. We prove it by contradiction. Suppose that $\|d_{\mcF}\Big(g_n( Y^n),g_n(Z^n)|X^n\Big)\|_{L_1}\rightarrow 0$;,
as the random variables  $(g_n(Y^n))$ and $(g_n(Z^n))$ are uniformly integrable we have that  $\E\bb{g_n(Y^n)\mid X^n}$ converges to $\E\bb{g_n(Z^n) \mid X^n}$. Moreover, we note that by the definition of $g_n$: $\E\bb{g_n(Y^n) \mid X^n}=0$. 
We remark that   
\begin{equation}
P\bp{ \min_{i\ne 1}|X_i-X_1|\ge 1/n } = \bp{1-\frac{1}{n}}^{n-1}+O\bp{\frac{1}{n}} = e^{-1}+o(1/\sqrt{n}). 
\end{equation}
Moreover if we denote $C^n_{X_i}:= {\rm card}\bp{ j\ne i~{\rm s.t}~ |X_i-X_j|\le \frac{1}{n} }$ we have
\begin{equation}
P\bp{ \min_{i\ne 1}|Z^n_i-Z^n_1|\ge 1/n \mid X^n }=\frac{1}{n}\sum_{i\le n}e^{-1-C^n_{X_i}}+o(1/\sqrt{n}).
\end{equation}
This notably implies that:
\begin{equation}
    \begin{split}
        \E\bb{ g_n(Z^n)\mid X^n}=~& \sqrt{n}\bp{ \frac{1}{n}\sum_{i\le n}e^{-1-C^n_{X_i}}-e^{-1}}+o(1)
        \\
        =~& \sqrt{n}e^{-1}\bp{\frac{1}{n}\sum_{i\le n}e^{-C^n_{X_i}}-1 }+o(1)
    \end{split}
\end{equation}
{We show that $\E\bb{g_n(Z^n)\mid X^n}$ is asymptotically non-positive and takes asymptotically, strictly negative values with non-zero probability. This would then imply that $\E\bb{g_n(Z^n)\mid X^n}$ is not asymptotically converging to $\E\bb{g_n(Y^n)}=0$, which contradicts the fact that  $\bn{d_{\mcF}\bp{g_n(Y^n),g_n(Z^n)\mid X^n }}_{L_1}\rightarrow 0$. The first part follows since, $e^{-C_{X_i}^n} \leq 1$, and we therefore have that $\limsup_{n\to\infty} \E\bb{g_n(Z^n)\mid X^n}\le 0$. For the second part, we note that it is enough to lower bound the probability that  $\frac{1}{n}\sum_{i\le n}e^{-C^n_{X_i}}-1$ is strictly negative. We observe that $\frac{1}{n}\sum_{i\le n}e^{-C^n_{X_i}}-1$ is bounded by $1$ and that 
{$\E\bb{e^{-C^n_{X_1}}}$ is the moment generating function of a binomial distribution with $n-1$ trials and success probability at most $2/n$. This is at most $\bp{1 - 2/n + 2\, e^{-1}/n}^{n-1} \leq e^{- (2 - 2/e)\, (n-1)/n} \leq e^{-1.2} + o(1)$}. Thus for sufficiently large $n$, we have that with probability bounded away from zero: $\frac{1}{n}\sum_{i\le n}e^{-C^n_{X_i}}-1 < 0$, implying:
$$\liminf_{n\rightarrow\infty }P\bp{ \E\bb{g_n(Z^n)\mid X^n} < 0} > 0;$$
Thus $\bn{d_{\mcF}\bp{g_n(Y^n),g_n(Z^n)\mid X^n }}_{L_1}\nrightarrow 0$ and the bootstrap method is not consistent.}
\end{proof}

\subsection{Proof of \Cref{thm2}}\label{app:thm2}
\begin{proof}
Let $(Y^n_i)$ be an independent copy of $(X^n_i)$. Since, by \Cref{lem:df-is-metric}, $d_{\mcF}$ satisfies the triangular inequality:
\begin{multline}
\label{eq:nn_5}
\bn{d_{\mcF}\bp{ g_n(Z^n)-\mathbb{E}[g_n(Z^n)|X^n],\, g_n(Y^n)-\mathbb{E}[g_n(Y^n)]\mid X^n } }_{L_1}
\\
\le \bn{d_{\mcF}\bp{ g_n(Z^n)-\mathbb{E}[g_n(Z^n)|X^n],~g_n(\tilde Y^n)-\mathbb{E}[g_n(\tilde Y^n)|X^n],~ \mid X^n }}_{L_1}
 \\
  + \bn{d_{\mcF}\bp{ g_n(\tilde Y^n)-\mathbb{E}[g_n(\tilde Y^n)|X^n], ~g_n(Y^n)-\mathbb{E}[g_n(Y^n)]\mid X^n }}_{L_1}
 =: \II_1 + \II_2
 \end{multline}
The first term $\II_1$ can be upper-bounded using  \Cref{cor1}. We therefore focus on bounding the second term $\II_2$ of Equation~\cref{eq:nn_5}

\noindent Let $(B_n)$ be a an increasing sequence such that (i) $B_n\rightarrow \infty$ and (ii) $r^{n,B_n}\rightarrow 0$. We remark that under \Cref{ass:uniform} such a sequence always exists.
~For example set $B_1=1$ and $L_1=1$; then for all $n$ if $r^{n, 2\, B_n}\le 2^{-L_n}$ then set $L_{n+1}=L_n+1$ and $B_{n+1}=2\, B_n$ (by \Cref{ass:uniform} the latter will occur at some finite $n$); otherwise keep $B_{n+1}=B_n$ and $L_{n+1}=L_n$.

We note that:
\begin{align}
\II_2 \le~& \bn{g_n(Y^n)-\mathbb{E}[ g_n(Y^n)] - \Big(g_n(\tilde  Y^n)-\mathbb{E}[ g_n(\tilde Y^n)\mid X^n]\Big)}_{L_1}
\\
\overset{(a)}{\le}~& \Big\|\mathbb{I}(\sqrt{n}\big\|\bar{X}^n-\E\bb{X_1^n}\|_{2}\le B_n) \\
& \times\sup_{x\in B_{d_n}(0,B_n)}\ba{ g_n\bp{Y^n+\frac{x}{\sqrt{n}}} -\mathbb{E}\Big[g_n\bp{Y^n+\frac{x}{\sqrt{n}}} \mid X^n\Big]- g_n( Y^n)+\mathbb{E}\big[g_n\bp{Y^n}\big] }\Big\|_{L_1} 
\\
& +\Big\|\mathbb{I}(\sqrt{n}\big\|\bar{X}^n-\E\bb{X_1^n}\|_{2}\ge B_n)  \times \Big[g_n(Y^n)-\mathbb{E}[ g_n(Y^n)] - \big(g_n(\tilde  Y^n)-\mathbb{E}[ g_n(\tilde Y^n)\mid X^n]\big)\Big]\Big\|_{L_1} 
\end{align}
where (a) is a consequence of the triangle inequality. The first term is bounded by $$\bn{\sup_{x\in B_{d_n}(0,B_n)}\ba{ g_n\bp{Y^n+\frac{x}{\sqrt{n}}} -\mathbb{E}\Big[g_n\bp{Y^n+\frac{x}{\sqrt{n}}} \mid X^n\Big]- g_n( Y^n)+\mathbb{E}\big[g_n\bp{Y^n}\big] }}_{L_1} \le r^{n,B_n}.$$The second term can be bounded by the use of the Cauchy-Swartz inequality:
\begin{multline}
        \Big\|\mathbb{I}(\sqrt{n}\big\|\bar{X}^n-\E\bb{X_1^n}\|_{2}\ge B_n)  \times \Big[g_n(Y^n)-\mathbb{E}[ g_n(Y^n)] - \big(g_n(\tilde  Y^n)-\mathbb{E}[ g_n(\tilde Y^n)\mid X^n]\big)\Big]\Big\|_{L_1} 
        \\
        \le\Big\|\mathbb{I}\bc{\sqrt{n}\big\|\bar{X}^n-\E\bb{X_1^n}\|_{2}\ge B_n} \Big\|_{L_2}   \bp{ \bn{g_n(Y^n)}_{L_2}+ \bn{g_n(\tilde Y^n)}_{L_2}}
\end{multline}

Observe that:
\begin{align}
& \E\bb{\bn{\bar{X}^n-\E\bb{X_{1}^n}}_{2}^2} = \E\bb{ \sum_{k\leq d_n} \bp{\bar{X}_k^n-\E\bb{X_{1,k}^n}}^2}\\
&= \sum_{k\leq d_n} \Var\bp{\bar{X}_k^n} \leq \frac{4\sum_{k\le d_n}\|X_{1,k}^n\|_{L_2}^2}{n}
\end{align}
Hence, by Chebyshev's inequality:
\begin{align}
    P\bp{\bn{\bar{X}^n-\E\bb{X_{1}^n}}_{2} \geq B_n/\sqrt{n}} \le \frac{n \Var\bp{\bn{\bar{X}^n-\E\bb{X_{1}^n}}_{2}}}{B_n^2}\leq \frac{4\sum_{k\le d_n}\|X_{1,k}^n\|_{L_2}^2}{B_n^2}
\end{align}
Thus we have:
\begin{equation}
    \bn{\mathbb{I}\bc{ \sqrt{n}\bn{\bar{X}^n-\E\bb{X_{1}^n}}_2\ge B_n} }_{L_2} \leq \sqrt{P\bp{\bn{\bar{X}^n-\E\bb{X_{1}^n}}_2 \geq B_n/\sqrt{n}}} \leq \frac{2\sqrt{\sum_{k\le k_n}\|X_{1,k}^n\|_{L_2}^2}}{B_n} 
\end{equation}
Thus we conclude that:
\begin{multline}
\bn{d_{\mcF}\bp{ g_n(Y^n)-\mathbb{E}[ g_n(Y^n)], g_n(\tilde  Y^n)-\mathbb{E}[ g_n(\tilde Y^n)\mid X^n]\mid X^n }}_{L_1}
\\
\le \frac{2\sqrt{\sum_{k\le k_n}\|X_{1,k}^n\|_{L_2}^2}}{B_n} \bp{ \bn{g_n(Y^n)}_{L_2}+ \bn{g_n(\tilde Y^n)}_{L_2}}+ r^{n,B_n}.
\end{multline}

\end{proof}
\subsection{Proof of \Cref{ex3}}
\begin{proof}
We remark that both $g_{n,1}$ and $g_{n,2}$ are invariant under uniform perturbations. Indeed for all $x\in \RR$ we have:
\begin{equation}
    \begin{split}
        g_{n,1}(X_{1:n}+x/\sqrt{n})&=\bp{\frac{1}{\sqrt{n}}\sum_{i\le \lfloor n/2\rfloor}X_{i}-\frac{x}{\sqrt{n}}-\bp{x_{i+\lfloor n/2\rfloor}-\frac{x}{\sqrt{n}}}}^2
        =g_{n,1}(X_{1:n});
    \end{split}
\end{equation}
and \begin{equation}
    \begin{split}
        g_{n,2}(X_{1:n}+x/\sqrt{n})&=\sqrt{n}\bp{ \prod_{i=1}^n\bp{ 1+\frac{x_i-{x/\sqrt{n}}-\big(\bar{x}^n-x/\sqrt{n}\big)}{n}}-1}
        =g_{n,2}(X_{1:n}).
    \end{split}
\end{equation}
Therefore both functions $(g_{n,1})$ and $(g_{n,2})$ satisfy \eqref{eqn:uniform}. Moreover by a direct application of the chain rule we can verify that both $(g_{n,1})$ and $(g_{n,2})$ verify conditions $(H_0)$ and $(H_1)$. Hence \Cref{thm2} implies that:$$d_{\mcF}\bp{g_{n,1}( Z^n)-\E(g_{n,1}(Z^n)|X),g_{n,1}( Y_{1:n})-\E(g_{n,1}(Y_{1:n}))}\rightarrow 0;$$
 $$d_{\mcF}\bp{g_{n,2}( Z^n)-\E(g_{n,2}(Z^n)|X),g_{n,2}( Y_{1:n})-\E(g_{n,2}(Y_{1:n}))}\rightarrow 0.$$
\end{proof}

\subsection{Proof of \Cref{thm3}}\label{app:thm3}
\begin{proof}
The proof works by contradiction. Suppose that there is a measurable function $\mathcal{Q}_n:x_1,\dots,x_n\rightarrow \mathcal{P}_M(\RR)$ such that for all sequence of measures $(\nu_n)\in \prod_{n=1}^{\infty}\mathcal{P}_n$ the following holds: $$\Big\|d_{\mcF}\Big(\mathcal{Q}_n(X^n),g_n(Y^n)-\E(g_n(Y^n))\mid X^n\Big)\Big\|_{L_1}\xrightarrow{n\rightarrow \infty }0.$$
By hypothesis we know that there is a sequence of measures $(p_{\theta_n}^n)\in\times_{l=1}^{\infty} \mathcal{P}_n$, an $\epsilon>0$ and and a sequence of vectors $(z_n)$ such that 
\begin{enumerate}[(i.)] 
\item $\limsup \sup_{\tilde \theta_n\in[\theta_n,\theta_n+\frac{z_n}{\sqrt{n}}]}~~\|\mathcal{I}_n(\tilde \theta_n)^{1/2}\frac{z_n}{\sqrt{n}}\|_{v,d_n}<\infty$;
\item The following holds for $(X^n)\overset{i.i.d}{\sim}{p_{\theta_n}^n}$
\begin{equation}\label{contrac}
d_{\mcF}\bp{ g_n\bp{X^n+\frac{z_n}{\sqrt{n}}}-\E\bb{g_n\bp{X^n+\frac{z_n}{\sqrt{n}}}},g_n(X^n)-\E\bb{g_n(X^n)} }>\epsilon.
\end{equation}
\item $\theta_n+\frac{z_n}{\sqrt{n}}\in \Omega_n$
\end{enumerate}
By abuse of notations we denote $p_{\theta_n}^n+\frac{z_n}{\sqrt{n}}$ the distribution of $X+\frac{z_n}{\sqrt{n}}$ for $X\sim p_{\theta_n}^n$. We define the following subset of distributions $\mathcal{P}_n^*:=\{p_{\theta_n}^n,p_{\theta_n}^n+\frac{z_n}{\sqrt{n}}\}$ and let $X^n$ be i.i.d random variables distributed according to $\mu_n\in \mathcal{P}^*_n$. 
We want to test if $H_0: \E(X_1^n)=\theta_n$ against the alternative hypothesis $H_1: \E(X_1^n)=\theta_n+\frac{z_n}{\sqrt{n}}$. Using \cref{contrac} we know that  it is possible to find a rejection region $R_n$ such that: $$P(X^n\in R_n|H_0)+P(X^n\not \in R_n|H_1)\rightarrow 0.$$
However by hypothesis the Kullback-Leibler divergence is smooth and according to the Taylor expansion we know that 
$$KL\bp{ p_{\theta_n}^n+\frac{z_n}{\sqrt{n}},p_{\theta_n}^n }=\frac{1}{2n}z_n^{\top}\mathcal{I}_n(\tilde \theta_n)z_n,$$ 
where $\tilde\theta_n\in [\theta_n,\theta_n+\frac{z_n}{\sqrt{n}}]$. Moreover by definition of the total variation distance and the inequality: $\|p(\cdot)-q(\cdot)\|_{TV}\ge 1-\frac{1}{2}e^{-KL(p,q)}$ (see \Cref{lemm:tv}) we know that for all rejection region $R_n$ we have \begin{equation}
    \begin{split}&
        P(X^n\in R_n|H_0)+P(X^n\not \in R_n|H_1)\\&\ge \frac{1}{2}e^{-nKL(p_{\theta_n}^n+\frac{z_n}{\sqrt{n}},p_{\theta_n}^n)}\\&\overset{}{\ge} \frac{1}{2}e^{-\frac{1}{2}\sqrt{z_n^{\top}\mathcal{I}_n(\tilde \theta_n)z_n}}.
    \end{split}
\end{equation} Therefore using (i) we obtain that $\liminf_n  P(X^n\in R_n|H_0)+P(X^n\not \in R_n|H_1)>0$ and note there is a contradiction. Hence we have successfully showed that the desired result holds.
\end{proof}

\subsection{Proof of \Cref{thm_center}}
\begin{proof}
This is a direct consequence of using \cref{thm8} for the random functions $$\tilde g_n:x_{1:n}\rightarrow g_n\Big(x_1+\mathbb{E}(X^n_1)-\bar{X^n},\dots,x_n+\mathbb{E}(X^n_1)-\bar{X^n}\Big);$$ which can be approximated by the following smooth functions:$$\tilde f_n:x_{1:n}\rightarrow f_n\Big(x_1+\mathbb{E}(X^n_1)-\bar{X^n},\dots,x_n+\mathbb{E}(X^n_1)-\bar{X^n}\Big).$$ 
\end{proof}

\subsection{Proof of \Cref{thm_4}}\label{app:thm_4}
\begin{proof}

By the triangle inequality we know that:
\begin{align}
\label{yp}\ba{ g_n( Z^n)-\E\bb{g_n(Y^n)\mid X^n}}
\le~&  \ba{ \E\bb{g_n(Y^n)- g_n(\tilde{Y}^n)\mid X^n} }+  \ba{ g_n(\tilde Y^n)-\E\bb{g_n(\tilde Y^n)\mid X^n}}.
\end{align}
\noindent 
\noindent We bound successively each terms of the right-hand side. Firstly using \cref{thm1} we remark that for all $\delta>0$ we have $$P\bb{\ba{\E\bb{g_n(\tilde Y^n)\mid X^n}-\E\bb{g_n(Z^n)\mid X^n}}\ge \delta}\rightarrow 0.$$Hence by combining this with the definition of $t_{b,n}^{\beta/2}$ we know that

\begin{align}
\limsup_{\delta\downarrow 0}P\big(\ba{ g_n(Z^n)-\E\bb{g_n(\tilde Y^n)\mid X^n}}\ge t_{{\rm b},n}^{\beta/2}(X^n)+\delta \big)\le \beta/2.
\end{align} 

\noindent Let $N^n\sim N(0,\Sigma_n)$ be a Gaussian vector with variance-covariance $\Sigma_n$.
Using \cite{chernozhukov2017central} we know that there is a constant $C$ that does not depend on $n$ such that the following holds:
\begin{align}&\label{eqn:nn}
 \sup_t \Big|P\bb{\sqrt{n}\max_k\big|\bar{X^n_k}-\mathbb{E}(X_{1,k}^n)\big|\ge t}-P(\max_k|N^{n,k}|\ge t)\Big|\le \frac{C\log(d_n)^{7/6}\max(\|\max_j|X^n_{1,j}|\|_{L_4}^4,1)}{n^{1/6}}
\end{align}
Combining this with hypothesis $(H_3)$ we obtain that:
\begin{align}&
P\bp{\big|  \E\bb{g_n(Y^n+\bar{X^n}-\mathbb{E}(X^n_1))-g_n(Y^n)\mid X^n} \big|\ge t_{g,n}^{\beta/2}}
\\&\le P\bp{\|\sqrt{n}[\bar{X^n}-\mathbb{E}(X^n_1)]\|_{\infty}^{\alpha}\ge \frac{t_{g,n}^{\beta/2}}{C_n}}
\\&\le\frac{C\log(d_n)^{7/6}\max(\|\max_j|X^n_{1,j}|\|_{L_4}^4,1)}{n^{1/6}}+   P\bp{\max_k|N_{k}^n|\ge \big[\frac{t_{g,n}^{\beta/2}}{C_n}\big]^{\frac{1}{\alpha}}}
\\&\le \beta/2+o(1)
\end{align}
Therefore  for all $\delta>0$ we have
\begin{align}&
P\bp{\big|g_n(Z^n)-\E(g_n(Y^n))\big|\le t_{{\rm b},n}^{\beta/2}(X^n)+ t_{g,n}^{\beta/2}+\delta}
\\&\le P\bp{\big|g_n(Z^n)-\E(g_n(\tilde Y^n))\big|\le t_{{\rm b},n}^{\beta/2}(X^n)+\delta}
\\&~+ P\bp{\big|\E(g_n(\tilde Y^n)-g_n(Y^n)\mid X^n)\big|\le  t_{g,n}^{\beta/2}}
\\&\le \beta+o(1)
\end{align}
\end{proof}

\subsection{Proof of \Cref{ex_3_6}}
\begin{proof}
Define the function: $g_n:x_{1:n}\rightarrow \bp{\frac{1}{\sqrt{n}}\sum_{i\le n}x_i}^2$. It is straightforward to check that $(g_n)$ and $(X^n)$ satisfy conditions $(H_0)$ and $(H_1)$. Moreover for all $x$ we note that: 
\begin{align}
    \ba{\mathbb{E}\bp{g_n(Y^n+\frac{x}{\sqrt{n}})-g_n(Y^n)}}&=  x^2.
\end{align}Therefore $(g_n)$ also satisfy conditions $(H_3)$ with $C_n=1$ and $\alpha=2$. The result is a direct consequence of \Cref{thm_4}.
\end{proof}
\subsection{Proof of \Cref{cafe_2}}\label{app:cafe_2}
\begin{proof}

~Firstly we prove that with high-probability $\bar{X}^n-\mathbb{E}(X_1^n)$ is in $B_{d_n}(\gamma_n)$. Indeed, using \Cref{casser} and Chebystchev inequality we know that there is a constant $C$ such that 
\begin{align}
 P\bb{\max_k\ba{\bar{X^n_k}-\mathbb{E}(X_{1,k}^n)}\ge \gamma_n }
 &\le  \frac{\E\bp{\max_k\ba{\bar{X^n_k}-\mathbb{E}(X_{1,k}^n)}^2}}{\gamma^2_n}
 \\&\le \frac{C\log(d_n)^2\bn{\max_{k\le k_n}\ba{X^n_{1,k}}}_{L_2}^2}{\gamma_n^2n}\rightarrow 0.
\end{align}
Therefore with high-probability $\tilde\mu:=\bar{X}^n-\mathbb{E}(X_1^n)$ is in $B_{d_n}(\gamma)$. Moreover, we note that: \begin{align}&P\bp{\ba{g_n( Z^n-\tilde \mu)-\mathbb{E}(g_n( Z^n-\tilde \mu)\mid X^n)}\ge t^{\alpha}_{*}(X^n)~\mid X^n}\le \alpha.
\end{align}
We remark that $Z^n-\tilde \mu=\tilde Z^n$ therefore according to \Cref{thm_center} for all $\delta>0$ we have:
\begin{align}& {P\bp{\ba{g_n(Y^n)-\mathbb{E}(g_n(Y^n)}\ge t^{\alpha}_{*}(X^n)+\delta}\mid X^n}\le \alpha+o_n(1).
\end{align}
Which implies that asymptotically the confidence intervals $\big[t^{\beta}_{*}(X^n)-\delta, ~t^{\alpha}_{*}(X^n)+\delta\big]$ has an asymptotic coverage of at least $1-\beta$.
\end{proof}
\subsection{Proof of \Cref{cafe}}\label{app:cafe}
\begin{proof}
Denote $$\epsilon_n:=\sup_{\nu\in \mathcal{P}_n}\E_{X_1^n\in \mathcal{P}_n}\bp{\ba{ f_{n}(\tilde Z^n)-g_n(\tilde Z^n) }} + \E\bp{ \ba{f_{n}({Y}^n)-g_n({Y}^n) }}+\sup_{\nu\in \mathcal{P}_n}\max\bp{R_{n,1}^{c,\nu}, R_{n,2}^{c,\nu}, R_{n,3}^{c,\nu}}.$$
Choose $\nu\in \mathcal{Q}_n$, and let $X^n\sim \nu$ if $t^{\alpha}_{n}(X^n)$ is chosen such that:
\begin{equation}
{P\bp{ \ba{ g_n(Z^{n}+\E(X^n_1)-\bar{X^n} )-\E\bb{g_n(Z^{n}-\E(X^n_1)+\bar{X^n} )\mid X^n} }\ge t_{n}^{\alpha} (X^n)\mid X^n}} \le \alpha.
\end{equation} then according to \Cref{thm_center} and \Cref{prop1} for all $\delta>0$ we have
\begin{equation}
P\bp{ \ba{ g_n(X^{n})-\E\bb{g_n(X^n)} }\ge t^{\alpha}_{n}+\delta}\le \alpha+O(\epsilon_n).
\end{equation} But as  $\mathbb{E}(X_1^n)\in A_n$ by  definition of $t_{n}^{\alpha}(X^n)$ we know that: $t_{n}^{\alpha}(X^n)\le t_{\alpha}^{*,n}$. This directly implies that 
\begin{equation}
{{P_{X^n,Y^n\overset{i.i.d}{\sim} \nu}\bp{ \ba{ g_n(Y^{n})-\E\bb{g_n(Y^n)} }\ge t_{\alpha}^{
*,n} (X^n)+\delta}}} \le \alpha+O(\epsilon_n).
\end{equation}
As this holds for all $\nu \in A_n$ we get the desired result.
\end{proof}

\section{Proofs from \Cref{sec:illustrative}}
\subsection{Proof of \cref{ex1}}\label{pr:ex1}
\begin{proof} We will use \Cref{thm1}.
We note that the condition $(H_0)$ holds automatically as the functions  $(g_n)$ are three-times differentiable. Therefore to we only need to verify that $(H_1)$  holds. By the chain rule we have: $$\partial_i g_n( Z^{n, i, \bar{X}^n})=\frac{p}{\sqrt{n}}\big(\frac{1}{\sqrt{n}}\sum_{j\ne i}^{n}X_j+\frac{1}{\sqrt{n}}\bar{X^n}\big)^{p-1}; $$and $$ \partial_i^2 g_n( Z^{n,i,\bar{X^n}} )=\frac{p(p-1)\mathbb{I}(p\ge 2)}{n}\big(\frac{1}{\sqrt{n}}\sum_{i=1}^{n}X_i+\frac{1}{\sqrt{n}}\bar{X^n}\big)^{p-2};$$ $$\partial_i^3 g_n( Z^{n,i,x} )=\frac{p(p-1)(p-2)\mathbb{I}(p\ge 3)}{\sqrt{n}^3}\big(\frac{1}{\sqrt{n}}\sum_{i=1}^{n}X_i+\frac{1}{\sqrt{n}}x\big)^{p-3}.$$ 
Moreover, according to the  Rosenthal inequality for martingales \cite{hitczenko1990best}, there is a constant $C$ such that 
\begin{equation}
    \begin{split}
        &\bn{\frac{1}{\sqrt{n}}\sum_{i=1}^{n}X_i}_{12p}\le C\|X_1\|_{12p}
    \end{split}
\end{equation} This implies that: $R_{n,1}=O(n^{-1/2}),\quad R_{n,2}=O(n),\quad \&\quad R_{n,3}=O(n^{-3/2}).$
 \Cref{thm1} concludes the proof.
\end{proof}

\subsection{Proof of \Cref{stress}}\label{app:stress} 
\begin{proof}
We define $g_n(X):= \sqrt{n}\prod_{l=1}^n (1+\frac{X_l}{{n}}).$ Condition $(H_0)$ holds automatically as $g_n$ is smooth. To obtain the desired result we only need to verify that $(H_1)$  holds. 
By the chain rule we have: $$\partial_i g_n( Z^{n,i,\bar{X^n}})=\frac{1}{\sqrt{n}}\prod_{l\ne i}(1+\frac{ Z^n_l}{{n}})\times(1+\frac{\bar{X^n}}{n}); \quad \rm{and}\quad \partial_i^2 g_n( Z^{n,i} )=0.$$ Therefore we note that for condition $(H_1)$ to hold we only need to upper bound $\partial_i g_n(Z^{n,i,\bar{X^n}})$. Using the fact that the observations $(X_i)$ are assumed to be bounded we know that there is a constant $C$ such that 
\begin{equation}
    \begin{split}
        &\big\|\prod_{l\ne i}\Big(1+\frac{ Z^n_l}{{n}}\Big)\big\|_{L_{12}}^{12}
        \\&\overset{(a)}{\le} \mathbb{E}\Big[ \prod_{l\ne i}\mathbb{E}\Big((1+\frac{ Z^n_l}{{n}})^{12}\Big|X\Big)\Big]
        \\& \overset{(b)} {\le} \prod_{l\ne i}(1+\frac{\max(C,1)}{n})
        \sim e^{\max(C,1)}
    \end{split}
\end{equation} where to get (a) we exploited the conditional independence of the observations $(Z^n_i)$ and to obtain (b) we used the fact that the observations $(X_i)$ are assumed to be bounded. This implies that $(H_1)$ holds. 
\end{proof}
\subsection{Proof of \Cref{ex32}}\label{pr:ex32}
\begin{proof} We want to used \Cref{thm1} and \Cref{cor1}. Let $(\beta_n)\in \mathbb{R}^{\mathbb{N}}$ be a sequence satisfying: (i) $\beta_n\rightarrow \infty$ and (ii) $\beta_n=o(n^{1/4})$

\noindent We define $(f_n:\mathbb{R}^n\rightarrow\mathbb{R})$ as the following sequence of functions: $$f_n(x_{1:n}):=\frac{1}{\beta_n}\log\bp{1+e^{\beta_n\frac{1}{\sqrt{n}}\sum_{i\le n}x_i}}.$$ We note that the functions $(f_n)$ are three times differentiable functions and that the following holds:
$$\bn{f_n(Z^n)-g_n(Z^n)}\le \frac{\log(2)}{\beta_n}\quad \rm{and}\quad \bn{f_n(Z^n)-g_n(Z^n)}\le \frac{\log(2)}{\beta_n}.$$ Therefore the functions $(f_n)$ and $(g_n)$ satisfy conditions $(H_0)$. Moreover by the chain rule we have:
$$\partial_i f_n(x_{1:n})=\frac{1}{\sqrt{n}},\quad \partial^2_i f_n(x_{1:n})=\frac{\beta_n}{{n}},\quad \partial^3_i f_n(x_{1:n})=\frac{\beta_n^2}{{n}^{3/2}}. $$
This implies that condition $(H_1)$ also holds as $R_{n,1}=O(n^{-1/2}),\quad R_{n,2}=o(n^{-1/2}),\quad \&\quad R_{n,3}=0(n^{-1}).$
 \Cref{cor1} and \Cref{thm1} conclude the proof.

\end{proof}
\subsection{Proof of \Cref{roryl}}\label{app:roryl}
\begin{proof}
For ease of notations  for all element $\z\in\mathbb{Z}^2$ we denote by $(\z_1,\z_2)$ its coordinates. Moreover we remark that the i.i.d random variables $(X^n_{i,j})$ are indexed by $\dbracket{n}^2$ rather than $\dbracket{n^2}.$ As there is a one-to-one mapping between those two sets, we note that the random variables $(X^n_{i,j})$ could be indexed by $\dbracket{n^2}.$ Therefore \Cref{thm1} applies. However for ease of notations we keep the original indexing.

We note that $(g_n)$ is three times differentiable so $(H_0)$ holds. Moreover by the chain rule we have:
\begin{equation}
    \partial_{\z}g_n(ZZ^{n,\z,\overline{X^n}}):=\frac{1}{n\sqrt{n}}\frac{\sum_{m\in \{-1,1\}^n}m_{\z_1}m_{\z_2}e^{\frac{1}{\sqrt{n}}m^{\top}ZZ^{n,\z,\overline{X^n}}m}}{\sum_{m\in \{-1,1\}^n}e^{\frac{1}{\sqrt{n}}m^{\top}ZZ^{n,\z,\overline{X^n}}m}}.
\end{equation}
This implies that \begin{equation}
   \big\| \partial_{\z}g_n(ZZ^{n,\z,\overline{X^n}})\big\|_{L_{12}}\le \frac{1}{n\sqrt{n}}.
\end{equation}
Using the chain rule we have:
\begin{equation}
    \partial^2_{\z}g_n(ZZ^{n,\z,\overline{X^n}}):=\frac{1}{n^2}\frac{\sum_{m\in \{-1,1\}^n}(m_{\z_1}m_{\z_2})^2e^{\frac{1}{\sqrt{n}}m^{\top}ZZ^{n,\z,\overline{X^n}}m}}{\sum_{m\in \{-1,1\}^n}e^{\frac{1}{\sqrt{n}}m^{\top}ZZ^{n,\z,\overline{X^n}}m}}
   -\frac{1}{n^2}\Big[\frac{\sum_{m\in \{-1,1\}^n}m_{\z_1}m_{\z_2}e^{\frac{1}{\sqrt{n}}m^{\top}ZZ^{n,\z,\overline{X^n}}m}}{\sum_{m\in \{-1,1\}^n}e^{\frac{1}{\sqrt{n}}m^{\top}ZZ^{n,\z,\overline{X^n}}m}}\Big]^2 .
\end{equation}
This implies that \begin{equation}
   \big\| \partial^2_{\z}g_n(ZZ^{n,\z,\overline{X^n}})\big\|_{L_{12}}\le \frac{1}{n^2}.
\end{equation}
Finally by a last use of the chain rule we have
\begin{equation}\begin{split}
    \partial^3_{\z}g_n(ZZ^{n,\z,x}):=&\frac{1}{n^{5/2}}\frac{\sum_{m\in \{-1,1\}^n}m_{\z_1}m_{\z_2}e^{\frac{1}{\sqrt{n}}m^{\top}ZZ^{n,\z,\overline{X^n}}m}}{\sum_{m\in \{-1,1\}^n}e^{\frac{1}{\sqrt{n}}m^{\top}ZZ^{n,\z,\overline{X^n}}m}}\Big[1-
\Big[\frac{\sum_{m\in \{-1,1\}^n}m_{\z_1}m_{\z_2}e^{\frac{1}{\sqrt{n}}m^{\top}ZZ^{n,\z,\overline{X^n}}m}}{\sum_{m\in \{-1,1\}^n}e^{\frac{1}{\sqrt{n}}m^{\top}ZZ^{n,\z,\overline{X^n}}m}}\Big]^2\Big]
   \end{split}
\end{equation}
This implies that \begin{equation}
\bn{ \sup_{x\in [Z_{\z}^n,\tilde X^n]\cup  [\tilde Y_{\z}^n,\tilde X^n]}\ba{\partial^3_{\z}g_n(ZZ^{n,\z,x})}}_{L_{12}}\le \frac{1}{n^{5/2}}.
\end{equation}
This implies that the desired result holds. The same can be proved for the centered bootstrap $(\tilde Z^n_{\z})$ and $(Y_{\z}^n)$.
\end{proof}

\section{Proofs from \Cref{sec:bands}}

\subsection{Proof of \Cref{seule}}\label{app:seule}
\begin{proof}
We note that we can suppose without loss of generality that $\E\bp{X_1^n}=0$. Let $(\beta_n)$ be a sequence of  reals satisfying: $(i)$ $\beta_n\rightarrow \infty$ and (ii) $\beta_n=o\big(\frac{n^{1/8}}{\sqrt{\log(p_n)}}\big)$.

\noindent To get the desired result we use \Cref{jardin}. In this goal we define $(f_n)$ to be the following sequence of function: $$ f_n(x_{1:n}):= \frac{\log\Big(\sum_{l\le p_n}e^{\beta_n\log(p_n)\frac{1}{\sqrt{n}}\sum_{i\le n} x'_{i,l}}\Big)}{\beta_n\log(p_n)}.$$We remark that the functions $(f_n)$ are three times differentiable, and that they satisfy: \begin{equation}
    \begin{split}\label{fr}&
        \Big\|f_n(\tilde Z^n)-g_n( \tilde Z^n)\Big\|_{L_1}\le\frac{1}{\beta_n}
        \\&\Big\|f_n( Y^n)-g_n( Y^n)\Big\|_{L_1}\le\frac{1}{\beta_n}.
    \end{split}
\end{equation}

\noindent We prove that the conditions of \Cref{jardin} hold. Using \cref{fr} we know that hypothesis $(H_0)$ is holding; and we only need to prove that  $(H_1^*)$ also holds. For ease of notations we write: $$\omega_k(x_{1:n}):=\frac{e^{\beta\log(p_n)\frac{1}{\sqrt{n}}\sum_{i\le n} x
       _{i,k}}}{\sum_{l\le p_n}e^{\beta\log(p_n)\frac{1}{\sqrt{n}}\sum_{i\le n} x_{i,l}}}.$$ By the chain rule we remark that  for all $k_1\le p_n$ and all $x_{1:n}\in \RR^{p_n}$  we have 
  \begin{equation}
    \begin{split}
       \Big|\partial_{i,k_1}f_n(x_{1:n})\Big|
       &\le \frac{1}{\sqrt{n}}\omega_{k_1}(x'_{1:n})
       \end{split}\end{equation}
As $\sum_{k\le p_n}\omega_k(x_{1:n})=1$ we obtain for all $i\le n$ that 
\begin{equation}
    \begin{split}
       \Big\|\big\|\partial_{i}f_n(\tilde Z^{n, i, \bar{X}^n})\big\|_{v,p_n}\Big\|_{L_{12}}
       &\le \frac{1}{\sqrt{n}}\Big\|\sum_{j\le p_n}\omega_j({ \tilde Z^{n, i, \bar{X}^n}_{1:n}})\Big\|_{L_{12}}
       \le  \frac{1}{\sqrt{n}}.\end{split}\end{equation}
       This directly implies that $R_{n,1}^*=O(\frac{1}{n^{1/6}})$. 
       Moreover by using the chain rule we obtain that:
       \begin{equation}\begin{split}&
       \Big| \partial_{i,k_1,k_2}^2f_n(x_{1:n})\Big|
       \le \frac{\beta_n\log(p_n)}{n}\omega_{k_1}(x_{1:n})\omega_{k_2}(x_{1:n})+\mathbb{I}(k_1=k_2)\frac{\beta_n\log(p_n)}{n}\omega_{k_1}(x_{1:n}).
    \end{split}
\end{equation}

\noindent Therefore  as $\sum_{k\le p_n}\omega_{k}(x_{1:n})=1$ we have for all $i\le n$
\begin{equation}
    \begin{split}
       \Big\|\big\|\partial_{i}^2f_n(\tilde Z^{n, i, \bar{X}^n})\big\|_{m,p_n}\Big\|_{L_{12}}
       \le  \frac{2\beta_n \log(p_n)}{n}.\end{split}\end{equation}
       this implies that $R_{n,2}^*=O(\beta_n\log(p_n)/\sqrt{n})$.
       Finally, by another application of the chain rule we have:
       \begin{equation}\begin{split}&\label{landing}
       \Big| \partial_{i,k_{1:3}}^3f_n(x_{1:n})\Big|
       \\&\le2 \frac{\beta_n^2\log(p_n)^2}{n^{\frac{3}{2}}}\omega_{k_1}(x_{1:n})\omega_{k_2}(x_{1:n})\omega_{k_3}(x_{1:n})+[\mathbb{I}(k_2=k_3)+\mathbb{I}(k_1=k_3)\\&+\mathbb{I}(k_1=k_2)] \frac{\beta_n^2\log(p_n)^2}{n^{\frac{3}{2}}}\omega_{k_1}(x_{1:n})\omega_{k_2}(x_{1:n})+\mathbb{I}(k_1=k_2=k_3) \frac{\beta_n^2\log(p_n)^2}{n^{\frac{3}{2}}}\omega_{k_1}(x_{1:n}).
    \end{split}
\end{equation}

Therefore we obtain for all $i\le n$ that:
\begin{equation}\begin{split}&
       \Big\| \max_{x\in [\bar{X}^n,Z^{n}_i]\bigcup [0,\tilde Y^n_i]}\big\|\partial_{i}^3f_n(Z^{n,i,x}_{1:n})\big\|_{t,p_n}\Big\|_{L_{12}}
       \le \frac{6\beta^2_n\log(p_n)^2}{{n^{\frac{3}{2}}}}
    \end{split}
\end{equation}This implies that $R_{3,n}^*=O(\frac{\beta_n^3\log(p_n)^2}{n^{1/2}})$
Finally by assumption we know that $\|\sup_{l\le p_n}X^n_{1,l}\|_{L_{12}}<\infty$, hence the assumption $(H_1^*)$ holds as well as the desired result.

\end{proof}

\section{Proofs from \Cref{sec:minmax}}

\subsection{Proof of \Cref{prop3}}\label{app:prop3}
\begin{proof} 
To get the desired results we use \Cref{jardin}. Firstly we remark that the observations $X_1^n$ take value in $\RR^{p_n\times p_n}$. As there is a one to one mapping between $\RR^{p_n\times p_n} $ and $\RR^{p_n^2}$ we could take $X_1^n$ to take value in $\RR^{p_n^2}$. For ease of notations we do not do so, but note that the conditions of \Cref{jardin} can alternatively be expressed for observations taking value in $\RR^{p_n\times p_n}$.

\noindent We denote $g_n(x_{1:n}):=\min_{i\le p_n}\max_{j\le p_n}\frac{1}{\sqrt{n}}\sum_{i\le n}X^n_{l,i,j}$; 
and let $(\beta_n)$ be a sequence of positive reals satisfying (i) $\beta_n\rightarrow \infty$ and (ii) $\beta_n=o(\log(p_n)/n^{-1/4})$. ~We set $(f_n)$ to be the following sequence of functions: $$f_n(x_{1:n}):=\frac{1}{\beta_n log(p_n)}\log\Big(\sum_{i\le p_n}\big(\sum_{j\le p_n}e^{\frac{\beta_n\log(p_n)}{\sqrt{n}}\sum_{l\le n}x_{l,i,j}}\big)^{-1}\Big).$$ We remark that they are three times differentiable and that following holds:
\begin{equation}
    \begin{split}
        &\big\|g_n(Z^n)-f_n(Z^n)\|_{L_1}\le \frac{2}{\beta_n}
        \\&\big\|g_n(\tilde Y^n)-f_n(\tilde Y^n)\|_{L_1}\le \frac{2}{\beta_n}.
    \end{split}
\end{equation}
Therefore hypothesis $(H_0)$ holds and we only need to check that hypothesis $(H_1^*)$ also holds to be able to use \Cref{jardin}. For ease of notations we write $\hat{R}_{i,j}:=\frac{1}{\sqrt{n}}\sum_{l\le n}X_{l,i,j}$ and denote $$\omega_{i,j}(x_{1:n}):=\frac{e^{\beta_n\log(p_n)\hat{R}_{i,j}}}{\sum_{k\le p_n}e^{\beta_n\log(p_n)\hat{R}_{i,k}}};\qquad \overline{\omega}_{i}(x_{1:n}):=\frac{\big[\sum_{k\le p_n}e^{\beta_n\log(p_n)\hat{R}_{i,k}}\big]^{-1}}{\sum_{l\le p_n}\big[\sum_{k\le p_n}e^{\beta_n\log(p_n)\hat{R}_{i,k}}\big]^{-1}}.$$
We remark that the following holds: \begin{equation}\label{eq:s}\sum_{k,j\le p_n}\omega_{k,j}(x_{1:n})\overline{\omega}_k(x_{1:n})=1,\quad \sum_{j\le p_n}\omega_{k,j}(x_{1:n})=1\end{equation}By the chain rule, we remark that for all $k,j\le p_n$ we have:
$$\partial_{i,k,j}f_n(x_{1:n})=\frac{1}{\sqrt{n}}\omega_{k,j}(x_{1:n})\overline{\omega}_k(x_{1:n}).$$
~Therefore as $\sum_{k,j\le p_n}\omega_{k,j}(x_{1:n})\overline{\omega}_k(x_{1:n})=1$, this implies that:
$$\max_{l\le n} \Big\|\|\partial_{l,i,j}f_n(Z^{n, i, \bar{X}^n})\|_{v,p_n^2}\Big\|_{L_{12}}\le \frac{1}{\sqrt{n}}.
   $$ This implies that $R_{n,1}^*=O(\frac{1}{n^{1/6}}).$
Moreover by another application of the chain rule we have for all $i_1,j_1,i_2,j_2\le p_n$ and all $l\le  n$:
\begin{equation}
    \begin{split}&
        \partial_{l,i_1,j_1,i_2,j_2}^2f_n(x_{1:n})\\&=\frac{\beta_n\log(p_n)\mathbb{I}(i_1=i_2) }{\sqrt{n}} \partial_{l,i_1,j_1}f_n(x_{1:n})\Big[\mathbb{I}(j_1=j_2)-2 \omega_{i_2,j_2}(x_{1:n})\Big]
        \\&-\partial_{l,i_1,j_1}f_n(x_{1:n})\frac{\log(p_n)\beta_n }{\sqrt{n}}\omega_{i_2,j_2}(x_{1:n})\overline{\omega}_{i_2}(x_{1:n}).
    \end{split}
\end{equation}
Therefore we have by combining this with \cref{eq:s} and the fact that $\sum_{k,j\le p_n}|\partial_{l,k,j}f_n(x_{1:n})|\le 1/\sqrt{n}$  we have
$$\max_{l\le n}\Big\|\big\|  \partial_{l,i_1,j_1,i_2,j_2}^2f_n(Z^{n, i, \bar{X}^n})\big\|_{m,p_n}\Big\|\le \frac{4\beta_n\log(p_n)}{n}.
     $$
Therefore we have $R_{2,n}^*=O(\frac{\log(p_n)\beta_n}{\sqrt{n}})$. Finally, by a last application of  the chain rule  for all $i_{1:3},j_{1:3}\le p_n$ we have:
\begin{equation}
    \begin{split}&
        \partial_{l,i_1,j_1,i_2,j_2,i_3,j_3}^3f_n(x_{1:n})\\&=\frac{\beta_n\log(p_n)\mathbb{I}(i_1=i_2) }{\sqrt{n}} \partial_{l,i_1,j_1,i_3,j_3}^2f_n(x_{1:n})\Big[\mathbb{I}(j_1=j_2)-2 \omega_{i_2,j_2}(x_{1:n})\Big]
        \\&-\partial_{l,i_1,j_1,i_3,j_3}^2f_n(x_{1:n})\frac{\log(p_n)\beta_n }{\sqrt{n}}\omega_{i_2,j_2}(x_{1:n})\overline{\omega_{i_2}}(x_{1:n})
        \\&-2\frac{\log(p_n)^2\beta_n^2\mathbb{I}(i_1=i_2=i_3) }{n} \partial_{l,i_1,j_1}f_n(x_{1:n})\omega_{i_2,j_2}(x_{1:n}) \Big[\mathbb{I}(j_2=j_3)-\omega_{i_3,j_3}(x_{1:n})\Big]
        \\&-\partial_{l,i_1,j_1}   \mathbb{I}(i_2=i_3)f_n(x_{1:n})\frac{\log(p_n)^2\beta^2  }{n}\omega_{i_2,j_2}(x_{1:n})\overline{\omega_{i_2}}(x_{1:n})
     \times\Big[\mathbb{I}(j_2=j_3)-2\omega_{i_3,j_3}(x_{1:n})\Big]
     \\&+\partial_{l,i_1,j_1}f_n(x_{1:n}){\omega_{i_2,j_2}}(x_{1:n})\overline{\omega_{i_2}}(x_{1:n})
     {\omega_{i_3,j_3}}(x_{1:n})\overline{\omega_{i_3}}(x_{1:n}).
    \end{split}
\end{equation}
Therefore by combining this with \cref{eq:s}  and the fact that $\sum_{i_{1:2}\le p_n,j_{1:2}\le p_n}\ba{ \partial_{l,i_1,j_1,i_2,j_2}^2f_n(Z^{n, i, \bar{X}^n})}\le \frac{4\beta_n\log(p_n)}{n}$ we have
\begin{equation}
    \begin{split}&
   \max_{l\le n}   \bn{\max_{z\in [\bar{X}^n,\tilde Z_1^n]\bigcup [\bar{X}^n,Y_1^n] }  \bn{\partial_{l,i_1,j_1,i_2,j_2,i_3,j_3}^3f_n(Z^{i,z})}_{t,p_n}}_{L_{12}}
\le \frac{10\log(p_n)^2\beta_n^2}{n^{3/2}}.
    \end{split}
\end{equation}This implies that $R_{3,n}^*=O(\frac{\log(p_n)^2\beta_n^2}{\sqrt{n}})$.
Therefore if we choose $\beta_n=\frac{n^{1/6}}{{\log(p_n)}^{2/3}}$ then 
by using \Cref{jardin} guarantees that:
\begin{equation}
    \begin{split}&
      \Big\|d_{\mcF}(g_n(Z^n),g_n(\tilde Y^n)|X^n)\Big\|_{L_1}=O\bp{\frac{{\log(p_n)}^{2/3}}{n^{1/6}}}
    \end{split}
\end{equation}
Moreover by definition of $\tilde Y^n$ we have $\E\bp{Z_{1,i,j}^n\mid X^n}=\E\bp{\tilde Y_{1,i,j}^n\mid X^n}$ therefore $\theta_{Z^n}=\theta_{\tilde Y^n}$. By combining this with \cref{lem:trans_df} we conclude that:
\begin{equation}
    \begin{split}&
      \Big\|d_{\mcF}(g_n(Z^n)- \sqrt{n}\theta_{Z^n},~g_n(\tilde Y^n)-\sqrt{n}\theta_{\tilde Y^n}|X^n)\Big\|_{L_1}=O\bp{\frac{{\log(p_n)}^{2/3}}{n^{1/6}}}
    \end{split}
\end{equation}
Moreover by following the exact same line of reasoning we can prove that:
\begin{equation}
    \begin{split}&
      \bn{d_{\mcF}\bp{g_n(\tilde Z^n),g_n( Y^n)\mid X^n}}_{L_1}=O\bp{\frac{{\log(p_n)}^{2/3}}{n^{1/6}}}
      \\&\bn{d_{\mcF}\bp{g_n(\tilde Z^n)-\sqrt{n}\theta_{\tilde Z^n},~g_n( Y^n)-\sqrt{n}\theta_0\mid X^n}}=O\bp{\frac{{\log(p_n)}^{2/3}}{n^{1/6}}}
    \end{split}
\end{equation}
Therefore according to \Cref{prop1} if $(t_{n,\alpha})$ is a sequence chosen such that: $$P\bp{\ba{g_n(\tilde Z^n)-\sqrt{n}\theta_{\tilde Z^n}}\ge t_{n,\alpha}\mid  X^n}=\alpha$$ then for all sequence $(\epsilon_n)$ satisfying $\epsilon_n\downarrow 0$ and $\bp{\frac{\log(p_n)^2}{\sqrt{n}}}^{1/3}=o(\epsilon_n)$ we have $$\limsup_{n\rightarrow 0}P\bp{\ba{g_n(X_n)-\sqrt{n}\theta_0}\ge t_{n,\alpha}+\epsilon_n\mid  X^n}\le\alpha.$$ 
\end{proof}

\begin{lemma}
\label{seperate}

Let $(p_n)$ be a sequence of integers satisfying $\log(p_n)=o(n^{1/4})$. Define $(X^n_i)$ to be a triangular array of sequences of i.i.d random variables taking value in $\RR^{p_n\times p_n}$. Suppose that $\|\sup_{l_1,l_2\le p_n}|X_{1,l_1,l_2}^n|\|_{L_{12}}<\infty$ and that $\frac{\log(p_n)}{\sqrt{n}}=o\Big[\inf_{(i,j)\ne (l,k)}\big|\E\big[X^{n}_{1,l,k}\big]-\E\big[X^{n}_{1,i,j}\big]\big|\Big]$. We denote: $\hat \theta_n(x_{1:n})=\frac{1}{n}\min_{i\le p_n}\max_{j\le p_n}\sum_{l\le n}X_{l,i,j}^n$.
~Then the following holds  $$\Big\|d_{\mcF}\bp{\sqrt{n}\Big[\hat \theta_n( Z^n)-\theta_{Z^n}\Big],~\sqrt{n}\Big[\hat \theta_n( Y^n)-\theta_{Y^n}\Big]\mid X^n}\Big\|_{L_1}=o(1);$$where  for a process $(Y'_{1:n})$ we have written, by abuse of notations, 
$\theta_{Y'}:=\min_{i\le p_{n}}\max_{j\le p_{n}} ~\E\big[Y^{'}_{l,i,j}|X^n\big].$     

\end{lemma}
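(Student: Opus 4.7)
The plan is to use the mean-separation hypothesis to reduce the minmax, on a high-probability event, to a linear statistic at a single coordinate, and then invoke Corollary~\ref{cor1} applied to that linear statistic. Let $(i^*,j^*)$ denote the pair attaining $\theta_0 := \min_i\max_j \E[X^n_{1,i,j}]$ (uniquely defined for large $n$ since the gap hypothesis forces all entries to be distinct), and set $\gamma_n := \inf_{(i,j)\neq(\ell,k)}|\E[X^n_{1,\ell,k}]-\E[X^n_{1,i,j}]|$. I would introduce the ``collapsing event'' $E_n$ on which each of
\begin{equation*}
\sqrt{n}\max_{i,j}\ba{\bar X^n_{i,j}-\E[X^n_{1,i,j}]},\quad \sqrt{n}\max_{i,j}\ba{\tfrac{1}{n}\!\sum_\ell Y^n_{\ell,i,j}-\E[X^n_{1,i,j}]},\quad \sqrt{n}\max_{i,j}\ba{\tfrac{1}{n}\!\sum_\ell Z^n_{\ell,i,j}-\bar X^n_{i,j}}
\end{equation*}
is below $\sqrt{n}\gamma_n/4$. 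On $E_n$ the orderings of $\E[X^n_{1,i,j}]$, $\bar X^n_{i,j}$, $\tfrac{1}{n}\sum_\ell Y^n_{\ell,i,j}$ and $\tfrac{1}{n}\sum_\ell Z^n_{\ell,i,j}$ all coincide, so the minmax of each is attained at $(i^*,j^*)$. Applying Lemma~\ref{casser} at a large moment $p$ to each of these three maxima (the third conditionally on $X^n$, followed by an outer expectation using $\|\sup_{i,j}|X^n_{1,i,j}|\|_{L_{12}}<\infty$), combined with Markov's inequality, gives $\Pr(E_n^c)=o((\log p_n/(\sqrt{n}\gamma_n))^p)$ for any fixed $p$, hence polynomial decay by the gap assumption.

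On $E_n$ the quantities of interest collapse to linear averages at $(i^*,j^*)$:
\begin{equation*}
    \sqrt{n}\bp{\hat\theta_n(Y^n)-\theta_{Y^n}} = \tfrac{1}{\sqrt{n}}\sum_\ell\bp{Y^n_{\ell,i^*,j^*}-\E[X^n_{1,i^*,j^*}]},\quad \sqrt{n}\bp{\hat\theta_n(Z^n)-\theta_{Z^n}} = \tfrac{1}{\sqrt{n}}\sum_\ell\bp{Z^n_{\ell,i^*,j^*}-\bar X^n_{i^*,j^*}},
\end{equation*}
using $\theta_{Y^n}=\theta_0$ (by independence of $Y^n$ from $X^n$) and $\theta_{Z^n}=\bar X^n_{i^*,j^*}$ (from the preserved ordering). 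Setting $g^*_n(x_{1:n}) := \tfrac{1}{\sqrt{n}}\sum_\ell x_{\ell,i^*,j^*}$, a direct computation shows these two quantities equal $g^*_n(Z^n)-\E[g^*_n(Z^n)\mid X^n]$ and $g^*_n(\tilde Y^n)-\E[g^*_n(\tilde Y^n)\mid X^n]$ respectively. Assumption~\ref{ass:approx} is trivially satisfied for the linear $g^*_n$ (take $f_n=g^*_n$: the only nonzero derivative is $\partial_{\ell,i^*,j^*}f_n=n^{-1/2}$, giving $R_{n,1}=O(n^{-1/6})$ and $R_{n,2}=R_{n,3}=0$), and $g^*_n$ is symmetric in its $\ell$-arguments, so Corollary~\ref{cor1} yields
\begin{equation*}
\bn{d_{\mcF}\bp{g^*_n(Z^n)-\E[g^*_n(Z^n)\mid X^n],\; g^*_n(\tilde Y^n)-\E[g^*_n(\tilde Y^n)\mid X^n]\mid X^n}}_{L_1}\to 0.
\end{equation*}

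To pass from the $E_n$-restricted identity to the full statement I would use that every $h\in\mcF$ is $1$-Lipschitz, so by Cauchy--Schwarz the discrepancy contributed by $E_n^c$ is bounded by $\sqrt{\Pr(E_n^c)}\bp{\bn{\sqrt{n}[\hat\theta_n(Z^n)-\theta_{Z^n}]}_{L_2}+\bn{\sqrt{n}[\hat\theta_n(Y^n)-\theta_{Y^n}]}_{L_2}}$. Each $L_2$-norm on the right is controlled via $|\min_i\max_j a_{i,j}-\min_i\max_j b_{i,j}|\le \max_{i,j}|a_{i,j}-b_{i,j}|$ and another invocation of Lemma~\ref{casser}, yielding $O(\log p_n)$, so the product is $o(1)$ by the polynomial decay from Step~1. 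The main obstacle will be the conditional concentration bound in Step~1: invoking Lemma~\ref{casser} conditionally on $X^n$ for the bootstrap increment and then taking an outer expectation requires some care to retain a polynomial tail exponent large enough to absorb the $\log p_n$ factor appearing at the end of the argument.
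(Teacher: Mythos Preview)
Your proof is correct and takes a genuinely different route from the paper's. The paper first invokes Proposition~\ref{prop3} (which relies on the smooth softmin--softmax approximation and the full machinery of \Cref{jardin}) to obtain
\[
\bn{d_{\mcF}\bp{\sqrt{n}[\hat\theta_n(Z^n)-\theta_{Z^n}],\,\sqrt{n}[\hat\theta_n(\tilde Y^n)-\theta_{\tilde Y^n}]\mid X^n}}_{L_1}=o(1),
\]
and only \emph{afterwards} uses the gap hypothesis to show that, with high probability, $\hat\theta_n(\tilde Y^n)-\theta_{\tilde Y^n}=\hat\theta_n(Y^n)-\theta_{Y^n}$. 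In other words, the paper applies the collapse argument only to pass from $\tilde Y^n$ to $Y^n$, while the bootstrap side is handled by the heavy Proposition~\ref{prop3}. You instead push the collapse through \emph{all} three processes $X^n$, $Y^n$, and $Z^n$ simultaneously, reducing both sides of the $d_\mcF$ distance to a single linear coordinate statistic, and then apply Corollary~\ref{cor1} to that trivial linear $g_n^\ast$. This is more elementary --- it sidesteps the softmin--softmax construction entirely --- but buys that simplicity at the price of needing concentration for the bootstrap deviation $\max_{i,j}|\tfrac{1}{n}\sum_\ell Z^n_{\ell,i,j}-\bar X^n_{i,j}|$, which you correctly flag as the main technical care point. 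That step is fine: apply Lemma~\ref{casser} conditionally on $X^n$ (the $Z^n_\ell-\bar X^n$ are conditionally i.i.d.\ and centered) and then take outer expectation, noting that the unconditional $L_p$ moment of $\sup_{i,j}|Z^n_{1,i,j}|$ equals that of $\sup_{i,j}|X^n_{1,i,j}|$, which is controlled by the $L_{12}$ assumption for $p\le 12$.

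One small correction: you write ``$\Pr(E_n^c)=o((\log p_n/(\sqrt{n}\gamma_n))^p)$ for any fixed $p$'', but the moment hypothesis only gives you $p\le 12$; this is still ample polynomial decay to kill the $O(\log p_n)$ growth of the $L_2$ norms in your final Cauchy--Schwarz step, provided the $L_{12}$ bound is read as uniform in $n$ (as the paper intends).
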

\begin{proof}
\noindent Firstly, we remark that \Cref{prop3} imply that:
$$\Big\|d_{\mcF}\bp{\sqrt{n}\Big[\hat \theta_n( Z^n)-\theta_{Z^n}\Big],~\sqrt{n}\Big[\hat \theta_n(\tilde Y^n)-\theta_{\tilde Y^n}\Big]\mid X^n}\Big\|_{L_1}=o(1).$$ Therefore to get the desired result we only need to prove that:
$$\Big\|d_{\mcF}\bp{\sqrt{n}\Big[\hat \theta_n( Y^n)-\theta_{Y^n}\Big],~\sqrt{n}\Big[\hat \theta_n(\tilde Y^n)-\theta_{\tilde Y^n}\Big]\mid X^n}\Big\|_{L_1}=o(1).$$
We denote $I,J\le p_n$ the unique pair of indices satisfying $\E\bp{X^n_{1,I,J}}=\theta_0$; and define: $$\epsilon_n:=\frac{\log(p_n)\bn{\max_{i,j}\ba{X^n_{1,i,j}}}_{L_2}}{\sqrt{n}\inf_{(i,j)\ne (l,k)}\big|\E\big[X^{n}_{1,l,k}\big]-\E\big[X^{n}_{1,i,j}\big]\big|}.$$
Using \Cref{casser} we know that there is a constant $C$ such that 
\begin{equation}
    \begin{split}&
      P\bp{\max_{i,j\le p_n} \ba{\frac{1}{{n}}\sum_{l\le n}Y^n_{l,i,j}-\E(Y^n_{l,i,j})}\ge \frac{\inf_{(i,j)\ne (l,k)}\big|\E\big[X^{n}_{1,l,k}\big]-\E\big[X^{n}_{1,i,j}\big]\big|}{4}}
     \\&=  P\bp{\max_{i,j\le p_n} \ba{\frac{1}{{n}}\sum_{l\le n}Y^n_{l,i,j}-\E(Y^n_{l,i,j})}\ge \frac{\log(p_n)\bn{\max_{i,j}\ba{X^n_{1,i,j}}}_{L_2}}{4\epsilon_n\sqrt{n}}}
       \\&\overset{}{\le} {C\epsilon_n^2}\rightarrow 0.
    \end{split}
\end{equation} 
 We remark that if $\hat{\theta}_n(Y^n)$ is distinct from  $\frac{1}{{n}}\sum_{l\le  p_n}Y^n_{l,I,J}$ this means that there are indexes $i,j\le p_n$ such that  $\ba{\frac{1}{{n}}\sum_{l\le n}Y^n_{l,i,j}-\E(Y^n_{l,i,j})}$ is bigger than $ \frac{\inf_{(i,j)\ne (l,k)}\big|\E\big[X^{n}_{1,l,k}\big]-\E\big[X^{n}_{1,i,j}\big]\big|}{2}$. Therefore we have:\begin{equation}
    \begin{split}
      & P\bp{\hat{\theta}_n(Y^n)\ne\frac{1}{{n}}\sum_{l\le  p_n}Y^n_{l,I,J}}
       \\&\le P\bp{\max_{i,j\le p_n} \ba{\frac{1}{{n}}\sum_{l\le n}Y^n_{l,i,j}-\E(Y^n_{l,i,j})}\ge \frac{\inf_{(i,j)\ne (l,k)}\big|\E\big[X^{n}_{1,l,k}\big]-\E\big[X^{n}_{1,i,j}\big]\big|}{2}}
     \\& \le {C\epsilon_n^2}\to 0.
    \end{split}
\end{equation}Similarly we have
\begin{equation}
    \begin{split}
      & P\bp{\hat\theta_n(\tilde Y^n)=\frac{1}{{n}}\sum_{l\le  p_n}\tilde Y^n_{l,I,J}}
       \\&\le P\bp{\max_{i,j\le p_n} \ba{\frac{1}{{n}}\sum_{l\le n}X^n_{l,i,j}-\E(X^n_{l,i,j})}\ge \frac{\inf_{(i,j)\ne (l,k)}\big|\E\big[X^{n}_{1,l,k}\big]-\E\big[X^{n}_{1,i,j}\big]\big|}{4}}\\&+P\bp{\max_{i,j\le p_n} \ba{\frac{1}{{n}}\sum_{l\le n}Y^n_{l,i,j}-\E(Y^n_{l,i,j})}\ge \frac{\inf_{(i,j)\ne (l,k)}\big|\E\big[X^{n}_{1,l,k}\big]-\E\big[X^{n}_{1,i,j}\big]\big|}{4}}
     \\& \le {2C\epsilon_n^2}\to 0.
    \end{split}
\end{equation}
Therefore with high-probability we have: $$\hat\theta_n(Y^n)=\hat\theta_n(\tilde Y^n)-\theta_{\tilde Y^n}+\theta_{Y^n}.$$
Using the dominated convergence theorem we thus obtain that
$$\Big\|d_{\mcF}\Big[~\sqrt{n}\big[{\hat\theta_n(\tilde Y^n)}-\theta_{\tilde Z^n}\big],\sqrt{n}\big[{g_n(Y^n)}-\theta_{Y^n}\big]\mid X^n\Big]\Big\|_{L_1}\rightarrow 0.$$

\end{proof}

\begin{lemma}
\label{seperate_2}

Let $(p_n)$ be a sequence of integers satisfying $\log(p_n)=o(n^{1/7})$. Define $(X^n_i)$ to be a triangular array of i.i.d random variables taking value in $\RR^{p_n\times p_n}$. Suppose that $\sup_n\|\sup_{l_1,l_2\le p_n}|X_{1,l_1,l_2}^n|\|_{L_{12}}<\infty$. We denote $ t_{g,n}^{\beta/4}$ a constant such that: $$P\bp{\max_{i,j\le p_n}\big|N^n_{i,j}|\ge t_{g,n}^{\beta/4}}\le \beta/4 ,$$  where $(N^n)$ is a sequence of gaussian matrices satisfying $\rm{Cov}(N^n_{i,j},N^n_{k,l})=\rm{Cov}(X^n_{i,j},X^n_{k,l})$ and write $\hat \theta_n(x_{1:n})=\frac{1}{n}\min_{i\le p_n}\max_{j\le p_n}\sum_{l\le n}X_{l,i,j}^n$. Choose $t_{\beta/4}(X^n)$ such that the following holds:
$$P\bp{\sqrt{n}\ba{\hat \theta_n( Z^n)-\mathbb{E}(\hat{\theta}_n(Z^n)\mid X^n)}\ge t_{\beta/4}(X^n)\mid X^n}\le \beta/4.$$
The following holds  $$\lim_{\delta \downarrow 0}\limsup_{n\rightarrow \infty} P\bp{\sqrt{n}\ba{\hat \theta_n( X^n)-\theta_{X^n}}\ge t_{\beta/4}(X^n)+3t_{g,n}^{\beta/4}+\delta}\le \beta;$$where  for a process $(Y'_{1:n})$ we have written, by abuse of notations, 
$\theta_{Y'}:=\min_{i\le p_{n}}\max_{j\le p_{n}} ~\E\big[Y^{'}_{l,i,j}|X^n\big].$     % 

\end{lemma}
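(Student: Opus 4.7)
The plan is to prove the bound via a triangle-inequality decomposition that isolates a bootstrap-comparable fluctuation term from two offset terms controlled by the CCK Gaussian approximation for the centered sample mean. Interpreting $\theta_{X^n}$ via the convention of \Cref{prop3} as $\theta_0 := \min_i \max_j \E[X^n_{1,i,j}]$, and writing $M(a) := \min_i\max_j a_{i,j}$ (which is $1$-Lipschitz in the entrywise sup-norm), I would let $Y^n$ be an independent copy of $X^n$ (so the two sides are equidistributed) and introduce the recentered copy $\tilde Y^n$ of \Cref{sec:preliminaries}. Then I write
\begin{equation*}
\sqrt{n}\,|\hat\theta_n(Y^n)-\theta_0|\;\le\; A+B+C,
\end{equation*}
with $A:=\sqrt{n}\,|\hat\theta_n(Y^n)-\hat\theta_n(\tilde Y^n)|$, $B:=\sqrt{n}\,|\hat\theta_n(\tilde Y^n)-\E[\hat\theta_n(\tilde Y^n)\mid X^n]|$, and $C:=\sqrt{n}\,|\E[\hat\theta_n(\tilde Y^n)\mid X^n]-\theta_0|$.

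The term $B$ is handled by the already-proved \Cref{prop3}: its hypotheses are satisfied since $\log p_n=o(n^{1/7})\subset o(n^{1/4})$ and the $L_{12}$ moment condition holds, so the conditional $d_\mcF$-distance between $\sqrt{n}[\hat\theta_n(\tilde Y^n)-\E[\hat\theta_n(\tilde Y^n)\mid X^n]]$ and $\sqrt{n}[\hat\theta_n(Z^n)-\E[\hat\theta_n(Z^n)\mid X^n]]$ vanishes in $L_1$. Combining with \Cref{prop1} to translate vanishing $d_\mcF$-distance into closeness of quantiles, the definition of $t_{\beta/4}(X^n)$ gives, for any fixed $\delta>0$, $P(B\ge t_{\beta/4}(X^n)+\delta/3)\le \beta/4+o(1)$.

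For $A$, $1$-Lipschitzness of $M$ yields $A\le \sqrt{n}\max_{i,j}|\bar X^n_{i,j}-\mu^n_{i,j}|$, and the CCK high-dimensional Gaussian approximation of \cite{chernozhukov2017central} applies to the $p_n^2$-dimensional centered average (using $\log(p_n^2)^{7/6}=o(n^{1/6})$ and the $L_{12}$ moment), giving $\sup_t|P(\sqrt{n}\max_{i,j}|\bar X^n_{i,j}-\mu^n_{i,j}|\ge t)-P(\max_{i,j}|N^n_{i,j}|\ge t)|=o(1)$; hence $P(A\ge t_{g,n}^{\beta/4})\le \beta/4+o(1)$. For $C$, the same Lipschitz bound and independence of $Y^n$ from $X^n$ give
\begin{equation*}
C\;\le\;\sqrt{n}\max_{i,j}|\bar X^n_{i,j}-\mu^n_{i,j}|\;+\;\sqrt{n}\,\E_Y\!\max_{i,j}|\bar Y^n_{i,j}-\mu^n_{i,j}|.
\end{equation*}
The first summand obeys the same CCK bound as in the control of $A$, so it is $\le t_{g,n}^{\beta/4}$ with probability $\ge 1-\beta/4-o(1)$. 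The second summand is purely deterministic and, by CCK applied to expectations together with the fact that for Gaussian maxima the expected value is dominated by the high quantile $t_{g,n}^{\beta/4}$, is at most $t_{g,n}^{\beta/4}+o(1)$.

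Collecting the three events by a union bound yields
$P(\sqrt{n}|\hat\theta_n(X^n)-\theta_0|\ge t_{\beta/4}(X^n)+3\,t_{g,n}^{\beta/4}+\delta)\le 3\beta/4+o(1)\le \beta+o(1)$,
and then letting $\delta\downarrow 0$ after $\limsup_n$ gives the stated inequality. The main technical obstacle I expect is the clean domination of $\sqrt{n}\,\E\max_{i,j}|\bar Y^n_{i,j}-\mu^n_{i,j}|$ by $t_{g,n}^{\beta/4}+o(1)$: both quantities scale like $\sqrt{\log(p_n^2)}$ for Gaussian maxima of bounded-variance coordinates, so the argument requires either an explicit Gaussian comparison of expectations under the regime $\log p_n=o(n^{1/7})$ or absorbing a fixed multiplicative constant into the factor $3$ already present in the statement—this is why the coefficient is $3$ rather than $2$.
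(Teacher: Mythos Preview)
Your three-term split via $\tilde Y^n$ is different from the paper's route. The paper decomposes $\sqrt n\,|\hat\theta_n(Y^n)-\theta_0|$ into a fluctuation piece $\sqrt n\,|\hat\theta_n(Y^n)-\E\hat\theta_n(Y^n)|$ and a bias piece $\sqrt n\,|\E\hat\theta_n(Y^n)-\theta_0|$, and handles the fluctuation by invoking \Cref{thm_4} directly (with $g_n=\sqrt n\,\hat\theta_n$, $\alpha=1$, $C_n=2$ in \eqref{ass:h3}), which already yields the budget $t_{\beta/4}(X^n)+2\,t_{g,n}^{\beta/4}$; the remaining $t_{g,n}^{\beta/4}$ is spent on the bias via one more CCK comparison. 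If you unpack \Cref{thm_4}, its $H_3$ step compares $\E[g_n(\tilde Y^n)\mid X^n]$ to $\E[g_n(Y^n)]$, and this difference is \emph{random} (it is bounded pointwise by $2\sqrt n\max_{i,j}|\bar X^n_{i,j}-\mu_{i,j}|$), so CCK applies with no expectation over $Y$ entering. That is precisely where two of the three copies of $t_{g,n}^{\beta/4}$ come from.

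The obstacle you flag in $C$ is genuine and is not resolved by your hand-wave. By comparing $\E[\hat\theta_n(\tilde Y^n)\mid X^n]$ directly to $\theta_0$ you create the \emph{deterministic} number $\sqrt n\,\E_Y\max_{i,j}|\bar Y^n_{i,j}-\mu_{i,j}|$, and there is no general reason for the mean of a (near-)Gaussian maximum to lie below its $(1-\beta/4)$-quantile up to $o(1)$: Borell--TIS only gives $|\E-\text{quantile}|=O(\sigma_{\max})=O(1)$, which cannot be absorbed into $\delta$, and the factor $3$ is not slack but exactly accounts for three random CCK-controlled events. The fix is to route $C$ through $\E[\hat\theta_n(Y^n)]$ rather than $\theta_0$, so the piece $|\E[\hat\theta_n(\tilde Y^n)\mid X^n]-\E[\hat\theta_n(Y^n)]|$ stays random and is handled by $H_3+$CCK as in \Cref{thm_4}; only then does the residual deterministic bias $\sqrt n|\E\hat\theta_n(Y^n)-\theta_0|$ appear as the paper's separate second term, for which the paper gives a (terse) Lipschitz-plus-CCK argument.
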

\begin{proof} We want to use \Cref{thm_4} to prove this result.  By the triangle inequality we know that:
\begin{align}
    \sqrt{n}\ba{\hat\theta_n(Y^n)-\theta_{Y^n}}
    \le  \sqrt{n}\ba{\hat\theta_n(Y^n)-\mathbb{E}(\hat\theta_n({Y^n}))}+\sqrt{n}\ba{\mathbb{E}(\hat\theta_n({Y^n}))-\theta_{Y^n}}.
\end{align}
This implies that:
\begin{align}&
    P\bp{\sqrt{n}\ba{\hat\theta_n(Y^n)-\theta_{Y^n}}\le t_{\beta/2}(X^n)+3t_{g,n}^{\beta/4}+\delta }
    \\&\le P\bp{ \sqrt{n}\ba{\hat\theta_n(Y^n)-\mathbb{E}(\hat\theta_n({Y^n}))}\le t_{\beta/4}(X^n)+2t_{g,n}^{\beta/4}+\delta/2 }\\&+P\bp{\sqrt{n}\ba{\mathbb{E}(\hat\theta_n({Y^n}))-\theta_{Y^n}}\le t_{g,n}^{\beta/4}+\delta/2}.
\end{align}
We bound the first term using \Cref{thm_4} and use arguments that are inspired by the proof of \Cref{thm_4} to bound the second.

\noindent Firstly, we remark that in \Cref{prop3} we proved that $\sqrt{n}\hat{\theta}_n$ satisfies \Cref{ass:approx}. Moreover we note that for all $x\in \mathbb{R}^{p_n\times p_n}$ we have
$$\sqrt{n}\ba{\mathbb{E}\bp{\hat\theta_n(Y^n+\frac{x}{\sqrt{n}})-\hat\theta_n(Y^n)}}\le 2\max_{i,j\le p_n}|x_{i,j}| .$$ Therefore according to \Cref{thm_4} we know that:
\begin{align}\label{sdcp_1}\lim_{\delta\downarrow 0}\limsup_{n\rightarrow \infty}P\bp{\sqrt{n}\ba{\hat \theta_n( Y^n)-\mathbb{E}(\hat{\theta}_n(Y^n))}\ge t_{\beta/4}(Y^n)+2 t_{g,n}^{\beta/4}+\delta}\le \beta/2.\end{align}
Moreover we note that: $$\sqrt{n}\ba{\mathbb{E}(\hat\theta_n(Y^n))-\theta_{X^n}}\le \sup_{i,j}\ba{\frac{1}{\sqrt{n}}\sum_{l\le n}X_{l,i,j}^n-\mathbb{E}(X_{1,i,j}^n)}.$$
Using \cite{chernozhukov2017central} we know that there is a constant $C$ that does not depend on $n$ such that the following holds:
\begin{align}&\label{sdcp_2}
 \sup_t \Big|P\bb{\sup_{i,j}\ba{\frac{1}{\sqrt{n}}\sum_{l\le n}X_{l,i,j}^n-\mathbb{E}(X_{1,i,j}^n)}\ge t}-P(\max_{i,j}|N^{n}_{i,j}|\ge t)\Big|\\&\le \frac{C\log(p_n)^{7/6}\max(\|\max_{i,j\le p_n}|X^n_{1,i,j}|\|_{L_4}^4,1)}{n^{1/6}}\to 0.
\end{align}
Therefore by combining \cref{sdcp_1} and \cref{sdcp_2} we obtain that:\begin{align}\label{sdcp_1}\lim_{\delta\downarrow 0}\limsup_{n\rightarrow \infty}P\bp{\sqrt{n}\ba{\hat \theta_n( Y^n)-\theta_{X^n}}\ge t_{\beta/4}(Y^n)+3 t_{g,n}^{\beta/4}+\delta}\le \beta.\end{align}
\end{proof}

\subsection{Proof of \Cref{prop4}}\label{app:prop4}
We separately prove each statement. Firstly we examine the case where both  $(H_0^{\rm{minmax}})$ and $(H_1^{\rm{minmax}})$ are assumed to hold.
\begin{proof} 
The first step of the proof is to approximate $\sqrt{n}\hat{\theta}_n(\cdot)$ by a discrete version and use \Cref{seperate} to prove the desired results. In this goal, choose $(\epsilon_n)$  and $(\delta_n)$ to be two  sequences satisfying: $$\epsilon_n=\frac{1}{n^{1/8}\sqrt{\log(nC_n)d'_nC_nc_n}};\qquad \delta_n=\frac{1}{(C_nn)^{3/2}}.$$

\noindent We denote $\Omega^{\mathcal{P}}_n:=\{\omega_i^*,~i\le p_n\}$ a maximal $\epsilon_n$-packing of $B_{d'_n}^1$, it is well known that $p_n\le \Big(\frac{2}{ \epsilon_n}+1\Big)^{d'_n}$. Moreover we remark that for all $\theta\in B_{d'_n}^1$ there is an $\omega^*\in\Omega^{\mathcal{P}}_n $ such that $\bn{\omega^*-\omega}\le \epsilon_n$. Indeed if this would not be the case then $\{\omega\}\bigcup \Omega^{\mathcal{P}}_n$ would also be a packing set of $B_{d'_n}^1$ contradicting the fact that we assumed it to be maximal. Similarly we can find a $\delta_n$-covering net of $B_{d'_n}^1$:  $\Omega^{\delta_n}:=\{\omega'_i,~i\le q_n\}$  with $q_n\le \bp{\frac{2}{\delta_n}+1}^{d'_n}$.

\noindent By exploiting conditions $(H_0^{\rm{minmax}})$  for all $\omega_1,\omega_2\in B_{d'_n}^1$ we have
\begin{equation}\label{gold35}\sup_{x\in \RR^{d_n}}\inf_{\omega_1^*,\omega_2^*\in \Omega^{\mathcal{P}}_n}\|f_n(x,\omega_1,\omega_2)-f_n(x,\omega^*_1,\omega^*_2)\big\|_{L_{\infty}}< C_n \epsilon_n;
\end{equation}
and  by using $(H_1^{\rm{minmax}})$ we remark that 
\begin{equation}\label{gold2}\inf_{\substack{(i,j)\ne (k,l)}}\Big|\E\Big[f_n(Y_1^n,\omega^*_k,\omega^*_l)\Big]-\E\Big[f_n(Y_1^n,\omega^*_i,\omega^*_j)\Big]\Big|>c_n\epsilon_n.
\end{equation}
We define $g_n(x_{1:n}):=\min_{i\le p_n}\max_{j\le p_n}\frac{1}{\sqrt{n}}\sum_{l\le n}f_n(x_l,\omega^*_i,\omega^*_j)$ and remark that it approximates $\sqrt{n}\hat{\theta}_n(\cdot).$ Indeed by combining \cref{gold35} and \Cref{casser} we know that there is a constant $K$ such that :\begin{align}
    \label{gold3}&\bn{g_n(\xi^n)-\sqrt{n}\hat{\theta}_n(\xi^n)}_{L_{1}}
    \\&\le \bn{\sup_{\omega_1,\omega_2\in B_{d'_n}^1}\sup_{\omega^*_1,\omega^*_2\in \Omega^{\mathcal{P}}_n}\ba{\frac{1}{\sqrt{n}}\sum_{i\le n}  f_n(\xi^n_{i},\omega_1,\omega_2)-f_n(\xi^n_{i},\omega^*_1,\omega^*_2)}}_{L_{1}}
     \\&\le  C_n\delta_n\sqrt{n}+\bn{\sup_{\omega'_1,\omega'_2\in \Omega^{\delta_n}}\sup_{\omega^*_1,\omega^*_2\in \Omega^{\mathcal{P}}_n}\ba{\frac{1}{\sqrt{n}}\sum_{i\le n}  f_n(\xi^n_{i},\omega'
     _1,\omega'_2)-f_n(\xi^n_{i},\omega^*_1,\omega^*_2)}}_{L_{1}}
     \\&\le C_n^{-1/6} n^{-1/6}+K\log(q_n)C_n\epsilon_n
        \\&\le C_n^{-1/6} n^{-1/6}+K\log(\frac{2}{(C_nn)^{3/2}}+1)d'_n C_n\epsilon_n\rightarrow 0.
\end{align} 
Therefore if we establish that the bootstrap method is consistent for $(g_n)$ we will have demonstrated that it is for $\sqrt{n}\hat\theta_n(X_{1:n}^n)$.
For ease of notations we denote $$\theta^{dis}_{Z^n}:=\min_{i\le p_n}\max_{j\le p_n}\E\big[f_n(Z^n_1, \omega^*_i,\omega^*_j)|\xi^n\big],\qquad \theta^{dis}_{\xi^n}:=\min_{i\le p_n}\max_{j\le p_n}\E\big[f_n(\xi^n_1, \omega^*_i,\omega^*_j)\big].$$
We remark that as $\log(p_n)=o(n^{1/4})$ 
and as \cref{gold2} guarantees that $$\log(p_n)/\sqrt{n}=o(\min_{(i,j)\ne (k,l)}\ba{\E\bp{f_n(\xi_1^n,\omega_i,\omega_j)}-\E\bp{f_n(\xi_1^n,\omega_l,\omega_k)}})$$ then \Cref{seperate} guarantees that $$\bn{d_{\mcF}\Big[g_n(\xi^n)-\sqrt{n}\theta^{dis}_{\xi^n},~{g_n(Z^n)-\sqrt{n}\theta^{dis}_{Z^n}}\Big|\xi^n\Big]}_{L_1}\rightarrow 0.$$
Hence by a last application of \cref{gold3}  we get the desired result.
\end{proof}

\vspace{4mm}

We now examine the case where we only assume that $(H_0^{\rm{minmax}})$ holds.
\begin{proof} 
The first step is to approximate $\sqrt{n}\hat{\theta}_n(\cdot)$ by a discrete version and use \Cref{seperate_2} to prove the desired results. In this goal, choose $(\epsilon_n)$  and $(\delta_n)$ to be two  sequences satisfying: $$\epsilon_n=\frac{1}{n^{1/8}\sqrt{\log(nC_n)d'_nC_n}};\qquad \delta_n=\frac{1}{(C_nn)^{3/2}}.$$

\noindent Similarly as previously, we denote $\Omega^{\mathcal{P}}_n:=\{\omega_i^*,~i\le p_n\}$ a maximal $\epsilon_n$-packing of $B_{d'_n}^1$ with $p_n\le \Big(\frac{2}{ \epsilon_n}+1\Big)^{d'_n}$. Similarly we can find a $\delta_n$-covering net of $B_{d'_n}^1$:  $\Omega^{\delta_n}:=\{\omega'_i,~i\le q_n\}$  with $q_n\le \bp{\frac{2}{\delta_n}+1}^{d'_n}$.

As above, we remark that for all $\theta\in B_{d'_n}^1$ there is an $\omega^*\in\Omega^{\mathcal{P}}_n $ such that $\bn{\omega^*-\omega}\le \epsilon_n$. 
\noindent By exploiting conditions $(H_0^{\rm{minmax}})$ we obtain for all $\omega_1,\omega_2\in B_{d'_n}^1$ that:
\begin{equation}\label{gold352}\sup_{x\in \RR^{d_n}}\inf_{\omega_1^*,\omega_2^*\in \Omega^{\mathcal{P}}_n}\|f_n(x,\omega_1,\omega_2)-f_n(x,\omega^*_1,\omega^*_2)\big\|_{L_{\infty}}<  C_n \epsilon_n.
\end{equation}

We define $g_n(x_{1:n}):=\min_{i\le p_n}\max_{j\le p_n}\frac{1}{\sqrt{n}}\sum_{l\le n}f_n(x_l,\omega^*_i,\omega^*_j)$ and remark that it approximates $\sqrt{n}\hat{\theta}_n(\cdot).$ Indeed by combining \cref{gold352} and \Cref{casser} we know that there is a constant $K$ such that :\begin{align}
    \label{gold32}&\bn{g_n(\xi^n)-\sqrt{n}\hat{\theta}_n(\xi^n)}_{L_{1}}
    \\&\le \bn{\sup_{\omega_1,\omega_2\in B_{d'_n}^1}\sup_{\omega^*_1,\omega^*_2\in \Omega^{\mathcal{P}}_n}\ba{\frac{1}{\sqrt{n}}\sum_{i\le n}  f_n(\xi^n_{i},\omega_1,\omega_2)-f_n(\xi^n_{i},\omega^*_1,\omega^*_2)}}_{L_{1}}
     \\&\le  C_n\delta_n\sqrt{n}+\bn{\sup_{\omega'_1,\omega'_2\in \Omega^{\delta_n}}\sup_{\omega^*_1,\omega^*_2\in \Omega^{\mathcal{P}}_n}\ba{\frac{1}{\sqrt{n}}\sum_{i\le n}  f_n(\xi^n_{i},\omega'
     _1,\omega'_2)-f_n(\xi^n_{i},\omega^*_1,\omega^*_2)}}_{L_{1}}
     \\&\le C_n^{-1/6} n^{-1/6}+K\log(q_n)C_n\epsilon_n
        \\&\le C_n^{-1/6} n^{-1/6}+K\log(\frac{2}{(C_nn)^{3/2}}+1)d'_n C_n\epsilon_n\rightarrow 0.
\end{align} 
Therefore if we establish the desired result for $(g_n)$ it will also hold for $\sqrt{n}\hat\theta_n(X_{1:n}^n)$.
 We write $X_1^n:=(f_n(\xi_1^n,\omega_i,\omega_i))_{i,j\le p_n}$ and denote $ t_{g,n}^{\beta/4}$ a constant such that: $$P\bp{\max_{i,j\le p_n}\big|N^n_{i,j}|\ge t_{g,n}^{\beta/4}}\le \beta/4 ,$$  where $N^n$ is a gaussian matrix with $\rm{Cov}\bp{N_{i,j}^n,N_{k,l}^n}=\rm{Cov}\bp{X_{i,j}^n,X_{k,l}^n}$. Choose $t_{\beta/4}(X^n)$ such that the following holds:
$$P\bp{\ba{g_n(Z^n)-\E(g_n(Z^n)\mid X^n)}\ge t_{\beta/4}(X^n)\mid X^n}\le \beta/4.$$ 
We remark that $\log(p_n)=o(n^{1/7})$ 
and that $g_n$ satisfies all of the conditions of \Cref{seperate_2}. Therefore according to \Cref{seperate_2} we know that: $$\lim_{\delta\downarrow 0}\limsup_{n\rightarrow \infty}P\bp{\ba{g_n(Z^n)-\E(g_n(Z^n)\mid X^n)}\ge t_{\beta/4}(X^n)+3t_{g,n}^{\beta/4}+\delta\mid X^n}\le \beta.$$ We note that as $\|g_n(X^n)-\sqrt{n}\theta_n(X^n)\|_{L_1}$ we therefore have that $$\lim_{\delta\downarrow 0}\limsup_{n\rightarrow \infty}P\bp{\ba{g_n(Y^n)-\E(g_n(Y^n))}\ge t_{\beta/4}(X^n)+3t_{g,n}^{\beta/4}+\delta\mid X^n}\le \beta.$$Finally we note that $t^{*,\beta/4}_{g,n}\ge t^{\beta/4}_{g,n}$ which implies the desired result.

\end{proof}

\section{Proofs from \Cref{sec:p_value}}

\subsection{Proof of \Cref{nulle}}\label{app:nulle}
\begin{proof}For simplicity we write $\Theta_n:=\{\theta_k,~k\le p_n\}$ and for all $\theta\in \Theta_n$ we denote $$g_{\theta}(x_{1:2},y_{1:2}):=K_{\theta}(x_1,x_2)+K_{\theta}(y_1,y_2)-K_{\theta}(x_1,y_2)-K_{\theta}(y_1,x_2);$$  $$D_n:=4\max\bp{\big\|\sup_{k\le p_n}K_{\theta_k}(X_{1,1}^{M},X_{1,1}^{M})\big\|_{L_{120}}, \sup_{k\le p_n}\sum_{l\le l_n}\lambda_{l,k},~1}.$$
Using Mercer's theorem we note that there are orthonormal eigenfunctions $(\psi_{l,k})$ and  positive eigenvalues $(\lambda_{l,k})$ such that: $$K_{\theta_k}(\cdot,\cdot):=\sum_{l}\lambda_{l,k}\psi_{l,k}(\cdot)\psi_{l,k}(\cdot).$$

\vspace{2mm}

\noindent As we assumed that $\max_{k}\sum_{l}\lambda_{l,k}<\infty$ then there is a sequence $(l_n)$ such that: $$\max_{k}\sum_{l\ge l_n}\lambda_{l,k}=o\bp{\min(\lambda_n,1)^2(\beta_nnp_nD_n^3)^{-1}}.$$ We show that we can suppose without loss of generality that the kernels $(K_{\theta_k})$ are of rank $l_n$ or less. In this goal, we denote $K_{\theta_k}^*(\cdot,\cdot)$ the following kernels $K_{\theta_k}^*(\cdot,\cdot):=\sum_{l\le l_n}\lambda_{l,k}\psi_{l,k}(\cdot)\psi_{l,k}(\cdot)$ and shorthand $$H_{i,j}^{*\theta_k}(X^M):=K^*_{\theta_k}(X^{M}_{j,1},X^{M}_{i,1})+K^*_{\theta_k}(X^{M}_{j,2},X^{M}_{i,2})-K^*_{\theta_k}(X^{M}_{j,1},X^{M}_{i,2})-K^*_{\theta_k}(X^{M}_{j,2},X^{M}_{i,1});$$  $$p^*_{\theta}(X^{M}):=\frac{\frac{1}{n^2}\sum_{i,j\le n}H_{i,j}^{*\theta}(X^M)}{\frac{4}{|n|^3}\sum_{i\le n}\big[\sum_{j\le n} H_{i,j}^{*,\theta}(X^M)]^2-\frac{4}{n^4}\Big[\sum_{i,j\le n} H_{i,j}^{*,\theta}(X^M)\Big]^2+\lambda_n}.$$ 

$$ \omega^*_k(x_{1:n}):=\frac{e^{\beta_np^*_{\theta_k}(X^M)}}{\sum_{k'\le p_n}e^{\beta_np^*_{\theta_k'}(X^M)}}$$
\vspace{2mm}

\noindent We prove that: $$\bn{ nT_n(X^M)-\sum_{k\le p_n} \frac{1}{n}\sum_{i,j\le n}  H_{i,j}^{*\theta_k}(X^M)\omega^*_k(X^M)}_{L_1}\to 0$$ and that $$\bn{ nT_n(Z^M)-\sum_{k\le p_n} \frac{1}{n}\sum_{i,j\le n}  H_{i,j}^{*\theta_k}(Z^M)\omega^*_k(Z^M)}_{L_1}\to 0.$$ In this goal, we remark that:
\begin{equation}
    \begin{split}\label{dm_0}
\sup_{i,j\le n} \bn{\sup_{k\le p_n}\ba{ \bp{ H_{i,j}^{*\theta_k}(X^M)-H^{\theta_k}_{i,j}}}}_{L_4}
&\le\sup_{i,j\le n}\sum_{k\le p_n} \bn{{ \bp{ H_{i,j}^{*\theta_k}(X^M)-H^{\theta_k}_{i,j}}}}_{L_4}
\\&\overset{(a)}{\le}4\sup_{i,j\le n}\bn{{ \sup_{k\le p_n}\ba{ K^{*}_{\theta_k}(X^M_{i,1},X^M_{j,2})-K_{\theta_k}(X^M_{i,1},X^M_{j,2})}}}_{L_4}
\\&\le 4\sup_{i,j\le n}\bn{{ \sup_{k\le p_n}\sum_{l\ge l_n}\lambda_{l,k}\ba{ \psi_{l,k}(X_{i,1}^M)\psi(X_{j,2}^M)}}}_{L_4}
\\&\overset{}{\le}4p_n\sup_{k\le p_n}\sum_{l\ge l_n} \lambda_{l,k} \sup_{i\le n}\sup_{k\le p_n, l\in \mathbb{N}}\bn{{ { \psi_{l,k}(X_{i,1}^M)^2}}}_{L_4}
\\&\overset{}{\le}4p_nD_n\sup_{k\le p_n}\sum_{l\ge l_n} \lambda_{l,k} 
    \end{split}
\end{equation} where to get (a) we used the fact that $X_{i,1}^M$ has the same distribution than $X_{i,2}^M$.

\vspace{2mm}
\noindent  We remark that the function $\sigma_n:(x_1,\dots,x_{p_n})\rightarrow \sum_{k\le p_n}H_{i,j}^{*\theta_k}(X^M) \frac{e^{\beta_nx_k}}{\sum_{k'\le p_n} e^{\beta_nx_{k'}}}$ is Lipchitz in the max norm: $\ba{\sigma_n(x)-\sigma_n(y)}\le \beta_n \sup_{k\le p_n}\ba{H_{i,j}^{*\theta_k}(X^M)} \max_{k\le p_n}\ba{x_k-y_k}.$ Therefore coupling this with  the triangle inequality we get that
\begin{equation}
    \begin{split}&~\label{dm_1}
     \bn{ nT_n(X^M)-\sum_{k\le p_n} \frac{1}{n}\sum_{i,j\le n}  H_{i,j}^{*\theta_k}(X^M)\omega^*_k(X^M)}_{L_1}
     \\\le& \bn{\sum_{\substack{k\le p_n\\i,j\le n}}\frac{1}{n} \bp{ H_{i,j}^{*\theta_k}(X^M)-H^{\theta_k}_{i,j}}\omega_k(X^M)}_{L_1}+ \bn{\frac{1}{n}\sum_{\substack{k\le p_n\\i,j\le n}}  H_{i,j}^{*\theta_k}(X^M)\bp{\omega_k(X^M)-\omega^*_k(X^M)}}_{L_1}
     \\\overset{(a)}{\le}&n \bn{\sup_{k\le p_n}\ba{\frac{1}{n^2}\sum_{i,j\le n} \bp{ H_{i,j}^{*\theta_k}(X^M)-H^{\theta_k}_{i,j}}}}_{L_1}+2n\beta_n\sup_{i,j\le n}\bn{\sup_{k\le p_n}\ba{H_{i,j}^{*\theta_k}(X^M)}}_{L_2}
      \bn{\sup_{k\le p_n}\ba{p_{\theta_k}^*(X^n)-p_{\theta_k}(X^n)}}_{L_2}.
    \end{split}
\end{equation}

\noindent The first term of \cref{dm_1} is bounded by:
$$n \bn{\sup_{k\le p_n}\ba{\frac{1}{n^2}\sum_{i,j\le n} \bp{ H_{i,j}^{*\theta_k}(X^M)-H^{\theta_k}_{i,j}}}}_{L_1}\le np_n\sup_{k\le p_n}\sum_{l\ge l_n}\lambda_{l,k}\rightarrow 0.$$
Moreover we can bound the second term of \cref{dm_1} using the triangular inequality, \cref{dm_2} and the fact that  $$\bp{\frac{4}{|n|^3}\sum_{i\le n}\big[\sum_{j\le n} H_{i,j}^{*,\theta}]^2-\frac{4}{n^4}\Big[\sum_{i,j\le n} H_{i,j}^{*,\theta}\Big]^2+\lambda_n}^{-1}\ge \lambda_n,\qquad \bn{\sup_{k\le p_n}\ba{H_{i,j}^{*\theta_k}(X^M)}}_{L_4}\le D_n.$$ Indeed we have,
\begin{equation}
    \begin{split}\label{dm_2}&~
   2n\beta_n\sup_{i,j\le n}\bn{\sup_{k\le p_n}\ba{H_{i,j}^{*\theta_k}(X^M)}}_{L_2}
      \bn{\sup_{k\le p_n}\ba{p_{\theta_k}^*(X^n)-p_{\theta_k}(X^n)}}_{L_2}
      \\\overset{}{\le}& \frac{2n}{\lambda_n}\beta_nD_n
      \bn{\sup_{k\le p_n}\ba{\frac{1}{n^2}\sum_{i,j\le n} \bp{ H_{i,j}^{*\theta_k}(X^M)-H^{\theta_k}_{i,j}}}}_{L_2}
     +\frac{2n\beta_n}{\lambda_n^2}D_n^2\sup_{k\le p_n}
      \Big\|\frac{4}{|n|^3}\sum_{i\le n}\Big(\big[\sum_{j\le n} H_{i,j}^{*\theta_k}(X^M)]^2\\&-\big[\sum_{j\le n} H_{i,j}^{\theta_k}]^2\Big)-\frac{4}{n^4}\bp{\Big[\sum_{i,j\le n} H_{i,j}^{*\theta_k}(X^M)\Big]^2-\Big[\sum_{i,j\le n} H_{i,j}^{\theta_k}\Big]^2}\Big\|_{L_2}
      \\\le & 4np_n\sup_{k\le p_n}\sum_{l\ge l_n}\lambda_{l,k}\big[1+\frac{2D_n\beta_n}{\lambda_n}+\frac{16D_n^3}{\lambda_n^2}\big]\rightarrow 0.
    \end{split}
\end{equation}

\noindent This implies directly that: $ \bn{nT_n(X^M)-\sum_{k\le p_n} \frac{1}{n}\sum_{i,j\le n}  H_{i,j}^{*\theta_k}(X^M)\omega^*_k(X^M)}_{L_1}\rightarrow 0$.
Following the exact same road map we can show that:$ \bn{nT_n(Z^M)-\sum_{k\le p_n} \frac{1}{n}\sum_{i,j\le n}  H_{i,j}^{*\theta_k}(Z^M)\omega^*_k(Z^M_{1:n})}_{L_1}\rightarrow 0$.

\noindent Therefore we have $$ \bn{ d_{\mcF}\bp{nT_n(Z^M),\sum_{k\le p_n} \frac{1}{n}\sum_{i,j\le n}  H_{i,j}^{*\theta_k}(Z^M)\omega^*_k(Z^M)\mid X^n}}_{L_1}\rightarrow 0;$$
$$ \bn{ d_{\mcF}\bp{nT_n(Y^M),\sum_{k\le p_n} \frac{1}{n}\sum_{i,j\le n}  H_{i,j}^{*\theta_k}(Y^M)\omega^*_k(Y^M)\mid X^n}}_{L_1}\rightarrow 0.$$ Hence using \cref{lem:df-is-metric} we know that to establish the desired result it is sufficient to study the asymptotics of $ \frac{1}{n}\sum_{i,j\le n}  H_{i,j}^{*\theta_k}(Z^M)\omega^*_k(Z^M)$. Hence we can suppose without loss of generality that all the Kernels $(K_{\theta_i})$ have ranks of $l_n$ or less, and do so

\vspace{2mm}

\noindent As the distribution of $X_{i,1}^M$ is the same than the one of $X_{i,2}^M$ they also have the same mean embedding. Moreover we note that for all $c\in\mathbb{R}$ we have :\begin{equation}
    \begin{split}
       H_{i,j}^{\theta_k}=&\sum_{l\le l_n}\lambda_{l,k}\big[\psi_{l,k}(X^M_{i,1})-\psi_{l,k}(X^M_{i,2})\big]\big[\psi_{l,k}(X_{j,1}^n)-\psi_{l,k}(X_{j,2}^n)\big] 
       \\&=\sum_{l\le l_n}\lambda_{l,k}\big[\psi_{l,k}(X^M_{i,1})+c-\big(\psi_{l,k}(X^M_{i,2})+c\big)\big]\big[\big(\psi_{l,k}(X_{j,1}^n)+c\big)-\big(\psi_{l,k}(X_{j,2}^n)+c\big)\big] .
    \end{split}
\end{equation}Therefore as the test statistics $\hat{T_n}$ depends only on $(H_{i,j}^{\theta_k})$ we can suppose without loss of generality that $$\E(\psi_{l,k}(X^{M}_{1,1}))= \E(\psi_{l,k}(X^{M}_{1,2}))=0,\quad \forall\le l_n,~ k\le p_n.$$ 
%where we have written 
 We define $X_i^{*,n}:=(X_{i,l,k}^{*})_{l,k}$ to be random variables, defined as \begin{equation}\begin{split}X_{i,l,k}^{*}:=&\psi_{l,k}(X^{n}_{i,1})-\psi_{l,k}(X^{n}_{i,2}).\end{split}\end{equation} 
We define the process $X^{*,n}:=\big(X_i^{*,n}\big)$ and note that the observations $(X^{*,n}_i)$ take value in $M_{p_n\times l_n}(\RR)$. We remark that we could have taken the observations $(X^{*,n}_i)$ to take value in $\RR^{p_nl_n}$ as there is a one to one mapping from $M_{p_n\times l_n}(\RR)$ to $\RR^{p_nl_n}$. However for ease of notations we keep them as defined. 
~Let $( Z^{*}_i)$  and $(Y^{*}_i)$ be defined in the following way:

$$Z^*_i:=\big(\psi_{l,k}(Z^M_{i,1})- \psi_{l,k}(Z^M_{i,2})\big) \quad {\rm and}\quad Y^*_i:=\big(\psi_{l,k}(Y^M_{i,1})-\psi_{l,k}(Y^M_{i,2})\big).$$ We note that they form 
respectively a bootstrap sample and  an independent copy  of $(X^{*}_i)$. Moreover we remark that $Z_{i,1}^M|X^n\overset{d}{=}Z_{i,2}^M|X^n$ therefore we can also assume with out loss of generality that $\E(Z^*_{i,l,k}|X^n)=0$. We choose $(g_n)$ to be the following sequence of functions:
$$g_n(x_{1:n}):= \sum_{k}\omega_{k}(x_{1:n})\sum_{l}\lambda_{l,k}\Big(\frac{1}{\sqrt{n}}\sum_{j\le n}x_{j,l,k}\Big)^2$$
where we have set $\omega_k(x_{1:n})= \frac{e^{\beta_nh_{n,k}(x_{1:n})}}{\sum_{k}e^{\beta_n h_{n,k}(x_{1:n})}}$ and  $$h_{n,k}(x_{1:n}):=\frac{\sum_{l}\lambda_{l,k}\Big(\frac{1}{n}\sum_{j\le n}x_{j,l,k}\Big)^2}{\frac{4}{n^3}\sum_l\lambda_{l,k} (\sum_ix_{i,l,k})^3-\frac{4}{n^4}(\sum_l\lambda_{l,k}(\sum_ix_{i,l,k})^2)^2+\lambda_n}.$$

\vspace{2mm}

\noindent We remark that we have $$g_n(Z^{*}_{1:n})=n T_n( Z^M),\quad g_n(\tilde Y^{*}_{1:n})=n T_n( Y^M),\quad g_n(X^{*}_{1:n})=n T_n( X^M).$$ It is therefore enough to study $g_n$ and $(X^{*}_{{1:n}})$.
For ease of notations we write: \begin{equation}\label{cvd}\widehat{\sigma_{n,k}}(x_{1:n}):=\frac{4}{n^3}\sum_l\lambda_{l,k} (\sum_ix_{i,l,k})^3-\frac{4}{n^4}(\sum_l\lambda_{l,k}(\sum_ix_{i,l,j})^2)^2+\lambda_n;\end{equation}
and \begin{equation}\label{cvd2}\overline{x_{l,k_1}}:=\frac{1}{{n}}\sum_{j\le n}x_{j,l,k_1},\quad\hat{H}_{n,k}(x_{1:n}):=\sum_{l}\lambda_{l,k}\Big(\frac{1}{\sqrt{n}}\sum_{j\le n}x_{j,l,k}\Big)^2.\end{equation}
%As the eigenfunctions $(\psi_{l,k})_l$ are orthonormal
Moreover we note that we have \begin{equation}
    \begin{split}\label{eq:ker:1}
  \big\| \sup_{k\le p_n}\sum_{l\le l_n}\lambda_{l,k}[X^{*}_{1,l,k}]^2\|_{L_{120}}&\overset{(a)}{\le} 2  \big\| \sup_{k\le p_n}\sum_{l\le l_n}\lambda_{l,k}\Big([\psi_{l,k}(X^M_{i,1})]^2+[\psi_{l,k}(X^M_{i,2})]^2\Big)\|_{L_{120}}
             \\&\overset{(b)}{\le}4\big\|\sup_{k\le p_n}K_{\theta_k}(X_{1,1}^{M},X_{1,1}^{M})\big\|_{L_{120}}
    \end{split}
\end{equation}where (a) and (b) come from the Cauchy-Schwarz inequality. Similarly we have:\begin{equation}
    \begin{split}\label{eq:ker:2}
  \bn{ \sup_{k\le p_n}\sum_{l\le l_n}\lambda_{l,k}\ba{X^{*}_{1,l,k}}}_{L_{120}}
  &\overset{(a)}{\le}  \sqrt{\sup_{k\le p_n}\sum_{l\le l_n}\lambda_{l,k}}\sqrt{ \bn{ \sup_{k\le p_n}\sum_{l\le l_n}\lambda_{l,k}\ba{X^{*}_{1,l,k}}^2}_{L_{120}}}
 \\&\le 2\sqrt{\sup_{k\le p_n}\sum_{l\le l_n}\lambda_{l,k}}\sqrt{\bn{\sup_{k\le p_n}K_{\theta_k}(X_{1,1}^{M},X_{1,1}^{M})}_{L_{120}}}
    \end{split}
\end{equation}where (a) comes from the Cauchy-Schwarz inequality. Therefore using \Cref{casser} we note that there is a constant $C$ such that:

\begin{align}&
\Big\|\sup_{k\le p_n}\sup_{z\in [\bar{X}^n, Z_1^{*}]\cup[\bar{X}^n,\tilde Y_1^*]} \frac{1}{\sqrt{n}}\hat{H}_{n,k}(Z^{*,i,z})\Big\|_{L_{120}}
\\&\overset{(a)}{\le}\frac{2}{n^{3/2}}\big\| \sup_{k\le p_n}\sum_{l\le l_n}\lambda_{l,k}[X^{*}_{1,l,k}]^2\|_{L_{120}}
+2\Big\|\sup_{k\le p_n} \frac{1}{\sqrt{n}}\sum_{l\le l_n}\lambda_{l,k}(\frac{1}{\sqrt{n}}\sum_{l\le n}Z_l^{*,n,i})^2\Big\|_{L_{120}}
\\&\overset{(b)}{\le} C\log(p_n) \|\sup_{k\le p_n}K_{\theta_k}(X_{1,1}^{M},X_{1,1}^{M})\big\|_{L_{120}}.
\end{align}
where (a) is a consequence of the triangle inequality and (b) of \Cref{casser} and \cref{eq:ker:1}. Similarly using \Cref{casser} and \cref{eq:ker:2} we can establish that there is a constant $C'$ such that:
\begin{align}&\Big\|\sup_{k\le p_n}\sup_{z\in [\bar{X}^n, Z_1^{*}]\cup [\bar{X}^n,\tilde Y_1^*]} \sum_{l\le l_n} \lambda_{l,k}\frac{1}{\sqrt{n}}\Big|z_{l,k}+\sum_{i\ge 2} Z_{i,l,k}^{*}\Big|\Big\|_{L_{120}}\\&\le C'\log(p_n) \sqrt{\sup_{k\le p_n}\sum_{l\le l_n}\lambda_{l,k}}\sqrt{\bn{\sup_{k\le p_n}K_{\theta_k}(X_{1,1}^{M},X_{1,1}^{M})}_{L_{120}}}\end{align}

\vspace{2mm}

\noindent To prove the desired result we use \Cref{jardin}. We remark that the functions $(h_{n,k})$ and $(g_n)$ are three times differentiable. This implies that $(H_0)$ hold. We check that $(H_1^*)$ also holds. 
\noindent In this goal we first check that the partial derivatives of $(h_{n,k})$ are bounded. For ease of notations for a function $f_n$ we shorthand: $$\partial_{i,k,l}f_n(x_{1:n}):=\partial_{x_{i,k,l}}f_n(x_{1:n}),\quad \partial_{i,k_{1:2},l_{1:2}}^2f_n(x_{1:n}):=\partial_{x_{i,k_1,l_1},x_{i,k_2,l_2}}^2f_n(x_{1:n})$$ $$\partial_{i,k_{1:3},l_{1:3}}^2f_n(x_{1:n}):=\partial_{x_{i,k_1,l_1},x_{i,k_2,l_2}x_{i,k_3,l_3}}^3f_n(x_{1:n}).$$

\noindent In this goal, using the chain rule we note that 
for all $k\le p_n$ and all $l\le l_n$ we have:
\begin{align}
 \partial_{{i,k,l}}\hat{\sigma}_{n,k}(x_{1:n}):=
\frac{\lambda_{l,k}}{n}\bp{12\overline{x_{l,k}}^2 
           -16\overline{x_{l,k}} \frac{\hat{H}_{n,k}(x_{1:n})}{n}}
           \end{align}
           and:
\begin{equation}
    \begin{split}
        \partial_{{i,k,l}}h_{n,k}(x_{1:n})=&
        \frac{2}{{n}}\frac{\lambda_{l,k}\overline{x_{l,k}}}{\widehat{\sigma_{n,k}}(x_{1:n})}
    -\frac{\hat{H}_{n,k}(x_{1:n}) \partial_{{i,k,l}}\widehat{\sigma}_{n,k}(x_{1:n})}{\widehat{\sigma_{n,k}}(x_{1:n})^2}.
    \end{split}
\end{equation}
Therefore we obtain that there is a constant $K$ such that:
\begin{equation}
    \begin{split}
    {n}  \bn{\sup_{k\le p_n}\sum_{l\le l_n}  \ba{ \partial_{{i,k,l}}h_{n,k}(Z^{n, i, \bar{X}^n})}}_{L_{12}}
      \le 
        K\Big[\frac{D_n}{\lambda_n}+ \frac{D_n^2+D_n^3}{\lambda_n^2}\Big]
    \end{split}
\end{equation}
Using once again the chain rule we have that:
\begin{equation}
    \begin{split}
        \partial_{{i,k,l}}\omega_{k'}(x_{1:n})=& \beta_n  \partial_{{i,k,l}}h_{n,k}(x_{1:n})\omega_{k'}(x_{1:n})\bp{\mathbb{I}(k'=k)-\omega_{k}(x_{1:n})}
    \end{split}
\end{equation}
 as well as:
\begin{equation}
    \begin{split}
        \partial_{{i,k,l}}g_n(x_{1:n})=& \frac{2}{\sqrt{n}}\lambda_{l,k}(\frac{1}{\sqrt{n}}\sum_{j\le n}x_{j,l,k}) \omega_k(x_{1:n})+\sum_{k'\le p_n}\hat{H}_{n,k'}(x_{1:n}) \partial_{{i,k,l}}\omega_{k'}(x_{1:n}).
    \end{split}
\end{equation}
Therefore, using \cref{eq:ker:1} and \cref{eq:ker:2} and as $\sum_{k\le p_n}\omega_k(x_{1:n})=1$ 
we obtain that there is a constant $K_2$ that do not depend on $n$ such that 

\begin{equation}
    \begin{split}&
  \sup_{i\le n}  \Big\| \big\| \partial_{{i,k,l}}g_n( Z^{*,i,\bar{X}^n}_{1:n})\big\|_{v,p_n\times l_n} \Big\|_{L_{12}}
  \\&\le  \beta_n\sup_{i\le n}  \Big\| \sup_{k\le p_n}\big|\frac{1}{\sqrt{n}}\hat{H}_{n,k}(Z^{*,i,\bar{X^n}})\big|\Big\|_{L_{24}}\Big\| \sup_{k\le p_n}\sum_{l\le l_n}\sqrt{n}\big| \partial_{{i,k,l}}h_{n,k}(Z^{*,i,\bar{X^n}})\big| \Big\|_{L_{24}}
  \\&\quad+ \sup_{i\le n}  \Big\| \sup_{k\le p_n}\sum_{l\le l_n}\big|\lambda_{l,k}(\frac{1}{\sqrt{n}}\sum_{j\le n}Z^{*,i,\bar{X^n}}_{j,l,k})\big| \Big\|_{L_{12}}
    \\&\le \frac{K_2\beta_n\log(p_n)D_n^4}{\sqrt{n}\min(\lambda_n^2,1)} .
    \end{split}
\end{equation}
This implies that $R^*_{n,1}:=O(\frac{\beta_n\log(p_n)D_n^4}{n^{1/6}\min(\lambda_n^2,1)}).$
Moreover all $k_1,l_1,k_2,l_2$ using the chain rule we have: $\partial_{{i,k_{1:2},l_{1:2}}}^2h_{n,k_1}(x_{1:n})=0$ if $k_1$ is distinct from $k_2$. Moreover if $k_1=k_2$ we have:
\begin{align}
 \partial_{i,k_{1:2},l_{1:2}}\hat{\sigma}_{n,k}(x_{1:n}):=
\frac{\lambda_{k_1,l_1}}{n^2}\bp{24\overline{x_{l_1,k_1}}\mathbb{I}(l_1=l_2)
           -16 \frac{\hat{H}_{n,k_1}(x_{1:n})}{n}\mathbb{I}(l_1=l_2)
           -16\lambda_{k_2,l_2}\overline{x_{l_1,k_1}}\overline{x_{l_2,k_1}}}
           \end{align}
and by the Chain rule we have:

\begin{equation}
    \begin{split}
        \partial_{i,k_{1:2},l_{1:2}}^2h_{n,k_1}(x_{1:n})
        &=\frac{2\lambda_{k_1,l_1}}{n\widehat{\sigma_{n,k_1}}(x_{1:n})}\Big[\frac{\mathbb{I}(l_1=l_2)}{n}+\frac{\overline{x_{l_2,k_1}}\partial_{{i,k_2,l_2}}\widehat{\sigma_{n,k_1}}(x_{1:n})}{\widehat{\sigma_{n,k_1}}(x_{1:n})}\Big]
 \\&-\frac{\partial_{{i,k_2,l_2}}h_{n,k_2}(x_{1:n})}{n}\frac{\partial_{{i,k_1,l_1}}\widehat{\sigma_{n,k_1}}(x_{1:n})}{\widehat{\sigma_{n,k_1}}(x_{1:n})}
   -\frac{\hat{H}_{n,k_1}(x_{1:n})}{n}\frac{\partial_{i,k_{1:2},l_{1:2}}^2\widehat{\sigma_{n,k_1}}(x_{1:n})}{\widehat{\sigma_{n,k_1}}(x_{1:n})}
    \\&+ 2\frac{\hat{H}_{n,k_1}(x_{1:n})}{n}\frac{\partial_{{i,k_1,l_1}}\widehat{\sigma_{n,k_1}}(x_{1:n})\partial_{{i,k_2,l_2}}\widehat{\sigma_{n,k_1}}(x_{1:n})}{\widehat{\sigma_{n,k_1}}(x_{1:n})^3}.
  \end{split}
\end{equation}

\noindent Therefore there is a constant $K_3$ such that:
\begin{equation}
    \begin{split}&
    \Big\| \sup_{k_{1:2}\le p_n} \sum_{l_{1:2}\le l_n}\ba{ \partial_{i,k_{1:2},l_{1:2}}^2h_{n,k_1}(Z^{*, i, \bar{X}^n}_{1:n}) }\Big\|_{L_{12}}
    \le\frac{ K_3D_n^5}{\max(\lambda_n^3,1)n^2} .
    \end{split}\end{equation}
    By another application of the chain rule we have:
    \begin{equation}
    \begin{split}
        \partial_{i,k_{1:2},l_{1:2}}\omega_{k'}(x_{1:n})=& \beta_n \bp{\mathbb{I}(k'=k_1)-\omega_{k_1}(x_{1:n})}
        \\&\times \bp{\partial_{i,k_{1:2},l_{1:2}}^2h_{n,k_1}(x_{1:n})\omega_{k'}(x_{1:n})+ \partial_{{i,k_1,l_1}}h_{n,k_1}(x_{1:n})\partial_{{i,k_2,l_2}}\omega_{k'}(x_{1:n})}
        \\-&    \beta_n \partial_{{i,k_1,l_1}}h_{n,k_1}(x_{1:n})\omega_{k'}(x_{1:n})\partial_{{i,k_2,l_2}}\omega_{k_1}(x_{1:n})
    \end{split}
\end{equation}
by the chain rule we also have that:
\begin{equation}
    \begin{split}
        \partial_{i,k_{1:2},l_{1:2}}g_n(x_{1:n})=& \frac{2}{{n}}\lambda_{k_1,l_1}\mathbb{I}(k_1=k_2) \omega_{k_1}(x_{1:n})+\frac{2}{\sqrt{n}}\lambda_{k_1,l_1}(\frac{1}{\sqrt{n}}\sum_{j\le n}x_{j,l,k_1}) \partial_{{i,k_2,l_2}}\omega_{k_1}(x_{1:n})
        \\&+\frac{2}{\sqrt{n}}\lambda_{k_2,l_2}(\frac{1}{\sqrt{n}}\sum_{j\le n}x_{j,k_2,l_2})\partial_{{i,k_1,l_1}}\omega_{k_2}(x_{1:n})
        \\&+\sum_{k'\le p_n}\hat{H}_{n,k'}(x_{1:n})\partial_{i,k_{1:2},l_{1:2}}^2\omega_{k'}(x_{1:n})
    \end{split}
\end{equation}
Therefore we know that there are constants $K_3$ such that:
\begin{equation}
    \begin{split}&
    \Big\| \big\|\partial^2_{x_{1}}g_n( Z^{*,i,\bar{X^n}}_{1:n})\big\|_{m,p_n\times l_n} \Big\|_{L_{12}}
 \le \frac{K_3D_n^7}{\min(\lambda_n^4,1)n}\beta_n^2.
    \end{split}\end{equation}
This implies that $R_{n,2}^*=O(\frac{\beta_n^2D_n^7}{\min(\lambda_n^4,1)\sqrt{n}})$.    Finally using a similar line of reasoning we note that Moreover all $k_1,l_1,k_2,l_2,l_3,k_3$ using the chain rule we have: $\partial_{{i,k_{1:3},l_{1:3}}}^3h_{n,k_1}(x_{1:n})=0$ if $k_1$ is distinct from $k_2$ or from $k_3$. Moreover if $k_1=k_2=k_3$ we have:
\begin{align}
 \partial_{i,l_{1:3},k_{1:3}}^3\hat{\sigma}_{n,k}(x_{1:n})=
\frac{\lambda_{k_1,l_1}}{n^3}\Big(&24\mathbb{I}(l_1=l_2=l_3)-32\lambda_{k_1,l_3}\overline{x_{l_3,k_1}}\mathbb{I}(l_1=l_2)
           \\&-16\lambda_{k_2,l_2}(\overline{x_{l_1,k_1}}\mathbb{I}(l_3=l_2)+\overline{x_{l_2,k_1}}\mathbb{I}(l_1=l_3) )\Big)
           \end{align}
and by the Chain rule we have:

\begin{equation}
    \begin{split}&
        \partial_{i,k_{1:3},l_{1:3}}^3h_{n,k_1}(x_{1:n})
        \\=&-\frac{\partial_{l_3,k_3}\widehat{\sigma_{n,k_1}}(x_{1:n})}{n\widehat{\sigma_{n,k_1}}(x_{1:n})^2}\Big(\frac{\overline{x_{l_1,k_1}}\partial_{{i,k_2,l_2}}\widehat{\sigma_{n,k_1}}(x_{1:n})}{\widehat{\sigma_{n,k_1}}(x_{1:n})}\big[2\lambda_{k_1,l_1}+3\frac{\hat{H}_{n,k_1}(x_{1:n})\partial_{{i,k_1,l_1}}\widehat{\sigma_{n,k_1}}(x_{1:n})}{\widehat{\sigma_{n,k_1}}(x_{1:n})}\Big]\\-&\partial_{{i,k_2,l_2}}\hat{H}_{n,k_2}(x_{1:n}) \partial_{{i,k_1,l_1}}\widehat{\sigma_{n,k_1}(x_{1:n})}
   -h_{n,k_1}(x_{1:n})\partial_{i,k_{1:2},l_{1:2}}^2\widehat{\sigma_{n,k_1}}(x_{1:n})+2\lambda_{k_1,l_1}\frac{\mathbb{I}(l_1=l_2)}{n}\Big)
   \\+&\frac{2\lambda_{k_1,l_1}}{n\widehat{\sigma_{n,k_1}}(x_{1:n})^2}\bp{\frac{\partial_{{i,k_2,l_2}}\widehat{\sigma_{n,k_1}}(x_{1:n})\mathbb{I}(l_2=l_3)}{n}+\overline{x_{l_3,k_1}}\partial^2_{{i,k_{2:3},l_{2:3}}}\widehat{\sigma_{n,k_1}}(x_{1:n})}\\-&\frac{\partial_{{i,k_{2:3},l_{2:3}}}h_{n,k_2}(x_{1:n})}{n}\frac{\partial_{{i,k_1,l_1}}\widehat{\sigma_{n,k_1}}(x_{1:n})}{\widehat{\sigma_{n,k_1}}(x_{1:n})}
-\frac{\partial^2_{{i,k_1,l_1,k_3,l_3}}\widehat{\sigma_{n,k_1}}(x_{1:n})}{n\widehat{\sigma_{n,k_1}}(x_{1:n})}\Big(\partial_{{i,k_2,l_2}}h_{n,k_2}(x_{1:n})\\&-{\hat{H}_{n,k_1}(x_{1:n})}\frac{\partial_{{i,k_2,l_2}}\widehat{\sigma_{n,k_1}}(x_{1:n})}{\widehat{\sigma_{n,k_1}}(x_{1:n})}\Big)
  -\frac{2\lambda_{l_3,k_1}\bar{X}_{l_3,k_1}}{n\widehat{\sigma_{n,k_1}}(x_{1:n})}\bp{\partial_{i,k_{1:2},l_{1:2}}^2\widehat{\sigma_{n,k_1}}(x_{1:n})-\frac{\widehat{\sigma_{n,k_1}}(x_{1:n})\partial_{{i,k_2,l_2}}\widehat{\sigma_{n,k_1}}(x_{1:n})}{\widehat{\sigma_{n,k_1}}(x_{1:n})}}
    \\& -\frac{\hat{H}_{n,k_1}(x_{1:n})}{n\widehat{\sigma_{n,k_1}}(x_{1:n})}\bp{\partial_{i,k_{1:3},l_{1:3}}^3\widehat{\sigma_{n,k_1}}(x_{1:n})-\frac{\partial_{{i,k_1,l_1}}\widehat{\sigma_{n,k_1}}(x_{1:n})\partial_{{i,k_{2:3},l_{2:3}}}^2\widehat{\sigma_{n,k_1}}(x_{1:n})}{\widehat{\sigma_{n,k_1}}(x_{1:n})}}
   \end{split}
\end{equation}

\noindent Therefore there is a constant $K_3$ such that:
\begin{equation}
    \begin{split}&
    \Big\| \sup_{k_{1:3}\le p_n}\sup_{z\in [\bar{X}^n,Z_i^*]\cup[\bar{X}^n,\tilde Y^*_i]} \sum_{l_{1:3}\le l_n} \ba{\partial_{i,k_{1:3},l_{1:3}}^3h_{n,k_1}(Z^{*, i, z}_{1:n}) }\Big\|_{L_{12}}
    \le\frac{ K_3D_n^7}{\max(\lambda_n^4,1)n^3} .
    \end{split}\end{equation}
    By another application of the chain rule we have:
    \begin{equation}
    \begin{split}
        \partial_{i,k_{1:3},l_{1:3}}\omega_{k'}(x_{1:n})=& \beta_n\bp{\mathbb{I}(k'=k_1)-\omega_{k_1}(x_{1:n})}
        \\&\times \Big({\partial_{i,k_{1:3},l_{1:3}}^3h_{n,k_1}(x_{1:n})\omega_{k'}(x_{1:n})+\partial_{i,k_{1:2},l_{1:2}}^2h_{n,k_1}(x_{1:n})\partial_{i,k_3,l_3}\omega_{k'}(x_{1:n})}%
      \\& \quad +{ \partial_{{i,k_1,l_1}}h_{n,k_1}(x_{1:n})\partial^2_{{i,k_{2:3},l_{2:3}}}\omega_{k'}(x_{1:n})+\partial^2_{{i,k_1,l_1},i,k_3,l_3}h_{n,k_1}(x_{1:n})\partial_{{i,k_2,l_2}}\omega_{k'}(x_{1:n})
    }\Big)
     \\-&\beta_n\partial_{i,k_3,l_3}\omega_{k_1}(x_{1:n})\\&\times \bp{ \partial_{i,k_{1:2},l_{1:2}}^2h_{n,k_1}(x_{1:n})\omega_{k'}(x_{1:n})- \partial_{{i,k_1,l_1}}h_{n,k_1}(x_{1:n})\partial_{{i,k_{2},l_{2}}}\omega_{k'}(x_{1:n})} 
        \\-&    \beta_n\omega_{k'}(x_{1:n})\\&\times\Big( \partial^2_{{i,k_1,l_1},i,k_3,l_3}h_{n,k_1}(x_{1:n})\partial_{{i,k_2,l_2}}\omega_{k_1}(x_{1:n})+ \partial_{{i,k_1,l_1}}h_{n,k_1}(x_{1:n})\partial^2_{{i,k_{2:3},l_{2:3}}}\omega_{k_1}(x_{1:n})\Big)
    \end{split}
\end{equation}
as well as:
\begin{equation}
    \begin{split}&
        \partial_{i,k_{1:3},l_{1:3}}^3g_n(x_{1:n})\\=& \frac{2}{{n}}\lambda_{k_1,l_1}\mathbb{I}(k_1=k_2=k_3) \partial_{i,k_3,l_3}\omega_{k_1}(x_{1:n})+\frac{2}{\sqrt{n}}\lambda_{k_1,l_1}(\frac{1}{\sqrt{n}}\sum_{j\le n}x_{j,l,k_1}) \partial_{{i,k_{2:3},l_{2:3}}}\omega_{k_1}(x_{1:n})
        \\&+\frac{2\mathbb{I}(k_1=k_3)}{n}\lambda_{k_1,l_1} \partial_{{i,k_{2},l_{2}}}\omega_{k_1}(x_{1:n})+\frac{2\mathbb{I}(k_2=k_3)}{{n}}\lambda_{k_2,l_2}\partial_{{i,k_1,l_1}}\omega_{k_2}(x_{1:n})
        \\&+\frac{2}{\sqrt{n}}\lambda_{k_2,l_2}(\frac{1}{\sqrt{n}}\sum_{j\le n}x_{j,k_2,l_2})\partial_{{i,k_1,l_1},i,k_3,l_3}^2\omega_{k_2}(x_{1:n})\\&+\frac{2}{\sqrt{n}}\lambda_{k_3,l_3} (\frac{1}{\sqrt{n}}\sum_{j\le n}x_{j,k_3,l})\partial_{i,k_{1:2},l_{1:2}}^2\omega_{k_3}(x_{1:n})
    +\sum_{k'\le p_n}\hat{H}_{n,k'}(x_{1:n})\partial_{i,k_{1:3},l_{1:3}}^3\omega_{k'}(x_{1:n})
    %   +{\beta_n} \partial_{{i,k,l}}h_{n,k}(x_{1:n})\hat{H}_{n,k}(x_{1:n})\omega_k(x_{1:n})
    %     \\&-{\beta_n}\omega_k(x_{1:n}) \partial_{{i,k,l}}h_{n,k}(x_{1:n})\sum_{k''\le p_n}\hat{H}_{n,k''}(x_{1:n})\omega_{k''}(x_{1:n})
        %\\&+\frac{2\beta_n}{{n}^{3/2}}\frac{\lambda_{l,k}(\frac{1}{\sqrt{n}}\sum_{j\le n}x_{j,l,k}) e^{\beta_nh_{n,k}(x_{1:n})}}{\widehat{\sigma_{n,k}}(x_{1:n})\sum_{k'} e^{\beta_n h_{n,k'}(x_{1:n})}}\hat{H}_{n,k}(x_{1:n})\Big[1-\frac{e^{\beta_nh_{n,k}(x_{1:n})}}{\sum_{k'} e^{\beta_n h_{n,k'}(x_{1:n})}}\Big]
      %  \\&-\frac{12\beta_n}{n^3}\frac{\lambda_{l,k}(\frac{1}{\sqrt{n}}\sum_{j\le n}x_{j,l,k})^2 e^{\beta_nh_{n,k}(x_{1:n})}}{\widehat{\sigma_{n,k}}(x_{1:n})^2\sum_{k'} e^{\beta_n h_{n,k'}(x_{1:n})}} \hat{H}_{n,k}(x_{1:n})^2\Big[1-\frac{e^{\beta_nh_{n,k}(x_{1:n})}}{\sum_{k'} e^{\beta_n h_{n,k'}(x_{1:n})}}\Big]
          % \\&+\frac{16\beta_n}{n^{7/2}}\frac{\lambda_{l,k}(\frac{1}{\sqrt{n}}\sum_{j\le n}x_{j,l,k}) e^{\beta_nh_{n,k}(x_{1:n})}}{\widehat{\sigma_{n,k}}(x_{1:n})^2\sum_{k'} e^{\beta_n h_{n,k'}(x_{1:n})}} \hat{H}_{n,k}(x_{1:n})^3\Big[1-\frac{e^{\beta_nh_{n,k}(x_{1:n})}}{\sum_{k'} e^{\beta_n h_{n,k'}(x_{1:n})}}\Big]
    \end{split}
\end{equation}
% \begin{equation}
%     \begin{split}&
%         \partial_{x_{i,k_1,l_1}x_{i,k_2,l_2}}^2g_n(x_{1:n})\\&= \frac{2\beta_n}{\sqrt{n}}\lambda_{k_1,l_1}(\frac{1}{\sqrt{n}}\sum_{j\le n}x_{j,k_1,l_1})\partial_{{i,k_2,l_2}}h_{n,k_2}(x_{1:n}) \omega_{k_1}(x_{1:n})\big[\mathbb{I}(k_1=k_2)-\omega_{k_2}(x_{1:n})\Big]
%         \\&+{\beta_n^2}\partial_{{i,k_1,l_1}}h_{n,k}(x_{1:n})\partial_{{i,k_2,l_2}}h_{n,k}(x_{1:n})\hat{H}_{n,k_1}(x_{1:n})\omega_{k_1}(x_{1:n})\\&\quad \times \bp{\hat{H}_{n,k_1}(x_{1:n})\bp{\mathbb{I}(k_1=k_2)-\omega_{k_2}(x_{1:n})}-2\omega_{k_2}(x_{1:n})\sum_{k''\ne k_1}\hat{H}_{n,k''}(x_{1:n})\omega_{k''}(x_{1:n})}
%       % \\&+{\beta_n^2}\partial_{{i,k_1,l_1}}h_{n,k}(x_{1:n})\partial_{{i,k_2,l_2}}h_{n,k}(x_{1:n})\hat{H}_{n,k_1}(x_{1:n})\omega_{k_1}(x_{1:n})\Big[\mathbb{I}(k_1=k_2)-\omega_{k_2}(x_{1:n})\Big]
%       % \\&-2{\beta_n^2}\partial_{{i,k_1,l_1}}h_{n,k}(x_{1:n})\partial_{{i,k_2,l_2}}h_{n,k}(x_{1:n})\omega_{k_1}(x_{1:n})\omega_{k_2}(x_{1:n})\sum_{k''\ne k_1}\hat{H}_{n,k''}(x_{1:n})\omega_{k''}(x_{1:n})
%         \\&+\beta_n\mathbb{I}(k_1=k_2)\partial_{x_{i,k_1,l_1},x_{i,k_2,l_2}}^2h_{n,k}(x_{1:n})\hat{H}_{n,k_1}(x_{1:n})\omega_{k_1}(x_{1:n})\Big[1-\omega_{k_2}(x_{1:n})\Big]
%          \\&+\mathbb{I}(k_1=k_2,l_1=l_2)\frac{2}{{n}}\lambda_{k_1,l_1}\omega_{k_1}(x_{1:n}).
%     \end{split}
% \end{equation}
% Therefore using the fact that$$\sup_{k\le p_n}\sum_{l\le l_n}\lambda_{l,k}\le \sup_{k\le p_n}\|K_{\theta_k}(X_{1,1}^{n},X_{1,1}^{n})\big\|_{L_1}.$$  
Therefore we know that there are constants $K_4$ such that:
\begin{equation}
    \begin{split}&
  \max_i  \Big\|\max_{z\in [\bar{X^n},Z^*_{i}]\cup[\bar{X^n},\tilde Y^*_i]} \big\|\partial^2_{x_{1}}g_n( Z^{*,i,z}_{1:n})\big\|_{t,p_n\times l_n} \Big\|_{L_{12}}
  %  \\&\le\frac{ K_4D_n}{n}\Big[1+\beta_n\frac{D_n^4}{\lambda_n^3}+\beta^2_n\frac{D_n^6}{\lambda_n^4}\Big]
 \le \frac{K_4D_n^{10}}{\min(\lambda_n^6,1)n^2}\beta_n^3
    %\\&\Big[1+\frac{\beta_n}{\lambda_n^2}\big[1+\frac{\beta_n}{\lambda_n^2}\big]\Big]\max\Big(\big\|\sup_{k\le p_n}K_{\theta_k}(X_{1,1}^{n},X_{1,1}^{n})\big\|^{5}_{120},1\Big)
   % \\&\le\frac{K_4\beta_nD_n}{\lambda_nn}\Big[
%\Big[1+\frac{D_n+D_n^2}{\lambda_n}\Big]+\beta_n \frac{D_n}{\lambda_n}
%\Big[1+\frac{D_n+D_n^2}{\lambda_n}\Big]^2
%D_n\sup_{k_1}\sum_{l}\lambda_{l,k_1} \Big[1+\frac{D_n^2}{n^4}\Big]\\&~~+D_n 
    %  +\frac{1}{n\lambda_n}\Big[D_n+D_n^2+D_n^3\big]\Big[D_n+D_n^2\Big]
 %          +\big[D_n^2+D_n^3\big]\Big]\Big]
  %  \\&~~+\sup_{k_1}\sum_{l}\lambda_{l,k_1}\Big]
 %   \\&\le\frac{\beta_n K_4}{\lambda_nn}
    \end{split}\end{equation}

\noindent    Therefore we have $R_{n,3}^*=O(\frac{D_n^{10}\beta_n^3}{\min(\lambda_n^6,1)n})$.

\noindent    This implies that both $(H_0)$ and $(H_1^*)$ hold and \Cref{jardin} guarantees that $$\Big\|d_{\mcF}\Big(n\hat{T}_n( Z^M),g_n( Y^M)\mid X^n\Big)\Big\|_{L_1}\rightarrow 0.$$ %As we have noted $g_n(\tilde Y^*_{1:n})\overset{(d)}{=}n\hat{T}_n(Y^n)$ which gives us the desired result
\end{proof}

%\section{Proofs from \Cref{sec:stacking}}

\section{Proof of \Cref{common_stacking_msbien}}\label{app:common_stacking_msbien}

We divide the proof of the two statements of \Cref{common_stacking_msbien} in two. Firstly we prove that the bootstrap method is consistent if $\beta_n=o(\sqrt{n-m_n})$.
\begin{proof}
%The key of this proof is to exploit the fact:
Let $(Y^n_i)$ be an independent copy of $(X^n_i)$.% We define $(L'_i)$ as the following random variables  $$L'_i=\mathcal{L}_n\bp{X_{i+m_n}^n,~\hat\Theta_n^Y(X^n_{i+m_n})}.$$
We shorthand $$\omega_n^p(x_{1:n-m_n}):=
\frac{e^{-\beta_n\mcR^k_n(x_{1:n})}}{\sum_{k'\le p_n}e^{-\beta_n\mcR^{k'}_n(x_{1:n})}},\qquad \hat\theta^p_n:=\hat\theta^p_n(X_{1:m_n}^n);$$
$$\hat\Theta_n:=\hat\Theta_n(X_{1:n}^n),\qquad \hat\Theta^Y_n:=\hat\Theta_n(X_{1:m_n}^nY^{n}_{m_n+1:n}). $$
The first step of the proof is to realize that as conditionally on $\hat\Theta_n$ and  $\hat\Theta_n^Y$  the observations $\bp{\mathcal{L}_n\bp{X^n_{i+m_n},\hat\Theta^Y_n(X^n_{i+m_n})}-\mathcal{L}_n\bp{X^n_{i+m_n},\hat\Theta_n(X^n_{i+m_n})}}_{i\ge 0}$ are independent and identically distributed we have
\begin{equation}
\begin{split}&\label{fin_1}
   \bn{ \sqrt{{\rm var}\Big[\frac{1}{\sqrt{n-m_n}}\sum_{i\le n- m_n}\mathcal{L}_n\bp{X_{i+m_n}^n,~\hat\Theta_n^Y(X^n_{i+m_n})}-\mathcal{L}_n\bp{X^n_{i+m_n},\hat\Theta_n(X^n_{i+m_n})} \Big|\hat\Theta_n^Y,\hat\Theta_n \Big]}}_{L_1}
    \\&=\bn{  \sqrt{{\rm var}\Big[\mathcal{L}_n\bp{X_{m_n+1}^n,~\hat\Theta_n^Y(X^n_{m_n+1})}-\mathcal{L}_n\Big(X^n_{m_n+1},\hat\Theta_n(X^n_{m_n+1})\Big)\Big|\hat\Theta_n,\hat\Theta_n^Y\Big]}}_{L_1}
    % \\&\le 2\sum_{l\le d'_n}\Big\|\sup_{\theta \in \Omega(\{\hat\theta^p_n,~p\le p_n\})}\partial_{2,l}\mathcal{L}_n(X^n_{m_n+1},\theta)\Big\|_{L_{\infty}} \times\Big\|\hat\Theta_{n,l} -\hat\Theta_{n,l}(X_{1:m_n}^nY^{n}_{m_n+1:n}\Big)\Big\|_{L_2}
    % \\&\le L_nd'_n\sup_{l\le d'_n}\Big\|\hat\Theta_{n,l} -\hat\Theta_{n,l}(X_{1:m_n}^nY^{n}_{m_n+1:n}\Big)\Big\|_{L_2}
    \end{split}\end{equation}
    Moreover by exploiting Taylor expansions we know that \begin{equation}
\begin{split}&
\bn{  \sqrt{{\rm var}\Big[\mathcal{L}_n\bp{X_{m_n+1}^n,~\hat\Theta_n^Y(X^n_{m_n+1})}-\mathcal{L}_n\Big(X^n_{m_n+1},\hat\Theta_n(X^n_{m_n+1})\Big)\Big|\hat\Theta_n,\hat\Theta_n^Y\Big]}}_{L_1}
    \\&\le \sum_{l\le d'_n}\Big\|\sup_{\theta \in \Omega(\{\hat\theta^p_n,~p\le p_n\})}\ba{\partial_{2,l}\mathcal{L}_n(X^n_{m_n+1},\theta(X^n_{m_n+1}))}\Big\|_{L_{\infty}} \times\Big\|\hat\Theta_{n,l}(X^n_{m_n+1}) -\hat\Theta_{n,l}^Y(X^n_{m_n+1})\Big\|_{L_2}
    \\&\le L_nd'_n\sup_{l\le d'_n}\Big\|\hat\Theta_{n,l}(X^n_{m_n+1}) -\hat\Theta_{n,l}^Y(X^n_{m_n+1})\Big\|_{L_2}
    \end{split}\end{equation} We define $(X_{i}^{n,j})$ as the following interpolating process: $X_{i}^{n,j}:=\begin{cases}X_i^n~{\rm if}~i\le j\\Y_j^{n}~{\rm otherwise}\end{cases}$ and shorthand $\hat\Theta_{n,l}^j:=\hat\Theta_{n,l}(X_{1:n}^{n,j})$. By the Effron-Stein inequality we have:
    \begin{equation}
\begin{split}\label{fin_2}
    \bn{\hat\Theta_{n,l}(X^n_{m_n+1}) -\hat\Theta_{n,l}^Y(X^n_{m_n+1})}^2_{L_2}
%    \\&=\bn{\sum_{j\ge m_n+1}\hat\Theta_{n,l} -\hat\Theta_{n,l}^Y}^2_{L_2}
    &\overset{}{\le} \sum_{j\ge m_n+1}\bn{\hat\Theta_{n,l}^j(X^n_{m_n+1})-\hat\Theta_{n,l}^{j-1}(X^n_{m_n+1})}^2_{L_2}        \\&\le (n-m_n)\bn{\hat\Theta_{n,l}^{n}(X^n_{m_n+1})-\hat\Theta_{n,l}^{n-1}(X^n_{m_n+1})}_{L_2}^2
    \end{split}\end{equation}
Moreover using the triangle inequality  and the inequality $(a+b)^2\le 2(a^2+b^2)$ we can upper-bound the right-hand side as
       \begin{equation}
\begin{split}&\label{fin_3}
\bn{\hat\Theta_{n,l}^n(X^n_{m_n+1})-\hat\Theta_{n,l}^{n-1}(X^n_{m_n+1})}^2_{L_2} 
\\&= \bn{\sum_{p\le p_n}\hat\theta^p_{n,l}\bp{\omega_n^p(X^n_{m_n+1:n})-\omega_n^p(X^n_{m_n+1:n-1}Y^n_n)}}^2_{L_2} %  \\&\le \bn{\hat\Theta_{n,l}-\hat\Theta_{n,l}(X_{1:n}^{n,n})}_{L_2}^2
   \\\le&  2\bn{\sum_{p\le p_n}\hat\theta^p_{n,l}\omega_n^p(X^n_{m_n+1:n})\Big[e^{\frac{\beta_n}{n-m_n}[\mathcal{L}_n(X_n^n,\hat\theta_n^p)-\mathcal{L}_n(Y_n^n,\hat\theta_n^p)]}-1\Big]}_{L_2}^2
        \\&~+2\bn{\sum_{p\le p_n}\hat\theta^p_{n,l}\omega_n^p(X^n_{m_n+1:n})\sum_{p'\le p_n}\omega_n^{p'}(X^n_{m_n+1:n})\Big[e^{\frac{\beta_n}{n-m_n}[\mathcal{L}_n(X_n^n,\hat\theta_n^{p'})-\mathcal{L}_n(Y_n^n,\hat\theta_n^{p'})]}-1\Big]}_{L_2}^2
        \\&\le4\frac{\beta_n^2}{(n-m_n)} \|\sup_{p\le p_n}|\hat{\theta}_{n,l}^p|\|^2_{L_{\infty}}\bn{\sup_{p\le p_n}\ba{\mathcal{L}_n(X_1^n,\hat\theta_n^p)}}^2_{L_{\infty}}e^{\frac{2\beta_n}{n-m_n}\bn{\sup_{p\le p_n}\mathcal{L}_n(X_1^n,\hat\theta_n^p)}_{L_{\infty}}}% 4(n-m_n)^2\bn{\sum_{p\le p_n}\hat\theta^p_n\omega_n^p(X^n_{m_n+1:n})e^{\frac{\beta_n}{n-m_n}[\mathcal{L}_n(X_1^n,\hat\theta_n^p)-\mathcal{L}_n(Y_1^n,\hat\theta_n^p)]}
        \\&\le 4\frac{\beta_n^2}{(n-m_n)} T^2_{n}L^2_n e^{\frac{2\beta_n}{n-m_n}L_n}
    \end{split}\end{equation}
    
    By combining \cref{fin_1}
 and \cref{fin_2}  we therefore obtain that:
 \begin{equation}
\begin{split}&
 \bn{ \sqrt{{\rm var}\Big[\frac{1}{\sqrt{n-m_n}}\sum_{i\le n- m_n}\mathcal{L}_n\bp{X_{i+m_n}^n,~\hat\Theta_n^Y(X^n_{i+m_n})}-\mathcal{L}_n\bp{X^n_{i+m_n},\hat\Theta_n(X^n_{i+m_n})} \Big|\hat\Theta_n^Y,\hat\Theta_n \Big]}}_{L_1}
    \\&\longrightarrow 0.
    \end{split}\end{equation}
Hence by using the fact that $d_{\mcF}$ satisfies the triangle inequality (see \Cref{lem:df-is-metric}) we observe that it is enough to prove that the distribution of\\ $\frac{1}{\sqrt{n}}\sum_{i\le n}\mathcal{L}_n\bp{X^n_{i+m_n},\hat\Theta^Y_n(X^n_{i+m_n})}$ can be correctly approximated by the bootstrap method. We note that the process $\bp{\mathcal{L}_n\bp{X^n_{i+m_n},\hat\Theta^Y_n(X^n_{i+m_n})}}$ is exchangeable and that if we define the function: $g_n(x_1,\dots,x_n)=\frac{1}{\sqrt{n}}\sum_{i\le n}x_i$ then: $$g_n\bp{\bp{\mathcal{L}_n\bp{X^n_{i+m_n},\hat\Theta^Y_n(X^n_{i+m_n})}}_{1:n}}:=\frac{1}{\sqrt{n}}\sum_{i\le n}\mathcal{L}_n\bp{X^n_{i+m_n},\hat\Theta^Y_n(X^n_{i+m_n})}.$$We remark that $(g_n)$ satisfy all the conditions of \Cref{thm8} which implies that the desired result holds. 

\end{proof}
\section{Proof of  \Cref{nulle_mt2}}For simplicity we write
\begin{equation}\begin{split}&T_{l,n}:=\bn{\sup_{p\le p_n}\ba{\hat\theta^p_{n,l}(X_{1:m_n}^n)(Y_1^{n,1})}}_{L_{\infty}}\lor 1,\\&L^{*}_{l_{1:i},n}:=\big\|\max_{\substack{p\le p_n}}\big|\partial^i_{2,l_{1:i}}\mathcal{L}_n(X_n^n,\hat\theta^p_{n}(X^n_{1:m_n})(X_n^n))\big|\big\|_{L_{\infty}}\lor 1.\end{split}\end{equation}%$T_{l,n}:=\|\sup_{p\le p_n}\hat\theta^p_{n,l}(X_{1:m_n}^n)\big\|_{L_{\infty}}$; and denote $$L_{l_{1:i},n}:=\big\|\max_{\substack{p\le p_n}}\big|\partial^i_{2,l_{1:i}}\mathcal{L}_n(Z^n_n,\hat\theta^p_{n}(Y^n_{1:m_n}))\big|+\big|\partial^i_{2,l_{1:i}}\mathcal{L}_n(\tilde Y^n_n,\hat\theta^p_{n}(Y^n_{1:m_n}))\big|\big\|_{L_{\infty}}.$$
\begin{proof}
For ease of notations, we shorthand:
$$ \hat\Theta_n^{Z^{n,2}}:=\hat\Theta_n(X^n_{1:m_n}Z^{n,2}_{1:n-m_n}),\qquad \hat\theta^p_n:=\hat\theta^p_n(X_{1:m_n}^n).$$ We first notice using \cref{lem:df-is-metric} and \cref{lem:trans_df} that:
 \begin{equation}
     \begin{split}\label{eq:sm_last} &\Big\|d_{\mcF}\Big(\mcR^{{\rm s}}_{\hat\Theta_n^{Z^{n,2}}}(Z_{1:n-m_n}^{n,1})-\E\bp{\mcR^{{\rm s}}_{\hat\Theta_n^{Z^{n,2}}}(Z_{1:n-m_n}^{n,1})\mid\hat\Theta_n^{Z^{n,2}},X^n},\\&\qquad \quad\mcR^{{\rm s}}_{\hat\Theta_n^{'}}(Y_{1:n-m_n}^{n,1})-\E\bp{\mcR^{{\rm s}}_{\hat\Theta_n^{'}}(Y_{1:n-m_n}^{n,1})\mid\hat\Theta_n^{'}}\mid X^n\Big)\Big\|_{L_1}
     \\&\le \Big\|d_{\mcF}\Big(\mcR^{{\rm s}}_{\hat\Theta_n^{Z^{n,2}}}(Z_{1:n-m_n}^{n,1})-\mathbb{E}\big(\mcR^{{\rm s}}_{\hat\Theta_n^{Z^{n,2}}}(Z_{1:n-m_n}^{n,1})\mid\hat\Theta_n^{Z^{n,2}},X^n\big) \\&~\qquad\qquad ,\mcR^{{\rm s}}_{\hat\Theta_n^{Z^{n,2}}}(Y_{1:n-m_n}^{n,1})-\mathbb{E}\big(\mcR^{{\rm s}}_{\hat\Theta_n^{Z^{n,2}}}(Y_{1:n-m_n}^{n,1})\mid\hat\Theta_n^{Z^{n,2}}\big)\Big)\Big\|_{L_1}
     \\&+\Big\|d_{\mcF}\Big(\mcR^{{\rm s}}_{\hat\Theta_n^{Z^{n,2}}}(Y_{1:n-m_n}^{n,1})-\E\bp{\mcR^{{\rm s}}_{\hat\Theta_n^{Z^{n,2}}}(Y_{1:n-m_n}^{n,1})\mid\hat\Theta_n^{Z^{n,2}}},\\&\qquad\qquad  \mcR^{{\rm s}}_{\hat\Theta_n^{'}}(Y_{1:n-m_n}^{n,1})-\E\bp{\mcR^{{\rm s}}_{\hat\Theta_n^{'}}(Y_{1:n-m_n}^{n,1})\mid\hat\Theta_n^{'}}\mid X^n\Big)\Big\|_{L_1}
     \end{split}
 \end{equation} 
 We upper-bound each term seperately. In this goal, we define as $(X'_i)$ the process defined as $X'_i=\mathcal{L}_n(X_{i+m_n}^n,\hat\Theta^{Z^{n,2}}_n).$ We remark that the sequence $(X'_i)$ is an exchangeable sequence and we define: $$Z'_i:=\mathcal{L}_n(Z^{n,1}_i,\hat\Theta_n^{Z,n2}),\quad Y'_i:=\mathcal{L}_n(Y^{n,1}_i,\hat\Theta_n^{Z,n2}). $$ We note that  $(Z'_i)$ and $(Y'_i)$ respectively form a bootstrap sample of $(X'_i)_{i\le n-m_n}$ and a copy of  $(X'_i)$ that is conditionally on $\hat \Theta_n^{Z^{n,2}}$ independent.% and identically distributed to  $(X'_i)$.
~We define $F_{1,n}$ to be the following functions: 
 $$F_{1,n}(x_{1:n-m_n}):=\frac{1}{\sqrt{n-m_n}}\sum_{i\le n-m_n}x_i.$$

\noindent Moreover we denote $(X^{*}_i)$ the following random vectors $X^{*}_i=\Big(\mathcal{L}_n(X_{i+m_n}^{n},\hat\theta^p_n\Big)_{p\le p_n}.$ We also define:$$Z^{*}_i=\Big(\mathcal{L}_n(Z_{i+m_n}^{n,2},\hat\theta^p_n\Big)_{p\le p_n},\quad Z^{*}_i=\Big(\mathcal{L}_n(Y_{i+m_n}^{n,2},\hat\theta^p_n\Big)_{p\le p_n}.$$ We remark that $(Z^{*}_i)$ and $(Y^{*}_i)$ are respectively bootstrap samples and (conditionally) independent copy of $(X^{*}_i)$. % We remark that the sequence $(X^{*}_i)_$ is also an exchangeable sequence. Let $(Z^{*,n}_i)$ be an independent bootstrap sample of $(X^{*}_i)_{i\le n-m_n}$ and $(Y_i^{*,n})$ be an independent copy.

\noindent We shorthand $$ R_n^p:=\E(\mathcal{L}_n(X^n_p,\hat\theta^p_n)|\hat\theta^p_n);\qquad\bar{X'_p}:=\frac{1}{n-m_n}\sum_{i\le n-m_n}X'_{i,p}.$$ 

\noindent We define the following weight functions: $\omega_n^p:\times_{i=1}^{n-m_n}\RR\rightarrow \RR$ as: $$\omega_n^p(x_{1:n-m_n}):=
\frac{e^{ \frac{-\beta_n}{n-m_n}\sum_{i\le n-m_n}x_{i,p}}}{\sum_{p'\le p_n}e^{\frac{-\beta_n}{n-m_n}\sum_{i\le n-m_n}x_{i,p'}}}.$$ We define
$F_{2,n}:\times_{l=1}^n\RR^{p_n}\rightarrow\RR$ as the following random function: \begin{equation}\begin{split}F_{2,n}(x_{1:n-m_n}):=\frac{1}{\sqrt{n-m_n}}\sum_{i\le n-m_n}&\mathcal{L}_n\bp{Y^{n,1}_i,\sum_{p\le p_n}\hat\theta^p_n(Y_{i}^{n,1})\omega_n^p(x_{1:n-m_n})}\\&-\E\bp{\mathcal{L}_n\bp{Y^{n,1}_i,\sum_{p\le p_n}\hat\theta^p_n(Y_{i}^{n,1})\omega_n^p(x_{1:n-m_n})}\mid X^n}.
\end{split}\end{equation}We remark that, conditionally on $X^n$ and $Y^{n,1}$, the functions $(F_{2,n})$ are independent from $(Z^*_i)$ and $(Y^*_i)$. 
Using \cref{eq:sm_last} we note that if  %$$F_{2,n}(Z^*_{1:n-m_n})\overset{d}{=}\mcR^{{\rm s}}_{\hat\Theta_n}(Z_{1:n-m_n}^{n,1},\hat\Theta_n^{Z^{n,2}})-\E\Big[\mcR^{{\rm s}}_{\hat\Theta_n}(Z_{1:n-m_n}^{n,1},\hat\Theta_n^{Z^{n,2}})\Big|\hat\Theta_n^{Z^{n,2}}\Big],\quad F_{1,n}(Z'_{1:n-m_n})\overset{d}{=}\mcR^{{\rm s}}_{\hat\Theta_n}(Z_{1:n-m_n}^{n,1},\hat\Theta_n^{Z^{n,2}}(\tilde Y^{n,2}_{1:n}))$$$$F_{2,n}(\tilde Y^*_{1:n-m_n})\overset{d}{=}\mcR^{{\rm s}}_{\hat\Theta_n}(Z_{1:n-m_n}^{n,1},\hat\Theta_n^{Z^{n,2}}(\tilde Y^{n,2}_{1:n})),\quad F_{1,n}(Y'_{1:n-m_n})\overset{d}{=}\mcR^{{\rm s}}_{\hat\Theta_n}(Y_{1:n-m_n}^{n,1},\hat\Theta_n^{Z^{n,2}}(\tilde Y^{n,2}_{1:n})).$$ Therefore
the following hold then the desired result also holds:
\begin{equation}
\begin{split}
\bn{d_{\mcF}\bp{F_{2,n}(Z^{*}_{1:n-m_n}),F_{2,n}(\tilde Y^{*}_{1:n-m_n}) |X^n}}_{L_1}\rightarrow 0;
    \end{split}\end{equation}
  \begin{equation}
\begin{split}
   \bn{d_{\mcF}\bp{F_{1,n}(Z'_{1:n-m_n}),F_{1,n}(\tilde Y'_{1:n-m_n}) \mid X^n}}_{L_1}\rightarrow 0.
    \end{split}\end{equation}
   To prove that those hold we use \Cref{jardin} extended to exchangeable sequences and random functions. We notice that the functions $(F_{2,n})$ and $(F_{1,n})$ are three times differentiable (random) functions. Therefore for both functions hypothesis $(H_0)$ holds and to get the desired results we only need to check that hypothesis $(H_1^*)$ also holds. For simplicity for all $i\le 3$ we write  \begin{equation}\begin{split}\hat{R}_{j}^{n,l_{1:i}}(x_{1:n}):=\frac{1}{\sqrt{n-m_n}}\Big(& \partial_{2,l_{1:i}}^i\mathcal{L}_n\Big({Y^{n,1}_j,\sum_{p\le p_n}\hat\theta^p_n(Y_j^{n,1})\omega_n^p(x_{1:n-m_n})}\Big)\\&-\E\Big({\partial_{2,l_{1:i}}^i\mathcal{L}_n\big({Y^{n,1}_j,\sum_{p\le p_n}\hat\theta^p_n(Y_j^{n,1})\omega_n^p(x_{1:n-m_n})}\big)\mid X^n}\Big)\Big).\end{split}\end{equation}  We define the following random functions: $f_{l_{1:3},x}(z):=\partial_{2,l_{1:3}}^3\mathcal{L}_n\bp{z,\sum_{p\le p_n}\hat{\theta}_n^p(z)\omega_p( xZ^{*}_{2:n-m_n}))};$ %We remark that those functions are (conditionally) independent from the the process $(Z_i^{n})$. 
and remark that the set of functions $\{f_{l_{1:3},x},~l_{1:3}\le d'_n, x\in [0, Z^{*}_1]\bigcup [0,  Y^{*}_1]\}$  has an epsilon covering number proportional to $\big(\sum_{l\le n}L^*_{l,n}T_{l,n}\beta_n/((n-m_n)\epsilon)\big)^{p_n}p_n$. This implies that there is a constant $K$ such that  we have:
   \begin{equation}\begin{split}\label{train}\max_{i\le n}\bn{\sup_{x\in [\bar{X}^{n,*}, Z^{*}_1]\bigcup [\bar{X}^{n,*}, \tilde Y^{*}_1]}\ba{\sum_{j\le m_n}\hat{R}_{j}^{n,l_{1:3}}(Z^{*,i,x}_{1:n})}}_{L_{72}}\le {K\log(p_n)}L^*_{n,l_{1:3}}.\end{split}\end{equation}
   Moreover we note that for all $k\le 2$ there is a constant $C$ such that
      \begin{equation}\begin{split}\label{train_233}\max_{i\le n}\bn{\ba{\sum_{j\le m_n}\hat{R}_{j}^{n,l_{1:k}}(Z^{*,i,\overline{Z^*}}_{1:n})}}_{L_{72}}\le {C}L^*_{n,l_{1:k}}.\end{split}\end{equation}
   For ease of notations we write: $\overline{\theta^p_{n,l}}(x_{1:n-m_n},y):=\hat{\theta}_{n,l}^p(y)-
\sum_{p'\le p_n}\hat{\theta}_{n,l}^{p'}(y)\omega_{n}^{p'}(x_{1:n-m_n}).$
   We remark that  for all $p\le p_n$ we have
    \begin{equation}
\begin{split}&
    \partial_{i,p}F_{2,n}(x_{1:n-m_n})\\=&-\frac{\beta_n}{n-m_n}
\omega^p_n(x_{1:n-m_n})\sum_{l\le d'_n}\sum_{j\le m_n}\hat{R}_{l}^{n,j}(x_{1:n})\Big[\hat{\theta}_{n,l}^p(Y_j^{n,1})-
\sum_{p'\le p_n}\sum_{j\le m_n}\hat{\theta}_{n,l}^{p'}(Y_j^{n,1})\omega_{n}^{p'}(x_{1:n-m_n})\Big]
\\=&-\frac{\beta_n}{n-m_n}
\omega^p_n(x_{1:n-m_n})\sum_{l\le d'_n}\sum_{j\le m_n}\hat{R}_{l}^{n,j}(x_{1:n})\overline{\theta^p_{n,l}}(x_{1:n-m_n},Y_j^{n,1})
\end{split}\end{equation}
Therefore as $\sum_{p\le p_n}\omega_n^p(x_{1:n})=1$, using \cref{train_233} we obtain that there is a constant $C$ that does not depend on $n$ such that   \begin{equation}
\begin{split}&
  \max_{i\le n} \bn{ \bn{\partial_{i}F_{2,n}(Z^{*,i,\overline{X^*}}_{1:n-m_n})}_{v,p_n}}_{L_{12}}
    \le \frac{C\beta_n}{n-m_n}\sum_{l\le d'_n}L^*_{l,n}
\Big\|\max_{p\le p_n}\big|\hat{\theta}_{n,l}^p(Y_1^{n,1})\big|\Big\|_{L_{12}}.
%\\&\le \frac{2\beta_n}{n-m_n}\sum_{l\le d'_n}L^*_{l,n}T_{l,n}
\end{split}\end{equation}
Moreover for all $p\le p_n$ we have $\partial_{i}F_{1,n}(x_{1:n-m_n})=\frac{1}{\sqrt{n-m_n}}.$
Therefore we obtain that\begin{equation}\begin{split}&
  \max_i \Big\| \partial_{i}F_{1,n}(Z^{',i,\overline{X'}}_{1:n-m_n})\Big\|_{L_{12}}
    \le \frac{1}{\sqrt{n-m_n}}.
\end{split}\end{equation} In addition, we remark that we have $\partial_{i}^2F_{1,n}(x_{1:n-m_n})=0$. Therefore the condition $(H_1^*)$ holds for $F_{1,n}$. For ease of notations we write $\omega^{p_1,p_2}_n(x_{1:n-m_n}):=\mathbb{I}(p_1=p_2)-\omega_{n}^{p_2}(x_{1:n-m_n}).$ We have
 \begin{equation}
\begin{split}&
    \partial_{i,p_1,p_2}^2F_{2,n}(x_{1:n-m_n})
    \\&=\frac{\beta_n^2\omega_n^{p_1}(x_{1:n-m_n})}{(n-m_n)^2}
\omega^{p_1,p_2}_n(x_{1:n-m_n})\sum_{l\le d'_n}\sum_{j\le m_n}\hat{R}_{l}^{n,j}(x_{1:n})\overline{\theta^{p_1}_{n,l}}(x_{1:n},Y_j^{n,1})
\\&+\frac{\beta_n^2\omega_n^{p_1}(x_{1:n-m_n})\omega_n^{p_2}(x_{1:n-m_n})}{(n-m_n)^2}
\sum_{l_1,l_2\le d'_n}\sum_{j\le m_n}\hat{R}_{l_1,l_2}^{n,j}(x_{1:n})\overline{\theta^{p_1}_{n,l_1}}(x_{1:n},Y_j^{n,1})\overline{\theta^{p_2}_{n,l_2}}(x_{1:n},Y_j^{n,1})
\\&-\frac{\beta_n^2}{(n-m_n)^2}\omega_{n}^{p_1}(x_{1:n-m_n})\omega_{n}^{p_2}(x_{1:n-m_n})\sum_{l\le d'_n}\sum_{j\le m_n}\hat{R}^{n,j}_l(x_{1:n})\overline{\theta^{p_2}_{n,l}}(x_{1:n},Y_j^{n,1}).
\end{split}\end{equation}
This combined with \cref{train_233} implies that there is a constant $C_2$ such that:
 \begin{equation}
\begin{split}&
 \max_i \bn{ \bn{\partial_{i}^2F_{2,n}(Z^{*,i,\overline{X^*}}_{1:n-m_n})}_{m,p_n}}_{L_{12}}
   %\\& \le \frac{C_2\beta_n^2}{(n-m_n)^2}
\le \frac{C_2\beta_n^2}{(n-m_n)^2}
\Big[\sum_{l\le d'_n}L^*_{l,n}T_{l,n}+\sum_{l\le d'_n}L^*_{l_1,l_2,n}T_{l_1,n}T_{l_2,n}\Big]
\end{split}\end{equation}

\vspace{3mm}

\noindent Finally using the chain rule for all $p_{1},p_2,p_3\le p_n$ we have 

\vspace{1.5mm}

 \begin{equation}
\begin{split}&
    \partial_{i,p_{1:3}}^3F_{2,n}(x_{1:n-m_n})
    \\&=\frac{\beta_n^3\sum_{l\le d'_n}\sum_{j\le m_n}\hat{R}_{l}^{n,j}(x_{1:n})\overline{\theta^{p_1}_{n,l_1}}(x_{1:n},Y_j^{n,1})\omega_n^{p_1}(x_{1:n-m_n})}{(n-m_n)^3} \Big[\omega_n^{p_2}(x_{1:n-m_n})\omega^{p_2,p_3}_n(x_{1:n-m_n})\\&-\omega_{n}^{p'}(x_{1:n-m_n}) \omega^{p_1,p_3}_n(x_{1:n-m_n})\Big]
    %\\&=\frac{-\beta_n^3\omega_n^{p_1}(x_{1:n-m_n})\omega^{p_1,p_2}_n(x_{1:n-m_n}) \omega^{p_1,p_3}_n(x_{1:n-m_n})}{(n-m_n)^3}
%\sum_{l\le d'_n}\hat{R}_{l}^n\overline{\theta^{p_1}_{n,l_1}}(x_{1:n}) 
%\\&+\frac{\beta_n^3\omega_n^{p_1}(x_{1:n-m_n})\omega_n^{p_2}(x_{1:n-m_n})}{(n-m_n)^3} \omega^{p_2,p_3}_n(x_{1:n-m_n})
%\sum_{l\le d'_n}\hat{R}_{l}^n\overline{\theta^{p_1}_{n,l_1}}(x_{1:n})
-\frac{\beta_n^3\omega_n^{p_1}(x_{1:n-m_n})}{(n-m_n)^3}
\sum_{l_1,l_2\le d'_n}\sum_{j\le m_n}\hat{R}_{l_1,l_2}^{n,j}(x_{1:n})\overline{\theta^{p_1}_{n,l_1}}(x_{1:n},Y_j^{n,1})
 \\&\quad \times\overline{\theta^{p_3}_{n,l_3}}(x_{1:n},Y_j^{n,1})\Big[\omega_n^{p_3}(x_{1:n-m_n})\omega_n^{p_1,p_2}(x_{1:n-m_n})+\omega_n^{p_3}(x_{1:n-m_n})\omega_n^{p_1,p_2}(x_{1:n-m_n})\Big]\\&-\frac{\beta_n^3\prod_{i=1}^3\omega_n^{p_i}(x_{1:n-m_n})}{(n-m_n)^3}
\Big[\sum_{l_{1:3}\le d'_n}\sum_{j\le p_n}\hat{R}_{l_1,l_2,l_3}^{n,j}(x_{1:n})\prod_{k=1}^3\overline{\theta^{p_k}_{n,l_k}}(x_{1:n},Y_j^{n,1})-\sum_{l_{1:2}\le d'_n}\hat{R}_{l_1,l_2}^{n,j}(x_{1:n},Y_j^{n,1})\\&\quad \times\overline{\theta^{p_2}_{n,l_1}}(x_{1:n},Y_j^{n,1})\overline{\theta^{p_3}_{n,l_2}}(x_{1:n},Y_j^{n,1})\Big]
%\\&-\frac{\beta_n^3\omega_n^{p_1}(x_{1:n-m_n})\omega_n^{p_3}(x_{1:n-m_n})\omega_n^{p_1,p_2}(x_{1:n-m_n})}{(n-m_n)^3}
%\sum_{l_1,l_2\le d'_n}\hat{R}_{l_1,l_2}^n\overline{\theta^{p_1}_{n,l_1}}(x_{1:n})\overline{\theta^{p_3}_{n,l_3}}(x_{1:n})
%\\&-\frac{\beta_n^3\omega_n^{p_1}(x_{1:n-m_n})\omega^{p_2}_n(x_{1:n}) \big[\omega^{p_1,p_3}_n(x_{1:n-m_n})+\omega^{p_2,p_3}_n(x_{1:n-m_n})\big]}{(n-m_n)^3}
%\sum_{l_1,l_2\le d'_n}\hat{R}_{l_1,l_2}^n\overline{\theta^{p_1}_{n,l_1}}(x_{1:n})\overline{\theta^{p_2}_{n,l_2}}(x_{1:n}) 
%\\&+\frac{\beta_n^3\omega_n^{p_1}(x_{1:n-m_n})\omega^{p_2}_n(x_{1:n}) \omega^{p_2,p_3}_n(x_{1:n-m_n})}{(n-m_n)^3}
%\sum_{l_1,l_2\le d'_n}\hat{R}_{l_1,l_2}^n\overline{\theta^{p_1}_{n,l_1}}(x_{1:n})\overline{\theta^{p_2}_{n,l_2}}(x_{1:n})
+\frac{\beta_n^3\omega_n^{p_1}(x_{1:n-m_n})\omega^{p_2}_n(x_{1:n})\big[\omega_{n}^{p_1,p_3}(x_{1:n-m_n})+\omega_{n}^{p_2,p_3}(x_{1:n-m_n})\big]}{(n-m_n)^3} 
\\&\quad \times \sum_{l\le d'_n}\sum_{j\le m_n}\hat{R}_{l}^{n,j}(x_{1:n})\overline{\theta^{p_2}_{n,l_1}}(x_{1:n},Y_j^{n,1})
\end{split}\end{equation}
Therefore using \cref{train} we establish that that there is a constant $C_3<\infty$ such that 
\begin{equation}
\begin{split}&
  \max_{i\le n} \bn{\max_{x\in [\bar{X}^n, Z^{*}_1]\bigcup [\bar{X}^n, \tilde Y^{*}_1]} \bn{\partial_{i}^3F_{2,n}(Z^{*,i,x}_{1:n-m_n})}_{t,p_n}}_{L_{12}}
   %\\& \le \frac{C_2\beta_n^2}{(n-m_n)^2}
\\&\le \frac{C_3\beta_n^3\log(p_n)}{(n-m_n)^3}
\sum_{l_1\le d'_n}T_{l_1,n}\Big[L^*_{l_1,n}+\sum_{l_{2}\le d'_n}L^*_{l_{1:2},n}T_{l_2,n}+\sum_{l_{2:3}\le d'_n}L^*_{l_{1:3},n}T_{l_2,n}T_{l_3,n}\Big]
\end{split}\end{equation}
This implies that hypothesis $(H_1^*)$ holds for $F_{2,n}$; which means that 
\begin{equation}
\begin{split}
\bn{d_{\mcF}\bp{F_{2,n}(Z^{*}_{1:n-m_n}),F_{2,n}(\tilde Y^{*}_{1:n-m_n}) |X^n}}_{L_1}\rightarrow 0;
    \end{split}\end{equation}We note that $F_{2,n}(\tilde Y^{*}_{1:n-m_n}) :=\mathcal{R}^s_{\hat\Theta'_n}(Y_{1:n-m_n}^{n,1}).$
   \\\noindent Moreover, we remark that we have proved that the condition $(H_1^*)$ also holds for $F_{1,n}$. This implies that we have
  \begin{equation}
\begin{split}
   \bn{d_{\mcF}\bp{F_{1,n}(Z'_{1:n-m_n}),F_{1,n}(\tilde Y'_{1:n-m_n}) \mid X^n}}_{L_1}\rightarrow 0.
    \end{split}\end{equation}
    As $F_{1,n}$ is linear this gives us the desired result.
%Moreover, as we note that the conditions also hold for $(F_{2,n})$, we have \begin{equation}
%
% \begin{equation}

\end{proof}
%This implies that hypothesis $(H_1^*)$ holds. This implies that 

\section{Proofs from \Cref{app:preliminaries}}

\subsection{Proof of \Cref{casser}}\label{app:casser}
\begin{proof}
The first step is to bound $\E\Big[\max_{k\le p_n}\frac{1}{\sqrt{n}}\sum_{i\le n}\tilde X^n_{i,k}\Big]$. % note that if $(\tilde Y^n_i)$ is a sequence of i.i.d random variable with mean $0$ then the following holds $$\E(\max_k\frac{1}{\sqrt{n}}\sum_{i\le n}\tilde Y^n_{i,k})\le $$
% $g_n(\tilde Y):=\max_k\frac{1}{\sqrt{n}}\sum_{i\le n}\tilde Y^n_{i,k}$ then we have
Define $(f_n:\RR^d\rightarrow\RR)$ to be the following sequence of functions:
$$f_n(x_{1:n}):= \frac{\log\Big(\sum_{l\le p_n}e^{\log(p_n)\frac{1}{\sqrt{n}}\sum_{i\le n} x_{i,l}}\Big)}{\log(p_n)}.$$We observe that
 $\|f_n(\tilde X^n)-\max_{k\le p_n}\frac{1}{\sqrt{n}}\sum_{i\le n}\tilde X^n_{i,k}\|_{L_1}\le 1$; and that the functions $(f_n)$ are infinitely differentiable.
We denote,  for all $z\in \RR$, $(\tilde X^{i,n}_{l})$ and  $(\tilde X^{i,z}_{l})$ the processes respecting $$\tilde X^{i,n}_l:=\begin{cases}\tilde X^n_l~{\rm if}~i\le l\\ 0~{\rm otherwise}.\end{cases}\qquad \tilde X^{i,z}_l:=\begin{cases}\tilde X^{i,n}_l~{\rm if}~i\ne l\\ z~{\rm otherwise}\end{cases}.$$ 
We first suppose that $\|\max_{k\le p_n}\tilde X^n_{1,k}\|_{L_p}\le 1$.
Using the Taylor expansion we have: 

\vspace{2mm}

 \begin{equation}
    \begin{split}\label{idaho}
      \Big| \E \Big[f_n(\tilde X^n)\Big]\Big|&\le \sum_{i\le n}  \Big| \E \Big[f_n(\tilde X^{i,n}_{1:n})-f_n(\tilde X^{i-1,n}_{1:n})\Big]\Big|
      \\ &\le \sum_{i\le n}\Big|\E\Big[~(\tilde X^n_i)^{\top}~\partial_if_n(\tilde X^{i,0}_{1:n}) \Big]\Big|
      + \sum_{i\le n}\Big|\E\Big[(\tilde X^n_i)^{\top}\partial^2_if_n(\tilde X^{i,0}_{1:n})\tilde X^n_i\Big]\Big|
       \\&+ \sum_{i\le n}\E\Big[\sup_{z\in [0,\tilde X^n_i]}\Big<\Big|\partial^3_if_n(\tilde X^{i,z}_{1:n})\Big|,\Big|\tilde X^{\otimes 3}_i\Big|\Big>\Big]
       \\&\overset{(a)}{\le} n \Big|\sum_{d_1,d_2\le p_n}\E\Big[\partial^2_{i,d_1,d_2}F^{n}_{\beta}(\tilde X^{i,0}_{1:n})\Big]\E(\tilde X^n_{1,d_1}\tilde X^n_{1,d_2})\Big|
      \\&+ n\sum_{d_1,d_2,d_2\le n}\E\Big[\sup_{z\in [0,\tilde X^n_i]}\Big|\partial^3_{i,d_{1:3}}F^{n}_{\beta}(\tilde X^{i,z}_{1:n})\Big|~\Big|\tilde X^n_{1,d_1}\tilde X^n_{1,d_2}\tilde X^n_{1,d_3}\Big|\Big]
    \end{split}
\end{equation} where to get (a) we used the fact that as $\sigma(\tilde X^{i,0}_{1:n})=\sigma(\tilde X_0^n,\dots, \tilde X_{i-1}^n)$ we have $\E(\tilde X^n_i|\tilde X^{i,0}_{1:n})=0$. The next step is to upper-hand the right-hand side of \Cref{idaho}. For ease of notations we write: $$\omega_k(x_{1:n}):=\frac{e^{\log(p_n)\frac{1}{\sqrt{n}}\sum_{i\le n} x_{i,k}}}{\sum_{l\le p_n}e^{\log(p_n)\frac{1}{\sqrt{n}}\sum_{i\le n} x_{i,l}}}.$$

\vspace{1.5mm}

\noindent For all $k_1,k_2\le p_n$ we obtain by the chain rule that
       \begin{equation}\begin{split}&
       \Big| \partial_{i,k_1,k_2}^2f_n(x_{1:n})\Big|
       \le \begin{cases}\frac{\log(p_n)}{n}\omega_{k_1}(x_{1:n})\omega_{k_2}(x_{1:n})\qquad{\rm if}~k_1\ne k_2\\\frac{\log(p_n)}{n}\Big[ \omega_{k_1}^2(x_{1:n})+\omega_{k_1}(x_{1:n})\Big]\qquad{\rm if}~k_1=k_2.\end{cases}
    \end{split}
\end{equation}

\vspace{2mm}

\noindent As $\sum_{k\le p_n}\omega_k(x_{1:n})=1$, this implies that
\begin{equation}
    \begin{split}&
       n \Big|\sum_{d_1,d_2\le p_n}\E\Big[\partial^2_{i,d_1,d_2}F_{\beta}(\tilde X^{i,0})\Big]\E(\tilde X^n_{1,d_1}\tilde X^n_{1,d_2})\Big|
       \le  {2 \log(p_n)}.\end{split}\end{equation}
We now bound the second term of the right hand side of \cref{wed_dr}. For all  integers $k_1,k_2,k_3\le p_n$, by the chain rule, we obtain that:
       \begin{equation}\begin{split}\label{bj_1}&
       \ba{ \partial_{i,k_{1:3}}^3F_{\beta}(x'_{1:n})}
       \\&\le\quad \frac{\log(p_n)^{3/2}}{n^{3/2}} \omega_{k_1}(x_{1:n})\omega_{k_2}(x_{1:n})\omega_{k_3}(x_{1:n})\bp{1+\mathbb{I}(k_1=k_3)+\mathbb{I}(k_2=k_3)}
       \\&+\frac{\log(p_n)^{3/2}}{n^{3/2}} \omega_{k_1}(x_{1:n})\omega_{k_2}(x_{1:n})\bp{\mathbb{I}(k_1=k_3)+\mathbb{I}(k_2=k_3)}\\&+\frac{\log(p_n)^{3/2}}{n^{3/2}} \omega_{k_1}(x_{1:n})\bp{\omega_{k_3}(x_{1:n})\mathbb{I}(k_1=k_2)
     +\mathbb{I}(k_1=k_2=k_3)}
       %\\&+\Big|\frac{\beta\log(p_n)}{nn!}\sum_{\pi\in \mathbb{S}_n}\frac{e^{2\beta\log(p_n)\frac{1}{\sqrt{n}}\sum_{i\le n} x'_{i,\pi(1)}}}{\Big[\sum_{l\le p_n}e^{\beta\log(p_n)\frac{1}{\sqrt{n}}\sum_{i\le n} x'_{i,l}}\Big]^2}\Big|
      % \\&\le \frac{\beta\log(p_n)}{n}
    \end{split}
\end{equation}
% Moreover for all distinct integers $k_1\ne k_2$ by the chain rule
%  we have
%   \begin{equation}\begin{split}\label{bj_2}
%       \Big| \partial_{i,k_{1},k_1,k_2}^3F_{\beta}(x'_{1:n})\Big|
%       &\le \frac{2\beta^2\log(p_n)^2}{n^{\frac{3}{2}}}\omega_{k_1}(x_{1:n})\omega_{k_2}(x_{1:n})\Big[\omega_{k_2}(x_{1:n})+1\Big]
%     \end{split}
% \end{equation}
% Finally,  for all $k_1\le p_n$ we get that:
%   \begin{equation}\begin{split}\label{bj_3}
%       \Big| \partial_{i,k_{1},k_1,k_1}^3F_{\beta}(x'_{1:n})\Big|
%       &\le 3\frac{\beta^2\log(p_n)^2}{n^{\frac{3}{2}}}\omega_{k_1}(x_{1:n})\Big[\omega_{k_1}(x_{1:n})^2+\omega_{k_1}(x_{1:n})+1\Big].
%     \end{split}
% \end{equation}
Therefore we obtain that
\begin{equation}\begin{split}&
       n\sum_{d_1,d_2,d_3\le n}\E\bp{\sup_{z\in [0,\tilde X^n_i]}\ba{\partial^3_{i,d_{1:3}}F^{\beta,n}(\tilde X^{i,z})}~\ba{\tilde X^n_{1,d_1}\tilde X^n_{1,d_2}\tilde X^n_{1,d_3}}}
       \le \frac{7\beta^2\log(p_n)^2}{\sqrt{n}}.
    \end{split}
\end{equation}
Hence using \cref{idaho} we establish that:
\begin{equation}\begin{split}
        \ba{ \E\bp{\max_{k\le p_n}\frac{1}{\sqrt{n}}\sum_{i\le n}\tilde X^n_{i,k}}}
%    &\le \inf_{\beta}\Big[\frac{1}{\beta}+  \Big[{\log(p_n)\beta} +\frac{6\beta^2\log(p_n)^2}{\sqrt{n}}\Big]\Big]
&\le 1+ \Big[{\log(p_n)} +\frac{7\log(p_n)^2}{6\sqrt{n}}\Big] . \end{split}
\end{equation}
By potentially renormalizing $\|\max_{k\le p_n}\ba{\tilde X_{i,k}^n}\|_{L_p}$ we obtain therefore than in general we have: \begin{equation}\begin{split}
        &\ba{ \E\bp{\max_{k\le p_n}\frac{1}{\sqrt{n}}\sum_{i\le n}\tilde X^n_{i,k}}}
\\&\le \bn{\max_{k\le p_n}|X_{1,k}|}_{L_p}\bp{1+ {\log(p_n)} +\frac{6\log(p_n)^2}{\sqrt{n}}}. \end{split}
\end{equation}
Lastly, according to the  Rosenthal inequality for martingales \cite{hitczenko1990best}, there are constants $(C_p)$ that do not depend on $d$ or $(\tilde X^n_i)$ such that
\begin{equation}\begin{split}&
    \bn{\max_{k\le p_n}\frac{1}{\sqrt{n}}\sum_{i\le n}\tilde X^n_{i,k}-\E\bp{\max_{k\le p_n}\frac{1}{\sqrt{n}}\sum_{i\le n}\tilde X^n_{i,k}}}_{L_p}
    \le 2 C_p\bn{\sup_{d_1\le p_n}\ba{\tilde X^n_{i,d_1}}}_{L_p}.
    \end{split}
\end{equation}
Therefore we get the desired results using the triangular inequality.

\noindent Finally, let $(\mathbb{F}_i)$ designates the filtration $\mathbb{F}_i:=\sigma\Big( X^n_1,\dots, X^n_i\Big)$ then the following is an array of martingale differences $\bp{\E(g_{k,n}( X^n)|\mathbb{F}_i)-\E(g_{k,n}( X^n)|\mathbb{F}_{i-1})}$. The last point of \Cref{casser} follows directly from this observation.
\end{proof}

\end{document}